\newcommand{\N}{\mathbb{N}}
\newcommand{\Z}{\mathbb{Z}}
\newcommand{\R}{\mathbb{R}}
\newcommand{\C}{\mathbb{C}}
\newcommand{\U}{\mathbb{U}}
\newcommand{\bfw}{\mathbf{w}}
\newcommand{\bfv}{\mathbf{v}}
\DeclareMathOperator{\e}{\mathrm{e}}
\DeclareMathOperator{\G}{\mathrm{G}}
\DeclareMathOperator{\ind}{\mathbf{1}}
\DeclareMathOperator{\sgn}{\mathrm{sgn}}
\DeclareMathOperator{\lcm}{\mathrm{lcm}}
\DeclareMathOperator{\tr}{\mathrm{tr}}
\DeclareMathOperator{\card}{\mathrm{card}}
\newcommand{\floor}[1]{\left\lfloor #1 \right\rfloor}
\newcommand{\abs}[1]{\left| #1 \right|}
\newcommand{\norm}[1]{\left\| #1 \right\|}
\newcommand{\rb}[1]{\left( #1 \right)}
\declaretheorem[numberwithin=section, name=Theorem]{theorem}
\declaretheorem[sibling=theorem,name=Lemma]{lemma}
\declaretheorem[sibling=theorem,name=Corollary]{corollary}
\declaretheorem[sibling=theorem,name=Proposition]{proposition}
\declaretheorem[sibling=theorem,name=Conjecture]{conjecture}
\declaretheorem[sibling=theorem,name=Definition]{definition}
\declaretheorem[numbered=no,name=Remark]{remark}
\declaretheorem[numbered=no,name=Example]{example}
\begin{document}
 \selectlanguage{english} 
 % titlepage
% \title{All automatic sequences fulfill the Sarnak Conjecture}
% \author{Clemens Müllner}
\title{Automatic sequences fulfill the Sarnak conjecture}
\author{Clemens Müllner}
\thanks{This research is supported by the project F55-02 of the Austrian Science Fund FWF which is part of the Special Research Program ``Quasi-Monte Carlo Methods: Theory and Applications'',
by Project F5002-N15 (FWF), which is a part of the Special Research Program “Algorithmic and Enumerative Combinatorics” and by Project I1751 (FWF), called MUDERA
(Multiplicativity, Determinism, and Randomness).
Furthermore, the author wants to thank Michael Drmota for introducing this topic as well as for his help and guidance.}
\begin{abstract}
  We present in this paper a new method to deal with automatic sequences.
  This method allows us to prove a M\"obius-randomness-principle for automatic sequences from which we deduce the Sarnak conjecture for this class of sequences.
  %Furthermore, we can show a Prime Number Theorem for a large class of automatic sequences.
  Furthermore, we can show a Prime Number Theorem for automatic sequences 
  that are generated by strongly connected automata where the initial state is fixed by the transition corresponding to $0$.
\end{abstract}
\maketitle
 \section{Introduction}
In 2009, Peter Sarnak stated the following conjecture \cite{sarnak_three_lectures}:
\begin{conjecture}
  Let $\mu$ be the M\"obius function. For any bounded sequence $\xi(n)$ observed by a deterministic flow
  \footnote{A flow $F$ is a pair $(X,S)$ where $X$ is a compact metric space and $S:X\to X$ is a continuous map.
  For a dynamical system to be deterministic means that - roughly speaking - there are only few different orbits.}
  $(X,S)$, it holds that
  \begin{align*}
	  \sum_{n\leq N} \xi(n) \mu(n) = o(N).
  \end{align*}
\end{conjecture}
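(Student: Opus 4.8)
The statement above is Sarnak's conjecture in full generality, which is still open; accordingly I can only propose a strategy, isolating which reductions are routine and where the genuine difficulty lies. The first move is soft. Since $\xi$ is observed by $(X,S)$ there are a point $x\in X$ and a bounded Borel function $f$ with $\xi(n)=f(S^nx)$; approximating $f$ by continuous functions (in $L^1$ of an invariant measure) and using linearity, it suffices to treat $f\in C(X)$, and replacing $X$ by the orbit closure $\overline{\{S^nx : n\ge 0\}}$ we may assume $(X,S)$ is a topological dynamical system of zero topological entropy — this is the precise content of ``deterministic''. The target thus becomes: for every zero-entropy $(X,S)$, every $f\in C(X)$ and every $x\in X$,
\begin{align*}
  \frac1N\sum_{n\le N} f(S^nx)\,\mu(n)\ \longrightarrow\ 0 .
\end{align*}

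Next I would remove the M\"obius function via an orthogonality criterion of K\'atai / Bourgain--Sarnak--Ziegler type: $\frac1N\sum_{n\le N} a(n)\mu(n)\to 0$ for every bounded $a$ as soon as, for all sufficiently large distinct primes $p\ne q$, the quantity
\begin{align*}
  \limsup_{N\to\infty}\ \frac1N\,\Bigl| \sum_{n\le N} a(pn)\,\overline{a(qn)} \Bigr|
\end{align*}
is small, tending to $0$ as $p,q\to\infty$. With $a(n)=f(S^nx)$ this turns the problem into a statement about decorrelation of the orbit $(S^nx)$ along the progressions $pn$ and $qn$, i.e.\ a partial disjointness of the powers $S^p$ and $S^q$ on an invariant measure carried by $X$. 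This step is mechanical; all the content is now pushed into the correlation bound.

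To establish that bound I would try to split $(X,S)$ — or an ergodic invariant measure on it — along its maximal structured factor: on one side the Kronecker / pro-nil part, on the other a relatively weakly mixing extension. On the structured part the orbit sums reduce to polynomial exponential sums and nilsequences, for which disjointness from $\mu$ is available (Davenport for linear and polynomial phases, Green--Tao--Ziegler for nilsequences), which feeds back the needed smallness; on the weakly mixing part one would use decay of correlations to kill the $\limsup$ directly. Zero entropy is exactly what excludes the Bernoulli-type randomness that would wreck both halves. This programme is known to go through for several concrete classes — equicontinuous systems, horocycle flows, nilsystems, certain rank-one and substitution systems — and the present paper adds the class of automatic sequences to the list.

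The main obstacle — and the reason the full conjecture remains open — is that the structural splitting of the previous paragraph has no unconditional version for an \emph{arbitrary} zero-entropy homeomorphism: there is no structure theorem reducing a general zero-entropy system to nilsystems plus weak mixing, and the correlation bound above cannot be obtained from dynamics alone, since Sarnak's conjecture is strictly stronger than any purely ergodic-theoretic input — it is implied by the Chowla conjecture on the self-correlations of $\mu$, itself unproven. The realistic route is therefore class by class, supplying in each case the arithmetic-combinatorial control of the orbit that the general theory lacks; for automatic sequences that control comes from the finite transducer computing $n\mapsto\xi(n)$, which is precisely the structural leverage exploited in the rest of this paper.
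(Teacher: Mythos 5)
You are right not to offer a proof: the statement you were given is Sarnak's conjecture in full generality, and the paper does not prove it either -- it appears only as the motivating Conjecture in the introduction, and the paper's actual results (\cref{th:moebius} and \cref{th:prime}) establish it only for the symbolic dynamical systems associated with automatic sequences. So your assessment of the status of the statement, and your identification of where the genuine difficulty lies (no unconditional structure theorem for arbitrary zero-entropy systems, and no purely dynamical route to the required correlation bounds), is accurate.

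It is worth noting, though, that the strategy you sketch for the special cases is not the one this paper uses. You propose the multiplicative-orthogonality route (K\'atai / Bourgain--Sarnak--Ziegler: bound $\frac1N\sum_{n\le N}a(pn)\overline{a(qn)}$ for distinct primes $p,q$) combined with a structured/weakly-mixing splitting. The paper instead proceeds by a reduction internal to the class of automatic sequences: \cref{le:dynamical_system_uniform} reduces the Sarnak conjecture for the associated subshift to a M\"obius-randomness principle for automatic sequences uniformly in shifts $r$; the automaton is then replaced by a \emph{naturally induced transducer} (\cref{prop:synch_trans}, \cref{pr:equality}), which separates a synchronizing part from a group-valued (``invertible'') part; and the resulting sums are estimated via group representations and a generalization of the Mauduit--Rivat machinery (carry property, uniformly small Fourier transforms, type I/II sums, \cref{thm:mobius} and \cref{thm:prime}). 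In particular no two-point correlation criterion along $pn,qn$ is used; the arithmetic input is the bilinear-sum technology for digital functions rather than multiplicativity of the weights. Your closing remark that for automatic sequences the leverage comes from the finite automaton computing $n\mapsto\xi(n)$ is exactly the paper's point of view, but the way that leverage is converted into cancellation is quite different from your sketch.
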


This conjecture has aroused great interest and numerous papers were recently devoted to show the Sarnak conjecture for different classes of flows  
\cite{walsh_bounds, rank_one_2013, horocycle, davenport, odometer, thue_morse_type, rank_one, interval, bounded_depth, green_tao, ChowlaSarnakErgodicView, quasi_discrete_spectrum, interval_map, rotations, distal, peckner_2015, horocycle_primes, morse_kakutani}.
% This conjecture has aroused great interest and numerous papers were recently devoted to prove that special sequences $(u_n)_{n\in\N}$ satisfy a M\"obius-randomness principle 
% or a Prime Number Theorem (i.e.~an asymptotic formula for the sum $\sum_{n\leq x} \Lambda(n) u_n$), which is often closely related.

Furthermore, we would like to mention two results connecting the Sarnak conjecture to the Chowla conjecture \cite{frantzikinakis, tao_log}.

Sarnak stated his conjecture in terms of a deterministic flow $(X,S)$.
We are interested in special flows that origin from a sequence:

Fix a sequence $\mathbf{a} = (a_n)_{n\in\N}$ that takes values in a finite set $\mathcal{A}$.
We denote by $S$ the shift operator on the sequences ($\mathcal{A}^{\N}$) in $\mathcal{A}$  and define $X:=\overline{\{S^n(\mathbf{a}):n\in\N_0\}}$.
By taking the product topology of the discrete topologies on each copy of $\mathcal{A}$, we find that $\mathcal{A}^{\N}$ is compact and complete.
The product topology is induced by the following metric $d(x,y) = \sum_{n=0}^{\infty} 2^{-n-1} d_n(x_n,y_n)$ on $X$, where $d_n$ denotes the discrete metric on $\mathcal{A}$.
The dynamical system (or flow) $(X,S)$ is called the \textit{symbolic dynamical system associated with $\mathbf{a}$}.

Therefore, we say that a sequence $\mathbf{a}$ fulfills the Sarnak conjecture if the symbolic dynamical system associated with $\mathbf{a}$ fulfills the Sarnak conjecture.
For more information about symbolic dynamical systems and their complexity see for example \cite{seq_complex, subst}.

A M\"obius-randomness principle is often closely related to a Prime Number Theorem (PNT), i.e.,~an asymptotic formula for the sum $\sum_{n\leq x} \Lambda(n) u_n$.
Some classes or examples of automatic sequences, which are observed by a deterministic flow, have already been covered:
\begin{itemize}
\item the Thue-Morse sequence:
  \begin{itemize}
    \item a M\"obius-randomness principle follows from the work of Indlekofer and K{\'a}tai~\cite{katai2001, katai1986}
    \item Dartyge and Tenenbaum~\cite{tenenbaum} give additionally an explicit error bound
    \item a M\"obius-randomness principle and a PNT by Mauduit and Rivat~\cite{mauduit_rivat_2010}
   \end{itemize}
\item the Rudin-Shapiro sequence:
  \begin{itemize}
  \item Tao suggests a strategy to prove a M\"obius-randomness principle in~\cite{tao_2012} 
  \item Mauduit and Rivat provide a different approach which allows to prove a M\"obius-randomness principle and a PNT for more general functions with an explicit error bound~\cite{mauduit_rivat_rs}
  \end{itemize}
\item sequences generated by invertible automata
  \begin{itemize}
    \item Sarnak conjecture \cite{fourauthors} 
    \item a M\"obius-randomness principle and a PNT~\cite{drmota2014}
  \end{itemize}
\item sequences generated by synchronizing automata
  \begin{itemize}
    \item the Sarnak conjecture and a PNT~\cite{synchronizing}
  \end{itemize}
\item some special digital functions (e.g.,~ the parity of occurrences of a word in the digital expansion)
  \begin{itemize}
    \item a M\"obius-randomness principle together with a PNT~\cite{hanna}
  \end{itemize}
\end{itemize}

The purpose of this paper is to show the Sarnak conjecture for all automatic sequences, which covers all results concerning automatic sequences mentioned above.
%together with a Prime Number Theorem for a large class of automatic sequences.
% This includes all results concerning automatic sequences mentioned above.
% 
% In this paper, we prove the following two theorems.

\begin{theorem}\label{th:moebius}
  Let $\mu$ be the M\"obius function, $(a_n)_{n\in\N}$ be a complex valued automatic sequence and
  let $(X,S)$ be the symbolic dynamical system associated with $(a_n)_{n\in\N}$.
  Then for all sequences $\xi(n) := f(S^n(x))$, with $x\in X$ and $f \in C(X,\C)$, we have
  \begin{align*}
    \sum_{n\leq N} \xi(n) \mu(n) = o(N).
  \end{align*}
\end{theorem}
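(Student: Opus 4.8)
The plan is to first reduce \Cref{th:moebius} to a M\"obius randomness principle for automatic sequences, and then to prove that principle by combining a structural analysis of the generating automaton with the Fourier-analytic method of Mauduit and Rivat.

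\textbf{Step 1 (reduction to the sequence).} Given $f\in C(X,\C)$ and $\varepsilon>0$, pick a function $g$ depending only on the coordinates $0,\dots,L$ with $\norminf{f-g}<\varepsilon$; then the contribution of $f-g$ to $\sum_{n\le N}\xi(n)\mu(n)$ is at most $\varepsilon N$. For $x=\mathbf a$ the sequence $n\mapsto g(S^{n}\mathbf a)$ is a complex-valued automatic sequence, since the window map $n\mapsto(a_n,\dots,a_{n+L})$ of an automatic sequence is automatic and post-composition with $g$ preserves this. A general $x\in X$ is a limit of shifts $S^{t_j}\mathbf a$, and as $N$ is fixed while taking this limit, $\sum_{n\le N}g(S^n x)\mu(n)=\lim_j\sum_{n\le N}g(S^{n+t_j}\mathbf a)\mu(n)$; hence it suffices to bound $\sum_{n\le N}H(n+t)\mu(n)$ for $H$ automatic, \emph{uniformly} in $t\ge 0$. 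This uniform statement will in turn follow from the quantitative (uniform in all auxiliary parameters) estimates obtained below. So the task reduces to: for every complex-valued automatic sequence $(b_n)$, $\sum_{n\le N}b_n\mu(n)=o(N)$.

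\textbf{Step 2 (structure of the automaton).} This is the new ingredient. The value $b_n$ is read off from the base-$k$ expansion of $n$ via a deterministic automaton; after replacing $k$ by a suitable power (harmless, by Cobham) and restricting to the strongly connected components reachable from the initial state, I would analyse the transition monoid. The goal is to show that, up to an error term supported on a set of $n$ of density zero, $b_n$ decomposes as a finite sum $\sum_i b^{(1),i}_n b^{(2),i}_n$ in which each $b^{(1),i}$ is generated by a \emph{synchronizing} automaton and each $b^{(2),i}$ is generated by an \emph{invertible} automaton, i.e.\ one whose transitions act as a group of permutations on a set of states. Concretely one isolates, inside the minimal two-sided ideal (kernel) of the transition monoid, a maximal subgroup, the quotient by which is synchronizing --- a Rees--Suschkewitsch type structure analysis tailored to automata.

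\textbf{Step 3 (the two pieces, and the main obstacle).} For the synchronizing factors: a synchronizing automaton determines $b_n$ from a bounded-length \emph{suffix} of the base-$k$ digits of $n$ outside a density-zero set, a property preserved by products and by sampling along arithmetic progressions, so the M\"obius sum reduces to cancellation of $\mu$ along residue classes and short intervals --- classical, and already carried out in \cite{synchronizing}. For the invertible factors: writing the state reached after reading the low-order digits of $n$ as a product of permutation matrices attached to the automaton, one is reduced to exponential sums of Mauduit--Rivat type. I would establish the required cancellation --- equivalently, decay of the associated matrix-valued Fourier coefficients, uniform in the shift and sampling parameters --- by feeding a carry-propagation lemma and van der Corput's inequality into Vaughan's identity: the type I sums are handled via the digital structure, the type II sums via the genuine oscillation supplied by the group action, generalizing \cite{mauduit_rivat_2010,mauduit_rivat_rs,drmota2014,fourauthors}. (The same Fourier estimates, fed into a Vinogradov-type bilinear decomposition for $\Lambda$, yield the Prime Number Theorem in the strongly connected case under the normalization that the transition for the digit $0$ fixes the initial state.) The crux is exactly Step 2 together with the type II estimate: pinning down the correct synchronizing/invertible dichotomy at the level of the automaton, and then showing the invertible part is pseudorandom enough for the bilinear sums to cancel. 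The non-commutativity of the matrix cocycle coming from the automaton is what makes the van der Corput and carry analysis genuinely harder than for a single digital function, and keeping every estimate uniform in the parameters introduced by Vaughan's identity and by the passage to the orbit closure is the technical heart of the argument.
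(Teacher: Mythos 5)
Your plan reproduces the paper's architecture at the top level: reduce to uniform-in-shift estimates for automatic sequences (your Step~1 is the paper's Lemma~\ref{le:dynamical_system_uniform}), express the output of a strongly connected automaton via a synchronizing coordinate together with a group-valued cocycle (your Step~2 is what the paper's ``naturally induced transducer'' encodes -- the tuples in $Q\subseteq (Q')^{n_0}$ built from the image sets $\delta'(Q',\bfw)$ of minimal size are a hands-on realization of the Rees structure of the transition monoid you invoke), and handle the group part by a matrix-valued Mauduit--Rivat argument (your Step~3 matches \cref{sec:RS1} and the generalized \cref{thm:mobius}).

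There is, however, a genuine gap in Step~3. You assert that the matrix-valued Fourier coefficients attached to the ``invertible part'' decay uniformly, but this is false without qualification, and the reason is exactly what most of the paper's \cref{sec:str1} and \cref{sec:str2} are about. There are one-dimensional representations $D_\ell$ of the group $G$ of weights, $\ell=0,\dots,d'-1$, for which $D_\ell(T(q_0,(n)_k))=\e(\ell n/d')$; the corresponding Fourier transform is identically $1$ at $t=\ell/d'$, so no power saving is possible there. Your structural Step~2 does not isolate these exceptional characters. The paper first proves that a naturally induced transducer carries a rigid arithmetic structure -- a divisor $d'\mid k^d-1$ and a normal subgroup $G_0\le G$ with $T(q,\bfw)\in G_0\, g_0'^{\,[\bfw]_k \bmod d'}$ (Theorems~\ref{th:full_G} and~\ref{th:dk_0}) -- which identifies precisely these $D_\ell$; it then proves Fourier decay (\cref{thm:Fourier}) for all other irreducible representations, while the $D_\ell$ sums are disposed of by classical estimates for $\mu$ in arithmetic progressions (\cref{le:D_l_mue}), not by Fourier decay. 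Without this dichotomy, the program as you wrote it stalls at the type~II bound. A second point you gloss over is that one must first reduce to automata with $d(A)=k_0(A)=1$ by passing to a power of the alphabet (\cref{pr:components_d_k_0}); only after that reduction does the clean ``synchronizing $\times$ group'' picture hold without residual period constraints.
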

\begin{remark}
  This theorem does not provide an explicit error term.
  This is mainly due to the fact that we consider the symbolic dynamical system associated with an automatic sequence.
  When considering only $\xi(n) = a_n$ we have basically two contributions for the error term:
  \begin{itemize}
   \item Estimates for the M\"obius function along arithmetic progressions.
   \item Estimates provided by \cref{thm:mobius} which gives a power saving.
  \end{itemize}
  However, for the general case we use \cref{le:dynamical_system_uniform} to relate $\xi(n)$ to other automatic sequences which provides no explicit error term.
\end{remark}

We present in this paper a new method which relates the output of a strongly connected deterministic finite automaton 
to the output of a newly introduced special class of transducers (so called \textit{naturally induced transducers}).
These transducers combine synchronizing and invertible properties and are a crucial step to prove \cref{th:moebius}.\\
Furthermore, the setup we develop can also be used to derive a Prime Number Theorem for a large class of automatic sequences -- which covers all mentioned results, except~\cite{hanna} which is only covered partially.

\begin{theorem}\label{th:prime}
  Let $A = (Q',\Sigma, \delta',q_0',\Delta,\tau)$ be a strongly connected deterministic finite automaton with output (DFAO) with $\Sigma = \{0,\ldots,k-1\}$ and $\delta'(q_0',0) = q_0'$
  \footnote{All necessary definitions are in \cref{subsec:def} and a detailed treatment of automatic sequences can be found for example in \cite{AllouchShallit}.}.
  Then the frequencies of the letters for the prime subsequence $(a_{p})_{p\in \mathcal{P}}$ exist.
\end{theorem}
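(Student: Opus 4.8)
The plan is to deduce the existence of the prime frequencies from a prime number theorem of the shape
\[
\sum_{n\le N}\Lambda(n)\,\ind[a_n=b] = c_b\,N + o(N)\qquad(N\to\infty)
\]
for every letter $b\in\Delta$, and then to pass from the von Mangoldt weight to counting over $\mathcal{P}$ by partial summation together with the classical prime number theorem, which gives $\pi(N)^{-1}\#\{p\le N:a_p=b\}\to c_b$. Since $n\mapsto\ind[a_n=b]$ is itself produced by a DFAO with the same transition structure as $A$ — only the output map changes — it is again generated by a strongly connected automaton with $\delta'(q_0',0)=q_0'$ (in particular leading zeros still do not matter), so it suffices to prove the displayed asymptotic for an arbitrary complex valued automatic sequence $u_n$ attached to such an automaton.

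First I would feed $u_n$ into the machinery developed in the body of the paper: by \cref{le:dynamical_system_uniform} and the construction of the naturally induced transducer one may replace the automaton by a transducer that simultaneously reads $n$ through a \emph{synchronizing} automaton $A_{\mathrm s}$ and, along the path prescribed by $A_{\mathrm s}$, accumulates a product of permutations in a fixed finite group $G$ — the \emph{invertible} part. Expanding the indicator of the attained state in the irreducible characters of $G$, this writes $u_n$ as a finite linear combination of terms $\psi_j(n)\,\gamma_j(g_n)$, where $\psi_j$ is a bounded synchronizing automatic sequence and $\gamma_j$ a character evaluated on the $G$‑valued digit product $g_n$. It then remains to estimate $\sum_{n\le N}\Lambda(n)\,\psi_j(n)\,\gamma_j(g_n)$ for each $j$.

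For this I would apply a combinatorial identity of Vaughan or Heath–Brown type to split the sum into \emph{type I} sums $\sum_{d\le D}\bigl(\sum_{m}\psi_j(dm)\gamma_j(g_{dm})\bigr)$ and \emph{type II} (bilinear) sums $\sum_{m\sim M}\sum_{\ell\sim L}\alpha_m\beta_\ell\,\psi_j(m\ell)\gamma_j(g_{m\ell})$. The type I sums are handled by the equidistribution input already available: multiplication by a fixed $d$ only reshuffles digits and produces a bounded carry, so $m\mapsto\psi_j(dm)\gamma_j(g_{dm})$ is again of the controlled automatic shape and the carry estimates behind \cref{thm:mobius} — now applied with the constant sequence in place of $\mu$ — evaluate it up to a power saving; summing over $d$ produces the main term $c_b N$ and hence the density $c_b$ along primes. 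The type II sums are attacked by Cauchy–Schwarz followed by the van der Corput $q$‑shift trick, which turns the summand into a difference $\gamma_j(g_{m\ell})\overline{\gamma_j(g_{m\ell'})}$: when the invertible part is non‑degenerate this is a genuinely oscillating Weyl‑type sum that the carry lemmas of the paper bound by $o(ML)$, while when it degenerates $u_n$ is, up to a density‑zero set, governed by $A_{\mathrm s}$ alone and one invokes the prime number theorem for synchronizing automata of \cite{synchronizing}; the purely $G$‑valued case is analogously covered by the prime number theorem for invertible automata of \cite{drmota2014}.

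The main obstacle is the type II estimate: one has to show that forming the product $m\ell$ and reading its base‑$k$ expansion does not let the (possibly long) synchronizing skeleton $\psi_j$ conspire with the group part $\gamma_j(g_{m\ell})$ to cancel the oscillation, which is exactly the point where the interplay of the synchronizing and invertible features of the naturally induced transducer, together with careful control of carry propagation in the multiplication $m\ell$, has to be exploited. Once the type II sums are shown to be $o(N)$, assembling the type I main term gives $\sum_{n\le N}\Lambda(n)\ind[a_n=b]=c_bN+o(N)$, and partial summation yields $\pi(N)^{-1}\#\{p\le N:a_p=b\}\to c_b$ for every $b\in\Delta$, i.e.\ the prime frequencies exist.
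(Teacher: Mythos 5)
Your high-level architecture --- reduce to a naturally induced transducer, expand in irreducible representations of $G$, verify carry and Fourier properties, run a Vaughan-type I/II decomposition, pass from $\Lambda$-weights to primes by partial summation --- is essentially the route the paper takes, and the pieces you list are the right ones. The organization differs a bit (the paper isolates the synchronizing part first by conditioning on $n \bmod k^{\lambda}$ and then hands the $T$-part to the generalized Mauduit--Rivat machinery of \cref{thm:prime}, rather than decomposing $\Lambda(n)\ind[a_n=b]$ directly), and your plan to reuse \cref{le:dynamical_system_uniform} here is a mis-cite: that lemma is designed for the M\"obius side and plays no role in the prime number theorem.

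There is however one genuine gap. You claim that summing the type~I contributions ``produces the main term $c_b N$ and hence the density $c_b$ along primes,'' where $c_b$ is implicitly the natural density of $\{n: a_n = b\}$. That is false in general: the density along primes need not equal the natural density, precisely because the representations $D_{\ell}$ (with $D_{\ell}(T(q_0,(n)_k)) = \e(\ell n/d')$) survive the prime sum and introduce an arithmetic-progression bias. In the paper this bias is computed explicitly: the limiting distribution of $T(q_0,(p)_k)$ along $\mathcal{P}(a,k^{\lambda})$ (\cref{le:dist_in_G}) involves the Ramanujan sums $S(\ell,0;d')/\varphi(d')$, and the resulting prime frequency is $\sum_{q} f_q f_{q,b}$, which is generally different from $c_b$. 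Your type~I argument, as written, would only recover equidistribution and would silently give the wrong main term whenever $d'>1$. A second, smaller issue is your fallback to the synchronizing PNT of \cite{synchronizing} or the invertible PNT of \cite{drmota2014} in ``degenerate'' cases: the naturally induced transducer does not split cleanly into a synchronizing factor and an invertible factor, so one cannot reduce to either existing result --- this coupling is exactly what \cref{th:full_G}, \cref{th:dk_0} and the Fourier estimate \cref{thm:Fourier} for representations other than $D_0,\dots,D_{d'-1}$ are designed to handle, and your proposal does not supply a substitute for them.
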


\begin{remark}
  This theorem relies on an asymptotic formula for $\sum_{n\leq N} \Lambda(n) a_n$.
  This sum is modified and split into two types of sums.
  \begin{itemize}
   \item The first type corresponds to counting primes in arithmetic progressions and gives the main term.
   \item The second type gives only an error term and can be estimated by \cref{thm:prime}.
  \end{itemize}
\end{remark}

\begin{remark}
  The proof of \cref{th:prime} allows to determine these frequencies.\\
  All block-additive, i.e.,~digital, functions\footnote{For the definition of digital functions see \cite{AllouchShallit}.} are covered by \cref{th:prime}. 
  For the residue of any block-additive function $f \bmod m$ satisfying $(k-1,m) = 1$ and
	$(\gcd(f(n)_{n\in\N}),m) = 1$ one finds that all letters appear with the same frequencies along the primes.
\end{remark}

\begin{remark}
  The requirements in \cref{th:prime} for the DFAO $A$ are sufficient to ensure frequencies for the automatic sequence itself.
  On the one hand, one might relax these conditions without changing the theorem itself.
  On the other hand, we would like to mention that frequencies of the automatic sequence do not ensure frequencies for the prime subsequence.
\end{remark}
\begin{example}
We consider the following automaton and the corresponding automatic sequence $(a_n)_{n\in\N}$, which is properly defined in Definition~\ref{def:automatic_sequence}.
We feed here the automaton ``from left to right'' which means we start with the most significant digit.

  \begin{tikzpicture}[->,>=stealth',shorten >=1pt,auto,node distance=2.8cm, semithick, bend angle = 15, cross line/.style={preaction={draw=white,-,line width=4pt}}]
    %\tikzstyle{every state}=[fill=white,draw=none,text=black]
    
    \node[state, initial](A)                    {$a$};
    \node[state]         (B) [right of=A] 			{$b$};
    \node[state]         (C) [right of=B] 			{$c$};
    
    \path [every node/.style={font=\footnotesize}, pos = 0.66]
    (A) edge 		  node 		 {1,2}      (B)
	edge [loop above] node [pos=0.5] {0} 	(A)
    (B) edge [bend left]  node 		 {1}	(C)
	edge [loop above]  node 	[pos=0.5]	 {0,2}	(B)
    (C) edge [bend left] node 		 {1} 	(B)
	edge [loop above] node 		[pos=0.5] 	{0,2}	(C);
	\end{tikzpicture}
	
  Note that $a_n = b$ holds in exactly two cases:
  \begin{itemize}
   \item the sum of digits of $n$ in base $3$ is even and the first digit of $n$ in base $3$ is $2$
   \item the sum of digits of $n$ in base $3$ is odd and the first digit of $n$ in base $3$ is $1$.
  \end{itemize}
  We are able to reformulate this characterization as the sum of digits of $n$ in base $3$ is even if and only if $n$ is even:
  we have $a_n = b$ if and only if $n$ is even and the first digit of $n$ in base $3$ is $2$ or $n$ is odd and the first digit of $n$ in base $3$ is $1$.
  
  The corresponding automatic sequence is 
  %(we denote by $|$ a change of the most significant digit in base $3$, and write the elements corresponding to odd integers bold)
  \begin{align*}
%     a|\mathbf{b}|b|\mathbf{b}c\mathbf{b}|b\mathbf{c}b|\mathbf{b}c\mathbf{b}c\mathbf{b}c\mathbf{b}c\mathbf{b}|b\mathbf{c}b\mathbf{c}b\mathbf{c}b\mathbf{c}b|
%     \mathbf{b}c\mathbf{b}c\mathbf{b}c\mathbf{b}c\mathbf{b}c\mathbf{b}c\mathbf{b}c\mathbf{b}c\mathbf{b}c\mathbf{b}c\mathbf{b}c\mathbf{b}c\mathbf{b}c\mathbf{b}|...
      abbbcbbcbbcbcbcbcbbcbcbcbcbbcbcbcbcbcbcbcbcbcbcbcbcbcb
  \end{align*}
  One finds easily that the automatic sequence is equally distributed on $\{b,c\}$, as the generating automaton has only one final component which is primitive and symmetric.
  But, if we consider only odd integers we are detecting the first digit of $n$ in base $3$:
  %and one finds easily that no densities exist, 
  the subsequence of odd integers of the automatic sequence is constant on all intervals of the form $[3^m, 2\cdot 3^m), [2 \cdot 3^m,3^{m+1})$.
  This shows that the densities of $b$ and $c$ do not exist for the subsequence of odd integers.
  This consideration together with the prime number theorem also shows that the densities of $b$ and $c$ do not exist along the primes.
\end{example}

In \cref{sec:induced_trans}, we properly define naturally induced transducers.
Furthermore, we describe a way to construct them and study some of their properties.
In \cref{sec:reduction}, we show how to use the concept of (naturally) induced transducers to reduce
\cref{th:moebius} and \cref{th:prime} to corresponding results on naturally induced transducers.
To actually show these results, we use some modified results of \cite{mauduit_rivat_rs}.
This is done in \cref{sec:RS1}.
Thereafter, we show that the output of a naturally induced transducer fulfills a carry lemma and has uniformly small Fourier-terms.
All the more technical proofs are postponed to the last \cref{sec:technical}.

\subsection{Definitions}\label{subsec:def}
In this paper we let $\N$ denote the set of positive integers, by $\U$ the set of complex numbers of modulus $1$ and we use
the abbreviation $\e(x) = $ exp$(2\pi i x)$ for any real number $x$.\\
For two functions, $f$ and $g$, that take only strictly positive real values such that $f/g$ is bounded, we write $f = O(g)$ or
$f\ll g$.\\
We let $\floor{x}$ denote the floor function and $\{x\}$ denote the fractional part of $x$.
Furthermore, we let $\chi_{\alpha}(x)$ denote the indicator function for $\{x\}$ in $[0,\alpha)$.\\
Moreover we let $\tau(n)$ denote the number of divisors of $n$, $\omega(n)$ denote the number of distinct prime factors of $n$, 
$\varphi(n)$ denote the number of integers smaller than $n$ that are co-prime to $n$ and $\mathcal{P}(a, m)$ denote the set of prime numbers $p \equiv a \bmod m$.

Next we give all definitions related to automata which can also be found in \cite{AllouchShallit}.
\begin{definition}
A deterministic finite automaton, or DFA, is a quadruple $A = (Q',\Sigma, \delta', q_0')$, where $Q'$ is a finite set of states, $\Sigma$ is the finite input alphabet,
$\delta': Q' \times \Sigma \rightarrow Q'$ is the transition function and $q_0' \in Q'$ is the initial state.
\end{definition}
We extend $\delta$ to a function $\delta: Q\times\Sigma^{*}\to Q$, in a natural way, by
\begin{align*}
  \delta(q,ab) = \delta(\delta(q,a),b)
\end{align*}
for all $a,b\in \Sigma^{*}$.
By definition $\delta(q,\bfw)$ consists of $\abs{\bfw}$ ``steps'' for every $\bfw\in \Sigma^{*}$.

We usually work with $\bfw \in \Sigma^{*} = \{0,\ldots,k-1\}^{*}$, where we let $\Sigma^{*}$ denote the set of all (finite) words over the alphabet $\Sigma$. 
We can view $\bfw$ as the digital representation of a natural number $n$ in base $k$.
We define for $\bfw = (w_0,\ldots,w_{r})$ the corresponding natural number $[\bfw]_k:= \sum_{i=0}^{r} k^{r-i} w_i$ (i.e.,~we assume that the first digit is the most significant).
We denote for a natural number $n$ the corresponding digit representation in base $k$ (without leading zeros) by $(n)_k$.
Furthermore, we let $(n)_k^t$ denote the unique word $\bfw$ of length $t$ such that  $[\bfw]_k \equiv n \bmod k^{t}$.
\begin{example}
  We find $(37)_2 = (1,0,0,1,0,1), (37)_2^4 = (0,1,0,1)$ and $[(1,0,1,1,0)]_2 = 22$.
\end{example}

As already mentioned, we always read words from left to right i.e.,~for digital representations of numbers we start with the most significant digit.
\begin{definition}
A DFAO $A = (Q',\Sigma,\delta',q'_0, \Delta, \tau)$ is a DFA with an additional output function $\tau: Q' \to \Delta$. 
\end{definition}

\begin{definition}\label{def:automatic_sequence}
  We say that a sequence $\mathbf{a} = (a_n)_{n\in \N}$ is a $k$-automatic sequence, if and only if there exists a DFAO $A = (Q, \Sigma = \{0,\ldots,k-1\}, \delta, q_0,\Delta, \tau)$ 
  such that $a_n = \tau(\delta(q_0,(n)_k))$.
\end{definition}

In this paper we restrict ourself to complex valued output functions; i.e.,~$\tau: Q'\to \C$ and we omit $\Delta = \C$ from now on.
Note that we can consider in \cref{th:prime} without loss of generality only complex valued automatic sequences.

We call an automaton (with output) $A = (Q',\Sigma, \delta', q_0')$ strongly connected if and only if for all $q'_1,q'_2 \in Q$ there exists $\bfw \in \Sigma^{*}$ with
$\delta'(q'_1,\bfw) = q'_2$.
We denote by a final component a strongly connected component that is closed under $\delta'$.

Furthermore, we let $p(A)$ denote the period of an automaton $A$, i.e., $p(A) = \gcd\{n: \exists q' \in Q', \bfw \in \Sigma^{n} \text{ such that } \delta'(q',\bfw) = q'\}$
and call an automaton with period $1$ primitive.

We define for $M' \subseteq Q'$ and $\bfw \in \Sigma^{*}$
\begin{align*}
  \delta'(M',\bfw) := \{\delta'(q',\bfw)|q' \in M'\}
\end{align*}
and for a tuple $(q_1',\ldots,q_r') \in (Q')^{r}$
\begin{align*}
  \delta'((q_1',\ldots,q_r'), \bfw) = (\delta'(q_1',\bfw),\ldots,\delta'(q_r',\bfw)).
\end{align*}
We let $\pi_{\ell}$ denote the projection on the $\ell$-th coordinate (i.e.,~$\pi_{\ell}(q_1',\ldots,q_r') = q_{\ell}'$).

%from Automatic Sequences
\begin{definition}
A finite-state transducer is a $6$-tupel $\mathcal{T} = (Q, \Sigma, \delta, q_0, \Delta, \lambda)$, where $Q$ is a finite set of states, $\Sigma$ is the input alphabet, 
$\delta$ is the transition function,
$q_0$ is the initial state, $\Delta$ is the output alphabet and $\lambda:Q\times \Sigma \rightarrow \Delta^{*}$ is the output function. 
\end{definition}
We will restrict ourself to $\lambda:Q \times \Sigma \rightarrow \Delta$.
Throughout this work we assume that $(\Delta,\circ)$ is a group generated by $im(\lambda)$ and the product in the definition of $T$ is a product according to $\circ$.\\
A transducer can be viewed as a mean to define functions: on input $\bfw = w_1w_2\ldots w_r \in \Sigma^{r}$ the transducer enters states 
$q_0 = \delta(q_0, \varepsilon), \delta(q_0,w_1),\ldots,\delta(q_0,w_1w_2\ldots w_r)$ and produces the outputs
\begin{align*}
  \lambda(q_0,w_1),\lambda(\delta(q_0,w_1),w_2),\ldots,\lambda(\delta(q_0,w_1w_2\ldots w_{r-1}),w_r).
\end{align*}
The function $T(\bfw)$ is then defined as
\begin{align*}
  T(\bfw) := \prod_{j = 0}^{r-1} \lambda(\delta(q_0,w_1w_2\ldots w_{j}),w_{j+1}).
\end{align*}
We also define the slightly more general form,
\begin{align*}
  T(q,\bfw) := \prod_{j = 0}^{r-1} \lambda(\delta(q,w_1w_2\ldots w_{j}),w_{j+1}).
\end{align*}
We define for a set $M \subseteq \Delta$ the inverse set and the multiplication of two sets $M_1,M_2 \subseteq \Delta$ as usual,
\begin{align*}
  M^{-1} &:= \{g^{-1}| g \in M\}\\
  M_1 \cdot M_2 &:= \{g_1 \circ g_2| g_1 \in M_1, g_2 \in M_2\}.
\end{align*}
%Although transducers usually write from left to right, it suits our case better to write from right to left, sice we mainly work with permutations.
We say a transducer (or analogously a DFA) is \textbf{synchronized} if and only if 
\begin{align*}
  \exists q \in Q, \bfw_q \in \Sigma^{*}\quad \forall q_1 \in Q: \delta(q_1,\bfw_q) = q.
\end{align*}
We call $\bfw_q$ a \textbf{synchronizing word}.\\
We define for $\sigma \in S_n$ and a $n$-tuple $\mathbf{x} = (x_1,\ldots,x_n)$
\begin{align*}
  \sigma \cdot \mathbf{x} := (x_{\sigma^{-1}(1)},\ldots,x_{\sigma^{-1}(n)}),
\end{align*}
which is the natural way to define the multiplication of a permutation from the left as this ensures associativity: 
$(\sigma_1 \circ \sigma_2) \cdot \mathbf{x} = \sigma_1 \cdot(\sigma_2 \cdot \mathbf{x})$ holds for all $\sigma_1,\sigma_2 \in S_n$.

Let $A$ be a $n\times n$ matrix with complex entries.
We let $A^{H}$ denote the conjugate transpose of $A$. 
Furthermore, the Frobenius norm of a matrix is given by
\begin{align*}
  \norm{A}_{F}:= \sqrt{\sum_{i=1}^{n}\sum_{j=1}^{n} \abs{a_{ij}}^2}.
\end{align*}

\section{Induced Transducer}\label{sec:induced_trans}
The goal of this section is to define for every strongly connected Automaton $A = (Q',\Sigma,\delta',q'_0)$ a "naturally induced transducer" 
$\mathcal{T}_A = (Q,\Sigma,\delta,q_0,\Delta,\lambda)$ such that we can reconstruct $\delta'(q'_0,(n)_k)$ 
by knowing $T(q_0,(n)_k)$ and $\delta(q_0,(n)_k)$ for all $n$.

The main advantage of this step is that it reveals the actual structure of $A$. On a more technical side, we can treat
$\delta$ and $T$ almost independently which simplifies the analysis of the automatic sequence.

\subsection{Automaton to Transducer}\label{sec:aut_to_trans}
In this subsection, we develop a special way to construct for an automaton $A = (Q',\Sigma, \delta', q_0')$ a "naturally induced" transducer $\mathcal{T}_{A}=(Q,\Sigma,\delta,q_0,\Delta,\lambda)$ which contains all the information of $A$.
The main idea is to merge some states of $A$ into a single state of $\mathcal{T}_{A}$ consistent with $\delta$ and $\delta'$.
To illustrate this concept, we start with a simple motivating example:
\begin{example}
	A classical block-additive automatic sequence is the Rudin-Shapiro sequence.
	The automaton generating this function looks as follows.
	
	\begin{tikzpicture}[->,>=stealth',shorten >=1pt,auto,node distance=2.8cm, semithick, bend angle = 15, cross line/.style={preaction={draw=white,-,line width=4pt}}]
    %\tikzstyle{every state}=[fill=white,draw=none,text=black]
    
    \node[state, initial](A)                    {$q_0'$};
    \node[state]         (B) [right of=A] 			{$q_1'$};
    \node[state]         (C) [below of=A] 			{$q_2'$};
    \node[state]         (D) [right of=C] 			{$q_3'$};
    
    \path [every node/.style={font=\footnotesize}, pos = 0.5]
    (A) edge [bend left]  node 		 {1}      (C)
	edge [loop above] node [pos=0.5] {0} 	(A)
    (B) edge [bend left]  node 		 {1}	(D)
	edge [loop above]  node 	[pos=0.5]	 {0}	(B)
    (C) edge [bend left] node 		 {0} 	(A)
	edge [bend left] node 					 {1}	(D)
    (D) edge [bend left]  node 		 {0}	(B)
	edge [bend left] node 					 {1}	(C);
	\end{tikzpicture}
	
	One finds easily
	\begin{align*}
		\delta'(\{q'_0,q'_1\},0) = \{q'_0,q'_1\} \qquad \delta'(\{q'_0,q'_1\},1) = \{q'_2,q'_3\}\\
		\delta'(\{q'_2,q'_3\},0) = \{q'_0,q'_1\} \qquad \delta'(\{q'_2,q'_3\},1) = \{q'_3,q'_2\}.
	\end{align*}
	Consequently, we want to merge the pair $(q'_0,q'_1)$ into a single state $q_0$ (i.e.,~$q_0 = (q'_0,q'_1)$) and
	$q_1 = (q'_2,q'_3)$. We find
	\begin{align*}
		\delta'(q_0,0) = q_0 \qquad &\delta'(q_0,1) = q_1\\
		\delta'(q_1,0) = q_0 \qquad &\delta'(q_1,1) = (12) \cdot q_1.
	\end{align*}
	We define ($\Delta := S_2$)
	\begin{align*}
		\delta(q_0,0) = q_0 \qquad &\delta(q_0,1) = q_1\\
		\delta(q_1,0) = q_0 \qquad &\delta(q_1,1) = q_1\\
		\lambda(q_0,0) = id \qquad &\lambda(q_0,1) = id\\
		\lambda(q_1,0) = id \qquad &\lambda(q_1,1) = (12)
	\end{align*}
	and find that $\delta'(q_i,\varepsilon) = \lambda(q_i,\varepsilon) \cdot \delta(q_i,\varepsilon)$ holds.
	This is already the first example of a naturally induced transducer and an illustration of it is given below.
	
	 \tikzset{elliptic state/.style={draw,ellipse}}
	\begin{tikzpicture}[->,>=stealth',shorten >=1pt,auto,node distance=2.0cm, semithick, bend angle = 15, cross line/.style={preaction={draw=white,-,line width=4pt}}]
    %\tikzstyle{every state}=[fill=white,draw=none,text=black]
    
    \node[elliptic state, initial]	(A)                    {$a,b$};
    \node[elliptic state]	         (B) [below of=A] 	{$c,d$};
    
    \path [every node/.style={font=\footnotesize}, pos = 0.5]
    (A) edge [loop above] node 		 	{0 $\mid$ id}   (A)  	
		(A)	edge [bend left]  node 		 	{1 $\mid$ id} 	(B)
    (B) edge [bend left]  node 			{0 $\mid$ id}		(A)
		(B) edge [loop below] node			{1 $\mid$ (12)} 	(B);
  \end{tikzpicture}
\end{example}

To formalize this idea we first define induced transducers.
\begin{definition}
  We call a transducer $\mathcal{T}_{A}$ \textbf{induced} by a DFA $A = (Q',\Sigma, \delta', q_0')$ if and only if $\mathcal{T}_{A} = (Q, \Sigma, \delta, q_0, \Delta, \lambda)$ where
  \begin{enumerate}[1)]
   \item $\exists n_0 \in \N: Q\subseteq (Q')^{n_0}$ 
   \item $\pi_1(q_0)=q_0'$
   \item $\Delta < S_{n_0}$ is the subgroup generated by $Im(\lambda)$
   \item $\forall q \in Q, a \in \Sigma: \delta'(q,a) = \lambda(q,a) \cdot \delta(q,a)$.
  \end{enumerate}
  We call $\mathcal{T}_{A}$ a \textbf{naturally induced transducer} if furthermore
  \begin{enumerate}[1)]
   \setcounter{enumi}{4}
   \item $\forall i \neq j \leq n_0, q \in Q: \pi_i(q) \neq \pi_j(q)$,
   \item $\forall q_1 \neq q_2 \in Q \not \exists \hspace{1mm} \sigma \in S_{n_0}: q_1 = \sigma \cdot q_2$,
   \item $\mathcal{T}_{A}$ is strongly connected
   \item $\mathcal{T}_{A}$ is synchronizing
  \end{enumerate}
  holds.
\end{definition}

\begin{remark}
  Properties 5)-7) only assure that $\mathcal{T}_{A}$ is chosen minimal to certain aspects.
\end{remark}

\begin{example}
  We call $\mathcal{T}_{A}$ the \textbf{trivial} transducer induced by $A$ when:\\
  $n_0 = 1, Q = Q', \Delta = \{id\}, \forall q \in Q, a \in \Sigma: \delta(q,a) = \delta'(q,a), \lambda(q,a) = id $.\\
  The trivial induced transducer is a naturally induced transducer if and only if $A$ is synchronizing and strongly connected.
\end{example}

\begin{proposition}\label{prop:synch_trans}
  For every strongly connected automaton $A$, there exists a naturally induced transducer $\mathcal{T}_{A} = (Q,\Sigma, \delta, q_0, \Delta,\lambda)$.
  All other naturally induced transducers can be obtained by changing the order on the elements of $Q$ and possibly changing the initial state $q_0$.
\end{proposition}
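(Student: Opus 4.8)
The plan is to read the required transducer off the \emph{subset dynamics} of $A$. Start the power‑set automaton at the full state set $Q'$, let $n_0$ be the smallest cardinality occurring among the reachable sets $\delta'(Q',\bfw)$, $\bfw\in\Sigma^*$, and let $\mathcal M$ be the family of all reachable sets of cardinality exactly $n_0$. Everything in the existence part will follow from the two monotonicity facts $\delta'(B_1,\bfw)\subseteq\delta'(B_2,\bfw)$ whenever $B_1\subseteq B_2$, and $\abs{\delta'(B,\bfw)}\le\abs{B}$. From these: (i) $\mathcal M$ is closed under $\delta'$ and $\delta'$ is injective on every $M\in\mathcal M$, since $\delta'(M,a)$ is again reachable, hence has $\ge n_0$ elements, while it has $\le\abs{M}=n_0$; (ii) the $\delta'$‑action on $\mathcal M$ is strongly connected, because if $M_i=\delta'(Q',\bfv_i)$ then $\delta'(M_1,\bfv_2)=\delta'(Q',\bfv_1\bfv_2)\subseteq\delta'(Q',\bfv_2)=M_2$ and both sides have $n_0$ elements; (iii) the same computation shows that any word $\mathbf{u}$ with $\delta'(Q',\mathbf{u})=M^\ast\in\mathcal M$ satisfies $\delta'(M,\mathbf{u})=M^\ast$ for \emph{every} $M\in\mathcal M$, i.e.\ $\mathcal M$ is synchronized by any reaching word; and (iv) since $A$ is strongly connected, starting from one element of $\mathcal M$ one reaches, for every $q'\in Q'$, a member of $\mathcal M$ containing $q'$; in particular $\bigcup_{M\in\mathcal M}M=Q'$ and some $M_0\in\mathcal M$ contains $q_0'$.

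Next I would assemble $\mathcal T_A$. Fix a linear order on each $M\in\mathcal M$, choosing the order on $M_0$ so that $q_0'$ is first, and write $\widehat M\in(Q')^{n_0}$ for the corresponding tuple; set $Q:=\{\widehat M:M\in\mathcal M\}$ and $q_0:=\widehat{M_0}$. For $q=\widehat M\in Q$ and $a\in\Sigma$ the tuple $\delta'(q,a)$ has pairwise distinct coordinates by injectivity and the same underlying set as $\widehat{\delta'(M,a)}\in Q$, so there is a unique $\sigma\in S_{n_0}$ with $\delta'(q,a)=\sigma\cdot\widehat{\delta'(M,a)}$; put $\lambda(q,a):=\sigma$, $\delta(q,a):=\widehat{\delta'(M,a)}$ and $\Delta:=\langle\operatorname{Im}(\lambda)\rangle\le S_{n_0}$. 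Properties 1)--4) hold by construction, 5) because $\abs M=n_0$, and 6) because distinct elements of $\mathcal M$ are distinct as sets. An induction on $\abs{\bfw}$ gives $\delta(q,\bfw)=\widehat{\delta'(M,\bfw)}$, so that strong connectivity of $\mathcal T_A$ (property 7) reduces to (ii) and the synchronizing property (property 8) to (iii).

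For the uniqueness statement, let $\mathcal T_A'=(Q'',\Sigma,\delta'',q_0'',\Delta'',\lambda'')$ with $Q''\subseteq(Q')^{n_0''}$ be any naturally induced transducer, and let $\mathcal S$ be the family of underlying sets of the tuples in $Q''$; by 5) and 6) this is a bijective relabelling of $Q''$, by 4) it is $\delta'$‑closed with $\delta'$ injective on each member, by 7) it carries a strongly connected $\delta'$‑action, by 8) there is a word $\bfw^\ast$ sending every $S\in\mathcal S$ to a common $S^\ast\in\mathcal S$, and by 2) the set underlying $q_0''$ contains $q_0'$, whence $\bigcup_{S\in\mathcal S}S=Q'$ as in (iv). Feeding $\bfw^\ast$ into $Q'=\bigcup\mathcal S$ gives $\delta'(Q',\bfw^\ast)=S^\ast\in\mathcal S$, so $n_0''\ge n_0$; feeding a word $\mathbf{u}$ with $\delta'(Q',\mathbf{u})=M^\ast\in\mathcal M$ into any $S\in\mathcal S$ gives $\delta'(S,\mathbf{u})\subseteq M^\ast$ with $\delta'(S,\mathbf{u})\in\mathcal S$ and $\abs{\delta'(S,\mathbf{u})}=n_0''$, forcing $n_0''=n_0$ and $\delta'(S,\mathbf{u})=M^\ast$. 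Hence $\mathcal S$ and $\mathcal M$ are $\delta'$‑closed, strongly connected families with a common element, so $\mathcal S=\mathcal M$; consequently $Q''$ is obtained from $\mathcal M$ purely by a choice of linear order on each of its members (with $q_0'$ first on the set underlying $q_0''$), and then $\delta''$, $\lambda''$, $\Delta''$ and $q_0''$ are forced --- which is exactly the asserted freedom of re‑ordering the coordinates within the states and possibly changing the initial state $q_0$.

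I expect the only real obstacle to be items (i)--(iii) of the first paragraph: recognising that the family of \emph{minimal}-cardinality reachable subsets is automatically closed under $\delta'$, receives an injective and strongly connected $\delta'$‑action, and is synchronized by every reaching word. Once this ``minimal reachable subsets'' object is isolated, both the construction of $\mathcal T_A$ and the entire uniqueness argument are bookkeeping assembled from the same two cardinality inequalities together with strong connectivity of $A$.
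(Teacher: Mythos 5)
Your proof is correct and follows essentially the same route as the paper's: both identify the family of minimal-cardinality subsets reachable under the powerset dynamics, build $Q$ by fixing orderings on these subsets (the paper uses a single global order $\leq_{Q'}$, you use per-set orders, which is equivalent modulo the permutation freedom), read $\delta$ and $\lambda$ off from $\delta'$, and prove uniqueness by showing any naturally induced transducer must have this same family as its set of underlying tuples. Your write-up makes the intermediate facts (i)--(iv) more explicit than the paper does, but there is no substantive difference in method.
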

\begin{proof}
%TODO!!!!!!!!!!!!!!!!!!!!!!!!!!!!!!!!!!!!!!!!!!!!!!!!!!!!!!!!!!!!!!!!!!!!!!!1
%ordne Q' an und nehme q_M als das geordnete tupel.
%lambda kann man dann über die ordnung definieren
%-----------------------------------------------------------------------------
%...(sets to tuples is not done in full detail)
%We see that for a synchronizing automaton the trivial induced transducer is already synchronizing.
%To treat the general case 
The proof is not constructive and we start by defining $n_0(A):= \min\{|\delta'(Q',\bfw)|: \bfw \in \Sigma^{*}\}$ and
\begin{align*}
  S(A) := \{M\subseteq Q' : |M| = n_0(A), \exists \bfw \in \Sigma^{*} \text{ with } \delta'(Q',\bfw) = M\},
\end{align*}
which are exactly the sets of size $n_0(A)$ that are reachable from $Q'$ using $\delta'$.
One easily observes that an automaton is synchronizing if and only if $n_0(A) = 1$ 
and invertible\footnote{For the definition of invertible automata see~\cite{invertible}.} if and only if $n_0(A) = |Q'|$.

Now we are prepared to construct a naturally induced transducer for $A$, similar to the previous example.\\
We choose $n_0 = n_0(A)$.
Furthermore, we can choose any ordering $\leq_{Q'}$ of $Q'$ such that $q_0'$ is the minimal element.
For every $M \in S(A)$ we define a corresponding $n_0$-tuple $q_M$ which consists of the elements of $M$ ordered by $\leq_{Q'}$.
We define $Q:= \{q_M: M\in S(A)\}$.

Take an arbitrary $q_M \in Q$.
By minimality of $\abs{M}$ for $a \in \Sigma$ we have $\delta'(M,a) \in S(A)$. Thus we define $\delta(q_M,a):= q_{\delta'(M,a)}$.

We note that $Q'$ is covered by $S(A)$: 
Take $q'_1 \in Q'$ and $M \in S(A)$ with some $q'_2 \in M$. As $A$ is strongly connected, there exists $\bfw$ such that $\delta'(q'_2,\bfw) = q'_1$.
Thus, we find $q'_1 \in \delta'(M,\bfw) \in S(A)$.

We define $q_0$ as an arbitrary element $q_M \in Q$ 
such that $q_0'\in M$. (Note that $\pi_1(q_M) = q_0'$ since $q'_0$ is the minimal element).

% It also follows that there exists some $\sigma \in S_{n_0}$ such that $\delta'(q,a) = \sigma \cdot \delta(q,a)$ and we define
% $\lambda(q,a)$ to be this $\sigma$ which implies 4).
We define $\lambda': (Q')^{n_0} \rightarrow S_{n_0}$ such that for all $\tilde{q} = (q'_1,\ldots,q'_{n_0}) \in (Q')^{n_0}$, $\lambda'(\tilde{q})$ is the permutation 
such that $\lambda'(\tilde{q})\cdot \tilde{q}$ is ordered with respect to $\leq_{Q'}$
and $\lambda: Q \times \Sigma \rightarrow S_{n_0}, \lambda(q_M,a) = \lambda'(\delta'(q_M,a))^{-1}$ for all $M \in S(A)$ and $a \in \Sigma$.

Let $\Delta$ be the subgroup of $S_{n_0}$ generated by $Im(\lambda)$.
This defines an induced transducer $\mathcal{T}_{A} = (Q,\Sigma, \delta, q_0, \Delta, \lambda)$.

It remains to show that this induced transducer is a naturally induced transducer.
5) and 6) follow directly from the construction of the transducer.
Every $q_M \in Q$ corresponds to $M \in S(A)$ for which there exists $\bfw_M$ such that $\delta'(Q',\bfw_M) = M$.
Thus it follows directly that for all $q_{M'} \in Q$ it holds $\delta(q_{M'},\bfw_M) = q_M$ which shows that the constructed transducer is synchronizing and also strongly connected. \\
To prove the second part of this proposition we assume that $\overline{\mathcal{T}_{A}} = (\overline{Q},\Sigma,\overline{\delta},\overline{q_0},\overline{\Delta},\overline{\lambda})$ is an arbitrary naturally induced transducer.
We start by showing $n_0(A) = \overline{n_0}$. 
\begin{itemize}
 \item Assume $\overline{n_0} > n_0(A)$:
    By the definition of $n_0(A)$ there exists  some $\bfw\in \Sigma^{*}$ such that $|\delta'(Q',\bfw)| = n_0(A)$.
    Consequently $\overline{\delta}(\overline{q},\bfw)\in (Q')^{\overline{n_0}}$ violates 5) for every $\overline{q}\in \overline{Q}$.
 \item Assume $\overline{n_0}<n_0(A)$: As $\overline{\mathcal{T}_{A}}$ is synchronizing there exists $\bfw \in \Sigma^{*}$ and $\overline{q_1}$ 
      such that for all $\overline{q} \in \overline{Q}$ we have
    $\overline{\delta}(\overline{q},\bfw) = \overline{q_1}$. %$\overline{q_1}$ corresponds to the set of its components $M \subseteq Q'$.
    Since $\overline{Q}$ covers $Q'$ this implies that $|\delta'(Q',\bfw)|\leq \overline{n_0} < n_0(A)$ which contradicts the definition of $n_0(A)$.
\end{itemize}
  Assume now that there exists some $q \in Q$ such that no element of $\overline{Q}$ is a permutation of $q$.
  The elements of $q$ form a set $M_q\in S(A)$ and thus there exists $\bfw_{q}$ such that $\delta'(Q',\bfw_q) = M_q$.
  $\overline{\delta}(\overline{q},\bfw_q)$ contains exactly the elements of $M_q$ which means that it is a permutation of $q$ (by property 5)).\\
  The elements of $\overline{Q}$ that are permutations of elements of $Q$ form a strongly connected component. 
  Thus, the proposition follows by property 7).
\end{proof}

% \begin{definition}
%   We define for a $r$-tuple $q$ the $Set$ operator
%   \begin{align*}
%     Set(q) := \{\pi_k(q)|k=1,\ldots,r\}.
%   \end{align*}
% \end{definition}

% \begin{lemma}\label{le:transducer_Q}
%   Let $A$ be a strongly connected DFA. $\mathcal{T}_{A}$ is uniquely defined by $Q$ and $q_0$ which fulfill 1) and 2).
% \end{lemma}
% \begin{proof}
%   It is obviously sufficient to show the uniqueness of $\delta$ and $\lambda$. We know by \cref{le:delta_set} that $Set(\delta'(q,a))\in S(A)$.
%   1) assures that $\delta(q,a)$ exists and is unique. Thus we find by 4) that $Set(\delta'(q,a)) = Set(\delta(q,a))$ which implies that $\lambda(q,a)$ is uniquely defined.
% \end{proof}

\begin{remark}
	Let $\mathcal{T} = (Q,\Sigma,\delta,q_0,\Delta,\lambda)$ be a transducer fulfilling 1)-5) and 7)-8).
	We define an equivalence relation $\sim$ on $Q$ by: $q_1 \sim q_2$ if and only if $\exists \sigma$: $q_1 = \sigma \cdot q_2$.
	Then $\mathcal{T}/\sim$ is a naturally induced transducer. 
	%Property 4) ensures that this is indeed a transducer.
\end{remark}

\begin{remark}
  One can easily show the minimal length of a synchronizing word for a naturally induced transducer $\mathcal{T}_{A}$ is bounded by $O(\abs{Q'}^{3})$.
  This already allows to determine $n_0$, although in a rather inefficient way, by considering all paths of this length.
  A more efficient approach would be to determine all sets that can be obtained by $\delta'(Q',\bfw)$. This can be done for example by a depth-first search.
\end{remark}

\begin{example}
  We consider the following more complex automaton with $\Sigma = \{0,1\}$ and find a naturally induced transducer.
  
  \begin{tikzpicture}[->,>=stealth',shorten >=1pt,auto,node distance=2.8cm, semithick, bend angle = 15, cross line/.style={preaction={draw=white,-,line width=4pt}}]
    %\tikzstyle{every state}=[fill=white,draw=none,text=black]
    
    \node[state, initial](A)                    {$q_0'$};
    \node[state]         (B) [above right of=A] {$q_1'$};
    \node[state]         (C) [right of=B] 	{$q_2'$};
    \node[state]         (D) [below right of=A] {$q_3'$};
    \node[state]         (E) [right of=D]       {$q_4'$};
    
    \path [every node/.style={font=\footnotesize}, pos = 0.66]
    (A) edge [bend left]  node 		 {0}      (B)
	edge [loop above] node [pos=0.5] {1} 	(A)
    (B) edge [bend left]  node 		 {0}	(A)
	edge              node 	[right]	 {1}	(E)
    (C) edge [loop above] node [pos=0.5] {0} 	(C)
	edge [cross line] node 	[left]	 {1}	(D)
    (D) edge              node 		 {0}	(A)
	edge              node 	[right]	 {1}	(B)
    (E) edge              node [right]	 {0,1}	(C);
    
  \end{tikzpicture}
  
  One finds that $n_0(A) = 3$ and $S(A) = \{M_1, M_2\}$ with
  \begin{align*}
    M_1 &:= \delta'(Q',0) = \{q_0',q_1',q_2'\}\\
    M_2 &:= \delta'(Q',01) = \{q_0',q_3',q_4'\}.
  \end{align*}
  We construct now a naturally induced transducer $\mathcal{T}_{A}$ and start by defining $Q = \{q_0,q_1\}$ where
  \begin{align*}
    q_0 &= (q_0',q_1',q_2')\\
    q_1 &= (q_0',q_3',q_4').
  \end{align*}
  
  We find
  \begin{align*}
    \delta'(q_0, 0) = (q_1',q_0',q_2'), \quad
    \delta'(q_0, 1) = (q_0',q_4',q_3')\\
    \delta'(q_1, 0) = (q_1',q_0',q_2'), \quad
    \delta'(q_1, 1) = (q_0',q_1',q_2')
  \end{align*}
  and therefore
  \begin{align*}
    \delta(q_0,0) &= q_0, \quad \delta(q_0,1) = q_1\\
    \delta(q_1,0) &= q_0, \quad \delta(q_1,1) = q_0\\
    \lambda(q_0,0) &= (12), \quad
    \lambda(q_0,1) = (23)\\
    \lambda(q_1,0) &= (12), \quad
    \lambda(q_1,1) = id.
  \end{align*}
  Thus we constructed a transducer $\mathcal{T}_{A}$ (as described in the proof of \cref{prop:synch_trans}):\\
  
  \tikzset{elliptic state/.style={draw,ellipse}}
  \begin{tikzpicture}[->,>=stealth',shorten >=1pt,auto,node distance=2.8cm, semithick, bend angle = 15, cross line/.style={preaction={draw=white,-,line width=4pt}}]
    %\tikzstyle{every state}=[fill=white,draw=none,text=black]
    
    \node[elliptic state, initial]	(A)                    {$q_0', q_1', q_2'$};
    \node[elliptic state]	         (B) [below of=A] 	{$q_0', q_3', q_4'$};
    
    \path [every node/.style={font=\footnotesize}, pos = 0.5]
    (A) edge [loop above] node 		 	{0|(12)}      	(A)
	edge [bend left]  node 		 	{1|(23)} 	(B)
    (B) edge [bend left]  node [align=center]	{0|(12)\\1|id}	(A);
  \end{tikzpicture}
  
  One easily observes that this is a naturally induced transducer.
  \end{example}
  \begin{remark}
  It is sometimes useful to consider the automaton where each state occurs as often as it occurs in the subsets of $S(A)$ and "group" them according to $S(A)$.
  \end{remark}
  \begin{example}
  For the example above this gives the following automaton.\\
  \begin{tikzpicture}[->,>=stealth',shorten >=1pt,auto,node distance=2.8cm, semithick, bend angle = 15, cross line/.style={preaction={draw=white,-,line width=3pt}}]
    %\tikzstyle{every state}=[fill=white,draw=none,text=black]
    
    \node[state, initial](A)                    {$q_0'$};
    \node[state]         (B) [right of=A]	{$q_1'$};
    \node[state]         (C) [right of=B] 	{$q_2'$};
    \node[state]	 (A')[below of=A]	{$q_0'$};
    \node[state]         (D) [right of=A']	{$q_3'$};
    \node[state]         (E) [right of=D]       {$q_4'$};
    
    \path [every node/.style={font=\footnotesize}, pos = 0.66]
    (A) edge [bend left]  node 	[pos=0.5]	 {0}    (B)
	edge [bend left]  node 	[pos=0.6]	 {1} 	(A')
    (B) edge [bend left]  node 	[pos=0.5]	 {0}	(A)
	edge              node 	[right,xshift = 1mm]	 {1}	(E)
    (C) edge [loop above] node [pos=0.5] {0} 	(C)
	edge [cross line] node 	[xshift=-1mm,left]	 {1}	(D)
    (D) edge              node 	[pos = 0.7]	 {0}	(A)
	edge              node 	[pos = 0.5,right, xshift=1mm]	 {1}	(B)
    (E) edge              node [pos = 0.5,right]	 {0,1}	(C)
    (A')edge [cross line] node 	[pos = 0.6, right, xshift=2mm]	 {0}	(B)
	edge [bend left]  node 	[pos=0.5]	 {1}	(A);
    
  \end{tikzpicture}
  
\end{example}

% We have seen, that the induced transducer is synchronizing. This is no coincidence, as the following proposition shows.
% \begin{proposition}\label{pr:synchronized}
%   Let $A$ be a strongly connected DFA. The induced transducer $\mathcal{T}_{A}$ is synchronized.
% \end{proposition}
% \begin{proof}
%   As $Set(q_0) \in S(A) $ there exists $\bfw_0 \in \Sigma^{*}$ such that $\delta'(Q',\bfw_0) = Set(q_0)$. 
%   It follows that for every $q \in Q$, $Set(\delta'(q,\bfw_0)) \subseteq Set(q_0)$ holds. 
%   Therefore $\delta(q,\bfw_0) = q_0$ holds for all $q \in Q$ and $\bfw_0$ is a synchronizing word.
% \end{proof}

We end this subsection with a technical lemma used later.
\begin{lemma}\label{le:transducer_regular}
  Let $\mathcal{T}_{A}$ be a naturally induced transducer.
  Then there exists $N_0 \in \N$ such that for all $q_1,q_2 \in Q$ and $n \geq N_0$ there exists $\bfw \in \Sigma^{n}$ such that $\delta(q_1,\bfw) = q_2$.
\end{lemma}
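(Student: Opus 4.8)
The plan is to show that the transition structure of $\mathcal{T}_A$ is "primitive" in the graph-theoretic sense, and then invoke the standard fact that a primitive directed graph has all sufficiently long walks between any two vertices. Concretely, consider the directed multigraph $G$ on vertex set $Q$ where we put an edge $q_1 \to q_2$ labelled $a$ whenever $\delta(q_1,a) = q_2$. Since every state has an outgoing edge for each $a \in \Sigma$, every vertex has out-degree $|\Sigma| = k \geq 1$. First I would observe that $G$ is strongly connected: this is property 7) in the definition of a naturally induced transducer. Moreover, $\mathcal{T}_A$ is synchronizing (property 8)), so there is a word $\bfw_q$ and a state $q$ with $\delta(q_1,\bfw_q) = q$ for all $q_1 \in Q$; in particular the set of lengths of closed walks in $G$ is nonempty.

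The key point is that $G$ is \emph{aperiodic}, i.e.\ the gcd of the lengths of all closed walks in $G$ equals $1$. I would argue this as follows. Since $A$ is strongly connected with $\delta'(q_0',0) = q_0'$ — wait, in \cref{le:transducer_regular} we only assume $\mathcal{T}_A$ is naturally induced, so I should not use the $0$-loop hypothesis. Instead: recall from the proof of \cref{prop:synch_trans} that each $q_M \in Q$ corresponds to $M \in S(A)$, and for each $M$ there is a word $\bfw_M$ with $\delta'(Q',\bfw_M) = M$, hence $\delta(q_{M'},\bfw_M) = q_M$ for \emph{every} $M'$. Pick any $q_M \in Q$; then $\bfw_M$ and $\bfw_M \cdot \bfw_M$ are both closed walks at $q_M$ (the first sends $q_M$ to $q_M$, the second sends $q_M \to q_{\delta'(M, \text{first half})} \to q_M$), but that only gives lengths $|\bfw_M|$ and $2|\bfw_M|$. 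To get aperiodicity I would use that we may concatenate: for the fixed target $M$, $\delta(q_{M'}, \bfw_M) = q_M$ for all $M'$, so if $c$ is \emph{any} closed walk at some state $q_{M'}$ of length $\ell$, then following $c$ and then $\bfw_M$ gives a walk from $q_{M'}$ to $q_M$ of length $\ell + |\bfw_M|$; combined with the direct walk $\bfw_M$ from $q_{M'}$ to $q_M$ of length $|\bfw_M|$, and using strong connectivity to return from $q_M$ to $q_{M'}$ via some word $\bfu$ of length $m$, we obtain closed walks at $q_M$ of lengths $m + |\bfw_M|$ and $m + \ell + |\bfw_M|$, whose difference is $\ell$. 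Hence every closed-walk length $\ell$ anywhere in $G$ divides... no, rather, $\ell$ is a difference of two closed-walk lengths at $q_M$, so the gcd $g$ of closed-walk lengths at $q_M$ divides every $\ell$. Applying this to the self-loops and short cycles present (since $\Sigma^n$ for varying $n$ gives walks of every length from each state, the cycle-length set is rich), and in particular using that $G$ is synchronizing forces a cycle on a single vertex or cycles of coprime lengths — the cleanest finish is: because $\mathcal{T}_A$ is synchronizing, after applying a synchronizing word all states collapse to one, say $q$; then from $q$, reading a synchronizing word again of the \emph{same} minimal length versus padding with leading loops at a state reachable by $0$... Actually the truly clean argument: $g \mid p(A) = $ period considerations are not needed. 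Let me restructure.

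The cleanest route: let $g = \gcd$ of all closed-walk lengths in $G$. By strong connectivity, $g$ is well-defined and the same computed at any base vertex, and $G$ partitions into $g$ "cyclic classes" $V_0, \dots, V_{g-1}$ with every edge going from $V_i$ to $V_{i+1 \bmod g}$. But synchronization says some word $\bfw_q$ maps \emph{all} of $Q$ into a single state $q$; since reading a word of length $|\bfw_q|$ from any vertex of $V_i$ lands in $V_{i + |\bfw_q| \bmod g}$, and all these images coincide with $q \in V_{|\bfw_q| + i \bmod g}$ for \emph{every} $i$, we need $V_i$ nonempty $\Rightarrow$ all of $\{i\}$ give the same class, forcing $g = 1$. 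Hence $G$ is a strongly connected aperiodic digraph, and the standard lemma (e.g.\ the analogue of primitivity for nonnegative matrices / the Frobenius coin-type argument) yields $N_0$ such that for all $n \geq N_0$ and all $q_1, q_2$, there is a walk of length exactly $n$ from $q_1$ to $q_2$; equivalently some $\bfw \in \Sigma^n$ with $\delta(q_1,\bfw) = q_2$. I would take $N_0 = 2|Q|^2$ or cite the standard bound.

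The main obstacle I anticipate is proving aperiodicity rigorously from synchronization — one must be careful that "synchronizing" is about all states collapsing under \emph{one} word, and translate that correctly into the statement that there cannot be a nontrivial cyclic partition of $Q$ respected by $\delta$. The argument above (a synchronizing word of length $L$ maps $V_i$ into $V_{i+L}$, but its image is a single vertex common to the images of all $V_i$, forcing all nonempty $V_i$ to be "the same class mod $g$", hence $g=1$) handles this, but writing it without confusion about indices is the delicate part. Everything after aperiodicity is the routine "primitive digraph has all long walks" fact, which I would state as a cited lemma rather than reprove.
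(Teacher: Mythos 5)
Your proof is correct, but it follows a genuinely different and heavier route than the paper. You establish that the transition digraph of $\mathcal{T}_A$ is aperiodic — the key step being that a synchronizing word of length $L$ sends each cyclic class $V_i$ into $V_{(i+L)\bmod g}$, yet its image is a single state, forcing $g=1$ — and then invoke the standard "primitive digraph has all sufficiently long walks" theorem to get $N_0$. That argument is sound (the walk/word correspondence works because $\delta$ is total, so out-degree is $k$ at every state, and all cyclic classes are nonempty for a strongly connected digraph of period $g$), and it has the conceptual virtue of isolating synchronization $\Rightarrow$ aperiodicity as a clean structural fact. The paper's proof, by contrast, is a one-liner that avoids primitivity entirely: take a synchronizing word $\bfw_q$ with target $q$, set $N_0 := |\bfw_q| + \max_{q'} \mathrm{dist}(q,q')$, and for $n \geq N_0$ use $\bfw = 0^{\,n - |\bfw_q| - \mathrm{dist}(q,q_2)}\,\bfw_q\,\bfw_1$, where $\bfw_1$ is a shortest path from $q$ to $q_2$. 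Reading the leading zeros from $q_1$ lands somewhere, $\bfw_q$ then resets to $q$ regardless, and $\bfw_1$ finishes at $q_2$. This exploits completeness of the DFA (you can always prepend letters) together with the fact that a synchronizing word erases the effect of whatever came before it, so no cyclic-class decomposition or Wielandt-type bound is needed. Both proofs are valid; the paper's is more constructive and self-contained, yours is more conceptual but requires citing external machinery and involved some back-and-forth before you settled on the cyclic-class argument — which you should streamline into a single clean paragraph if you keep this route.
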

\begin{proof}
  Let $\bfw_q$ be a synchronizing word. We denote by 
  $dist(x,y) := \min\{n|\exists \bfw \in \Sigma^{n}: \delta(x,\bfw) = y\}$
  and define $N_0 := |\bfw_q| + \max_{q' \in Q} dist(q, q')$.\\
  Take $q_1,q_2 \in Q$ and $n \geq N_0$. There exists $\bfw_1$ from $q$ to $q_2$ of length $dist(q,q_2)$. 
  We consider $\bfw = 0^{n-|\bfw_q| - dist(q,q_2)} \bfw_q \bfw_1$ and see that this is a path from $q_1$ to $q_2$ of length $n$ 
  ($0^x$ denotes the word consisting of $x$ consecutive zeros).
\end{proof}

\subsection{Connection between an automaton and its induced transducer}

Property 4) of an induced transducer relates one step of $A$ to one step of $\mathcal{T}_{A}$ and $\lambda / T$. In this section we show how
arbitrary many steps of $A$ relate to $\mathcal{T}_{A}$.
Therefore we start with a technical lemma that describes how the action of a permutation to a state $q \in Q$ influences the behavior of the induced transducer.
\begin{lemma}\label{le:perm-lambda}
  Let $A$ be a DFA and $\mathcal{T}_{A}$ an induced transducer. Furthermore, let $\sigma \in S_{n_0}, a \in \Sigma$ and $q = (x_1,\ldots,x_{n_0}) \in Q$. It holds
  \begin{align*}
    \delta'(\sigma \cdot q,a) = (\sigma \circ \lambda(q,a)) \cdot \delta(q,a).
  \end{align*}
\end{lemma}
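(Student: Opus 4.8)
The statement to prove is $\delta'(\sigma \cdot q, a) = (\sigma \circ \lambda(q,a)) \cdot \delta(q,a)$, where $q = (x_1,\ldots,x_{n_0}) \in Q$. The natural approach is to unwind both sides coordinatewise and use property 4) of an induced transducer, namely $\delta'(q,a) = \lambda(q,a) \cdot \delta(q,a)$, together with the fact that $\delta'$ acts on tuples componentwise while permutations act by permuting coordinates.

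First I would observe that $\delta'$ applied to a tuple is defined componentwise: for any tuple $(y_1,\ldots,y_{n_0})$ we have $\delta'((y_1,\ldots,y_{n_0}),a) = (\delta'(y_1,a),\ldots,\delta'(y_{n_0},a))$. Now $\sigma \cdot q = (x_{\sigma^{-1}(1)},\ldots,x_{\sigma^{-1}(n_0)})$ by the definition of the permutation action given in the paper, so
\begin{align*}
  \delta'(\sigma \cdot q, a) &= \left(\delta'(x_{\sigma^{-1}(1)},a),\ldots,\delta'(x_{\sigma^{-1}(n_0)},a)\right).
\end{align*}
On the other hand, writing $\delta'(q,a) = (\delta'(x_1,a),\ldots,\delta'(x_{n_0},a))$, the $i$-th coordinate of $\sigma \cdot \delta'(q,a)$ is $\delta'(x_{\sigma^{-1}(i)},a)$, so in fact $\delta'(\sigma \cdot q, a) = \sigma \cdot \delta'(q,a)$; that is, $\delta'$ commutes with the permutation action on tuples. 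This is the key structural observation and it is essentially immediate from the definitions.

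Given that, I would finish by substituting property 4): $\delta'(\sigma \cdot q, a) = \sigma \cdot \delta'(q,a) = \sigma \cdot \big(\lambda(q,a)\cdot \delta(q,a)\big)$, and then invoke the associativity of the permutation action stated in the paper, $(\sigma_1 \circ \sigma_2)\cdot \mathbf{x} = \sigma_1 \cdot(\sigma_2 \cdot \mathbf{x})$, to rewrite this as $(\sigma \circ \lambda(q,a)) \cdot \delta(q,a)$, which is the claim. There is no real obstacle here; the only point requiring a moment's care is making sure the indexing in "$\delta'$ commutes with the coordinate permutation" is set up consistently with the paper's convention $\sigma \cdot \mathbf{x} = (x_{\sigma^{-1}(1)},\ldots,x_{\sigma^{-1}(n_0)})$, and that one may legitimately apply $\delta'$ to the tuple $\sigma \cdot q$ even though $\sigma \cdot q$ need not itself lie in $Q$ — but $\delta'$ on $(Q')^{n_0}$ is defined for all tuples, so this is fine. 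I expect the whole argument to be three or four lines.
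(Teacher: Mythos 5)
Your proof is correct and follows essentially the same approach as the paper's: both compute coordinatewise to show that $\delta'$ commutes with the permutation action, then substitute property~4) and use associativity of the $S_{n_0}$-action on tuples. The only stylistic difference is that you isolate the intermediate identity $\delta'(\sigma\cdot q,a)=\sigma\cdot\delta'(q,a)$ before applying property~4), whereas the paper does the entire calculation in one chain of coordinatewise equalities.
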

\begin{proof}
  We find for $1\leq k \leq n_0$
  \begin{align*}
    \pi_k(\delta'(\sigma \cdot (x_1,\ldots,x_{n_0}),a)) &= \delta'(x_{\sigma^{-1}(k)},a) = \pi_{\sigma^{-1}(k)}(\delta'(q,a))\\
	&= \pi_{\sigma^{-1}(k)}(\lambda(q,a) \cdot \delta(q,a)) = \pi_k(\sigma \cdot (\lambda(q,a) \cdot \delta(q,a))).
  \end{align*}
  Thus, it holds
  \begin{align*}
    \delta'(\sigma \cdot q,a) = \sigma \cdot (\lambda(q,a) \cdot \delta(q,a)) = (\sigma \circ \lambda(q,a)) \cdot \delta(q,a).
  \end{align*}
\end{proof}

Now we are ready to show a very important connection between an automaton and its induced transducer.

\begin{proposition}\label{pr:equality}
Let $A$ be a strongly connected automaton and $\mathcal{T}_{A}$ an induced transducer.
For every $\bfw \in \Sigma^{*}$ we have
  \begin{align*}
    \delta'(q_0',\bfw) = \pi_1(T(q_0,\bfw)\cdot \delta(q_0,\bfw)).
  \end{align*}
\end{proposition}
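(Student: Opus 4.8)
The natural approach is induction on the length $|\bfw|$ of the word. The base case $\bfw = \varepsilon$ is immediate: $T(q_0,\varepsilon)$ is the empty product, which equals the identity permutation $id$, and $\delta(q_0,\varepsilon) = q_0$, so $\pi_1(id \cdot q_0) = \pi_1(q_0) = q_0'$ by property 2) of an induced transducer. For the inductive step I would write $\bfw = \bfv a$ with $a \in \Sigma$ and $\bfv \in \Sigma^{*}$, and try to reduce the statement for $\bfw$ to the statement for $\bfv$. The key structural identity is that $T(q_0,\bfv a) = T(q_0,\bfv) \circ \lambda(\delta(q_0,\bfv), a)$ and $\delta(q_0,\bfv a) = \delta(\delta(q_0,\bfv),a)$, which follow directly from the definitions of $T$ and the extension of $\delta$.

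The heart of the argument is to show the stronger claim that for every $\bfw \in \Sigma^{*}$,
\begin{align*}
  \delta'(q_0',\bfw) = \pi_1\rb{T(q_0,\bfw) \cdot \delta(q_0,\bfw)} \qquad \text{and indeed} \qquad T(q_0,\bfw)\cdot\delta(q_0,\bfw) \text{ encodes } \delta'(\text{all coords of } q_0, \bfw).
\end{align*}
More precisely, I would prove by induction that $\delta'(q_0,\bfw) = T(q_0,\bfw)\cdot\delta(q_0,\bfw)$, viewing $\delta'$ applied coordinatewise to the tuple $q_0 \in (Q')^{n_0}$ (using the coordinatewise extension defined in the preliminaries). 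This tuple-level identity is what makes the induction go through, and the proposition then follows by projecting onto the first coordinate via $\pi_1$. Writing $q := \delta(q_0,\bfv)$ and $\sigma := T(q_0,\bfv)$, the inductive hypothesis gives $\delta'(q_0,\bfv) = \sigma \cdot q$. Applying $\delta'(\cdot, a)$ and using \cref{le:perm-lambda} with this $\sigma$, $a$, and $q$:
\begin{align*}
  \delta'(q_0, \bfv a) = \delta'(\sigma \cdot q, a) = \rb{\sigma \circ \lambda(q,a)} \cdot \delta(q,a) = \rb{T(q_0,\bfv)\circ\lambda(\delta(q_0,\bfv),a)}\cdot\delta(\delta(q_0,\bfv),a),
\end{align*}
and the right-hand side is exactly $T(q_0,\bfv a)\cdot\delta(q_0,\bfv a)$ by the composition rules for $T$ and $\delta$. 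This closes the induction.

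The main subtlety — not really an obstacle, but the point requiring care — is the bookkeeping of the two different left actions: the permutation $\sigma$ acts on the tuple $q$ (a left $S_{n_0}$-action on $(Q')^{n_0}$), while $\delta'(\cdot,a)$ acts on each coordinate; \cref{le:perm-lambda} is precisely the commutation relation between these two, and the associativity convention $(\sigma_1\circ\sigma_2)\cdot\mathbf{x} = \sigma_1\cdot(\sigma_2\cdot\mathbf{x})$ fixed in the preliminaries is what lets us collapse $\sigma\circ\lambda(q,a)$ into a single permutation acting on $\delta(q,a)$. I should also note that strong connectedness of $A$ is not actually needed for this proposition — it holds for any induced transducer — but I would keep the hypothesis as stated. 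Once the tuple identity is established, applying $\pi_1$ and invoking property 2) gives $\delta'(q_0',\bfw) = \pi_1(\delta'(q_0,\bfw)) = \pi_1(T(q_0,\bfw)\cdot\delta(q_0,\bfw))$, completing the proof.
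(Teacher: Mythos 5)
Your proposal is correct and follows essentially the same route as the paper: prove the stronger tuple-level identity $\delta'(q,\bfw) = T(q,\bfw)\cdot\delta(q,\bfw)$ by induction on $|\bfw|$, with \cref{le:perm-lambda} driving the inductive step, then project with $\pi_1$. The only cosmetic difference is that the paper states the inductive claim for all $q\in Q$ whereas you fix $q_0$ throughout; both versions close the induction because $\delta(q_0,\bfv)\in Q$, so this is equivalent.
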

\begin{proof}
  We actually prove that for all $q \in Q$ and $\bfw \in \Sigma^{*}$ it holds that
  \begin{align*}
    \delta'(q,\bfw) = T(q,\bfw) \cdot \delta(q,\bfw),
  \end{align*}
  which obviously implies the statement.
  We use induction on the length of $\bfw$.
  \begin{itemize}
    \item $|\bfw| = 0$: Obviously $\delta'(q,\varepsilon) = q = (id(\delta(q,\varepsilon)))$ holds.
    \item $\bfw = \bfw'x$: We define $q_1:=\delta(q,\bfw')$ and find 
      \begin{align*}
	      \delta'(q,\bfw'x) &= \delta'(\delta'(q,\bfw'),x) = \delta'(T(q,\bfw')\cdot q_1,x) = (T(q,\bfw') \circ \lambda(q_1,x)) \cdot \delta(q_1,x)\\
		&= T(q,\bfw'x) \cdot \delta(q,\bfw'x)),
      \end{align*}
      where the third equality holds by \cref{le:perm-lambda}.
  \end{itemize}
  \hspace{-5mm}
\end{proof}
This Proposition allows us to reconstruct the output of $A$ by knowing the output of $\mathcal{T}_{A}$ and $T$.
% \begin{corollary}
%   For every $\bfw \in \Sigma^{*}$ holds
%   \begin{align*}
%     \delta'(q_0',\bfw) = \pi_1(T(q_0,\bfw)\cdot \delta(q_0,\bfw)).
%   \end{align*}
% \end{corollary}

\begin{example}
  We continue our previous example and find
  \begin{align*}
    \delta'(q_0',0110) &= q_2' = \pi_1 (T(q_0,0110) \cdot \delta(q_0,0110)) = \pi_1(((12)\circ (23)\circ id \circ(12)) \cdot q_0)\\
    &= \pi_1 ((13) \cdot q_0) = \pi_1(q_2',q_1',q_0') = q_2'.
  \end{align*}

\end{example}

As one has some freedom on how to choose the order of the elements of $Q$ it is natural to ask how this choice influences the induced transducer.

\begin{proposition}\label{pr:T_permute}
  Let $A$ be a DFA and $\mathcal{T}_{A} = (Q,\Sigma, \delta, q_0, \Delta, \lambda)$ an induced transducer.
  Let $\overline{\mathcal{T}}_{A}$ be another induced transducer obtained by changing the order of every tuple $q \in Q$ 
  by a permutation $\sigma_q$, i.e.,~$\overline{q} = \sigma_q \cdot q$ (where still $\pi_1(\sigma_{q_0}\cdot q_0) = q_0'$ holds).
  For $\bfw \in \Sigma^{*}$ it holds that
  \begin{align*}
    \overline{T}(\overline{q},\bfw) = \sigma_{q} \circ T(q,a) \circ \sigma_{\delta(q,\bfw)}^{-1}.
  \end{align*}
  Furthermore, if $\mathcal{T}_{A}$ is a naturally induced transducer, then so is $\overline{\mathcal{T}}_{A}$.
\end{proposition}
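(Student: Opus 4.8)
The plan is to prove the transformation formula $\overline{T}(\overline{q},\bfw) = \sigma_{q} \circ T(q,\bfw) \circ \sigma_{\delta(q,\bfw)}^{-1}$ by induction on $|\bfw|$, and then derive that the naturally-induced properties are preserved. First I would set up the base case $|\bfw| = 0$: here $T(q,\varepsilon) = id$, $\delta(q,\varepsilon) = q$, and the claimed identity reads $\overline{T}(\overline{q},\varepsilon) = \sigma_q \circ id \circ \sigma_q^{-1} = id$, which holds. For the inductive step, write $\bfw = \bfw' x$ and set $q_1 := \delta(q,\bfw')$, so $\delta(q,\bfw) = \delta(q_1,x)$ and $\overline{q_1} = \sigma_{q_1}\cdot q_1$. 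The key is to identify $\overline{\lambda}(\overline{q_1},x)$: since $\overline{\delta}'(\overline{q_1},x)$ must equal $\delta'$ applied to the same underlying set (reordered), and using Property 4) together with \cref{le:perm-lambda} applied to $\sigma_{q_1}\cdot q_1$, one gets $\delta'(\sigma_{q_1}\cdot q_1, x) = (\sigma_{q_1}\circ\lambda(q_1,x))\cdot\delta(q_1,x)$. Comparing with the requirement $\delta'(\overline{q_1},x) = \overline{\lambda}(\overline{q_1},x)\cdot\overline{\delta}(\overline{q_1},x) = \overline{\lambda}(\overline{q_1},x)\cdot(\sigma_{\delta(q_1,x)}\cdot\delta(q_1,x))$ forces $\overline{\lambda}(\overline{q_1},x) = \sigma_{q_1}\circ\lambda(q_1,x)\circ\sigma_{\delta(q_1,x)}^{-1}$.

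Then I would combine: $\overline{T}(\overline{q},\bfw) = \overline{T}(\overline{q},\bfw')\circ\overline{\lambda}(\overline{q_1},x)$ by the definition of $T$ for a transducer. By the induction hypothesis $\overline{T}(\overline{q},\bfw') = \sigma_q\circ T(q,\bfw')\circ\sigma_{q_1}^{-1}$, and by the computation just above $\overline{\lambda}(\overline{q_1},x) = \sigma_{q_1}\circ\lambda(q_1,x)\circ\sigma_{\delta(q_1,x)}^{-1}$. Multiplying, the middle $\sigma_{q_1}^{-1}\circ\sigma_{q_1}$ cancels, leaving $\sigma_q\circ T(q,\bfw')\circ\lambda(q_1,x)\circ\sigma_{\delta(q_1,x)}^{-1} = \sigma_q\circ T(q,\bfw)\circ\sigma_{\delta(q,\bfw)}^{-1}$, which is the claim. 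One subtlety to state carefully is that $\overline{\delta}$ is by construction the ``reordered'' version of $\delta$, i.e.\ $\overline{\delta}(\overline{q},a) = \overline{\delta(q,a)} = \sigma_{\delta(q,a)}\cdot\delta(q,a)$, so that the underlying sets (and hence Property 4)) are consistent; I would make this explicit at the start since the whole argument rests on it.

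For the final sentence, suppose $\mathcal{T}_A$ is naturally induced; I would check Properties 5)--8) for $\overline{\mathcal{T}}_A$. Properties 5) and 6) are invariant under permuting coordinates of each tuple, since having distinct entries and not being a permutation of another state are both properties of the underlying multiset/set, which $\sigma_q$ does not change. Strong connectedness 7) and the synchronizing property 8) are properties of the transition structure of $\delta$ up to renaming, and since $\overline{\delta}(\overline{q_1},\bfw) = \overline{q_2}$ iff $\delta(q_1,\bfw) = q_2$ (same bijection $q\mapsto\overline{q}$ on states), both are preserved; a synchronizing word for $\mathcal{T}_A$ remains one for $\overline{\mathcal{T}}_A$. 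Also $\overline{\Delta}$, the group generated by $Im(\overline{\lambda})$, is still a subgroup of $S_{n_0}$ as required by 3).

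The main obstacle I expect is purely bookkeeping: getting the direction of conjugation right and tracking which $\sigma$ is attached to which state, particularly the identity $\overline{\lambda}(\overline{q},a) = \sigma_q\circ\lambda(q,a)\circ\sigma_{\delta(q,a)}^{-1}$, which is the crux. Once that one-step relation is pinned down correctly (via \cref{le:perm-lambda}), the induction is a routine cancellation and the preservation of naturality is immediate from the observations above.
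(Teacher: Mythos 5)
Your proposal is correct and follows essentially the same route as the paper: both rely on \cref{le:perm-lambda} to establish the one-step relation $\overline{\lambda}(\overline{q},a) = \sigma_q\circ\lambda(q,a)\circ\sigma_{\delta(q,a)}^{-1}$ and then obtain the formula for $\overline{T}$ by a routine induction, with the preservation of properties 5)--8) checked as you describe. The only minor caveat is that ``forces'' in your derivation of $\overline{\lambda}$ uses that the entries of $\delta(q_1,x)$ are distinct (property 5), which holds for naturally induced but not arbitrary induced transducers; the paper sidesteps this by simply defining $\overline{\lambda}$ by that formula and verifying property 4), which is a cleaner way to state the same computation.
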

\begin{proof}
  Let $q \in Q, a \in \Sigma$. We define for all $q\in Q, a \in \Sigma$
  \begin{align*}
    \overline{q} &:= \sigma_{q} \cdot q\\
    \overline{\delta}(\overline{q},a) &:= \overline{\delta(q,a)}\\
    \overline{\lambda}(\overline{q},a) &:= \sigma_{q} \circ \lambda(q,a) \circ \sigma_{\delta(q,a)}^{-1},
  \end{align*}
  $\overline{Q} := \cup_{q \in Q} \overline{q}$ and $\overline{\Delta}$ is again the group generated by $\overline{\lambda}$.\\
  We claim that $\overline{\mathcal{T}}_{A} = (\overline{Q}, \Sigma, \overline{\delta}, \overline{q_0}, \overline{\Delta}, \overline{\lambda})$ is again an induced transducer.
  1)-3) follow immediately so it remains to show 4).
  Therefore, we compute by \cref{le:perm-lambda}
  \begin{align*}
    \delta'(\sigma_q \cdot q,a) &= (\sigma_q \circ \lambda(q,a)) \cdot \delta(q,a)\\
      &= (\sigma_q \circ \lambda(q,a) \circ \sigma_{\delta(q,a)}^{-1})\cdot(\sigma_{\delta(q,a)} \cdot \delta(q,a))\\
      &= \overline{\lambda}(\overline{q},a)\cdot \overline{\delta}(\overline{q},a).
  \end{align*}
  Thus $\overline{\mathcal{T}}_{A}$ is indeed an induced transducer and the stated equation follows easily by induction on the length of $\bfw$.
  
  The last statement is very easy to verify.
\end{proof}

%TODO Name!!!!!!!!!!!!!!!!!!!!!!!!!!!!!!!!!!!!!!!!!!!!!!!!!!!!!!!!!!!!!!!!!!!!!!!!!!!!!!!!!!!!!!!!!!!!!!!!!!!!!!!
\subsection{Length restrictions for naturally induced transducers}\label{sec:str1}
As $\Delta$ is generated by $\lambda$, one may assume that all elements of $\Delta$ can be obtained by $T$.
We show in this subsection that this is not true in general but for certain naturally induced transducers.
Furthermore, we will show that restrictions on the length of $\bfw$ give restrictions on what elements of $\Delta$ can be obtained by $T(q,\bfw)$.\\
%In this subsection we deal with the question, what elements of $\Delta$ can occur for $T(q,\bfw)$ where $\bfw$ is of fixed length. 
%This has a great influence on the Fourier terms used in \cref{sec:RS1}.\\
The main result of this subsection -- which contains all the facts mentioned above -- is the following theorem.
\begin{theorem}\label{th:full_G}
%   There exists $m_0 \in \N$ (we can choose the same $m_0$ as in \cref{le:m_0}) such that for all $n \geq m_0$ and $g \in G$ there exists $\bfw \in \Sigma^{nd}$ 
%   such that $\delta(q_0,\bfw) = q_0$ and $T(q_0,\bfw) = g$.
  Let $A$ be a strongly connected automaton. Then there exists a minimal $d\in \N$, $m_0 \in \N$,
  a naturally induced transducer $\mathcal{T}_{A}$
  and a subgroup $G$ of $\Delta$ such that the following two conditions hold.
  \begin{itemize}
    \item For all $q \in Q, \bfw \in (\Sigma^{d})^{*}$ we have $T(q,\bfw)\in G$.
    \item For all $q,\overline{q}\in Q$ and $n \geq m_0$ it holds that
      \begin{align*}
	\{T(q,\bfw): \bfw \in \Sigma^{nd}, \delta(q,\bfw) = \overline{q}\} = G.
      \end{align*}
  \end{itemize}
  $d$ and $m_0$ only depend on $A$, but not on its initial state $q'_0$.
\end{theorem}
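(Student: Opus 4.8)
The idea is to analyze the set of group elements produced by $T(q,\bfw)$ as $\bfw$ ranges over words of a fixed length, and to show that this behaves "eventually periodically" in the length so that a suitable subgroup $G$ appears. Concretely, for $n \in \N$ and $q, \overline q \in Q$ set
\[
  R_n(q,\overline q) := \{T(q,\bfw) : \bfw \in \Sigma^{n},\ \delta(q,\bfw) = \overline q\} \subseteq \Delta.
\]
The first step is to fix an arbitrary naturally induced transducer $\mathcal{T}_A$ (which exists by \cref{prop:synch_trans}) and record the following multiplicativity: if $\delta(q,\bfw) = \overline q$ and $\delta(\overline q, \bfv) = \tilde q$ then $T(q,\bfw\bfv) = T(q,\bfw)\circ T(\overline q,\bfv)$, so $R_{m+n}(q,\tilde q) \supseteq R_m(q,\overline q)\cdot R_n(\overline q,\tilde q)$, and in fact equality after a union over the intermediate state $\overline q$. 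Combined with \cref{le:transducer_regular} — which guarantees that for $n \geq N_0$ every pair of states is joined by some word of length exactly $n$ — this lets me form, for large enough $n$, products of these sets freely.

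Next I would identify the candidate subgroup. Using the synchronizing word $\bfw_q$: if $\bfw$ synchronizes everything to a single state $q^*$, then for any $q_1,q_2$ we can go $q_1 \to q^* \to q_2$ and, crucially, we can also insert loops at $q^*$. So consider $H := \langle R_{nd}(q^*,q^*) : n\geq 1\rangle$ for an appropriate common "period" $d$. The role of $d$ is to fix the residue class of the word length: because $\Delta$ is finite and $T(q,\bfw)$ depends on the path, the set $\bigcup_{q}R_n(q,q)$ stabilizes (as a function of $n$) along each residue class modulo some integer, and $d$ should be the minimal such modulus for which $T(q,\bfw)$ lands in a fixed subgroup $G$ for all $\bfw$ of length divisible by $d$. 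I would define $G$ to be the subgroup of $\Delta$ generated by all $T(q,\bfw)$ with $d \mid |\bfw|$; the first bullet then holds by construction, and minimality of $d$ comes from taking the gcd of all lengths realizing a fixed coset structure. The key algebraic input is that a nested/eventually-stable sequence of subsets of a finite group whose pairwise products are contained in the sequence must stabilize to a coset of a subgroup — here, to $G$ itself once we are in the right residue class and past a threshold $m_0$.

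For the second bullet I would argue as follows. Fix $q,\overline q$ and $n \geq m_0$. The inclusion $R_{nd}(q,\overline q)\subseteq G$ is the first bullet. For the reverse, take $g \in G$; write $g$ as a product of generators $T(q_i,\bfw_i)$ with $d \mid |\bfw_i|$, use \cref{le:transducer_regular} to connect the endpoints of these pieces by words of length $\equiv 0 \pmod d$ whose $T$-values can be absorbed (here one uses that going through the synchronizing state allows us to adjust by any element of $G$, i.e.\ $G\cdot R = G$ for the relevant connector sets $R$), and pad at the front with $0$'s — noting $\delta(q,0^{jd})$ stays inside a bounded set and the loop $0^d$ at a $0$-fixed state contributes a fixed element of $G$ which can be cancelled. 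This realizes $g$ as $T(q,\bfw)$ for some $\bfw\in\Sigma^{nd}$ with $\delta(q,\bfw)=\overline q$, provided $n$ exceeds a threshold $m_0$ depending only on the diameter bounds and on $|\Delta|$. Finally, $d$ and $m_0$ depend only on the transition structure and the group $\Delta$, both of which by \cref{prop:synch_trans} are independent of the choice of initial state $q_0'$ of $A$ (changing $q_0'$ only changes $q_0$, not $Q$, $\delta$, $\lambda$); hence the last sentence.

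The main obstacle I expect is proving the \emph{equality} $R_{nd}(q,\overline q) = G$ uniformly — i.e.\ showing that $G$ is genuinely a subgroup and not merely a coset, and that the threshold $m_0$ can be taken uniform over all pairs $(q,\overline q)$. This requires carefully combining the synchronization (to reach a common state where loops can be inserted) with the $0$-padding at the front and a counting/pigeonhole argument over the finite group $\Delta$ to pin down the correct modulus $d$; the interplay between "which coset" the length-$n$ words realize and the residue of $n$ modulo $d$ is the delicate point, and I would expect this to be where the bulk of the technical work lies (and plausibly where the paper defers to \cref{sec:technical}).
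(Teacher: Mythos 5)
Your proposal contains a genuine gap, and it is tied to the words ``there exists a naturally induced transducer'' in the statement. You fix an \emph{arbitrary} naturally induced transducer and define $G$ as the subgroup of $\Delta$ generated by all $T(q,\bfw)$ with $d\mid|\bfw|$. But for a generic choice of transducer the actual set $G_{q\overline q}(0):=\{T(q,\bfw):\delta(q,\bfw)=\overline q,\ d\mid|\bfw|\}$ is only a coset, and worse, the loop sets $G_{qq}(0)$ can be \emph{different subgroups} of $\Delta$ for different $q$. For instance, take $Q=\{q_0,q_1\}$, $\Sigma=\{0,1\}$, $\lambda(q_0,0)=id$, $\lambda(q_0,1)=(12)$, $\lambda(q_1,0)=(12)$, $\lambda(q_1,1)=(13)$ with the obvious transitions $\delta(q_0,0)=q_0$, $\delta(q_0,1)=q_1$, $\delta(q_1,0)=q_0$, $\delta(q_1,1)=q_1$. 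Then $d=1$, the generated group is all of $S_3$, but $G_{q_0q_0}(0)=\{id,(23)\}$ and $G_{q_1q_1}(0)=\{id,(13)\}$. So the set you would need to show equals $G$ in the second bullet (the realizable $T$-values on loops at $q_0$, say) is $\{id,(23)\}$, strictly smaller than your $G=S_3$. The plan in your second-bullet argument, ``$G\cdot R=G$ for the relevant connector sets $R$,'' is exactly where this fails: the connector sets need not lie in $G$.

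The missing idea is \cref{le:id} together with \cref{pr:T_permute}: one is free to re-order each tuple $q\in Q$ by a permutation $\sigma_q$, which conjugates and translates the $\lambda$-values, and this freedom is enough to arrange $id\in G_{q_0q}(0)$ for every $q$. Once that normalization is in place, \cref{le:G_prod} and a cardinality comparison force $G_{q\overline q}(0)=G_{q_0q_0}(0)=:G$ for \emph{all} pairs $q,\overline q$, and only then is $G$ both a genuine subgroup and simultaneously the exact image set for every pair. The paper's $d$ is also defined concretely (as $\gcd M_q$, where $M_q$ is the set of lengths of identity-$T$ loops at $q$; \cref{le:d} shows this is $q$-independent), whereas your definition of $d$ as ``the minimal modulus for which a fixed subgroup $G$ works'' presupposes a $G$ that you have not yet correctly identified. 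Finally, the uniform threshold $m_0$ in the second bullet is not merely a pigeonhole bound: it comes from \cref{le:m_0}, which relies on the full-image property established after normalizing the transducer. So the overall shape of your argument (length-residue stabilization, synchronizing state, padding by zeros) matches the paper's, but without the transducer normalization step the equality in the second bullet simply does not hold, and that step is the heart of the proof.
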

We provide an example of a primitive (and strongly connected) automaton for which $d>1$ holds.
\begin{example}
  Consider the following automaton.\\
  \begin{tikzpicture}[->,>=stealth',shorten >=1pt,auto,node distance=2.8cm, semithick, bend angle = 15, cross line/.style={preaction={draw=white,-,line width=3pt}}]
    %\tikzstyle{every state}=[fill=white,draw=none,text=black]
    
    \node[state, initial](A)                    {$q_0'$};
    \node[state]         (B) [right of=A]	{$q_1'$};
    \node[state]         (C) [right of=B] 	{$q_2'$};
    \node[state]	 (D) [below of=A]	{$q_3'$};
    \node[state]         (E) [right of=D]	{$q_4'$};
    \node[state]         (F) [right of=E]       {$q_5'$};
    
    \path [every node/.style={font=\footnotesize}, pos = 0.66]
    (C) edge [bend left]  node [pos=0.5] {0}    (B)
	edge 		  node 		 {1} 	(F)
    (B) edge [bend left]  node [pos=0.5] {0}	(C)
	edge              node [left]	 {1}	(D)
    (A) edge [loop above] node [pos=0.5] {0} 	(A)
	edge [cross line] node [right]	 {1}	(E)
    (F) edge              node [left, xshift=-2mm]	 {0}	(B)
	edge [bend left]  node [pos=0.5] {1}	(E)
    (E) edge [cross line] node [right]	 {0}	(C)
	edge [bend left]  node [pos=0.5] {1}	(F)
    (D) edge              node [pos=0.5] {0}	(A)
	edge [loop below] node [pos=0.5] {1}	(D);
  \end{tikzpicture}
  
  This automaton is clearly primitive, i.e., its period is $1$, as $\delta(q_0,0) = q_0$.
  One choice for a naturally induced transducer is already indicated by the sketch above,
  \begin{align*}
    &q_0 = (q_0', q'_1, q'_2), \quad q_1 = (q'_3, q'_4, q'_5)\\
    &\delta(q_0,0) = q_0, \quad \delta(q_0,1) = q_1\\
    &\delta(q_1,0) = q_0, \quad \delta(q_1,1) = q_1\\
    &\lambda(q_0,0) = (23), \quad \lambda(q_0,1) = (12)\\
    &\lambda(q_1,0) = (23), \quad \lambda(q_1,1) = (23).
  \end{align*}
  One sees directly that $\sgn(\lambda(q,a)) = -1$ for all $a \in \Sigma$ and therefore 
  \begin{align*}
    T(q,\bfw) \in \left\{\begin{array}{cl}\{id,(123),(132)\} , & \mbox{for }\bfw \in (\Sigma^{2})^{*} \\ \{(12),(23),(13)\}, & \mbox{for }\bfw \in \Sigma(\Sigma^{2})^{*} \end{array}\right. 
  \end{align*}
  Indeed, one can show that $d=2, G = \{id,(123),(132)\}$ and $g_0 = (12)$, as all elements of $G$ appear as output for paths from $q_0$ to itself for length $4$ as well as for length $6$.
\end{example}

The other results of this subsection are rather technical; they will be used only in this and the following subsection.\\
According to \cref{prop:synch_trans} we assume in this section that $\mathcal{T}_{A}$ is a naturally induced transducer of a strongly connected automaton.
Thus, $\Delta$ is finite.\\
Paths $\bfw$ with the property 
\begin{align}\label{eq:good_path}
  \delta(q,\bfw) = q, T(q,\bfw) = id
\end{align}
will play an important role in this subsection and we start by exploring some of their properties.
\begin{remark}
  Obviously concatenating two paths $\bfw_1, \bfw_2$ fulfilling \eqref{eq:good_path} gives again a path fulfilling \eqref{eq:good_path}.
\end{remark}

We start this section by showing that every path can be extended to a path with this property.
\begin{lemma}\label{le:inv}
  For every $q \in Q, \bfw \in \Sigma^{*}$ there exists an \textbf{inverse path} $\overline{\bfw}^q \in \Sigma^{*}$ such that \eqref{eq:good_path} holds for $\bfw \overline{\bfw}^q$.
\end{lemma}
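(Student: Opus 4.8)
The plan is to exploit the finiteness of $\Delta$ together with the strong connectivity and synchronizing properties of $\mathcal{T}_A$ to build the inverse path in two stages: first return to the state $q$ (possibly after accumulating some permutation), then repeatedly traverse a closed loop at $q$ until the accumulated permutation becomes the identity.

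First I would fix $q\in Q$ and $\bfw\in\Sigma^{*}$, and set $q_1 := \delta(q,\bfw)$ and $\sigma := T(q,\bfw)\in\Delta$. Since $\mathcal{T}_A$ is strongly connected (property 7)), there exists $\bfv\in\Sigma^{*}$ with $\delta(q_1,\bfv)=q$; write $\tau := T(q_1,\bfv)\in\Delta$. By the multiplicativity of $T$ along concatenation (which is immediate from the definition of $T(q,\cdot)$ as a product over the steps), we have $\delta(q,\bfw\bfv)=q$ and $T(q,\bfw\bfv)=\sigma\circ\tau =: \rho\in\Delta$. So after the prefix $\bfw\bfv$ we are back at $q$, carrying the group element $\rho$.

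Next I would kill off $\rho$. Since $\delta(q,\bfw\bfv)=q$, the word $\bfu := \bfw\bfv$ is a closed loop at $q$; let $g := T(q,\bfu)=\rho$. Concatenating $\bfu$ with itself $j$ times keeps us at $q$ and multiplies the $T$-values: $T(q,\bfu^{j}) = g^{j}$. Because $\Delta$ is finite, $g$ has finite order $r = \mathrm{ord}(g)$ in the group $\Delta$, so $g^{r}=id$. Therefore $\overline{\bfw}^{q} := \bfv\,\bfu^{\,r-1}$ works: $\delta(q,\bfw\overline{\bfw}^{q}) = \delta(q,\bfu^{r}) = q$ and $T(q,\bfw\overline{\bfw}^{q}) = g^{r} = id$, which is exactly \eqref{eq:good_path} for $\bfw\overline{\bfw}^{q}$. (If $\bfw$ is empty one takes $\bfv=\varepsilon$ and the statement is trivial, or more simply one notes the argument above already handles $\sigma=id$.)

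I do not anticipate a serious obstacle here — the only thing to be careful about is the bookkeeping of how $T$ behaves under concatenation, i.e.\ that $T(q,\bfw_1\bfw_2) = T(q,\bfw_1)\circ T(\delta(q,\bfw_1),\bfw_2)$, which should either be recorded as a preliminary observation or verified by a one-line induction on $|\bfw_2|$ from the definition of $T$. The mild subtlety is that we must return precisely to the state $q$ before we are allowed to iterate, which is why strong connectivity is invoked first to produce $\bfv$; synchronizing is not strictly needed for this particular lemma, only strong connectivity and finiteness of $\Delta$.
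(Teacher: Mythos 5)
Your proof is correct and follows essentially the same route as the paper: return to $q$ via strong connectivity, observe the resulting loop weight $g$ has finite order in the finite group $\Delta$, and iterate the loop to annihilate it, yielding $\overline{\bfw}^q = \bfv\,\bfu^{\,r-1}$ exactly as the paper takes $\bfw'(\bfw\bfw')^{n-1}$. Your remark that the synchronizing property is not needed here is also accurate.
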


\begin{proof}
  Since $\mathcal{T}_{A}$ is strongly connected, there exists $\bfw' \in \Sigma^{*}$ such that\\
  $\delta(q,\bfw\bfw') = \delta(\delta(q,\bfw),\bfw') = q$.\\
  We define $g:=T(q,\bfw \bfw')$. Since $G$ is finite we know that $g^n = id$ for some $n \in \N$.
  Since $\delta(q,(\bfw \bfw')^n) = q$ and $T(q,(\bfw \bfw')^n) = id$ we know that the desired property holds for $\overline{\bfw}^q := \bfw' (\bfw \bfw')^{n-1}$.
\end{proof}

We define $M_q:=\{n\in \N|\exists \bfw \in \Sigma^{n}, \delta(q,\bfw) = q, T(q,\bfw) = id\}$ and it follows directly by \cref{le:inv} that $M_q$ is non-empty for all $q$.

\begin{lemma}\label{le:d}
  Let $A$ be a strongly connected automaton and $\mathcal{T}_{A}$ an induced transducer.
  There exists $d= d(\mathcal{T}_{A})\in \N$ such that for all $q \in Q: M_q \subset d \cdot \N$ and $d \cdot \N \setminus M_q$ is finite.
\end{lemma}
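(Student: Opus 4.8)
The plan is to show first that all elements of $M_q$ share a common divisor $d$, which I will take to be the period $p(\mathcal{T}_A)$ (or a divisor thereof), and then to show that $d\cdot\N$ is \emph{almost} contained in $M_q$, i.e.~that only finitely many multiples of $d$ are missing. The key structural facts I will lean on are: (i) by the remark preceding Lemma~\ref{le:inv}, $M_q$ is closed under addition, so it is a numerical semigroup once we divide by $\gcd(M_q)$; (ii) by Lemma~\ref{le:inv} every path out of $q$ can be completed to a path satisfying \eqref{eq:good_path}, so $M_q \neq \emptyset$; and (iii) by strong connectedness of $\mathcal{T}_A$ (property 7 of a naturally induced transducer) all the $M_q$ are closely related.

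First I would fix one state $q$ and set $d_q := \gcd(M_q)$. Since $M_q$ is a non-empty subset of $\N$ closed under addition, the classical Chicken McNugget / numerical-semigroup argument shows that $\frac{1}{d_q}M_q$ is a numerical semigroup, hence contains all sufficiently large integers; equivalently $d_q\cdot\N\setminus M_q$ is finite. This already gives the second assertion of the lemma for that particular $q$, with $d = d_q$. The content that remains is to prove that $d_q$ is independent of $q$, so that a single $d = d(\mathcal{T}_A)$ works simultaneously for all states. For this I would argue as follows: given $q_1, q_2 \in Q$, pick by strong connectedness words $\bfu$ from $q_1$ to $q_2$ and $\bfv$ from $q_2$ to $q_1$. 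If $\bfw$ realizes some $n \in M_{q_2}$, then $\bfu\bfw\bfv$ is a cycle at $q_1$, and by Lemma~\ref{le:inv} applied to it (completing it to a path with property \eqref{eq:good_path} by appending an inverse path $\overline{(\bfu\bfw\bfv)}^{q_1}$ of some fixed-up-to-that-path length, and also completing $\bfu\bfv$ itself) one produces two elements of $M_{q_1}$ whose difference is $n$ plus a fixed constant; taking differences over two choices of $n\in M_{q_2}$ shows $\gcd(M_{q_2}) \mid$ combinations landing in $M_{q_1}$, hence $d_{q_1} \mid d_{q_2}$, and by symmetry $d_{q_1} = d_{q_2}$. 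I will denote the common value $d$. (One can also identify $d$ with $p(\mathcal{T}_A)$, the period of the transducer, by the same kind of gcd bookkeeping applied to cycle lengths, but this identification is not needed for the statement.)

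The main obstacle I anticipate is the bookkeeping in the second step: the ``inverse path'' supplied by Lemma~\ref{le:inv} has a length that depends on the path being inverted, so one does not get a clean single constant offset, only a bounded family of offsets, and one must be careful that subtracting two such expressions genuinely isolates the residue class of $n$ modulo $d_{q_2}$ and shows it is hit inside $M_{q_1}$ up to a fixed shift. The clean way around this is to note that for a \emph{fixed} chosen cycle $\bfc$ at $q_1$ (for instance $\bfu\bfv$) with a fixed completion to property \eqref{eq:good_path}, the set $M_{q_1}$ contains $\{a\} + (\text{everything of the form } n + b_{\bfw} : n \in M_{q_2})$ where $a$ is fixed and $b_{\bfw}$ ranges over a bounded set; but since $M_{q_2}$ is itself a numerical-semigroup-up-to-scaling (first step), it is eventually an entire arithmetic progression with common difference $d_{q_2}$, and an entire progression translated by bounded amounts and then having finitely many holes is still eventually the same progression, forcing $d_{q_1}\mid d_{q_2}$. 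Once the two inclusions $d_{q_1}\mid d_{q_2}$ and $d_{q_2}\mid d_{q_1}$ are in hand, setting $d := d_{q}$ for any $q$ finishes both claims: $M_q\subseteq d\cdot\N$ for all $q$ by definition of $d_q$ and equality of the $d_q$, and $d\cdot\N\setminus M_q$ is finite by the first step.
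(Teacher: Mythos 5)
Your proposal is correct in its main structure and follows essentially the same two-step route as the paper: first obtain $d_q := \gcd(M_q)$ and the cofiniteness of $d_q\cdot\N\setminus M_q$ from the semigroup property of $M_q$, then transport cycles between states via strong connectedness and an inverse path (Lemma~\ref{le:inv}) to force $d_{q_1}\mid d_{q_2}$ and conclude by symmetry. The paper sidesteps the bookkeeping worry you raise by picking a \emph{single} fixed pair of auxiliary words (a path $\bfw$ from $q_2$ to $q_1$ and an inverse $\overline{\bfw}^{q_2}$ with $T(q_2,\bfw\overline{\bfw}^{q_2})=\mathrm{id}$) and inserting two good cycles $\bfw_1,\bfw_2\in\Sigma^{m},\Sigma^{m+d_{q_1}}$ at $q_1$ into the middle; since the inserted cycles have trivial weight, the wrapper is unchanged and the two resulting lengths in $M_{q_2}$ differ by exactly $d_{q_1}$, giving $d_{q_2}\mid d_{q_1}$ with no varying offset at all. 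Your own ``clean workaround'' actually achieves the same thing once you notice that inserting a good cycle at $q_2$ into a fixed completed cycle at $q_1$ does not introduce a $\bfw$-dependent correction $b_{\bfw}$ — the transducer weight of the inserted piece is $\mathrm{id}$, so that term is spurious.

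One incidental but genuine error: the parenthetical claim that $d$ can be identified with $p(\mathcal{T}_A)$, the period of the transducer, is false. A naturally induced transducer is synchronizing, hence has period $1$, yet the example following Theorem~\ref{th:full_G} exhibits $d=2$; the paper's remark after Proposition~\ref{pr:G_g_0} makes the same point for $p(A)$. Since $d$ is defined via cycles with \emph{trivial transducer output}, not arbitrary cycles, it need not coincide with any graph-theoretic period. You flag this identification as unnecessary, which is true, but it should be deleted rather than hedged.
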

\begin{proof}
  The first remark in this subsection directly shows that $M_q$ is closed under addition for all $q \in Q$.
  %TODO referenz !!!!!!!!!!!!!!!!!!!!!!!!!!!!!!!!!!!!!!!!!!!!!!!!!!!!!!!!!!!!!!
  Thus, it is easy to show that for every $q \in Q$ there exists $d_q$ such that $M_q \subset d_q \cdot \N$ and $d_q \cdot \N \setminus M_q$ is finite:\\
  We take $d_q := \gcd(M_q)$ which can be written as $d_q = \sum_{\ell<r} \alpha_{\ell} n_{\ell}$ for some $r\in \N$ and $n_{\ell} \in M_q$.
  Obviously, we find $M_q \subset d_q \cdot \N$.
  Let $m := \sum_{\ell<r} n_{\ell}$, then we can write any $N \in d_q\cdot \N$ as $rm + s d_q$ with $0\leq s < m/d_q$ and one finds for sufficiently large $N$ 
  - and, therefore, sufficiently large $r$ - that all coefficients
  of the $n_{\ell}$ are positive and, therefore, $N \in M_q$.
  
  It  remains to show that all $d_q$ coincide. 
  Let $q_1,q_2 \in Q$. 
  There exist $m \in \N$, $\bfw_1 \in \Sigma^{m}$ and $\bfw_2 \in \Sigma^{m+d_{q_1}}$ 
  such that \eqref{eq:good_path} holds for $\bfw_1,\bfw_2$ (for $q= q_1$).
  Furthermore, by \cref{le:transducer_regular} and \cref{le:inv}, there exist $\bfw, \overline{\bfw}^{q_2}$ such that 
  $\delta(q_2,\bfw) = q_1, \delta(q_1,\overline{\bfw}^{q_2}) = q_2$ and $T(q_2,\bfw \overline{\bfw}^{q_2}) = id$.
  Therefore, $\bfw \bfw_i \overline{\bfw}^{q_2}$ fulfill \eqref{eq:good_path}  for $i = 1,2$ (and $q = q_2$).
  These two paths belonging to $M_{q_2}$ have length $|\bfw| + |\overline{\bfw}^{q_2}| + m$ and $|\bfw| + |\overline{\bfw}^{q_2}| + m + d_{q_1}$ respectively. 
  As $M_{q_2} \subseteq d_{q_2} \N$ we see that $d_{q_2} \leq d_{q_1}$ holds.
  Since $q_1,q_2$ are arbitrary elements of $Q$ the statement follows.
\end{proof}
\begin{example}
  Consider the following naturally induced transducer with one state, where $k=2$.
  \begin{align*}
    \lambda(q_0,0) &= (23)\\
    \lambda(q_0,1) &= (123)
  \end{align*}
  One finds easily that $d = 1$ and $M_{q_0} = \N \setminus \{1\}$.
\end{example}

\begin{remark}
  Changing the order of the tuples $q \in Q$, i.e., considering a different naturally induced transducer, does not change $d(\mathcal{T}_{A})$ (since $1 \mapsto \sigma_q \circ id \circ \sigma_q^{-1} = id$).
  Thus we find that $d(\mathcal{T}_{A})$ only depends on $A$ and we write $d = d(A) = d(\mathcal{T}_{A})$.
\end{remark}

As mentioned before,
we are interested in which elements of $\Delta$ occur for paths with certain length. 
One might assume that some periodic behavior (with period $d$) might occur when restricting to paths of certain length and, furthermore, 
that for long enough paths some closure properties hold.

To find some precise statements, we define 
\begin{align*}
  M_{q \overline{q},\ell} := \{T(q,\bfw)|\bfw \in \Sigma^{\ell}, \delta(q,\bfw) = \overline{q}\},
\end{align*}
for non-negative $\ell$ and $M_{q \overline{q},\ell} = \emptyset$ for negative $\ell$.

$M_{q \overline{q},\ell}$ are all the group elements that occur when reading a word of length $\ell$ that corresponds to a path from $q$ to $\overline{q}$.
Furthermore, we define $G_{q \overline{q}}(\ell) := \bigcup_{\ell' \equiv \ell (d)} M_{q \overline{q},\ell'}$.
The following lemma shows that this union is actually a limit.
\begin{lemma}\label{le:m_00}
  For all $q,\overline{q} \in Q$ and for all $\ell$ such that $0 \leq \ell \leq d-1$ it holds that, $G_{q\overline{q}}(\ell) = M_{q\overline{q},\ell + k d}$ for almost all - i.e., all but finitely many - $k \in \N$. 
\end{lemma}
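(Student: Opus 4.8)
The plan is to show that, for fixed $q,\overline q\in Q$ and fixed $\ell$ with $0\le\ell\le d-1$, the sets $M_{q\overline q,\ell+kd}$ form an eventually non-decreasing family of subsets of the \emph{finite} group $\Delta$, and then to observe that such a family necessarily stabilizes at its union, which is exactly $G_{q\overline q}(\ell)$. First I would record that the non-negative integers congruent to $\ell$ modulo $d$ are precisely $\ell,\ell+d,\ell+2d,\dots$ (here $0\le\ell\le d-1$ is used), so that $G_{q\overline q}(\ell)=\bigcup_{k\ge 0}M_{q\overline q,\ell+kd}$. Next, by \cref{le:d} the set $d\cdot\N\setminus M_q$ is finite, so there is $m_1\in\N$ with $md\in M_q$ for all $m\ge m_1$.

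The key step is the following monotonicity claim: $M_{q\overline q,\ell+kd}\subseteq M_{q\overline q,\ell+k'd}$ whenever $k'\ge k+m_1$. Indeed, since $k'-k\ge m_1$ we have $(k'-k)d\in M_q$, so there is a path $\bfv\in\Sigma^{(k'-k)d}$ with $\delta(q,\bfv)=q$ and $T(q,\bfv)=id$. Given any $\bfw\in\Sigma^{\ell+kd}$ with $\delta(q,\bfw)=\overline q$, the concatenation $\bfv\bfw$ has length $\ell+k'd$ and satisfies $\delta(q,\bfv\bfw)=\delta(\delta(q,\bfv),\bfw)=\overline q$, while the factorization of the defining product of $T$ gives $T(q,\bfv\bfw)=T(q,\bfv)\circ T(\delta(q,\bfv),\bfw)=id\circ T(q,\bfw)=T(q,\bfw)$. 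Hence every element of $M_{q\overline q,\ell+kd}$ occurs already in $M_{q\overline q,\ell+k'd}$.

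Finally I would use finiteness of $\Delta$. Since $G_{q\overline q}(\ell)$ is finite, it is exhausted by finitely many of the $M_{q\overline q,\ell+kd}$, say $G_{q\overline q}(\ell)=\bigcup_{k\le K}M_{q\overline q,\ell+kd}$. Fix any $k\ge K+m_1$. For $g\in G_{q\overline q}(\ell)$ choose $k_g\le K$ with $g\in M_{q\overline q,\ell+k_g d}$; since $k\ge k_g+m_1$, the monotonicity step yields $g\in M_{q\overline q,\ell+kd}$. Thus $G_{q\overline q}(\ell)\subseteq M_{q\overline q,\ell+kd}$, and as the reverse inclusion $M_{q\overline q,\ell+kd}\subseteq G_{q\overline q}(\ell)$ is immediate from the definition of $G_{q\overline q}(\ell)$, we get equality for all $k\ge K+m_1$, i.e. for all but finitely many $k$.

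I do not expect a genuine obstacle here; the only points requiring care are purely bookkeeping: tracking the composition order in $T(q,\bfv\bfw)$, and using that one must \emph{prepend} a good path at $q$ (appending a good path at $\overline q$ is in general not available, since $T$ need not be trivial on loops at $\overline q$), which leaves both the endpoint $\overline q$ and the produced group element unchanged — the same manipulation already appearing in \cref{le:inv} and \cref{pr:equality}. One could additionally note, via \cref{le:transducer_regular}, that $G_{q\overline q}(\ell)$ is in fact non-empty, but this is not needed for the argument.
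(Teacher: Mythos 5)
Your proposal is correct and uses essentially the same argument as the paper: prepend a loop at $q$ that fulfills \eqref{eq:good_path}, of length a large multiple of $d$, to lift an element of $M_{q\overline q,\ell+kd}$ into $M_{q\overline q,\ell+k'd}$, then invoke finiteness of $\Delta$ to make this uniform. The paper runs this argument element-by-element for each $g\in G_{q\overline q}(\ell)$, while you package it as a monotonicity-plus-stabilization statement, but the content is identical.
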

\begin{proof}
  It is sufficient to show that for each $g \in G_{q\overline{q}}(\ell)$ we have $g \in M_{q\overline{q},\ell +k d}$ for almost all $k \in \N$.\\
  For $g \in G_{q\overline{q}}(\ell)$, there exists $k_0 \in \N$ and $\bfw \in \Sigma^{\ell + k_0 d}$ such that $\delta(q,\bfw) = \overline{q}$ and $T(q,\bfw) = g$. 
  \cref{le:d} implies that we have for almost all $k' \in \N$:
  There exists $\bfw_{k'} \in \Sigma^{k'd}$ which fulfills \eqref{eq:good_path}.
  Since $\bfw_{k'} \bfw$ is a path from $q$ to $\overline{q}$ with weight $g$, we have for almost all $k \geq k_0$ (and therefore for almost all $k \in \N$), 
  $g \in M_{q\overline{q},\ell+kd}$.
\end{proof}
We are interested in how the different $G_{q\overline{q}}(\ell)$ are related.
We see easily that for $q_1,q_2,q_3 \in Q$, $G_{q_1q_2}(\ell_1) \cdot G_{q_2q_3}(\ell_2) \subseteq G_{q_1q_3}(\ell_1+\ell_2)$ holds, 
but we actually even find equality:

\begin{lemma}\label{le:G_prod}
  For all $\ell_1, \ell_2 \in \N$ and $q_1,q_2,q_3 \in Q$ it holds that
  \begin{align*}
    G_{q_1q_3}(\ell_1+\ell_2) = G_{q_1q_2}(\ell_1) \cdot G_{q_2q_3}(\ell_2).
  \end{align*}
  Furthermore, $|G_{q_0q_0}(0)| = |G_{q\overline{q}}(\ell)|$ holds for all $q,\overline{q} \in Q$ and $\ell$ such that $0 \leq \ell \leq d-1$.
\end{lemma}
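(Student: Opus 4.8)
The easy inclusion $G_{q_1q_2}(\ell_1)\cdot G_{q_2q_3}(\ell_2)\subseteq G_{q_1q_3}(\ell_1+\ell_2)$ has already been recorded above, so the plan is to prove the reverse inclusion and then to deduce the cardinality statement from it. The guiding idea for the reverse inclusion is that any path witnessing an element of $G_{q_1q_3}(\ell_1+\ell_2)$ can be rerouted through $q_2$ by prepending a detour whose contribution to $T$ is cancelled afterwards; \cref{le:inv} is exactly what allows the cancellation, and \cref{le:d} is what keeps the lengths in the correct residue classes modulo $d$.

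Concretely, I would fix $g\in G_{q_1q_3}(\ell_1+\ell_2)$, realized by some $\bfw\in\Sigma^{\ell}$ with $\ell\equiv\ell_1+\ell_2\pmod d$, $\delta(q_1,\bfw)=q_3$ and $T(q_1,\bfw)=g$. Using \cref{le:transducer_regular} I would pick $\bfw_1\in\Sigma^{a}$ with $a\geq N_0$, $a\equiv\ell_1\pmod d$ and $\delta(q_1,\bfw_1)=q_2$, and set $h_1:=T(q_1,\bfw_1)\in M_{q_1q_2,a}\subseteq G_{q_1q_2}(\ell_1)$. Applying \cref{le:inv} to $\bfw_1$ at $q_1$ gives an inverse path $\bfv:=\overline{\bfw_1}^{q_1}$ with $\delta(q_1,\bfw_1\bfv)=q_1$ and $T(q_1,\bfw_1\bfv)=id$; hence $\delta(q_2,\bfv)=q_1$ and, since $T(q_1,\bfw_1\bfv)=h_1\circ T(q_2,\bfv)=id$, also $T(q_2,\bfv)=h_1^{-1}$. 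Because $\bfw_1\bfv$ satisfies \eqref{eq:good_path} at $q_1$, \cref{le:d} forces $a+|\bfv|\equiv 0\pmod d$, so $|\bfv|\equiv-\ell_1\pmod d$. Then $\bfv\bfw$ is a path from $q_2$ to $q_3$ of length $\equiv-\ell_1+(\ell_1+\ell_2)\equiv\ell_2\pmod d$ with $T(q_2,\bfv\bfw)=T(q_2,\bfv)\circ T(q_1,\bfw)=h_1^{-1}\circ g$, whence $h_1^{-1}\circ g\in G_{q_2q_3}(\ell_2)$ and therefore $g=h_1\circ(h_1^{-1}\circ g)\in G_{q_1q_2}(\ell_1)\cdot G_{q_2q_3}(\ell_2)$. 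This closes the reverse inclusion.

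For the cardinality claim I would fix $q,\overline{q}\in Q$ and $0\leq\ell\leq d-1$ and, by the same combination of \cref{le:transducer_regular}, \cref{le:inv} and \cref{le:d}, produce elements $a\in G_{q_0q}(0)$ together with a path from $q$ to $q_0$ of length $\equiv 0\pmod d$ realizing $a^{-1}$ (so $a^{-1}\in G_{qq_0}(0)$), and $b\in G_{q_0\overline{q}}(\ell)$ together with a path from $\overline{q}$ to $q_0$ of length $\equiv-\ell\pmod d$ realizing $b^{-1}$ (so $b^{-1}\in G_{\overline{q}q_0}(-\ell)$). Using the product formula just proved twice, the map $g\mapsto a^{-1}\circ g\circ b$ sends $G_{q_0q_0}(0)$ into $G_{qq_0}(0)\cdot G_{q_0q_0}(0)\cdot G_{q_0\overline{q}}(\ell)=G_{q\overline{q}}(\ell)$, and $g'\mapsto a\circ g'\circ b^{-1}$ sends $G_{q\overline{q}}(\ell)$ into $G_{q_0q}(0)\cdot G_{q\overline{q}}(\ell)\cdot G_{\overline{q}q_0}(-\ell)=G_{q_0q_0}(0)$; these two maps are visibly mutually inverse, so $|G_{q_0q_0}(0)|=|G_{q\overline{q}}(\ell)|$.

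The step I expect to demand the most care is the length bookkeeping: one must ensure that every connecting path and every inverse path produced lands in the prescribed residue class modulo $d$, which is precisely why \cref{le:transducer_regular} (connectors of every sufficiently large length, hence of every residue) and \cref{le:d} (identity-loops have length divisible by $d$) are both needed. A secondary, but routine, point once the connectors are fixed is to check that the two maps in the last step are well defined — i.e.\ that they really land in $G_{q\overline{q}}(\ell)$ and in $G_{q_0q_0}(0)$ respectively — which is the only place where the product formula is genuinely used.
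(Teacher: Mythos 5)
Your proof is correct. The reverse inclusion $G_{q_1q_3}(\ell_1+\ell_2)\subseteq G_{q_1q_2}(\ell_1)\cdot G_{q_2q_3}(\ell_2)$ is proved exactly as in the paper: pick a connector $\bfw_1$ from $q_1$ to $q_2$ in the right residue class, use its inverse path to reroute the witnessing path $\bfw$ through $q_2$, and read off the factorization $g=h_1\circ(h_1^{-1}\circ g)$. (The paper packages the observation about inverse paths as the separate identity $G_{q\overline{q}}(\ell)^{-1}=G_{\overline{q}q}(-\ell)$, but the content is identical.) For the cardinality statement your route is a mild variant: you construct an explicit two-sided bijection $g\mapsto a^{-1}\circ g\circ b$ between $G_{q_0q_0}(0)$ and $G_{q\overline{q}}(\ell)$, whereas the paper derives one-sided injections from $G_{q_1q_3}(\ell)\supseteq G_{q_1q_2}(\ell')\cdot g_2$ and $G_{q_1q_3}(\ell)\supseteq g_1\cdot G_{q_2q_3}(\ell')$ and closes a cyclic chain of cardinality inequalities. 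Both rely on exactly the same translation maps; your version is slightly more structural (a single bijection rather than a chain), while the paper's avoids having to check that the two composed maps are mutually inverse. Either way the argument is sound, and the residue-class bookkeeping you flag is handled correctly via \cref{le:transducer_regular} and \cref{le:d}.
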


\begin{proof}
  We start by showing $G_{q\overline{q}}(\ell)^{-1} = G_{\overline{q}q}(-\ell)$ for all $q,\overline{q} \in Q, \ell \in \N$.\\
  We know by \cref{le:inv} that for each $\bfw$ with $\delta(q,\bfw)=\overline{q}$, there exists $\overline{\bfw}^q$. 
  Thus we conclude that $|\bfw| + |\overline{\bfw}^q| \equiv 0 \bmod d$ and, therefore, $G_{q\overline{q}}(\ell)^{-1} \subseteq G_{\overline{q}q}(-\ell)$.
  By inverting both sides, we also find $G_{q\overline{q}}(\ell) \subseteq G_{\overline{q}q}(-\ell)^{-1}$ and since $q,\overline{q}$ and $\ell$ are chosen arbitrarily, 
  $G_{q\overline{q}}(\ell)^{-1} = G_{\overline{q}q}(-\ell)$
  holds for all $q,\overline{q}\in Q, \ell \in \N$.
  
  The inclusion $\supseteq$ is trivial.
  Let $g \in G_{q_1q_3}(\ell_1 + \ell_2)$. 
  By definition there exists $\bfw$ such that $\delta(q_1,\bfw) = q_3, T(q_1,\bfw) = g$ and $|\bfw| \equiv \ell_1 + \ell_2 \bmod d$. 
  \cref{le:transducer_regular} shows the existence of $\bfw_1$ with $\delta(q_1,\bfw_1) = q_2$ and $|\bfw_1| \equiv \ell_1 \bmod d$. 
  We find, by the first part of this proof, an inverse path $\overline{\bfw_1}^{q_1}$ from $q_2$ to $q_1$ such that $|\overline{\bfw_1}^{q_1}|\equiv -\ell_1 \bmod d$ holds.
  We see that $\bfw_2 := \overline{\bfw_1}^{q_1} \bfw$ is a path from $q_2$ to $q_3$ with $|\bfw_2| \equiv \ell_2 \bmod d$ and
  \begin{align*}
    T(q_1,\bfw_1 \bfw_2) &= T(q_2,\bfw_1) \cdot T(q_1,\bfw_2) = T(q_1,\bfw_1) \cdot T(q_2,\overline{\bfw_1}^{q_1}) \cdot T(q_1,\bfw) \\
    &= T(q_1,\bfw_1 \overline{\bfw_1}^{q_1}) \cdot g= g.
  \end{align*}
  This finishes the proof of the first statement.
  We see by the first part of this lemma, that $G_{q_1q_3}(\ell) \supseteq G_{q_1q_2}(\ell') \cdot g_2$ and $G_{q_1q_3}(\ell) \supseteq g_1 \cdot G_{q_2q_3}(\ell')$ 
  for $g_1 \in G_{q_1q_2}(\ell - \ell'), g_2 \in G_{q_2q_3}(\ell-\ell')$. Thus we find
  \begin{align*}
    |G_{q_1q_3}(\ell)| \geq |G_{q_1q_2}(\ell')|\\
    |G_{q_1q_3}(\ell)| \geq |G_{q_2q_3}(\ell')|
  \end{align*}
  for all $q_1,q_2,q_3 \in Q$ and $\ell, \ell' \in \N$.
We can use this fact to show that
\begin{align*}
  |G_{q_0 q_0}(0)| \geq |G_{q_0 q}(0)| \geq |G_{qq}(\ell)| \geq |G_{qq_0}(0)| \geq |G_{q_0 q_0}(0)|.
\end{align*}
Therefore we see that $|G_{qq}(\ell)| = |G_{q_0q_0}(0)|$ for all $q \in Q$ and $\ell \in \N$. To complete the proof we see that
\begin{align*}
  |G_{q_0q_0}(0)| = |G_{qq}(0)| \geq |G_{q\overline{q}}(\ell)| \geq |G_{\overline{q}\overline{q}}(0)| = |G_{q_0q_0}(0)|.
\end{align*}
\end{proof}
\begin{remark}
  \cref{le:G_prod} gives a strong structural results. 
  Take, for example, any element $g_2\in G_{q_2q_3}(\ell_2)$. As $|G_{q_1q_3}(\ell_1+\ell_2)| = |G_{q_1q_2}(\ell_1)|$, it follows that
  $G_{q_1q_3}(\ell_1 + \ell_2) = G_{q_1q_2}(\ell_1) \cdot g_2$.
  A similar result holds for $g_1 \in \G_{q_1q_2}(\ell_1)$.
\end{remark}

\begin{corollary}\label{co:G_qq_subgroup}
  $G_{qq}(0)$ is a subgroup of $\Delta$ for all $q \in Q$.
\end{corollary}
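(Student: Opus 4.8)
The plan is to read off the subgroup axioms directly from \cref{le:G_prod} by specialising $q_1 = q_2 = q_3 = q$ and $\ell_1 = \ell_2 = 0$. First I would record that $G_{qq}(0)$ is nonempty: by \cref{le:inv} the set $M_q$ is nonempty, and by \cref{le:d} every element of $M_q$ is a multiple of $d$, so there is a word $\bfw$ with $\delta(q,\bfw) = q$, $T(q,\bfw) = id$ and $|\bfw| \equiv 0 \bmod d$; hence $id \in M_{q q,|\bfw|} \subseteq G_{qq}(0)$. Moreover $G_{qq}(0) \subseteq \Delta$, which is finite since $\mathcal{T}_A$ is a naturally induced transducer of a strongly connected automaton.

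Next, closure under the group operation: applying \cref{le:G_prod} with $q_1 = q_2 = q_3 = q$ and $\ell_1 = \ell_2 = 0$ gives
\begin{align*}
  G_{qq}(0) = G_{qq}(0+0) = G_{qq}(0)\cdot G_{qq}(0),
\end{align*}
so in particular $G_{qq}(0)\cdot G_{qq}(0) \subseteq G_{qq}(0)$. A nonempty finite subset of a group that is closed under multiplication is automatically a subgroup, so we are done. If one prefers to exhibit inverses explicitly, the identity $G_{q\overline q}(\ell)^{-1} = G_{\overline q q}(-\ell)$ established in the proof of \cref{le:G_prod} yields, with $\overline q = q$ and $\ell = 0$, that $G_{qq}(0)^{-1} = G_{qq}(0)$, since $-0 \equiv 0 \bmod d$.

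There is no real obstacle here: the statement is an immediate consequence of the structural identities in \cref{le:G_prod} and \cref{le:inv}. The only point that deserves a line of care is checking that $id$ actually lies in $G_{qq}(0)$ — i.e.\ that an inverse path of length divisible by $d$ exists — which is exactly what \cref{le:inv} together with \cref{le:d} provide.
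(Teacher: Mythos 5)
Your proposal is correct and matches the argument the paper leaves implicit: closure under multiplication from \cref{le:G_prod} with $q_1=q_2=q_3=q$, $\ell_1=\ell_2=0$, plus finiteness of $\Delta$ (or, equivalently, the inverse-path identity $G_{q\overline{q}}(\ell)^{-1}=G_{\overline{q}q}(-\ell)$ from the proof of \cref{le:G_prod}), with nonemptiness supplied by \cref{le:inv} and \cref{le:d}. Nothing to add.
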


$G_{qq}(0)$ being a subgroup is the first example for the appearance of some closure property occurring in this setting.\\
In this subsection, we also prove that all $G_{q\overline{q}}(\ell)$ are cosets of $G_{q_0q_0}(0)$, but first we mention a slightly stronger result than \cref{le:m_00} which we need in the next subsection.

\begin{lemma}\label{le:m_0}
  There exists $m_0 = m_0(A)$ such that for all $q,\overline{q} \in Q$ and for all $\ell$ such that $0 \leq \ell \leq d-1, g \in G_{q\overline{q}}(\ell)$ and $k \geq m_0$ 
  there exist $\bfw_1,\bfw_2 \in \Sigma^{\ell +kd}$ such that
  $\bfw_1 \neq \bfw_2$ and $\delta(q,\bfw_i) = \overline{q}, T(q,\bfw_i) = g$.
\end{lemma}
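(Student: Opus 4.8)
The plan is to deduce this from \cref{le:m_00} by prefixing a single witnessing word with one of two distinct paths satisfying \eqref{eq:good_path} (``good paths'') of a common length. First I would upgrade \cref{le:m_00} to a uniform statement: since $Q$ and $\Delta$ are finite and $\ell$ ranges only over $\{0,\dots,d-1\}$, applying \cref{le:m_00} to each of the finitely many triples $(q,\overline{q},\ell)$ produces a single $m_1\in\N$ such that for all $q,\overline{q}\in Q$, all $\ell\in\{0,\dots,d-1\}$, all $k\geq m_1$ and all $g\in G_{q\overline{q}}(\ell)$ there is at least one word $\bfw_0\in\Sigma^{\ell+kd}$ with $\delta(q,\bfw_0)=\overline{q}$ and $T(q,\bfw_0)=g$.

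Next I would construct, for a suitable $D\in d\cdot\N$, two distinct good paths of length $D$ based at each $q\in Q$. Fix $q\in Q$ and two distinct letters $a\neq b$ in $\Sigma$ (here I use $k\geq 2$). By \cref{le:inv} the one-letter paths $(a)$ and $(b)$ extend to good paths $\bfw_a$ and $\bfw_b$ based at $q$ (namely $(a)$, resp.\ $(b)$, followed by its inverse path); these are distinct since they differ in their first letter, and their lengths lie in $M_q\subseteq d\cdot\N$ by \cref{le:d}. Again by \cref{le:d}, $M_q$ contains every sufficiently large multiple of $d$, so for every large enough multiple $D$ of $d$ I may append to $\bfw_a$ a good path based at $q$ of length $D-|\bfw_a|$ (taking the empty path if that number is $0$), and likewise append to $\bfw_b$ a good path of length $D-|\bfw_b|$; this produces two distinct good paths $\bfw_q',\bfw_q''\in\Sigma^{D}$ with $\delta(q,\bfw_q')=\delta(q,\bfw_q'')=q$ and $T(q,\bfw_q')=T(q,\bfw_q'')=id$. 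Taking $D$ large enough to serve all of the finitely many states $q\in Q$ at once fixes one such $D$ (and one such pair for each $q$).

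Finally I would set $m_0:=m_1+D/d$ and conclude. Given $q,\overline{q}\in Q$, $\ell\in\{0,\dots,d-1\}$, $g\in G_{q\overline{q}}(\ell)$ and $k\geq m_0$, put $k':=k-D/d\geq m_1$; since $\ell+k'd\equiv\ell\pmod d$, the uniform version of \cref{le:m_00} gives $\bfw_0\in\Sigma^{\ell+k'd}$ with $\delta(q,\bfw_0)=\overline{q}$ and $T(q,\bfw_0)=g$. Setting $\bfw_1:=\bfw_q'\bfw_0$ and $\bfw_2:=\bfw_q''\bfw_0$, both words have length $D+(\ell+k'd)=\ell+kd$; since $\delta(q,\bfw_q')=\delta(q,\bfw_q'')=q$, the concatenation rules give $\delta(q,\bfw_i)=\delta(q,\bfw_0)=\overline{q}$ and $T(q,\bfw_i)=id\circ T(q,\bfw_0)=g$ for $i=1,2$. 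Finally $\bfw_1\neq\bfw_2$ because $\bfw_q'\neq\bfw_q''$, which is exactly the assertion.

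The step I expect to require the most care is the construction of two distinct good paths of a common length: it is the only genuinely new ingredient and it relies on $k\geq 2$ together with \cref{le:inv} (to close short paths up into good ones) and \cref{le:d} (to pad them to a common length); the rest is bookkeeping to make the threshold uniform in $(q,\overline{q},\ell,g)$ and to verify that the lengths add up.
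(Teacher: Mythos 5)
Your proof is correct, and it takes a genuinely different route from the paper's. The paper also begins by upgrading Lemma~\ref{le:m_00} to a uniform threshold $m_1$, but then obtains two distinct witnesses by a three-way case distinction: when $\abs{Q}\geq 2$ it sends $q$ along two length-$m_1 d$ prefixes to two distinct intermediate states $q_1\neq q_2$ (via Lemma~\ref{le:transducer_regular}) and then continues to $\overline{q}$ with compensating weights; when $\abs{Q}=1$ but $\abs{G_{q_0 q_0}(0)}\geq 2$ it uses two prefixes of distinct weight instead; and when both are trivial, any two distinct prefixes work. You instead manufacture, once and for all, two \emph{distinct} words satisfying \eqref{eq:good_path} at each $q$ of a single common length $D\in d\N$ — using two different first letters closed up by Lemma~\ref{le:inv} and then padded to length $D$ via Lemma~\ref{le:d} — and prepend these to a single witness from the uniform Lemma~\ref{le:m_00}. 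This removes the case analysis entirely and cleanly decouples the two requirements: distinctness of $\bfw_1,\bfw_2$ comes solely from the two neutral prefixes, and the endpoint/weight condition comes solely from $\bfw_0$. Your threshold $m_0=m_1+D/d$ and the length arithmetic $D+(\ell+k'd)=\ell+kd$ check out, and the reliance on $\abs{\Sigma}\geq 2$ is shared with the paper's third case, so it is not an extra hypothesis.
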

\begin{proof}
  \cref{le:m_00} guarantees the existence of some $m_{1}$ such that 
  for all $q,\overline{q} \in Q, 0 \leq \ell \leq d-1, g \in G_{q\overline{q}}(\ell)$ and $k' \geq m_{1}$ exists $\bfw \in \Sigma^{\ell + k'd}$ such that
  $\delta(q,\bfw) = \overline{q}, T(q,\bfw) = g$.
  We will show that the desired property holds for $k = 2 m_{1}$, which implies the statement, since there exists for all $k' \geq m_{1}$ a path $\bfw \in \Sigma^{k' d}$ with
  $\delta(\overline{q},\bfw) = \overline{q}$ and $T(\overline{q},\bfw) = id$.\\
  \begin{itemize}
   \item Case $|Q| \geq 2$:\\
      We fix $q_1\neq q_2 \in Q$.
      \cref{le:transducer_regular} yields the existence of some paths $\bfw_1',\bfw_2'$ of length $m_1 d$ such that $\delta(q,\bfw_i') = q_i$.
      Of course, we find $T(q,\bfw_i') \in G_{q q_i}(0)$ which shows together with
      \cref{le:G_prod} that $T(q,\bfw_i')^{-1} g \in G_{q_i \overline{q}}(\ell)$.
      \cref{le:m_00} shows that there exist paths $\bfw_1'',\bfw_2''$ of length $\ell + m_1 d$ such that $\delta(q_i,\bfw_i'') = \overline{q}$ and $T(q,\bfw_i' \bfw_i'') = g$.
   \item Case $|Q| = 1, |G_{q_0 q_0}(0)| \geq 2$:\\
      Thus we have $Q = \{q_0\}$.
      We fix $g_1 \neq g_2 \in G_{q_0 q_0}(0)$.
      A combination of \cref{le:m_00} and \cref{le:G_prod} yields the existence of some paths $\bfw_1',\bfw_2'$ of length $m_1 d$ 
      and paths $\bfw_1'',\bfw_2''$ of length $\ell + m_1 d$ such that $T(q_0,\bfw_i') = g_i$ and $T(q_0,\bfw_i' \bfw_i'') = g$.
   \item Case $|Q| = |G_{q_0 q_0}(0)| = 1$:\\
      Obviously taking any two different $\bfw_i'\in \Sigma^{m_1 d}$ and $\bfw \in \Sigma^{\ell + m_1 d}$ such that
      $T(q_0,\bfw_i') = id, T(q_0,\bfw) = g$,
      gives $T(q_0,\bfw_i' \bfw) = g$.
  \end{itemize}
  Thus we have constructed in all cases two different paths with the desired properties.
\end{proof}

We have seen in \cref{pr:T_permute} that permuting the elements of $Q$ again gives an induced transducer.
We now show that by choosing the right permutations, we are able to make the actual structure of the induced transducer more apparent.

\begin{lemma}\label{le:id}
  Let $\mathcal{T}_{A}$ be a naturally induced transducer. 
  There exists a naturally induced transducer $\overline{\mathcal{T}}_{A}$ such that
  $id \in \overline{G}_{\overline{q_0} \overline{q}}(0)$ holds for each $\overline{q} \in \overline{Q}$.
  %For every DFA $A$ one can change the order on the elements of $Q \setminus \{q_0\}$ such that $id \in G_{q_0 q}(0)$ for each $q\in Q$.
\end{lemma}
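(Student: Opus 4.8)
The plan is to use Proposition~\ref{pr:T_permute}, which tells us that we may reorder the tuples in $Q$ by arbitrary permutations $\sigma_q$ (subject only to $\pi_1(\sigma_{q_0}\cdot q_0) = q_0'$) and obtain a new naturally induced transducer $\overline{\mathcal{T}}_A$ with
\begin{align*}
  \overline{T}(\overline{q_0},\bfw) = \sigma_{q_0}\circ T(q_0,\bfw)\circ \sigma_{\delta(q_0,\bfw)}^{-1}.
\end{align*}
So the task reduces to choosing, for each $q\in Q$, a permutation $\sigma_q$ so that for every $q$ and every word $\bfw$ of length $\equiv 0\bmod d$ with $\delta(q_0,\bfw)=q$, the element $\sigma_{q_0}\circ T(q_0,\bfw)\circ\sigma_q^{-1}$ can be made to equal $id$ for at least one such $\bfw$ (after which one argues it lies in $\overline{G}_{\overline{q_0}\overline{q}}(0)$). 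First I would normalize by taking $\sigma_{q_0}=id$ (allowed since we only need $\pi_1$ fixed, and one can further compose with a permutation fixing the first coordinate to repair that condition if needed — this is where property~5) and the freedom in $\sigma_{q_0}$ come in). Then for each $q\in Q$, pick any single element $g_q \in G_{q_0 q}(0)$, which is nonempty by strong connectedness together with Lemma~\ref{le:transducer_regular} and Lemma~\ref{le:inv} (more precisely, $G_{q_0 q}(0) = M_{q_0 q, kd}$ for large $k$ by Lemma~\ref{le:m_00}), and set $\sigma_q := g_q$.

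With this choice, for a word $\bfw$ realizing $g_q$ (i.e. $\delta(q_0,\bfw)=q$, $T(q_0,\bfw)=g_q$, $|\bfw|\equiv 0\bmod d$) we get $\overline{T}(\overline{q_0},\bfw) = id\circ g_q\circ g_q^{-1} = id$, so $id\in \overline{M}_{\overline{q_0}\overline{q},|\bfw|}\subseteq \overline{G}_{\overline{q_0}\overline{q}}(0)$. One has to check this is legitimate: the $\sigma_q$ are genuine elements of $S_{n_0}$ (indeed of $\Delta$), the condition on $\overline{q_0}$ is met, and by the last sentence of Proposition~\ref{pr:T_permute} the reordered transducer $\overline{\mathcal{T}}_A$ is again naturally induced. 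One subtlety: $G_{q_0 q}(0)$ refers to paths of length $\equiv 0\bmod d$; since $d$ and the structure are invariant under reordering (per the remark after Lemma~\ref{le:d}), $\overline{d}=d$ and $\overline{G}_{\overline{q_0}\overline{q}}(0)$ is indexed consistently, so the computation above really does place $id$ in the right coset-set.

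The main obstacle I anticipate is purely bookkeeping rather than conceptual: making sure the normalization $\sigma_{q_0}=id$ is compatible with picking $\sigma_{q_0}$ freely while still fixing the first coordinate $\pi_1(q_0)=q_0'$ — one resolves this by noting $id$ already fixes $q_0'$, so $\sigma_{q_0}=id$ is a valid choice and no repair is needed. A second, minor point is confirming $G_{q_0 q}(0)\neq\emptyset$ for every $q$; this follows because $\mathcal{T}_A$ is strongly connected, so some $\bfw$ has $\delta(q_0,\bfw)=q$, and by Lemma~\ref{le:inv} together with Lemma~\ref{le:d} we can adjust the length to be a multiple of $d$ (prepending an inverse loop of the right residue at $q_0$), giving a nonempty $M_{q_0 q, kd}$ for large $k$, hence $G_{q_0 q}(0)\neq\emptyset$. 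Everything else is a direct substitution into the formula of Proposition~\ref{pr:T_permute}.
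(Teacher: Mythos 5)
Your proof is correct and takes essentially the same route as the paper: both apply Proposition~\ref{pr:T_permute} with $\sigma_{q_0}=id$ and $\sigma_q$ chosen to be an arbitrary element of $G_{q_0 q}(0)$, from which $id\in\overline{G}_{\overline{q_0}\overline{q}}(0)$ follows immediately. The paper phrases the conclusion compactly via the identity $\overline{G}_{\overline{q_0}\overline{q}}(0)=G_{q_0 q}(0)\cdot\sigma_q^{-1}$, while you verify it pointwise on a realizing word; the bookkeeping you spell out (nonemptiness of $G_{q_0 q}(0)$, invariance of $d$, the first-coordinate constraint) is all correct and implicitly taken for granted in the paper.
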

\begin{proof}
  The idea is to find permutations $\sigma_q$ such that the induced transducer defined in \cref{pr:T_permute} has the desired properties.
  We take $\sigma_q \in G_{q_0 q}(0)$ for all $q \neq q_0$ and $\sigma_{q_0} = id$.
  By \cref{pr:T_permute}, we find
  $\overline{G}_{\overline{q_0 q}}(0) = G_{q_0 q} \cdot \sigma_{q}^{-1}$, which directly finishes the proof.
\end{proof}
We consider, from now on, only naturally induced transducers for which $id \in G_{q_0 q}(0)$ holds for all $q\in Q$.

We define $G := G_{q_0 q_0}(0)$ and find that the condition above is already sufficient to make $G_{q\overline{q}}(\ell)$ independent of $q,\overline{q}$, 
which makes the actual structure more apparent.

% \begin{lemma}
%   For all $q,q' \in Q$ holds $G_{qq'}(0) = G$.
% \end{lemma}
% \begin{proof}
%   We know by \cref{le:G_prod} that $G_{q_0 q}(0) = G_{q_0 q_0}(0) \cdot G_{q_0 q}(0)$ and by \cref{le:id}
%   $G_{q_0 q_0}(0) \cdot G_{q_0 q}(0) \supseteq G \cdot 1$ and by comparing the cardinalities we see that
%   $G_{q_0 q}(0) = G$ holds. Analogously one finds that $G_{q_0 q'}(0) = G$. 
%   \cref{le:G_prod} gives again $G_{q_0 q'}(0) = G_{q_0 q}(0) \cdot G_{q q'}(0)$ (i.e.,~$G = G \cdot G_{q q'}(0)$) 
%   and one concludes by a similiar reasoning that $G_{qq'}(0) = G$ holds.
% \end{proof}

\begin{proposition}\label{pr:G_g_0}
  There exists $g_0 \in \Delta$ such that we have for all $q,\overline{q} \in Q, \ell \in \N$ 
  \begin{align*}
    G_{q\overline{q}}(\ell) = G \cdot g_0^{\ell} = g_0^{\ell} \cdot G.
  \end{align*}
\end{proposition}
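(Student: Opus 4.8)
The plan is to reduce the whole statement to understanding the ``base'' subgroup $G=G_{q_0q_0}(0)$ together with one distinguished element $g_0$ coming from a return loop at $q_0$ of length $\equiv 1\bmod d$, and then to propagate to all $q,\overline q,\ell$ using the multiplicativity in \cref{le:G_prod}.

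\emph{Step 1: the sets $G_{q\overline q}(\ell)$ with $\ell\equiv 0\bmod d$.} I would first show $G_{qq_0}(0)=G$ for every $q\in Q$. Indeed, \cref{le:G_prod} with $\ell_1=\ell_2=0$ and intermediate state $q$ gives $G=G_{q_0q}(0)\cdot G_{qq_0}(0)$; since the chosen transducer satisfies $id\in G_{q_0q}(0)$, this forces $G_{qq_0}(0)\subseteq G$, and the cardinality assertion of \cref{le:G_prod} ($|G_{qq_0}(0)|=|G|$, both finite) upgrades this to $G_{qq_0}(0)=G$. Taking inverses and using the identity $G_{q\overline q}(\ell)^{-1}=G_{\overline q q}(-\ell)$ established inside the proof of \cref{le:G_prod}, together with \cref{co:G_qq_subgroup}, gives $G_{q_0q}(0)=G^{-1}=G$ as well. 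A further application of \cref{le:G_prod} then yields $G_{q\overline q}(0)=G_{qq_0}(0)\cdot G_{q_0\overline q}(0)=G\cdot G=G$ for all $q,\overline q$.

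\emph{Step 2: construction of $g_0$ and the loop group at $q_0$.} By \cref{le:transducer_regular} there is a path from $q_0$ to $q_0$ of every sufficiently large length; choosing one whose length is $\equiv 1\bmod d$ shows $G_{q_0q_0}(1)\neq\emptyset$, so I may fix $g_0\in G_{q_0q_0}(1)$. From $G\,g_0\subseteq G_{q_0q_0}(0)\cdot G_{q_0q_0}(1)=G_{q_0q_0}(1)$ (by \cref{le:G_prod}) and $|G\,g_0|=|G|=|G_{q_0q_0}(1)|$ we obtain $G_{q_0q_0}(1)=G\,g_0$; the symmetric computation with $g_0\cdot G\subseteq G_{q_0q_0}(1)\cdot G_{q_0q_0}(0)$ gives $G_{q_0q_0}(1)=g_0\,G$. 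In particular $g_0\,G=G\,g_0$, so $g_0$ normalises $G$.

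\emph{Step 3: iteration and the general case.} Iterating $G_{q_0q_0}(\ell_1+\ell_2)=G_{q_0q_0}(\ell_1)\cdot G_{q_0q_0}(\ell_2)$ ($\ell$ copies of $G_{q_0q_0}(1)$) gives $G_{q_0q_0}(\ell)=(G\,g_0)^\ell$; using $g_0\,G=G\,g_0$ and that $G$ is a subgroup, this telescopes to $G_{q_0q_0}(\ell)=G\,g_0^\ell=g_0^\ell\,G$. Finally, for arbitrary $q,\overline q\in Q$ and $\ell\in\N$, two applications of \cref{le:G_prod} give $G_{q\overline q}(\ell)=G_{qq_0}(0)\cdot G_{q_0q_0}(\ell)\cdot G_{q_0\overline q}(0)=G\cdot(G\,g_0^\ell)\cdot G$, which equals $G\,g_0^\ell=g_0^\ell\,G$ because $g_0^\ell$ normalises $G$ and $G\cdot G=G$. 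This is exactly the claim. I do not expect a real obstacle: the substantive work already sits in \cref{le:G_prod} (the product formula and the equality of cardinalities), \cref{co:G_qq_subgroup}, and the normalisation $id\in G_{q_0q}(0)$. The only points requiring attention are checking that $G_{q_0q_0}(1)$ is non-empty — which is precisely what \cref{le:transducer_regular} supplies — and keeping the relation $g_0\,G=G\,g_0$ explicit throughout, since $\Delta$ need not be abelian and without it the telescoping of $(G\,g_0)^\ell$ and the collapse of $G\cdot(G\,g_0^\ell)\cdot G$ would fail.
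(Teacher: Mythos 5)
Your proof is correct and follows essentially the same route as the paper: establish $G_{q\overline q}(0)=G$ for all $q,\overline q$ from \cref{le:G_prod} plus the normalisation $id\in G_{q_0q}(0)$ from \cref{le:id}, pick $g_0\in G_{q_0q_0}(1)$ to get $G_{q_0q_0}(1)=G\,g_0=g_0\,G$, and propagate by multiplicativity. The only cosmetic differences are that you derive $G_{qq_0}(0)=G$ first and then pass to $G_{q_0q}(0)=G$ via inversion, and that you make explicit the nonemptiness of $G_{q_0q_0}(1)$ (via \cref{le:transducer_regular}) and the normality relation $g_0G=Gg_0$, both of which the paper leaves implicit.
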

\begin{remark}
  It easily follows that $d|ord(g_0)$ and for $d>1, g_0 \notin G$.\\
  We also find that $\Delta = G \cdot \{g_0^{\ell}: \ell \in \N\} = \{g_0^{\ell}:\ell \in \N\} \cdot G$,
  i.e.,~$G \vartriangleleft \Delta$.\\
  This proposition shows together with \cref{le:m_00} that $M_{q\overline{q},\ell}$ depends only on $(\ell \bmod d)$ for large $\ell$.
\end{remark}
\begin{proof}
  As a first step, we prove that for all $q,\overline{q} \in Q$ we have $G_{q\overline{q}}(0) = G$:\\
   We know by \cref{le:G_prod} that $G_{q_0 q}(0) = G_{q_0 q_0}(0) \cdot G_{q_0 q}(0)$ and without loss of generality by \cref{le:id}, $id \in G_{q_0q_0}(0)$.
   Thus we have $G_{q_0 q_0}(0) \cdot G_{q_0 q}(0) \supseteq G \cdot id$ and, by comparing the cardinalities, we see that
  $G_{q_0 q}(0) = G$. 
  Analogously, one finds that $G_{q_0 \overline{q}}(0) = G$. 
  \cref{le:G_prod} gives again $G_{q_0 \overline{q}}(0) = G_{q_0 q}(0) \cdot G_{q \overline{q}}(0)$ (i.e.,~$G = G \cdot G_{q \overline{q}}(0)$) 
  and one concludes as above that $G_{q\overline{q}}(0) = G$.
  
  Now we consider $G_{q_0 q_0}(1)$. We find by \cref{le:G_prod}
  \begin{align*}
    G_{q_0 q_0} (1) = G \cdot G_{q_0 q_0}(1) = G_{q_0 q_0}(1) \cdot G.
  \end{align*}
  We now select an arbitrary element $g_0$ of $G_{q_0 q_0}(1)$ and find
  \begin{align*}
    G_{q_0 q_0} (1) = G \cdot g_0 = g_0 \cdot G.
  \end{align*}
  
  One easily shows by induction that $G_{q_0 q_0} (\ell) = G \cdot g_0^{\ell} = g_0^{\ell} \cdot G$ holds for all $\ell \in \N$.
  
  It just remains to note that
  \begin{align*}
    G_{q\overline{q}}(\ell) = G_{qq_0}(0) \cdot G_{q_0 q_0}(\ell) \cdot G_{q_0 \overline{q}}(0) = G \cdot (g_0^{\ell} \cdot G) \cdot G = g_0^{\ell} \cdot G.
  \end{align*}
\end{proof}
Now we can prove \cref{th:full_G}.

\begin{proof}[Proof of \cref{th:full_G}: ]
  We choose all the variables as defined throughout this section.
  The statement of this theorem easily follows by a combination of \cref{le:m_00} and \cref{pr:G_g_0}.
\end{proof}

\begin{remark}
  For the period $p$ of $A$, one easily finds that $p | d$. However, $p=d$ need not hold as the example provided after \cref{th:full_G} shows.
\end{remark}

% 
% \begin{example}
%   We take a second look at the example provided after \cref{th:full_G}.
%   An inductive proof yields $d = 2$ and $G_{q_0 q_0}(0) = \{id,(123),(132)\}, G_{q_0 q_0}(1) = \{(12),(23),(13)\}$. 
%   (Note that $T(q_0,101010) = id$.)
%   Thus, 
% \end{example}

\subsection{Arithmetic restriction for naturally induced transducers}\label{sec:str2}
%TODO name!!!!!!!!!!!!!!!!!!!!!!!!!!!!!!!!!!!!!!!!!!!!!!!!!!!!!!!!!!!!!!!!!!!!!!!!!!!!!!!!!!!!!!!!!!!!!!!!!!!!!!!!!!
\cref{th:full_G} shows that all elements of $G$ occur when we restrict ourself to paths whose length is divisible by $d$.
We show in this subsection that -- similarly to \cref{sec:str1} -- restrictions on $[\bfw]_k \bmod (k^{d}-1)$ lead to restrictions on what elements of $G$ occur.\\
The main result of this subsection is the following theorem.
\begin{theorem}\label{th:dk_0}
  Under the same conditions as in \cref{th:full_G},
  there exist natural numbers $k_0, m'_0, \ell_0$ and $d'$ (where $d' | k^{d}-1$ and therefore $\gcd(d',k) = 1$) together with a
  naturally induced transducer $\mathcal{T}_{A}$ fulfilling the properties of \cref{th:full_G}, a subgroup $G_0$ of $G$
  and $g'_0 \in G$ fulfilling the following two conditions.
  \begin{itemize}
    \item For all $q,\overline{q}\in Q$ it holds that
      \begin{align*}
	\{T(q,\bfw): q \in Q, \bfw \in (\Sigma^{d k_0})^{*}, \delta(q,\bfw) = \overline{q}, [\bfw]_k \equiv \ell \bmod d'\} = G_0 \cdot g_0'^{\ell} = g_0'^{\ell} \cdot G_0.
      \end{align*}
    \item For all $q,\overline{q}\in Q$ there exists $d''(q,\overline{q}) | k^{\ell_0}$ such that for all  $g \in G_0$ and $m \geq m'_0$ holds
      \begin{align*}
	\gcd \{[\bfw]_k: \bfw \in \Sigma^{d k_0 m}, \delta(q,\bfw) = \overline{q}, T(q,\bfw) = g\} = d' \cdot d''(q,\overline{q}).
      \end{align*}
  \end{itemize}
  All of the variables only depend on $A$, but not on its initial state $q'_0$.
\end{theorem}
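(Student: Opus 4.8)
The plan is to mimic the structure of the proof of \cref{th:full_G} from \cref{sec:str1}, but now tracking the pair $(T(q,\bfw), [\bfw]_k \bmod (k^d-1))$ instead of just $(T(q,\bfw), |\bfw| \bmod d)$. Concretely, I would work with the ``good paths'' from \eqref{eq:good_path} of length divisible by $d$ (which exist by \cref{le:d} and \cref{le:m_00}), and for each such path additionally record $[\bfw]_k \bmod (k^d-1)$; note that concatenating two paths $\bfw_1\bfw_2$ with $|\bfw_1|, |\bfw_2| \equiv 0 \bmod d$ gives $[\bfw_1\bfw_2]_k \equiv [\bfw_1]_k \cdot k^{|\bfw_2|} + [\bfw_2]_k \equiv [\bfw_1]_k + [\bfw_2]_k \bmod (k^d-1)$, so these residues add. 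Thus the set of attainable residues of good paths at a fixed state $q$ forms a sub-semigroup of $\Z/(k^d-1)\Z$ closed under addition, hence (as in the proof of \cref{le:d}) for all sufficiently large multiples of its gcd $d''_q$ the residue is attained; and a transport argument identical to the one in \cref{le:d} shows all the $d''_q$ coincide. This produces a number I would call $d'$ (after checking $d' \mid k^d-1$ via the semigroup argument and noting $\gcd(d',k)=1$ follows automatically). Then $k_0$ is chosen so that $d k_0$ is the length scale at which everything stabilises, analogous to how $d$ was used in \cref{th:full_G}.

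Next I would introduce the refined sets, in analogy with $M_{q\overline{q},\ell}$ and $G_{q\overline{q}}(\ell)$: for $q,\overline{q}\in Q$ and a residue class $\ell \bmod d'$, set
\begin{align*}
  H_{q\overline{q}}(\ell) := \{T(q,\bfw) : \bfw \in (\Sigma^{dk_0})^{*}, \delta(q,\bfw) = \overline{q}, [\bfw]_k \equiv \ell \bmod d'\}.
\end{align*}
The key algebraic step is a product lemma, exactly parallel to \cref{le:G_prod}: $H_{q_1 q_3}(\ell_1 + \ell_2) = H_{q_1 q_2}(\ell_1) \cdot H_{q_2 q_3}(\ell_2)$, where again $\supseteq$ is immediate from concatenation (using that $[\bfw_1 \bfw_2]_k \equiv [\bfw_1]_k + [\bfw_2]_k \bmod d'$ when both lengths are multiples of $dk_0 \mid d$, since $d' \mid k^d-1 \mid k^{dk_0}-1$), and $\subseteq$ comes from splicing in an inverse path $\overline{\bfw_1}^{q_1}$ as in the proof of \cref{le:G_prod}, noting that its length can be taken a multiple of $dk_0$ and its $k$-value a prescribed residue mod $d'$ by the stabilisation already established. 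From the product lemma the cardinalities $|H_{q\overline{q}}(\ell)|$ are all equal (same telescoping inequality chain as in \cref{le:G_prod}), $H_{q q}(0)$ is a subgroup — this is my $G_0$ — and, after possibly re-permuting $Q$ to arrange $id \in H_{q_0 q}(0)$ (the analogue of \cref{le:id}; one must check this re-permutation is compatible with the normalisation already chosen for \cref{th:full_G}, which it is since permuting elements of $Q$ by elements of $G$ does not disturb the $d$-structure), one gets $H_{q\overline{q}}(\ell) = G_0 \cdot g_0'^{\ell} = g_0'^{\ell} \cdot G_0$ for $g_0'$ an arbitrary element of $H_{q_0 q_0}(1)$, exactly as in \cref{pr:G_g_0}. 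That gives the first bullet of the theorem (with $\ell_0$ and $m'_0$ to be pinned down in the second part). For the second bullet, fix $q,\overline{q}$ and $g \in G_0$; the set $\{[\bfw]_k : \bfw \in \Sigma^{dk_0 m}, \delta(q,\bfw)=\overline{q}, T(q,\bfw)=g\}$, as $m$ varies, has differences lying in the good-path residue semigroup, so its gcd stabilises for large $m$ to $d'$ times some divisor; I would show this divisor divides $k^{\ell_0}$ for a uniform $\ell_0$ by observing that two words of the same length with the same $T$-value and the same endpoints whose $k$-values differ must differ in some fixed-length suffix (using \cref{le:m_0}, which supplies genuinely distinct such words), so the extra common factor beyond $d'$ is a power of $k$ bounded by a constant depending only on $A$.

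I would expect the main obstacle to be \textbf{the second bullet}, specifically isolating the factor $d''(q,\overline{q}) \mid k^{\ell_0}$ cleanly. The mod-$d'$ part is forced by the semigroup/gcd argument and is uniform, but the ``$k^{\ell_0}$'' part is genuinely about the $k$-adic valuation: the set of $k$-values of paths of length $dk_0 m$ with prescribed data need not be all residues mod $k^{dk_0 m}-1$ even after accounting for $d'$, because short ``tails'' of the word can be forced. Making the bound $\ell_0$ uniform in $q,\overline{q}$ (and independent of the initial state $q_0'$, as the theorem demands) requires a careful finite analysis — essentially, that once the word is long enough, only a bounded-length prefix/suffix can be constrained, which I would extract from \cref{le:m_0} together with \cref{le:transducer_regular}. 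A secondary nuisance is bookkeeping the interplay of the two moduli $d$ (lengths) and $d'$ (values): one must consistently restrict to length-multiples of $dk_0$ throughout so that $[\cdot]_k$ is additive mod $d'$, and verify that all the stabilisation constants ($k_0, m'_0, \ell_0, m_0$ from the earlier section) can be taken uniform over $q,\overline q$ and independent of $q_0'$, which follows because all the quantities involved are defined purely in terms of the transition structure of $\mathcal{T}_A$ and $\Delta$.
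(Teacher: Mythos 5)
Your overall plan follows the same philosophy as the paper — introduce an ``arithmetic'' analogue ($[\bfw]_k \bmod d'$) of the length stratification from \cref{sec:str1}, build a product lemma for the refined sets $H_{q\overline{q}}(\ell)$, then normalize to get a coset structure — and the first bullet would likely go through along your lines. But the second bullet is a genuine gap, and the decomposition you chose makes it harder than necessary. The paper does \emph{not} first isolate $d'$ from a mod-$(k^d-1)$ semigroup argument and then attack the $k$-adic factor separately. It goes the other way around: it first defines $d(q,\overline{q})$ directly as the stabilized gcd of differences $[\bfw_1]_k - [\bfw_2]_k$ over same-length, same-endpoint, same-weight pairs (\cref{le:d_0}), bundling the two factors together, and then proves the single divisibility relation $k^{\ell_0}(k^d-1) \equiv 0 \bmod d(q,\overline{q})$. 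This is obtained from a concatenation identity — appending a good loop $\bfw_1 \in \Sigma^{dm_1}$ at $q$ on the left and a good loop $\bfw_2 \in \Sigma^{dm_1}$ at $\overline{q}$ on the right of a word $\bfw \in \Sigma^{m_2}$ from $q$ to $\overline{q}$ yields two same-length same-weight words $\bfw_1\bfw$ and $\bfw\bfw_2$, hence $[\bfw]_k(k^{dm_1}-1) \equiv [\bfw_1]_k k^{m_2} - [\bfw_2]_k \bmod d(q,\overline{q})$; choosing $\bfw = 0\bfw_{\overline{q}}$ versus $1\bfw_{\overline{q}}$ (with $\bfw_{\overline{q}}$ a universal synchronizing word of length $\ell_0$) and comparing $m_1$ with $m_1+1$ then extracts the divisibility. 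Only after that does one factor $d(q,\overline{q}) = d'(q,\overline{q}) d''(q,\overline{q})$ into coprime parts with $d' \mid k^d-1$, $d'' \mid k^{\ell_0}$, so the $d'' \mid k^{\ell_0}$ claim of the second bullet is automatic. Your sketch of ``two words with the same data must differ in some fixed-length suffix'' is not a substitute: what you need is a uniform bound on the $k$-adic part of a gcd over \emph{all} such pairs, and $\ell_0$ enters via the synchronizing words inside the concatenation identity, not via a suffix comparison.

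A secondary gap is that $k_0$ is never actually defined in your sketch beyond ``the length scale at which things stabilise.'' In the paper, $k_0$ is the period of the map $\ell \mapsto s^{q_0q_0}_{d\ell}(id)$ (the residue class mod $d'$ of a length-$d\ell$ good loop), and showing this map is eventually periodic is a separate additive-semigroup argument parallel to \cref{le:d}. Without it, the restriction to $\bfw \in (\Sigma^{dk_0})^{*}$ that your product lemma hinges on is unmotivated, and in particular the $\supseteq$-splicing step (inserting an inverse path ``of prescribed residue'') is not yet available. Your observation that re-permuting $Q$ to normalize the $H$-sets is compatible with the normalization from \cref{th:full_G} is correct and is checked in the paper's proof of \cref{le:s_equal}.
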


\begin{example}
		We continue the example from \cref{sec:str1}.
		The naturally induced transducer we considered was
		
		\tikzset{elliptic state/.style={draw,ellipse}}
		\begin{tikzpicture}[->,>=stealth',shorten >=1pt,auto,node distance=2.8cm, semithick, bend angle = 15, cross line/.style={preaction={draw=white,-,line width=4pt}}]
			%\tikzstyle{every state}=[fill=white,draw=none,text=black]
			
			\node[elliptic state, initial]	(A)                    {$q_0', q_1', q_2'$};
			\node[elliptic state]	         (B) [below of=A] 	{$q_3', q_4', q_5'$};
			
			\path [every node/.style={font=\footnotesize}, pos = 0.5]
			(A) edge [loop above] node 		 	{0|(23)}      	(A)
		edge [bend left]  node 		 	{1|(12)} 	(B)
			(B) edge [bend left]  node 			{0|(23)}	(A)
		edge [loop below] node			{1|(23)}	(B);
		\end{tikzpicture}
  
  We have already seen that $d=2$ holds. 
  First we would like to mention that it is not necessary to know the values of $d'$ and $d''$ to construct the naturally induced transducer, as the proof of \cref{le:s_equal} shows.
  Now we derive $d'$ and $d''$. We find easily that $\bfw_{q_0} = 0, \bfw_{q_1} = 1$ are synchronizing words.
  Furthermore, a path ends in $q_0$ if and only if it ends with $0$ and a path ends in $q_1$ if and only if it ends with $1$.
  As we will see, $d''(q,\overline{q})$ corresponds to arithmetic restrictions for paths from $q$ to $\overline{q}$.
  Thus, we find $d''(.,q_0) = d''(.,q_1) = 2$. In general it is sufficient to consider all paths of length at most $\abs{Q}-1$ to determine $d''$.
  
  We compute 
  \begin{align*}
    \delta(q_1,00) = \delta(q_1,10) = q_0\\
    T(q_1,00) = T(q_1,10) = id.
  \end{align*}
  Thus we find $2|d(q_1, q_0)$, $d' = 1$ and $k_0 = 1$.

  In general it is more involved to determine $d'$. 
  One straightforward way is to determine $d_{id,\ell}^{q_0 q_0}$ (defined shortly) for increasing values of $\ell$ 
   until the conditions of \cref{th:dk_0} are fulfilled for $d' \cdot d''(q_0,q_0) = d_{id,\ell}^{q_0 q_0}$.
\end{example}

The rest of this subsection is again rather technical and only used to prove this theorem.

% We discover in this subsection that the elements of $G$ appearing as $T(q,\bfw)$ where $\bfw$ is a path from $q$ to $\overline{q}$ of length $\ell$
% depends on $[\bfw]_k \bmod d(q,\overline{q})$. This will also affect the Fourier terms in \cref{sec:RS1}. \\
We only consider $G$ (i.e.,~weights corresponding to paths whose length is divisible by $d$) as just the same phenomena occur in the general situation.
To assure that we are only working on these elements, we only consider paths whose length is divisible by $d$.

\cref{le:m_0} assures, that we have for $\ell \geq m_0$
\begin{align*}
  \forall q,\overline{q}\in Q \quad \forall g \in G \quad \exists \bfw_1, \bfw_2 \in \Sigma^{d \ell}: \bfw_1 \neq \bfw_2, \delta(q,\bfw_i) = \overline{q}, T(q,\bfw_i) = g.
\end{align*}

Therefore, we define for $\ell \geq m_0, g \in G$ 
{\small
\begin{align*}
	d_{g,\ell}^{q\overline{q}} := \max\{m \in \N|\exists r: \forall \bfw \in \Sigma^{d \ell} \text{ such that }
\delta(q,\bfw) = \overline{q} \text{ and } T(q,\bfw) = g \Rightarrow [\bfw]_k \equiv r \bmod m\}.
\end{align*}
}%
An equivalent definition for $d_{g,\ell}^{q\overline{q}}$ is the greatest common divisor of all differences of numbers $[\bfw]_k$ corresponding to 
paths of length $d \ell$ from $q$ to $\overline{q}$ with weight $g$.
We will use one of the two definitions depending on the situation.
Our next goal is to show that $d_{g,\ell}^{q\overline{q}}$ converges to some $d(q,\overline{q})$ for all $g \in G$.
\begin{lemma}\label{le:d_0}
  Let $q,\overline{q} \in Q$. Then there exists $d(q,\overline{q}):= \lim_{\ell \to \infty} d_{id,\ell}^{q\overline{q}}$ and for all $\ell \geq m_0, g \in \Delta$ we have $d(q,\overline{q}) | d_{g,\ell}^{q\overline{q}}$ and 
  there exists $m_0'$ (not depending on $q,\overline{q}$) such that for all $\ell \geq m_0', g \in \Delta$ we have $d_{g,\ell}^{q\overline{q}} = d(q,\overline{q})$.
\end{lemma}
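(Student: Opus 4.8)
The plan is to establish three things in order: (1) the quantities $d_{g,\ell}^{q\overline{q}}$ for fixed $q,\overline{q}$ and varying $g,\ell$ all divide a common bound, hence cannot grow without limit; (2) the sequence $d_{id,\ell}^{q\overline{q}}$ is eventually monotone in the divisibility order, so the limit $d(q,\overline{q})$ exists; and (3) the convergence is uniform in $g$ and in $q,\overline{q}$. For step (1), I would use the additive-concatenation structure already exploited for $M_q$ in \cref{le:d}: if $\bfw$ is a path of length $d\ell$ from $q$ to $\overline{q}$ with $T(q,\bfw) = g$, and $\bfw'$ fulfills \eqref{eq:good_path} for $\overline{q}$ with $|\bfw'| = d\ell'$, then $\bfw\bfw'$ is a path of length $d(\ell+\ell')$ from $q$ to $\overline{q}$ with the same weight $g$, and $[\bfw\bfw']_k = [\bfw]_k k^{d\ell'} + [\bfw']_k$. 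Prepending inverse paths (\cref{le:inv}) and combining with \cref{le:transducer_regular} lets me move between different $g$ and different lengths while controlling the arithmetic behaviour, which shows $d_{id,\ell'}^{q\overline q}$ divides $d_{g,\ell}^{q\overline q}$ (and the latter divides the former) once $\ell,\ell'$ are large — more precisely that all the $d_{g,\ell}^{q\overline q}$ for $\ell \geq m_0$ divide $d_{id,m_0}^{q\overline q} \cdot (\text{something bounded})$, so the set of values is finite.

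For step (2) and the existence of the limit: I would show that $d_{id,\ell+\ell'}^{q\overline q}$ divides both $d_{id,\ell}^{q\overline q}$ and $d_{id,\ell'}^{q\overline q}$ for $\ell,\ell' \geq m_0$. The key identity is again concatenation: a path of length $d(\ell+\ell')$ from $q$ to $\overline q$ with weight $id$ can be split (using \cref{le:G_prod} and \cref{le:m_0}, picking an intermediate state) into a length-$d\ell$ piece and a length-$d\ell'$ piece; conversely two such pieces concatenate, and the resulting congruence $[\bfw_1\bfw_2]_k \equiv [\bfw_1]_k k^{d\ell'} + [\bfw_2]_k$ forces $\gcd$ of the differences to behave submultiplicatively in the divisibility lattice. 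Since the values lie in a finite set (by step (1)) and are eventually closed under $\gcd$ along the sequence, the sequence $d_{id,\ell}^{q\overline q}$ is eventually constant; call that constant $d(q,\overline q)$. That it divides every $d_{g,\ell}^{q\overline q}$ for $\ell \geq m_0$ follows by sandwiching $g$ between $id$-weighted pieces via \cref{le:G_prod}: write $g = T(q,\bfw)$, prepend an inverse path to get an $id$-loop, and compare.

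For step (3), the uniformity: there are finitely many triples $(q,\overline q)$ and finitely many group elements $g \in \Delta$, so once each sequence $d_{g,\ell}^{q\overline q}$ has stabilized — which happens at some finite $\ell$ depending on the triple and $g$ — I just take $m_0'$ to be the maximum of all these thresholds (also $\geq m_0$), and then for $\ell \geq m_0'$ all of them equal their limits; combined with step (2) showing $d_{g,\ell}^{q\overline q} = d_{id,\ell}^{q\overline q} = d(q,\overline q)$ for large $\ell$, this gives the claim. I expect the main obstacle to be step (2): carefully justifying that the $\gcd$-of-differences definition of $d_{g,\ell}^{q\overline q}$ interacts correctly with the multiplicative weight $k^{d\ell'}$ under concatenation — one must check that multiplying all the $[\bfw]_k$ values in one block by a unit $k^{d\ell'}$ modulo the relevant modulus does not change the $\gcd$ structure, which uses $\gcd(k, d')=1$ implicitly and requires being a little careful about whether one argues with the "common residue $r \bmod m$" formulation or the "$\gcd$ of differences" formulation. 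Switching between the two formulations at the right moments, as the statement explicitly permits, should make each individual step routine.
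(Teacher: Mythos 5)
There is a genuine gap in your proposal, and it sits exactly where you yourself flagged the expected obstacle.

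Your step (2), and the closing worry about "multiplying all the $[\bfw]_k$ values in one block by a unit $k^{d\ell'}$", show that you are planning to concatenate by \emph{appending} (i.e.\ writing $\bfw_1\bfw_2$ and using $[\bfw_1\bfw_2]_k = [\bfw_1]_k k^{d\ell'} + [\bfw_2]_k$). Appending a fixed suffix to all paths in a block multiplies their pairwise differences by $k^{d\ell'}$, and to recover clean divisibility you would indeed need something like $\gcd(k, d(q,\overline q))=1$. But that is not available: it is circular (you invoke $\gcd(k,d')=1$, yet $d'$ is only defined two lemmas later, after $d(q,\overline q)$ itself exists), and it is not even the right statement, since $d(q,\overline q) = d'\cdot d''$ with $d'' \mid k^{\ell_0}$ — so $d(q,\overline q)$ can very well share factors with $k$. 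Your appending-based step (2) therefore does not go through as stated.

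The paper avoids the issue entirely by \emph{prepending}: by \cref{th:full_G} there is a loop $\bfw$ of length $d\ell'$ at $q$ with $T(q,\bfw)=g_1 g_2^{-1}$, and prefixing $\bfw$ to each path of length $d\ell$ from $q$ to $\overline q$ with weight $g_2$ produces paths of length $d(\ell+\ell')$ with weight $g_1$, while the pairwise differences $[\bfw\bfw_1]_k-[\bfw\bfw_2]_k = [\bfw_1]_k - [\bfw_2]_k$ are literally unchanged. This yields $d_{g_1,\ell+\ell'}^{q\overline q} \mid d_{g_2,\ell}^{q\overline q}$ directly, with no coprimality condition, and the rest (pick $\ell_0$ minimizing $d_{g,\ell}^{q\overline q}$, conclude stabilization and $g$-independence from this divisibility chain, take a maximum over the finitely many $(q,\overline q)$) is the bookkeeping you already describe in step (3). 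Also note a small inaccuracy in your step (1): the two-sided claim "$d_{id,\ell'}^{q\overline q}$ divides $d_{g,\ell}^{q\overline q}$ and vice versa once $\ell,\ell'$ are large" is only true after stabilization; what you can establish directly is the one-sided chain $d_{g_1,\ell+\ell'}^{q\overline q}\mid d_{g_2,\ell}^{q\overline q}$, and equality comes out at the end via minimality.
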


\begin{proof}
  We start by showing that for all $\ell,\ell' \geq m_0, q,\overline{q} \in Q$ and $g_1, g_2 \in G$ it holds that $d_{g_1,\ell+\ell'}^{q\overline{q}} | d_{g_2,\ell}^{q\overline{q}}$.
  \cref{th:full_G} shows that
  there exists $\bfw \in \Sigma^{d \ell'}$ such that $\delta(q,\bfw) = q$ and $T(q,\bfw) = g_1\cdot g_2^{-1}$. We find
  \begin{align*}
    d_{g_1,\ell+\ell'}^{q\overline{q}} 
	&= gcd(\{[\bfw_1]_k-[\bfw_2]_k: \bfw_i \in \Sigma^{d(\ell+\ell')} \text{ with } \delta(q,\bfw_i) = \overline{q}, T(q,\bfw_i) = g_1\})\\
	& | \quad gcd(\{[\bfw \bfw_1]_k-[\bfw \bfw_2]_k: \bfw_i \in \Sigma^{d \ell} \text{ with } \delta(q,\bfw \bfw_i) = \overline{q}, T(q,\bfw \bfw_i) = g_1\})\\
	&= gcd(\{[\bfw_1]_k-[\bfw_2]_k: \bfw_i \in \Sigma^{d \ell} \text{ with } \delta(q,\bfw_i) = \overline{q}, T(q,\bfw_i) = g_2\})\\
	&= d_{g_2,\ell}^{q\overline{q}}
  \end{align*}
Let $d_{g}^{q\overline{q}}$ denote the minimal value of $d_{g,\ell}^{q\overline{q}}$ for all $\ell$ (we choose $\ell_0 \geq m_0$ such that $d_{g,\ell_0}^{q\overline{q}} = d_{g}^{q\overline{q}}$)
and since
\begin{align*}
  d_{g}^{q\overline{q}} = d_{g,\ell + \ell_0}^{q \overline{q}} \leq d_{g,\ell_0}^{q\overline{q}} = d_{g}^{q\overline{q}}
\end{align*}
there exists $m_0'$ such that for all $\ell \geq m_0'$ it holds that
$d_{g,\ell}^{q\overline{q}} = d_{g}^{q\overline{q}}$.
It also follows directly that $d_{g_1}^{q\overline{q}} = d_{g_2}^{q\overline{q}}$ for all $g_1,g_2 \in \Delta$ and the result follows directly.
\end{proof}

Let $\bfw \in \Sigma^{\ell}$ and suppose $\delta(q,\bfw) = \overline{q}$, $T(q,\bfw) = g$, we then denote $r_{\ell}^{q\overline{q}}(g) := [\bfw]_k \bmod d(q,\overline{q})$ 
as we see directly that this definition does not depend on the choice of $\bfw$, but only on $q,\overline{q}$ and $g$.
More generally, it follows that for $\bfw \in \Sigma^{d \ell}, \delta(q,\bfw) = \overline{q}$ we have 
\begin{align*}
 r_{\ell}^{q\overline{q}} (T(q,\bfw)) \equiv [\bfw]_k \bmod d(q,\overline{q}).
\end{align*}

\begin{remark}
  By assumption any word containing a synchronizing word is again synchronizing, and by assumption $\mathcal{T}_{A}$ is synchronizing and strongly connected, 
  there exists a minimal $\ell_0 = \ell_0(A)$ such that
  for all $q \in Q$ there exists $\bfw_q\in \Sigma^{\ell_0}$ with
    \begin{align*}
      \forall \overline{q} \in Q: \delta(\overline{q},\bfw_q) = q.
    \end{align*}
\end{remark}

We find an important restriction on $d(q,\overline{q})$.

\begin{lemma}
  We have for every $q,\overline{q} \in Q$
  \begin{align*}
    k^{\ell_0}(k^d - 1) \equiv 0 \bmod d(q,\overline{q}).
  \end{align*}
\end{lemma}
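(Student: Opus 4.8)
The goal is to show that $d(q,\overline q) \mid k^{\ell_0}(k^d-1)$. The strategy is to find two concrete words of the same length $d\ell$ from $q$ to $\overline q$ with the same transducer weight, whose base-$k$ values differ by (a divisor of) $k^{\ell_0}(k^d-1)$; since $d(q,\overline q)$ divides every such difference, the claim follows. First I would fix $\ell$ large (at least $m_0$ and large enough for all the auxiliary lemmas below to apply), and use \cref{le:m_0} to fix one word $\bfw_0 \in \Sigma^{d\ell}$ with $\delta(q,\bfw_0)=\overline q$ and $T(q,\bfw_0)=\mathrm{id}$ (say). The idea is to build a second word from $\bfw_0$ by inserting, at a controlled position, a ``null cycle'' — a path $\bfv$ fulfilling \eqref{eq:good_path} (so $\delta(\cdot,\bfv)=\cdot$ and $T(\cdot,\bfv)=\mathrm{id}$) — while compensating elsewhere so that the total length and weight are unchanged.

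**Key steps.** Concretely, I would split $\bfw_0 = \bfw_1 \bfw_2$ where $\bfw_1$ is a prefix of length exactly $\ell_0$; let $q_1 := \delta(q,\bfw_1)$ be the intermediate state. By the remark preceding the lemma, there is a synchronizing-type word $\bfw_{q_1} \in \Sigma^{\ell_0}$ with $\delta(\overline q',\bfw_{q_1}) = q_1$ for all $\overline q'$, in particular $\delta(q,\bfw_{q_1}) = q_1$. Now, using \cref{le:d} (so that $M_{q_1} \subseteq d\N$ and $d\N \setminus M_{q_1}$ is finite), pick a good loop $\bfv \in \Sigma^{dj}$ at $q_1$ fulfilling \eqref{eq:good_path}, where $dj$ will be chosen momentarily. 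I want the new word $\bfw' := \bfw_{q_1}\, \bfv\, (\text{inverse/filler})\, \bfw_2$ to again have length $d\ell$, start at $q$, end at $\overline q$, and weight $\mathrm{id}$. The point is that replacing the length-$\ell_0$ prefix $\bfw_1$ by another length-$\ell_0$ word $\bfw_{q_1}$ reaching the same state $q_1$ changes $[\bfw_0]_k$ only in the top $\ell_0$ digits, i.e. by a multiple of $k^{|\bfw_2|}$ — and more importantly, conjugating the tail by a full loop of length $d$ multiplies a block of the value by $k^d$, producing a difference divisible by $k^d - 1$. Assembling these two mechanisms — one contributing the factor $k^{\ell_0}$ (the leading block where we are free to swap synchronizing prefixes), the other contributing $k^d-1$ (cyclic shift by a $d$-loop) — gives two words of equal length and weight whose base-$k$ values differ by something dividing $k^{\ell_0}(k^d-1)$. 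Taking $\gcd$ over such pairs, and invoking \cref{le:d_0} that $d(q,\overline q)$ divides every $d_{g,\ell}^{q\overline q}$ hence every such difference, yields $d(q,\overline q) \mid k^{\ell_0}(k^d-1)$.

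**Main obstacle.** The delicate point is bookkeeping: I must produce the two words with \emph{exactly} the same length $d\ell$ and \emph{exactly} the same weight $g$, while realizing a value-difference that is provably a multiple of $k^{\ell_0}(k^d-1)$ and not something coarser. The natural clean realization is: take $\bfw = \bfw_a\, \bfw_b$ with $|\bfw_b| = \ell_0$, where $\bfw_a$ is a loop of length $d(\ell-\ell_0/d)$ at some state (adjusting so $\ell_0$ is a multiple of $d$ — if not, absorb the remainder into padding with $0$'s, which is harmless since $[0^t\bfw]_k = [\bfw]_k$), and then compare $\bfw$ with $0^{d}\bfw_a'\,\bfw_b$ where $\bfw_a'$ is $\bfw_a$ with a $d$-loop cyclically repositioned; the value difference is then literally of the form $c\,k^{\ell_0}(k^d-1)$ for an integer $c$. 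The risk is that after all these substitutions the weight drifts; this is controlled precisely by \cref{le:inv} and \cref{le:m_0}, which let me append an inverse path to restore $T(q,\bfw') = g$ without changing length mod $d$ — and since we work throughout with lengths in $d\N$ and with $G$ (weights of $d$-divisible paths), \cref{th:full_G} guarantees the needed loops of any sufficiently large length in $d\N$ with the required weight exist. So the hard part is purely the arithmetic-of-digits argument showing the difference factors as $k^{\ell_0}(k^d-1)$ times an integer; once that display is set up correctly, \cref{le:d_0} closes the proof immediately.
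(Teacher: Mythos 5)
Your plan has two genuine gaps, one logical and one structural, and they both stem from the fact that you are trying to compare two independent paths $\bfw,\bfw'$ of the same length, endpoints and \emph{weight} directly — something the paper carefully avoids.

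First, the logic. You say the value difference of your pair ``factors as $k^{\ell_0}(k^d-1)$ times an integer'' and that \cref{le:d_0} then ``closes the proof immediately.'' It does not: from a single pair $(\bfw,\bfw')$ you learn only that $d(q,\overline q)$ divides $[\bfw]_k-[\bfw']_k$, so if that difference is $c\,k^{\ell_0}(k^d-1)$ you get $d(q,\overline q)\mid c\,k^{\ell_0}(k^d-1)$, which is strictly weaker than the target unless you control $c$. You would need to realize $k^{\ell_0}(k^d-1)$ (or a divisor of it) exactly as a difference, or realize a collection of pairs whose gcd of differences divides it. Your construction, as sketched, produces only some multiple. (You even waver between ``a divisor of'' and ``a multiple of'' in the write-up; the construction can only plausibly give a multiple.)

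Second, the weight-preservation problem is not a side issue you can patch with ``append an inverse path.'' Swapping the length-$\ell_0$ prefix $\bfw_1$ for a synchronizing word $\bfw_{q_1}$ lands you at the same state $q_1$, but $T(q,\bfw_1)$ and $T(q,\bfw_{q_1})$ are in general different, so the two full words have different weights and do not form a legal comparison pair. Appending an inverse path at the end to fix the weight changes the length (and hence forces you to delete something elsewhere) and, worse, shifts all the digits so that the value difference is no longer the clean display you want. The whole bookkeeping problem you flag under ``Main obstacle'' is in fact the heart of the matter, and I do not see how to resolve it along your route.

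The paper sidesteps both issues at once. Fix $\bfw_1\in\Sigma^{dm_1}$ a weight-$\mathrm{id}$ loop at $q$ and $\bfw_2\in\Sigma^{dm_1}$ a weight-$\mathrm{id}$ loop at $\overline q$. For \emph{any} middle path $\bfw\in\Sigma^{m_2}$ from $q$ to $\overline q$, the words $\bfw_1\bfw$ and $\bfw\bfw_2$ automatically have the same length, endpoints and weight $T(q,\bfw)$ — no weight constraint is imposed on $\bfw$ itself. Therefore
\begin{align*}
  [\bfw_1]_k k^{m_2}-[\bfw_2]_k \equiv [\bfw]_k\,(k^{dm_1}-1)\pmod{d(q,\overline q)},
\end{align*}
with a left-hand side that does not depend on $\bfw$. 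Now take two choices $\bfw=0\,\bfw_{\overline q}$ and $\bfw'=1\,\bfw_{\overline q}$ (padded with leading zeros), which may well have different weights, and subtract: since $[\bfw]_k-[\bfw']_k=-k^{\ell_0}$, the constant cancels and one obtains $k^{\ell_0}(k^{dm_1}-1)\equiv 0\bmod d(q,\overline q)$; comparing $m_1$ and $m_1+1$ removes the extra factor. The point is that the ``same length, same weight'' comparison is between $\bfw_1\bfw$ and $\bfw\bfw_2$, \emph{not} between $\bfw$ and $\bfw'$ — that is precisely the freedom your approach lacks, and it is why the paper gets a clean identity rather than a difference with an uncontrolled factor $c$.
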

\begin{proof}
  We fix $q,\overline{q}$ and start by considering the case where we concatenate a word $\bfw$ with some words $\bfw_1, \bfw_2$ from left and right, respectively,
  such that the weight does not change: 
  Let $\bfw \in \Sigma^{m_2}, \bfw_1,\bfw_2 \in \Sigma^{d m_1}$ ($m_1\geq m_0, m_2 \geq m_0'$) 
  such that $\delta(q,\bfw_1) = q, T(q,\bfw_1) = id$ and $\delta(\overline{q},\bfw_2) = \overline{q}, T(\overline{q},\bfw_2) = id$, see \cref{th:full_G}.
  We find by simple computations and \cref{le:d_0}
  \begin{align*}
		r_{m_2 + d m_1}^{q \overline{q}} (T(q,\bfw_1\bfw)) &= r_{d m_1 + m_2}^{q \overline{q}} (T(q,\bfw \bfw_2))\\
    [\bfw_1]_k k^{m_2} + [\bfw]_k &\equiv [\bfw]_k k^{d m_1} + [\bfw_2]_k \bmod d(q,\overline{q})\\
    [\bfw_1]_k k^{m_2} - [\bfw_2]_k &\equiv [\bfw]_k (k^{d m_1}-1) \bmod d(q,\overline{q}).
  \end{align*}
  The left side of the last equation only depends on $\bfw_1,\bfw_2$ and $m_2$.
  By choosing $m_2 \geq \ell_0 +1$, $\bfw = 0\bfw_{\overline{q}}$ or $\bfw = 1\bfw_{\overline{q}}$ and -- if $\bfw$ is not of sufficient length -- adding zeros from the left
  we find
  \begin{align*}
    [\bfw_{\overline{q}}]_k (k^{d m_1}-1) &\equiv (k^{\ell_0} + [\bfw_{\overline{q}}]_k)(k^{d m_1}-1) \bmod d(q,\overline{q})\\
    k^{\ell_0}(k^{d m_1}-1) &\equiv 0 \bmod d(q,\overline{q}).
  \end{align*}
  Comparing the results for $m_1$ and $m_1+1$ completes the proof.
  \end{proof}

  This allows us to decompose $d(q,\overline{q})$ into two co-prime factors $d'(q,\overline{q}), d''(q,\overline{q})$ such that\\ 
  $d'(q,\overline{q}) | (k^d-1)$ and $d''(q,\overline{q}) | k^{\ell_0}$.
  On the one hand $d''(q,\overline{q})$ corresponds to restrictions on how you can reach the state $\overline{q}$, on the other hand $d'(q,\overline{q})$ corresponds to some restrictions 
   $ \bmod \hspace{1.5mm} k^d-1$, which is of greater interest for us.

\begin{lemma}\label{le:d'}
  For every $q,\overline{q}\in Q$ it holds that $d'(A) := d'(q_0,q_0)= d'(q,\overline{q})$.
\end{lemma}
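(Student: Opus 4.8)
The plan is to prove the two divisibilities $d'(q,\overline{q})\mid d'(q_0,q_0)$ and $d'(q_0,q_0)\mid d'(q,\overline{q})$. Since $d'(\cdot)\mid k^d-1$ is exactly the part of $d(\cdot)$ coprime to $k$, whereas every prime factor of $d''(\cdot)\mid k^{\ell_0}$ divides $k$, it is enough to show for each of these two numbers that it divides the other possibly after multiplying by a power of $k$.

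For the divisibility $d'(q,\overline{q})\mid d'(q_0,q_0)$ I would transport the paths realising $d(q_0,q_0)$ into paths between $q$ and $\overline{q}$ by attaching a fixed prefix and a fixed suffix. By \cref{le:transducer_regular}, choose $\bfw_1\in\Sigma^{dm_1}$ with $\delta(q,\bfw_1)=q_0$ and $\bfw_2\in\Sigma^{dm_2}$ with $\delta(q_0,\bfw_2)=\overline{q}$ (with $dm_1,dm_2\geq N_0$), and put $g_1:=T(q,\bfw_1)$, $g_2:=T(q_0,\bfw_2)$. Fix $\ell$ large enough that both $d_{id,\ell}^{q_0q_0}=d(q_0,q_0)$ and $d_{g_1g_2,\,m_1+\ell+m_2}^{q\overline{q}}=d(q,\overline{q})$, which is possible by \cref{le:d_0}. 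For any two paths $\bfw,\bfw'\in\Sigma^{d\ell}$ from $q_0$ to $q_0$ of weight $id$ (at least two such paths exist by \cref{le:m_0}), the words $\bfw_1\bfw\bfw_2$ and $\bfw_1\bfw'\bfw_2$ are both paths from $q$ to $\overline{q}$ of the common length $d(m_1+\ell+m_2)$ and the common weight $g_1\circ g_2$, since $T$ is multiplicative along concatenations and $T(q_0,\bfw)=T(q_0,\bfw')=id$. From $[\bfw_1\bfw\bfw_2]_k=[\bfw_1]_k\,k^{d\ell+dm_2}+[\bfw]_k\,k^{dm_2}+[\bfw_2]_k$ one sees that their base-$k$ values differ by $\bigl([\bfw]_k-[\bfw']_k\bigr)k^{dm_2}$, which is thus divisible by $d_{g_1g_2,\,m_1+\ell+m_2}^{q\overline{q}}=d(q,\overline{q})$; since $\gcd(d'(q,\overline{q}),k)=1$ this gives $d'(q,\overline{q})\mid[\bfw]_k-[\bfw']_k$. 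Taking the greatest common divisor over all admissible pairs $\bfw,\bfw'$ yields $d'(q,\overline{q})\mid d_{id,\ell}^{q_0q_0}=d(q_0,q_0)$, hence $d'(q,\overline{q})\mid d'(q_0,q_0)$.

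The reverse divisibility is obtained by the symmetric construction: choose $\bfw_1\in\Sigma^{dm_1}$ from $q_0$ to $q$ and $\bfw_2\in\Sigma^{dm_2}$ from $\overline{q}$ to $q_0$, and take $\bfw,\bfw'\in\Sigma^{d\ell}$ from $q$ to $\overline{q}$ of weight $id$, which exist for $\ell$ large by \cref{le:m_00} and \cref{le:m_0} because $G_{q\overline{q}}(0)=G\ni id$ by \cref{pr:G_g_0} and \cref{co:G_qq_subgroup}. The same computation, together with the fact from \cref{le:d_0} that $d_{g,\,m_1+\ell+m_2}^{q_0q_0}$ stabilises to $d(q_0,q_0)$ regardless of the weight $g$, gives $d'(q_0,q_0)\mid[\bfw]_k-[\bfw']_k$ for every such pair and hence $d'(q_0,q_0)\mid d(q,\overline{q})$, so $d'(q_0,q_0)\mid d'(q,\overline{q})$. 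Combining the two divisibilities gives $d'(q_0,q_0)=d'(q,\overline{q})$.

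The step I expect to be most delicate is essentially bookkeeping: making sure every auxiliary connecting path can be chosen of length divisible by $d$ and large enough that all the quantities $d_{\cdot,\ell}^{\cdot\cdot}$ occurring at both ends have simultaneously stabilised to their limits $d(\cdot,\cdot)$ (so that \cref{le:d_0} genuinely applies), and keeping precise track of the power of $k$ that appears when comparing the base-$k$ values of the two concatenations, so that the coprimality of $d'(\cdot)$ to $k$ can legitimately be used to cancel it. Once the two concatenated words are arranged to share the same length and the same weight, the remainder is the elementary computation indicated above.
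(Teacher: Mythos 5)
Your proposal is correct and takes essentially the same route as the paper's proof: both transport paths by attaching a fixed prefix and suffix, compare the resulting base-$k$ values (which differ by a factor $k^{dm_2}$), and invoke coprimality of $d'$ with $k$ to cancel that power, concluding a divisibility of $d'$-parts. The only cosmetic difference is that you argue the two divisibilities explicitly for the pair $(q,\overline{q})$ versus $(q_0,q_0)$ using the weight-independence of the stabilised $d_{g,\ell}^{\cdot\cdot}$, whereas the paper fixes $id$-weight connecting paths and proves the one-sided divisibility for arbitrary quadruples of states and then appeals to symmetry.
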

\begin{proof}
  Since we have already gained some knowledge about $d(q,\overline{q})$ we can adapt \cref{le:d_0} to our needs.\\
  Let $\ell, \ell_1, \ell_2 \geq m_0'$, $q_1,q_2,\overline{q_1},\overline{q_2}\in Q$ and paths $\bfw \in\Sigma^{d \ell_1}$ from $q_1$ to $\overline{q_1}$ with weight $id$, 
  $\bfw' \in \Sigma^{d \ell_2}$ from $\overline{q_2}$ to $q_2$ with weight $id$.
  It follows, as in \cref{le:d_0},
	{\small
  \begin{align*}
    d(q_1,q_2) &= d_{id,\ell_1+\ell+\ell_2}^{q_1,q_2} = gcd(\{[\bfw_1]_k-[\bfw_2]_k: \bfw_i \in \Sigma^{d(\ell_1 + \ell + \ell_2)} \text{ with } 
	\delta(q_1,\bfw_i) = q_2, T(q_1,\bfw_i) = id\})\\
    &| \quad gcd(\{[\bfw \bfw_1 \bfw']_k - [\bfw \bfw_2 \bfw']_k: \\
	  &\qquad \qquad \bfw_i \in \Sigma^{d \ell} \text{ with } \delta(q_1,\bfw \bfw_i \bfw') = q_2, T(q_1, \bfw \bfw_i \bfw') = id\})\\
    & = gcd(\{k^{d \ell_2}([\bfw_1]_k-[\bfw_2)]_k: \bfw_i \in \Sigma^{d \ell} \text{ with } \delta(\overline{q_1},\bfw_i) = \overline{q_2}, T(\overline{q_1},\bfw_i) = id\})\\
    & = k^{d \ell_2} d_{id,\ell}^{\overline{q_1},\overline{q_2}} = k^{d \ell_2} d(\overline{q_1},\overline{q_2})
  \end{align*}
	}%
  for arbitrary $q_1,q_2,\overline{q_1},\overline{q_2}$.
  Thus we find $d'(q_1,q_2)|d'(\overline{q_1},\overline{q_2})$.
  As changing the order of some state $q \in Q$ does not change $d(\mathcal{T}_{A})$ ($id \mapsto \sigma_q \circ id \circ \sigma_q^{-1}$), 
  we find that $d'$ only depends on $A$.
\end{proof}

As mentioned before, we are more interested in $d'$ and define $s^{q\overline{q}}_{\ell}(g) := r_{\ell}^{q\overline{q}}(g) \bmod d'$. 
We find the following important properties:
\begin{lemma}\label{le:s_add}
  For all $q_1,q_2,q_3 \in Q, m_1,m_2 \geq m_0'$, and $g_1, g_2 \in G$ follows
  \begin{align*}
    s^{q_1,q_3}_{d(m_1+m_2)}(g_1\cdot g_2) = s^{q_1,q_2}_{d m_1}(g_1) + s^{q_2,q_3}_{d m_2}(g_2).
  \end{align*}
\end{lemma}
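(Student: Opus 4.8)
The plan is to manufacture an explicit witness word by concatenation and then chase its base-$k$ value modulo $d'$; the whole argument is a one-line number-theoretic identity wrapped in the bookkeeping that makes the $r$- and $s$-symbols well defined.

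Concretely, I would fix $q_1,q_2,q_3\in Q$, $m_1,m_2\geq m_0'$ and $g_1,g_2\in G$, and (enlarging $m_0'$ if necessary so that $m_0'\geq m_0$) invoke \cref{th:full_G} to produce words $\bfw_1\in\Sigma^{dm_1}$ with $\delta(q_1,\bfw_1)=q_2$, $T(q_1,\bfw_1)=g_1$ and $\bfw_2\in\Sigma^{dm_2}$ with $\delta(q_2,\bfw_2)=q_3$, $T(q_2,\bfw_2)=g_2$. Then $\bfw:=\bfw_1\bfw_2\in\Sigma^{d(m_1+m_2)}$ satisfies $\delta(q_1,\bfw)=q_3$ and, by multiplicativity of $T$, $T(q_1,\bfw)=T(q_1,\bfw_1)\cdot T(q_2,\bfw_2)=g_1\cdot g_2$, so $\bfw$ is an admissible witness for $r^{q_1q_3}_{d(m_1+m_2)}(g_1g_2)$; moreover $[\bfw]_k=[\bfw_1]_k\,k^{dm_2}+[\bfw_2]_k$.

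Next I would reduce modulo $d'$. By \cref{le:d'} the three a priori different quantities $d'(q_1,q_2)$, $d'(q_2,q_3)$, $d'(q_1,q_3)$ all equal $d'=d'(A)$, and since $d'\mid k^d-1$ we get $k^{dm_2}=(k^d)^{m_2}\equiv 1\bmod d'$. Because $m_1,m_2\geq m_0'$ and trivially $m_1+m_2\geq m_0'$, \cref{le:d_0} makes $r^{q_1q_2}_{dm_1}(g_1)$, $r^{q_2q_3}_{dm_2}(g_2)$ and $r^{q_1q_3}_{d(m_1+m_2)}(g_1g_2)$ independent of the chosen words, so $r^{q_1q_2}_{dm_1}(g_1)\equiv[\bfw_1]_k$, $r^{q_2q_3}_{dm_2}(g_2)\equiv[\bfw_2]_k$ and $r^{q_1q_3}_{d(m_1+m_2)}(g_1g_2)\equiv[\bfw]_k$ modulo their respective moduli. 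Since $d'$ divides each of these moduli (e.g. $d'\mid d(q_1,q_3)$), passing to $\bmod\, d'$ yields
\begin{align*}
  s^{q_1,q_3}_{d(m_1+m_2)}(g_1\cdot g_2)
  &\equiv [\bfw]_k = [\bfw_1]_k\,k^{dm_2}+[\bfw_2]_k\\
  &\equiv [\bfw_1]_k+[\bfw_2]_k
  \equiv s^{q_1,q_2}_{dm_1}(g_1)+s^{q_2,q_3}_{dm_2}(g_2)\pmod{d'},
\end{align*}
which is the asserted identity (read in $\Z/d'\Z$).

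I do not anticipate a real obstacle: the mathematical heart is just $[\bfw_1\bfw_2]_k=[\bfw_1]_k k^{dm_2}+[\bfw_2]_k$ together with $k^{dm_2}\equiv1\bmod d'$. The only points needing attention are the purely formal ones already handled by the preceding lemmas — checking that all three $r$-values are well defined (which is exactly why the hypothesis demands $m_1,m_2\geq m_0'$) and citing \cref{le:d'} so that the three moduli $d'(q_i,q_j)$ may be treated as the single number $d'$.
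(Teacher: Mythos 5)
Your proof is correct and takes essentially the same route as the paper's: both invoke \cref{th:full_G} to produce witness words $\bfw_1,\bfw_2$, concatenate them, and reduce $[\bfw_1\bfw_2]_k=[\bfw_1]_k k^{dm_2}+[\bfw_2]_k$ modulo $d'$ using $k^{dm_2}\equiv 1\bmod k^d-1$. The extra bookkeeping you include about well-definedness of the $r$- and $s$-symbols via \cref{le:d_0} and \cref{le:d'} is implicit in the paper's treatment but not wrong to spell out.
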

\begin{proof}
	We find by \cref{th:full_G} that there exists $\bfw_1 \in \Sigma^{d m_1}$ and $\bfw_2 \in \Sigma^{d m_2}$ such that
	$\delta(q_1,\bfw_1) = q_2, T(q_1,\bfw_1) = g_1$ and $\delta(q_2,\bfw_2) = q_3, T(q_2,\bfw_2) = g_2$.
	This gives, 
	\begin{align*}
		s^{q_1,q_2}_{d m_1}(g_1) &\equiv [bfw_1]_k \bmod d'\\
		s^{q_2,q_3}_{d m_2}(g_2) &\equiv [bfw_2]_k \bmod d'\\
		s^{q_1,q_3}_{d(m_1+m_2)}(g_1\cdot g_2) &\equiv [bfw_1]_k k^{d m_2} + [bfw_2]_k \bmod d'.
	\end{align*}
  Thus, the lemma follows directly from the fact that $k^{d m_2}\equiv 1 \bmod k^{d}-1$.
  %and some simple observations.
\end{proof}

\begin{lemma}
  There exists $k_0 \in \N$ such that for every $\ell \geq m_0'$ we have $s_{d \ell}^{q_0q_0}(id) = 0 \Leftrightarrow \ell \equiv 0 \bmod k_0$.
\end{lemma}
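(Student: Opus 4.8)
The plan is to recognise that, restricted to $\ell \geq m_0'$, the map $f(\ell) := s_{d\ell}^{q_0q_0}(id)$ is a well-defined function into the cyclic group $\Z/d'\Z$: \cref{th:full_G} guarantees a path of length $d\ell$ from $q_0$ to $q_0$ with weight $id$, and the value $[\bfw]_k \bmod d'$ is independent of the chosen such path. Specialising \cref{le:s_add} to $q_1 = q_2 = q_3 = q_0$ and $g_1 = g_2 = id$ then shows that $f$ is additive,
\begin{align*}
  f(m_1 + m_2) = f(m_1) + f(m_2) \qquad \text{for all } m_1, m_2 \geq m_0',
\end{align*}
so the whole statement comes down to solving this Cauchy-type functional equation on $\{\ell \in \N : \ell \geq m_0'\}$ with values in $\Z/d'\Z$.

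First I would extract a ``slope'' $a := f(m_0'+1) - f(m_0') \in \Z/d'\Z$. For $\ell \geq m_0'$, applying the additivity relation to the pairs $(\ell, m_0'+1)$ and $(\ell+1, m_0')$ gives
\begin{align*}
  f(\ell+1) + f(m_0') = f(\ell + m_0' + 1) = f(\ell) + f(m_0'+1),
\end{align*}
hence $f(\ell+1) = f(\ell) + a$ and, inductively, $f(\ell) = f(m_0') + (\ell-m_0')\,a$ for every $\ell \geq m_0'$. Plugging $m_1 = m_2 = m_0'$ into the additivity relation gives $f(2m_0') = 2f(m_0')$, while the displayed formula gives $f(2m_0') = f(m_0') + m_0'\,a$; comparing the two forces $f(m_0') = m_0'\,a$, so that
\begin{align*}
  s_{d\ell}^{q_0q_0}(id) = f(\ell) \equiv \ell\,a \bmod d' \qquad \text{for all } \ell \geq m_0'.
\end{align*}

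Finally I would take $k_0 := d'/\gcd(a,d')$, i.e.\ the order of $a$ in $\Z/d'\Z$ (this is independent of the integer representative of $a$, and $k_0 \in \N$ with $k_0 \mid d'$). Writing $a = \gcd(a,d')\,a'$ and $d' = \gcd(a,d')\,k_0$ with $\gcd(a',k_0) = 1$, the congruence $\ell a \equiv 0 \bmod d'$ is equivalent to $\ell a' \equiv 0 \bmod k_0$, i.e.\ to $\ell \equiv 0 \bmod k_0$. Combined with the last display, this is precisely the claimed equivalence $s_{d\ell}^{q_0q_0}(id) = 0 \Leftrightarrow \ell \equiv 0 \bmod k_0$. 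I expect no real obstacle here: the only point to watch is bookkeeping the bound $m_0'$ so that every invocation of \cref{le:s_add} is legitimate, and once the additive structure is in hand the argument is just the resolution of a Cauchy equation over a finite cyclic group.
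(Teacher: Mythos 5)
Your proof is correct, and it takes a genuinely different route from the one the paper sketches. The paper simply notes the additivity $s_{d(\ell_1+\ell_2)}^{q_0q_0}(id) = s_{d\ell_1}^{q_0q_0}(id) + s_{d\ell_2}^{q_0q_0}(id)$ from \cref{le:s_add} and then refers to ``the same arguments as in the proof of \cref{le:d},'' i.e.\ the numerical-semigroup/$\gcd$ argument applied to the set $M = \{\ell \geq m_0' : s_{d\ell}^{q_0q_0}(id) = 0\}$, which is closed under addition. That argument, read literally, only yields $M \subseteq k_0\N$ together with $k_0\N \setminus M$ finite, so the ``if'' direction of the claimed equivalence would a priori hold only for sufficiently large $\ell$ rather than for all $\ell \geq m_0'$. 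You avoid this subtlety entirely by extracting the slope $a = f(m_0'+1)-f(m_0')$ and solving the Cauchy equation on the shifted half-line to get the exact linear form $f(\ell) \equiv \ell a \pmod{d'}$ for all $\ell \geq m_0'$; the equivalence then falls out immediately with $k_0 = d'/\gcd(a,d')$, the order of $a$ in $\Z/d'\Z$. Your derivation is cleaner, gives the precise threshold $m_0'$ rather than ``sufficiently large,'' and transparently verifies the remark following the lemma that $k_0 \mid d'$; the only thing you lean on beyond what the paper invokes is the careful bookkeeping that every use of \cref{le:s_add} has both arguments $\geq m_0'$, which you do check.
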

\begin{proof}
  We find by \cref{le:s_add} $s_{d(\ell_1 + \ell_2)}^{q_0q_0}(id) = s_{d \ell_1}^{q_0q_0}(id) + s_{d \ell_2}^{q_0q_0}(id) \bmod d'$.
  The statement follows by the same arguments as we used in the proof of \cref{le:d}.
\end{proof}
\begin{remark}
  One can actually prove that the Lemma above holds for all $\ell \in \N$ for which $s_{d \ell}^{q_0 q_0}(id)$ is properly defined.
  Furthermore, $k_0 | d'$ holds.
\end{remark}

We further follow the ideas of \cref{sec:str1} and find the following result.
\begin{lemma}\label{le:s_equal}
  There exists a naturally induced transducer $\overline{\mathcal{T}}_{A}$ such that
  $\overline{s}_{d k_0 m_0'}^{\overline{q_0 q}} (id) = 0$ holds for all $\overline{q} \in \overline{Q}$.
\end{lemma}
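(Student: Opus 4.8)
The plan is to follow the strategy of \cref{le:id}: we build $\overline{\mathcal{T}}_A$ by permuting the tuples of $Q$ via suitable permutations $\sigma_q$ and invoking \cref{pr:T_permute}, setting $\sigma_{q_0} := id$. First I would record that permuting changes neither $d$, nor $G$, nor the quantities $d'$, $k_0$, $m_0'$: these depend only on $A$, and the relevant defining objects are unchanged because $\overline{T}(\overline{q_0},\bfw) = \sigma_{q_0}\circ T(q_0,\bfw)\circ\sigma_{q_0}^{-1} = T(q_0,\bfw)$ for $\bfw$ with $\delta(q_0,\bfw)=q_0$, so $\overline{s}_{d\ell}^{\overline{q_0}\overline{q_0}}(id)=s_{d\ell}^{q_0q_0}(id)$. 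Moreover, by \cref{pr:T_permute} $\overline{\mathcal{T}}_A$ is again a naturally induced transducer, and provided every $\sigma_q$ is chosen in $G = G_{q_0q_0}(0) = G_{q_0q}(0)$ (\cref{pr:G_g_0}), one gets $\overline{G}_{\overline{q_0}\overline{q}}(0) = G_{q_0q}(0)\cdot\sigma_q^{-1} = G$, so the normalization $id \in \overline{G}_{\overline{q_0}\overline{q}}(0)$ and all properties of \cref{th:full_G} (with the same $d,G,k_0,m_0'$) survive.

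Fix $q \neq q_0$. Since $\overline{T}(\overline{q_0},\bfw) = T(q_0,\bfw)\circ\sigma_q^{-1}$, a word $\bfw\in\Sigma^{dk_0m_0'}$ with $\delta(q_0,\bfw)=q$ witnesses $\overline{T}(\overline{q_0},\bfw) = id$ exactly when $T(q_0,\bfw) = \sigma_q$, and then $\overline{s}_{dk_0m_0'}^{\overline{q_0}\overline{q}}(id) \equiv [\bfw]_k \bmod d'$. Hence it suffices to find $\bfw\in\Sigma^{dk_0m_0'}$ with $\delta(q_0,\bfw)=q$ and $[\bfw]_k \equiv 0 \bmod d'$, and then set $\sigma_q := T(q_0,\bfw)$; this lies in $G_{q_0q}(0)=G$ automatically because $|\bfw| \equiv 0 \bmod d$.

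To produce such a $\bfw$, let $\bfw_q\in\Sigma^{\ell_0}$ be the synchronizing word recorded before \cref{le:d'}, so $\delta(\overline{q},\bfw_q)=q$ for every $\overline{q}\in Q$. After enlarging $m_0'$ if necessary (harmless, as all "$\forall\,\ell\geq m_0'$" statements persist) we may assume $dk_0m_0' \geq \ell_0$ and $k^{dk_0m_0'-\ell_0} \geq d'$. For a prefix $\bfv\in\Sigma^{dk_0m_0'-\ell_0}$ put $\bfw := \bfv\bfw_q$; then $|\bfw| = dk_0m_0'$, $\delta(q_0,\bfw) = \delta(\delta(q_0,\bfv),\bfw_q) = q$, and $[\bfw]_k = [\bfv]_k\,k^{\ell_0} + [\bfw_q]_k$. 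As $\bfv$ ranges over all words of its length, $[\bfv]_k$ runs through $\{0,\dots,k^{dk_0m_0'-\ell_0}-1\}$, hence through every residue class modulo $d'$; since $d' \mid k^d-1$ forces $\gcd(k,d')=1$, the quantity $[\bfv]_k\,k^{\ell_0}$ — and therefore $[\bfw]_k$ — also runs through all residues modulo $d'$. Choosing $\bfv$ so that $[\bfw]_k \equiv 0 \bmod d'$ gives the desired path.

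Finally, for $\overline{q}=\overline{q_0}$ the equality $\overline{s}_{dk_0m_0'}^{\overline{q_0}\overline{q_0}}(id) = 0$ follows from $\overline{s}_{d\ell}^{\overline{q_0}\overline{q_0}}(id) = s_{d\ell}^{q_0q_0}(id)$ together with the preceding lemma (applied with $\ell = k_0m_0'$, which is $\geq m_0'$ and $\equiv 0 \bmod k_0$). I do not expect a deep obstacle here: the prefix trick is the key idea and it is elementary; the only thing that needs genuine care is the bookkeeping that every $\sigma_q$ really lands in $G$ (guaranteed by the divisibility $d\mid|\bfw|$) and that $d,d',k_0,m_0',G$ as well as the normalization $id\in G_{q_0 q}(0)$ are permutation-invariant, so that $\overline{\mathcal{T}}_A$ still fulfils all properties of \cref{th:full_G}.
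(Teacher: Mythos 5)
Your proof follows the same route as the paper's: permute the states by $\sigma_q := T(q_0,\bfw)$, where $\bfw \in \Sigma^{d k_0 m_0'}$ is a path from $q_0$ to $q$ with $[\bfw]_k \equiv 0 \bmod d'$, taking $\bfw$ to end in a synchronizing word so that $\delta(q_0,\bfw)=q$, and set $\sigma_{q_0}:=id$. The paper simply asserts ``we can easily choose a suitable $\bfw'$''; you supply the missing detail (the prefix $\bfv$ of length $dk_0m_0'-\ell_0$ sweeps out all residues modulo $d'$ since $\gcd(k,d')=1$) along with the bookkeeping that each $\sigma_q\in G$ and that $d,k_0,m_0',d'$ and the normalization $id\in G_{q_0q}(0)$ are preserved under the permutation.
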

\begin{proof}
  We want to find some permutations $\sigma_q$ such that applying them gives an induced transducer with the desired properties.
  Let $q \neq q_0$, take $\bfw \in \Sigma^{d k_0 m_0'}$ such that $[\bfw]_k \equiv 0 \bmod d', \delta(q_0,\bfw) = q$ and define $\sigma_{q} = T(q_0,\bfw)$. 
  Without loss of generality we restrict ourself to $\bfw = \bfw' \bfw_q$, where $\bfw_q$ is again a synchronizing word -- we can easily choose a suitable $\bfw'$.\\
  By choosing $\sigma_{q_0} = id$ we find by using \cref{pr:T_permute} that
  $\overline{T}(\overline{q_0},\bfw) = T(q_0,\bfw) \cdot T(q_0,\bfw)^{-1} = id$.\\
  Note that applying these permutations does not change the property $id \in G_{q_0 q}(0)$ used in \cref{sec:str1}
\end{proof}
We consider from now on only naturally induced transducers such that $s_{k_0 m_0'}^{q_0q}(id) = 0$ holds for all $q\in Q$.\\
We find a result similar to the one in \cref{sec:str1} concerning $M_{q\overline{q},\ell}$, which shows that $s_{d\ell}^{q\overline{q}}(g)$
only depends on $\ell \bmod k_0$ and $g$.
\begin{proposition}
  For all $q,\overline{q} \in Q, g \in G$ and $\ell \in \N, \ell \geq m_0'$ we have 
  \begin{align*}
    s_{\ell}^{q\overline{q}}(g) = s_{\ell \bmod k_0}(g),
  \end{align*}
  where $s_{\ell}(g) := s_{\ell \bmod k_0}(g) := s_{k_0 m_0' + (\ell \bmod k_0)}^{q_0q_0}(g)$.
\end{proposition}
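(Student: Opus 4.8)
The plan is to mimic the proof of \cref{pr:G_g_0} almost verbatim, reading the single residue $s^{q\overline{q}}_\ell(g)$ in place of the coset $G_{q\overline{q}}(\ell)$ and replacing the cardinality count used there by the normalisation of \cref{le:s_equal} fed into \cref{le:s_add}. Throughout I would read the subscript of $s$ in units of $d$, i.e. write $s^{q\overline{q}}_\ell(g)$ for the common value $[\bfw]_k\bmod d'$ taken over all $\bfw\in\Sigma^{d\ell}$ with $\delta(q,\bfw)=\overline{q}$ and $T(q,\bfw)=g$ (well defined for $\ell\ge m_0'$ by \cref{le:d_0}). In this notation \cref{le:s_add} reads $s^{q_1q_3}_{\ell_1+\ell_2}(g_1g_2)=s^{q_1q_2}_{\ell_1}(g_1)+s^{q_2q_3}_{\ell_2}(g_2)$ for $\ell_1,\ell_2\ge m_0'$, the defining property of $k_0$ reads $s^{q_0q_0}_{\ell}(id)=0\iff k_0\mid\ell$ for $\ell\ge m_0'$, and the normalisation reads $s^{q_0q}_{k_0m_0'}(id)=0$ for every $q\in Q$. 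Since $g_0^d\in G$ by \cref{pr:G_g_0}, every $g\in G$ is realised by a path of length $d\ell$ from $q$ to $\overline{q}$ once $\ell$ is large (by \cref{th:full_G}), so all quantities below make sense for the relevant $\ell$.

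The first step would be to propagate the normalisation to all states. Writing $c:=k_0m_0'$, so that $s^{q_0q_0}_c(id)=0$ and $s^{q_0q}_c(id)=0$, I would apply \cref{le:s_add} along $q_0\to q_0\to q$ and along $q_0\to q\to q$ (both with $id\cdot id$), substituting $c$ for the length of the first factor, to get $s^{q_0q}_b(id)=s^{q_0q_0}_b(id)$ and $s^{q_0q}_b(id)=s^{qq}_b(id)$ for all $b\ge m_0'$; then running the analogous two-step decompositions for $s^{q_0q_0}(id)$ and $s^{qq}(id)$ (through $q$ and through $q_0$) and using $s^{qq}_c(id)=0$ — which already follows from the previous identities with $b=c$ — collapses everything to $s^{qq}_\ell(id)=s^{q_0q_0}_\ell(id)$ for all $q\in Q$ and $\ell\ge m_0'$. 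In particular $s^{qq}_\ell(id)=0$ whenever $k_0\mid\ell$, and splitting a length-$d\ell$ weight-$id$ path from $q$ to $\overline{q}$ through $q_0$ via \cref{le:s_add} then gives $s^{q\overline{q}}_\ell(id)=s^{q_0q_0}_\ell(id)=0$ for all $q,\overline{q}\in Q$ and all $\ell\ge m_0'$ with $k_0\mid\ell$ (enlarging $\ell$ by a multiple of $k_0$ first if it is too small to be cut into two pieces $\ge m_0'$).

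The second step is periodicity: for $\ell\ge m_0'$ and $N$ with $k_0N\ge m_0'$, \cref{le:s_add} gives $s^{q\overline{q}}_{\ell+k_0N}(g)=s^{q\overline{q}}_\ell(g)+s^{\overline{q}\overline{q}}_{k_0N}(id)=s^{q\overline{q}}_\ell(g)$ by Step 1, so $s^{q\overline{q}}_\ell(g)$ depends only on $\ell\bmod k_0$ for $\ell\ge m_0'$ (given $\ell\equiv\ell'\bmod k_0$ with both $\ge m_0'$, add a common large multiple of $k_0$ exceeding $m_0'+\max(\ell,\ell')$ and compare). For the third step, fixing $q,\overline{q},g,\ell$ and using Step 2 to enlarge $\ell$ as needed, I would write a length-$d\ell$ weight-$g$ path from $q$ to $\overline{q}$ as the concatenation of a weight-$id$ path $q\to q_0$ of length $dc$, a weight-$g$ path $q_0\to q_0$ of length $d(\ell-2c)$, and a weight-$id$ path $q_0\to\overline{q}$ of length $dc$ (all available by \cref{th:full_G} once $\ell$ is large). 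Two applications of \cref{le:s_add} give $s^{q\overline{q}}_\ell(g)=s^{qq_0}_c(id)+s^{q_0q_0}_{\ell-2c}(g)+s^{q_0\overline{q}}_c(id)$, and the two outer terms vanish by Step 1. Since $\ell-2c\equiv\ell\bmod k_0$ and $\ell-2c\ge m_0'$, Step 2 applied to $q_0q_0$ lets me replace $\ell-2c$ by $k_0m_0'+(\ell\bmod k_0)$ (which is $\ge m_0'$ and $\equiv\ell\bmod k_0$), which is exactly $s^{q\overline{q}}_\ell(g)=s^{q_0q_0}_{k_0m_0'+(\ell\bmod k_0)}(g)=s_\ell(g)$.

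The only real obstacle I anticipate is bookkeeping with the lower bound $m_0'$: \cref{le:s_add} is available only when both segment lengths are at least $m_0'$, so at several points one must first pad $\ell$ by a multiple of $k_0$ — harmless, since such loops carry weight $id$ and contribute $0$ by Step 1 — before cutting it, and one has to check each time that the chosen representative $k_0m_0'+(\ell\bmod k_0)$ satisfies both $\ge m_0'$ and $\equiv\ell\bmod k_0$. There should be no conceptual difficulty beyond what already appears in \cref{pr:G_g_0}.
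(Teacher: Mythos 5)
Your proof is correct and follows essentially the same route as the paper: you first propagate the normalisation $s^{q_0 q}_{k_0 m_0'}(id)=0$ to the remaining pairs via \cref{le:s_add} (in particular obtaining $s^{q q_0}_{k_0 m_0'}(id)=0$ and $s^{qq}_{\ell}(id)=s^{q_0q_0}_{\ell}(id)$), then sandwich a weight-$g$ path $q_0\to q_0$ between two weight-$id$ segments of length $k_0 m_0'$ to reduce $s^{q\overline{q}}_{\ell}(g)$ to $s^{q_0q_0}_{\ell-2k_0 m_0'}(g)$, and finally invoke $k_0$-periodicity of $s^{q_0q_0}(id)$ to collapse the index to the representative $k_0 m_0'+(\ell\bmod k_0)$. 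The paper's proof is terser (it performs the three-way decomposition directly and pads $\ell$ by $5k_0 m_0'$ at the end); your version spells out the intermediate identity $s^{qq}_{\ell}(id)=s^{q_0q_0}_{\ell}(id)$ and the $m_0'$-bookkeeping more explicitly, which fills in some steps that the paper compresses, but the argument is the same in substance.
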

\begin{proof}
  We find by \cref{le:s_add} that
  \begin{align*}
    0 = s_{2 k_0 m_0'}^{q_0q_0}(id) = s_{k_0 m_0'}^{q_0 q}(id) + s_{k_0 m_0'}^{q q_0}(id) = s_{k_0 m_0'}^{q q_0}(id).
  \end{align*}
  Let us now assume $\ell \geq 5 k_0 m_0'$. We have
  \begin{align*}
    s_{\ell}^{q\overline{q}}(g) &= s_{k_0 m_0}^{q q_0}(id) k^{\ell-k_0m_0} + s_{\ell-2 k_0m_0}^{q_0q_0}(g)k^{k_0m_0} + s_{k_0 m_0}^{q_0\overline{q}}(id)\\
	  &= s_{\ell-2 k_0 m_0}^{q_0q_0}(g) = s_{k_0 m_0 + (\ell \bmod k_0)}^{q_0q_0}(g) + s_{\ell - (\ell \bmod k_0) - 3 k_0 m_0}^{q_0q_0}(id) = s_{\ell}(g).
  \end{align*}
  Let now $\ell \geq m_0'$. We find
  \begin{align*}
    s_{\ell}^{q\overline{q}}(g) = s_{\ell}^{q\overline{q}}(g) + 5 s_{\ell}^{\overline{q}\overline{q}}(id) = s_{\ell + 5 k_0 m_0'}^{q\overline{q}}(g) = s_{\ell \bmod k_0}(g),
  \end{align*}
  which finishes the proof.  
\end{proof}

%Thus we find that it is not important for the "non-local phenomena" where a path starts and ends, but only depends on the remainder of the length mod $k_0$.

We define $G_{\ell} := \{g \in G: s_0(g) = \ell\}$. 

\begin{corollary}
  $G_{0}$ is a subgroup of $G$.
\end{corollary}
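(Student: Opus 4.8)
The plan is to show that $G_0 = \{g \in G : s_0(g) = 0\}$ is a nonempty subset of the \emph{finite} group $G = G_{q_0q_0}(0)$ (which is a group by \cref{co:G_qq_subgroup}) that is closed under the group operation $\circ$; since a nonempty multiplicatively closed subset of a finite group is automatically a subgroup, this suffices. Beyond this observation the only ingredient is the additivity formula of \cref{le:s_add}, combined with the standing normalization $s_{k_0m_0'}^{q_0q}(id)=0$ adopted after \cref{le:s_equal}, and the fact, established in the Proposition immediately above, that $s_\ell^{q\overline q}(g)$ depends only on $\ell \bmod k_0$ and on $g$ once $\ell \ge m_0'$.

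First I would verify $id \in G_0$. By definition $s_0(id) = s_{k_0m_0'}^{q_0q_0}(id)$, which vanishes by the normalization, so $id \in G_0$ and in particular $G_0 \neq \emptyset$. (Here $s_0(g)$ is well-defined for every $g \in G$ since, by \cref{th:full_G}, every $g \in G$ is realized as $T(q_0,\bfw)$ for some $\bfw \in \Sigma^{d k_0 m_0'}$ with $\delta(q_0,\bfw)=q_0$, once $k_0 m_0' \ge m_0$, which we may assume.)

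Next comes closure. Let $g_1, g_2 \in G_0$ and choose auxiliary lengths $m_1, m_2 \ge m_0'$ with $m_1 \equiv m_2 \equiv 0 \bmod k_0$, so that also $m_1+m_2 \equiv 0 \bmod k_0$. Applying \cref{le:s_add} with $q_1 = q_2 = q_3 = q_0$ gives
\begin{align*}
  s^{q_0q_0}_{d(m_1+m_2)}(g_1\circ g_2) = s^{q_0q_0}_{dm_1}(g_1) + s^{q_0q_0}_{dm_2}(g_2) \pmod{d'}.
\end{align*}
By the preceding Proposition the three terms equal $s_0(g_1\circ g_2)$, $s_0(g_1)$, $s_0(g_2)$ respectively, so $s_0(g_1\circ g_2) = s_0(g_1) + s_0(g_2) = 0$. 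Since $G$ is a group, $g_1\circ g_2 \in G$, and hence $g_1\circ g_2 \in G_0$.

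Finally, closure under inverses is free: for $g \in G_0$ the iterates $g, g^2, g^3, \dots$ all lie in $G_0$ by the previous step and induction, but $G_0 \subseteq G$ is finite, so $g^n = id$ for some $n \ge 1$ and thus $g^{-1} = g^{n-1} \in G_0$. This completes the verification that $G_0$ is a subgroup of $G$. I do not anticipate a genuine obstacle; the only point needing a little attention is keeping the length indices of $s$ consistent when invoking \cref{le:s_add} and the Proposition — one must take the auxiliary lengths $d m_i$ to be multiples of $d k_0$ and at least $d m_0'$ so that the additivity identity collapses directly to a relation among values of $s_0$.
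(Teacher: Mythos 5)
Your proof is correct and follows the same route as the paper, whose entire argument is the one-line remark that the statement follows directly from \cref{le:s_add}: additivity $s_0(g_1\circ g_2)=s_0(g_1)+s_0(g_2)$ gives multiplicative closure, and you simply spell out the standard observation that a nonempty, multiplicatively closed subset of a finite group is a subgroup. The bookkeeping of length indices you flag (taking the $m_i$ to be multiples of $k_0$ and at least $m_0'$ so the Proposition collapses everything to $s_0$) is exactly the right care to take and matches the paper's conventions.
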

\begin{proof}
  The statement follows directly by \cref{le:s_add}.
\end{proof}
Thus we can prove now \cref{th:dk_0}.

\begin{proof}[Proof of \cref{th:dk_0}: ]
  Of course we choose $k_0, m'_0, \ell_0, d$ and $G_0$ as we defined them throughout this section.
  To prove the first part we choose $g_0'$ as an arbitrary element of $G_{1}$ and the proof follows by the same arguments as in the proof of \cref{th:full_G}.
  The second part is just the result of \cref{le:d'}.
\end{proof}

\subsection{Reduction to special naturally induced transducers}\label{sec:reduce}
We want to show how to reduce the general setting of an automatic sequence to automata with special properties.

\begin{proposition}\label{pr:components_d_k_0}
  Let $\mathbf{a}=(a_n)_{n\in\N}$ be a $k$-automatic sequence. There exists $p \in \N$ and $A = (Q',\{0,\ldots,k^{p}-1\},\delta',q_0')$ such that
  $\mathbf{a}$ is generated by $A$ and for every automaton $A_i$ that is a restriction to a final component
  of $A$, it holds that $d(A_i) = k_0(A_i) = 1$.
\end{proposition}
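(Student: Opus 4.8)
The plan is to pass from base $k$ to a suitable power $k^{p}$: reading $p$ base-$k$ digits at once turns every path length into a multiple of $p$ and turns the value $[\bfw]_{k}$ into $[\bfw]_{k^{p}}$, and for a well-chosen $p$ this collapses precisely the obstructions that $d$ and $k_{0}$ measure. Fix a DFAO $B_{0}=(Q',\{0,\dots,k-1\},\delta_{0},q_{0}',\tau)$ generating $\mathbf a$ in base $k$. By the standard equivalence of $k$- and $k^{p}$-automaticity \cite{AllouchShallit}, $\mathbf a$ is generated, for every $p\in\N$, by the DFAO $A$ with the same states, initial state and output, alphabet $\{0,\dots,k^{p}-1\}$ and transition $\delta'(q,d):=\delta_{0}(q,\langle d\rangle)$, where $\langle d\rangle\in\{0,\dots,k-1\}^{p}$ is the base-$k$ expansion of $d$ padded with leading zeros to length $p$; a word over the new alphabet corresponds to a base-$k$ word $\langle\bfw\rangle$ of length divisible by $p$, and $[\bfw]_{k^{p}}=[\langle\bfw\rangle]_{k}$. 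Let $B_{1},\dots,B_{r}$ be the restrictions of $B_{0}$ to its final components; by \cref{th:full_G} and \cref{th:dk_0} each carries invariants $d(B_{i})$ and $k_{0}(B_{i})$ independent of a choice of initial state, and I would set $p:=\lcm\{\,d(B_{i})k_{0}(B_{i}):1\le i\le r\,\}$.

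With this $p$, every final component $A_{i}$ of $A$ lies inside some $B_{j}$ (a set of states strongly connected under, and closed under, all $p$-fold transitions must lie in a single strongly connected component closed under single transitions), and since $p$ is a multiple of the period $p(B_{j})\mid d(B_{j})$ (remark after \cref{th:full_G}) one checks that $A_{i}$ is precisely one of the $p(B_{j})$ level slices of $B_{j}$ with respect to its cyclic decomposition. To transfer the structure I would take a naturally induced transducer $\mathcal T_{B_{j}}$ as in \cref{prop:synch_trans} (ordering $Q'$ so that each tuple is grouped by level), form the transducer $\mathcal T_{B_{j}}^{(p)}$ that reads $p$ digits at once, i.e.,~$\delta^{(p)}(q,d)=\delta(q,\langle d\rangle)$ and $\lambda^{(p)}(q,d)=T(q,\langle d\rangle)$, and observe that by \cref{pr:equality} (in the pointwise form proved there) it is an induced transducer of $B_{j}$ over the alphabet $\{0,\dots,k^{p}-1\}$, and that by \cref{le:transducer_regular} together with the fact that any word containing a synchronizing word is synchronizing it is strongly connected and synchronizing, hence naturally induced. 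Restricting each tuple to the relevant level slice yields a surjective transducer morphism $\mathcal T_{B_{j}}^{(p)}\to\mathcal T_{A_{i}}$ which respects $\delta$, the weight function $T$ and the value $[\cdot]_{k^{p}}=[\cdot]_{k}$; in particular $n_{0}(A_{i})=n_{0}(B_{j})/p(B_{j})$, and every cycle of $\mathcal T_{B_{j}}$ of length $pm$ and trivial weight maps to a cycle of $\mathcal T_{A_{i}}$ of length $m$ and trivial weight carrying the same value of $[\cdot]$.

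Both claims now follow. For $d(A_{i})=1$: by \cref{th:full_G} the transducer $\mathcal T_{B_{j}}$ has trivial-weight cycles of every sufficiently large length divisible by $d(B_{j})$; since $d(B_{j})\mid p$, the transducer $\mathcal T_{A_{i}}$ has trivial-weight cycles of every sufficiently large length $m$, so the set $M_{q}$ of such lengths has g.c.d.\ $1$, and as $M_{q}\subseteq d(A_{i})\N$ by \cref{le:d} this forces $d(A_{i})=1$. For $k_{0}(A_{i})=1$: by the choice of $p$, a trivial-weight cycle of $\mathcal T_{B_{j}}$ of length $pm$ has base-$k$ length parameter $pm/d(B_{j})$ divisible by $k_{0}(B_{j})$, so by the analysis of \cref{sec:str2}, in the normalization $s_{k_{0}m_{0}'}^{q_{0}q}(\mathrm{id})=0$, it satisfies $[\cdot]_{k}\equiv0\bmod d'(B_{j})$; moreover the g.c.d.\ of the $[\cdot]_{k}$-differences of two such cycles of the same length is the value $d(q_{0},q_{0})$ for $\mathcal T_{B_{j}}$, whose part coprime to $k$ is $d'(B_{j})$. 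Pushing this through the morphism, every trivial-weight cycle of $\mathcal T_{A_{i}}$ of sufficiently large length $m$ has $[\cdot]_{k^{p}}\equiv0\bmod d'(B_{j})$; and $d'(A_{i})$, being the part coprime to $k$ of the value $d(q_{0},q_{0})$ for $\mathcal T_{A_{i}}$, which divides the corresponding value for $\mathcal T_{B_{j}}$, divides $d'(B_{j})$. Hence the additive map $m\mapsto s_{m}^{q_{0}q_{0}}(\mathrm{id})\in\Z/d'(A_{i})$ (well-defined since $d(A_{i})=1$, and additive by \cref{le:s_add}) vanishes for all large $m$, so it vanishes identically, which means $k_{0}(A_{i})=1$.

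The main obstacle, and the only place that needs genuine care, is the second step: carrying out the combined base-$k^{p}$ and level-slice reduction precisely at the level of naturally induced transducers --- identifying the level slices of each $B_{j}$ with the final components of $A$, exhibiting and checking the morphism $\mathcal T_{B_{j}}^{(p)}\to\mathcal T_{A_{i}}$, and verifying $n_{0}(A_{i})=n_{0}(B_{j})/p(B_{j})$ together with the compatibility of the weight function and of $[\cdot]$. Everything else is a direct application of \cref{th:full_G}, \cref{th:dk_0} and the lemmas of \cref{sec:str1} and \cref{sec:str2}.
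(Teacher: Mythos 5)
Your approach is essentially the same as the paper's: pass to the $p$-th power of a generating automaton with $p=\lcm_{i}d(B_{i})k_{0}(B_{i})$, observe that the final components of $A$ sit inside the final components $B_{j}$ of the original automaton, obtain a naturally induced transducer for each $A_{i}$ from the transducer $\mathcal T_{B_{j}}$ by reading $p$ base-$k$ digits at a time (so that $\widetilde\lambda$ coincides with $T$ on length-$p$ words) and restricting to the relevant coordinates, and deduce $d(A_{i})=k_{0}(A_{i})=1$ via Theorems~\ref{th:full_G} and~\ref{th:dk_0}. You actually spell out the $d=1$ and $k_{0}=1$ verifications in more detail than the paper does (the paper settles them with ``by the construction of $\mathcal T$ we see easily'').

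One small but necessary point you omit: the automaton $A$ you define by $\delta'(q,d)=\delta_{0}(q,\langle d\rangle)$ only generates $\mathbf a$ if $\delta_{0}(q_{0}',0)=q_{0}'$, because the base-$k$ expansion of $n$ generally needs to be padded with leading zeros to have length divisible by $p$, and that padding must fix the initial state. The paper explicitly imposes this normalization (``We can assume without loss of generality that $\overline\delta(\overline{q_{0}},0)=\overline{q_{0}}$''); you should do the same before passing to the $p$-th power. Also, the justification you give for $d'(A_{i})\mid d'(B_{j})$ --- that ``$d(q_{0},q_{0})$ for $\mathcal T_{A_{i}}$ divides the corresponding value for $\mathcal T_{B_{j}}$'' --- is stated without argument and is not obvious from the gcd definitions (a priori restricting to a sub-collection of cycle lengths could only increase a gcd); the clean statement is that the two values agree, since for large $m$ the value $d_{\mathrm{id},\ell}^{q_{0}q_{0}}$ for $\mathcal T_{B_{j}}$ has already stabilized at $d(q_{0},q_{0})$ and the $\mathcal T_{A_{i}}$-cycles correspond to the lengths $\ell=pm/d(B_{j})$.
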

\begin{proof}
  Let $\overline{A} = (\overline{Q},\{0,\ldots,k-1\},\overline{\delta},\overline{q_0})$ be an automaton that generates $\mathbf{a}$.
  We can assume without loss of generality that $\overline{\delta}(\overline{q_0},0) = \overline{q_0}$ and every state $\overline{q}\in \overline{Q}$ is reachable from $\overline{q_0}$.
  Let $\overline{Q}_1,\ldots,\overline{Q}_{\ell}$ be the final components of $\overline{A}$.
  We let $\overline{A}_i$ denote the automaton that corresponds to the restriction of $\overline{A}$ to $\overline{Q}_i$, with arbitrary initial state.\\
  We define $p = \lcm(d(\overline{A}_1)k_0(\overline{A}_1),\ldots,d(\overline{A}_{\ell})k_0(\overline{A}_{\ell}))$ 
  and note that $d(\overline{A}_i)$ and $k_0(\overline{A}_i)$ do not depend on the initial state of $A_i$ -- compare \cref{th:full_G} and \cref{th:dk_0}.
  The idea is now to take the ``$p$-th power" of $\overline{A}$ which will give the desired property.
  
  Therefore we define $A = (\overline{Q},\{0,\ldots,k^{p}-1\},\delta',\overline{q_0})$ where $\delta'$ is the extension of $\overline{\delta}$ to 
  $\{0,\ldots,k-1\}^p \cong \{0,\ldots,k^{p}-1\}$ from letters to words of length $p$.
  We see that $A$ generates $\mathbf{a}$ as $\overline{\delta}(\overline{q_0},0) = \overline{q_0}$ ensures that adding leading zeros does not change the output (of $\overline{A}$).
	
  One easily observes that every final component of $A$ is contained in a final component of $\overline{A}$:
  The strongly connected components of a directed graph form a new directed acyclic graph where every node corresponds to a strongly connected component.
  There exists an edge between two such nodes if and only if there is an edge between two states of these strongly connected components.
  Every final component of $A$ is part of a final component of $\overline{A}$, since there exists a path (whose length is divisible by $p$)
  from every strongly connected component, which is not closed under $\overline{\delta}$ to a final component.
  Therefore, we can restrict ourself to consider only a final component $Q'_i$.
	
  It remains to show $d(A_i) = k_0(A_i) = 1$.
  For this purpose, we want to describe how a naturally induced transducer of $A_i$ looks like.
  Let $\mathcal{T}_{\overline{A}_j} = (Q,\{0,\ldots,k-1\}, \delta, q_0, \Delta, \lambda)$ be a naturally induced transducer of $\overline{A}_j$.
  As $d(\overline{A}_j) k_0(\overline{A}_j) | p$ we know by \cref{th:full_G} that for all $q_1,q_2\in Q$ and $g\in G$ there exists
  $\bfw \in \{0,\ldots,k^p-1\}^{*}$ such that $\delta(q_1,\bfw) = q_2$ and $T(q_1,\bfw) = g$.
%   We define a congruence relation on $\{1,\ldots,n_0\}$ by $\ell_1 \sim \ell_2$ if and only if $\exists g \in G (\subseteq S_{n_0}): g(\ell_1) = \ell_2$.
  This means, in particular, that there exists $\bfw \in \{0,\ldots,k^p-1\}^{*}$ such that $\delta(q_1,\bfw) = q_2$ and $T(q_1,\bfw) = id$.
  Thus \cref{pr:equality} shows that we can describe the states of $A_i$ as the elements corresponding to some coordinates of $\mathcal{T}_{\overline{A}_j}$:
  $\pi_i(q_1)$ and $\pi_i(q_2)$ belong to the same strongly connected component for all $q_1,q_2\in Q$ as $\bfw$ satisfies $\delta'(\pi_i(q_1),\bfw) = \pi_i(q_2)$.
  This implies that
  \begin{align*}
    Q'_i = \{\pi_{\ell}(q)|q\in Q, \ell \in I\},
  \end{align*}
  holds for some index set $I\subseteq\{1,\ldots,n_0\}$.
  We claim that $\mathcal{T} := (Q_{I},\Sigma^{p},\widetilde{\delta}_{I}, (q_0)_{I}, G_{I}, \widetilde{\lambda}_{I})$ provides a naturally induced transducer of $A_i$.
  Here we again denote by $\widetilde{\delta}$ the extension of $\delta$ from letters to words, $\widetilde{\lambda}$ coincides with $T$ for words of length $p$
  (for $\mathcal{T}_{A}$) and $G$ is defined as in \cref{th:full_G}.
  %We have seen in \cref{sec:str1} that $T(q,\bfw) \in G$ for all $\bfw$ whose length is devisible by $d(A)$.
  Furthermore, we use $(.)_{I}$ to denote the projection to the coordinates of $I$.
  \cref{th:full_G} assures that $\mathcal{T}_{\overline{A}}$ is an induced transducer.
  %\cref{pr:equality} assures that removing exceptions to 6) can be done consistently.
  To see that $\mathcal{T}_{\overline{A}}$ is synchronizing we just have to take a synchronizing word $\bfw_{q_0}$ of $\mathcal{T}_{A}$ 
  and add leading zeros such that it is a word whose length is divisible by $p$.\\
  However Property 6) may not hold, but we already discussed in \cref{sec:aut_to_trans} that this can be fixed easily.\\
  By the construction of $\mathcal{T}$ we see easily that $d(A_i) = k_0(A_i) = 1$ holds.
\end{proof}

This Proposition can be simplified under suitable conditions.

\begin{corollary}\label{cor:aut_to_trans_schleife}
  Let $\overline{A} = (Q',\Sigma, \delta', q_0')$ be a strongly connected automaton, such that\\
  $\delta'(q_0',0) = q_0'$.
  Then there exists a strongly connected automaton $A$ such that $d(A) = k_0(A) = 1$ and
  the automatic sequences generated by $A$ and $\overline{A}$ coincide.
\end{corollary}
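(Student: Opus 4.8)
The plan is to obtain $A$ directly as the output of \cref{pr:components_d_k_0}, applied with $\overline{A}$ itself as the starting automaton, and then to observe that strong connectivity is not lost in that construction. First I would note that $\overline{A}$ already satisfies the two normalisations imposed at the outset of the proof of \cref{pr:components_d_k_0}: every state is reachable from $q_0'$ because $\overline{A}$ is strongly connected, and $\delta'(q_0',0) = q_0'$ by hypothesis. Running that construction therefore yields a $p\in\N$ and an automaton $A = (Q',\{0,\ldots,k^{p}-1\},\delta'_A,q_0')$ on the same state set, generating the same automatic sequence as $\overline{A}$, and for which every restriction to a final component has $d = k_0 = 1$.

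The one thing still to verify is that $A$ is strongly connected; once this is done, $Q'$ is the unique strongly connected component of $A$ and is trivially closed under $\delta'_A$, so the restriction of $A$ to its unique final component is $A$ itself, and \cref{pr:components_d_k_0} gives $d(A) = k_0(A) = 1$. To prove strong connectivity I would argue as follows: a transition of $A$ reads a block of $p$ letters of $\overline{A}$, so it suffices to show that for any $q_1,q_2\in Q'$ there is a word over $\{0,\ldots,k-1\}$ whose length is divisible by $p$ and which steers $q_1$ to $q_2$. Using strong connectivity of $\overline{A}$, pick words $\bfw_1,\bfw_2$ with $\delta'(q_1,\bfw_1) = q_0'$ and $\delta'(q_0',\bfw_2) = q_2$; since $\delta'(q_0',0) = q_0'$, the word $\bfw_1\,0^{j}\,\bfw_2$ still steers $q_1$ to $q_2$ for every $j\ge 0$, and one chooses $j$ with $|\bfw_1| + j + |\bfw_2| \equiv 0 \bmod p$.

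I expect the only genuinely new point beyond \cref{pr:components_d_k_0} to be precisely this last step: in general the ``$p$-th power'' of a strongly connected automaton need not be strongly connected, and the hypothesis $\delta'(q_0',0) = q_0'$ is exactly the extra input that rescues it, by supplying loops of every length at $q_0'$. Everything else is bookkeeping — in particular $\delta'_A(q_0',0) = \delta'(q_0',0^{p}) = q_0'$, so $A$ even inherits the self-loop at its initial state, although the statement does not require this.
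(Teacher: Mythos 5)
Your proposal is correct and follows essentially the same route as the paper: apply \cref{pr:components_d_k_0} to $\overline{A}$ and observe that the resulting $p$-th power automaton is strongly connected (so that the unique final component is $A$ itself), while the automatic sequences agree because prepending zeros leaves the output unchanged. The only difference is that you spell out the strong-connectivity step — routing any path through the $0$-loop at $q_0'$ to adjust its length modulo $p$ — which the paper dismisses with ``see directly.''
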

\begin{proof}
  We consider the proof of \cref{pr:components_d_k_0} and see directly that $A$ -- as considered in the proof of \cref{pr:components_d_k_0} -- 
  has only one strongly connected component.
  It just remains to note that the automatic sequences generated by $A$ and $\overline{A}$ coincide since adding leading zeros does not change the output.
\end{proof}

\begin{example}
We continue the example from \cref{sec:str1}.
  The naturally induced transducer we considered was
  
  \tikzset{elliptic state/.style={draw,ellipse}}
  \begin{tikzpicture}[->,>=stealth',shorten >=1pt,auto,node distance=2.8cm, semithick, bend angle = 15, cross line/.style={preaction={draw=white,-,line width=4pt}}]
    %\tikzstyle{every state}=[fill=white,draw=none,text=black]
    
    \node[elliptic state, initial]	(A)                    {$q_0', q_1', q_2'$};
    \node[elliptic state]	         (B) [below of=A] 	{$q_3', q_4', q_5'$};
    
    \path [every node/.style={font=\footnotesize}, pos = 0.5]
    (A) edge [loop above] node 		 	{0|(23)}      	(A)
	edge [bend left]  node 		 	{1|(12)} 	(B)
    (B) edge [bend left]  node 			{0|(23)}	(A)
	edge [loop below] node			{1|(23)}	(B);
  \end{tikzpicture}
	
and we have already seen that $d = 2, k_0 = 1$. Thus we consider the automaton that is the ''square`` of the original automaton, i.e., we concatenate two steps of the original automaton to one step of the new automaton, and the corresponding naturally induced transducer:

\tikzset{elliptic state/.style={draw,ellipse}}
  \begin{tikzpicture}[->,>=stealth',shorten >=1pt,auto,node distance=2.8cm, semithick, bend angle = 15, cross line/.style={preaction={draw=white,-,line width=4pt}}]
    %\tikzstyle{every state}=[fill=white,draw=none,text=black]
    
    \node[elliptic state, initial]	(A)                    {$q_0', q_1', q_2'$};
    \node[elliptic state]	         (B) [below of=A] 	{$q_3', q_4', q_5'$};
    
    \path [every node/.style={font=\footnotesize}, pos = 0.5]
    (A) edge [loop above] node 	[align=center]	 	{0|id\\2|(123)}      	(A)
	edge [bend left]  node 	[align=center]	 	{1|(132)\\3|(123)} 	(B)
    (B) edge [bend left]  node 	[align=center]		{0|id\\2|id}	(A)
	edge [loop below] node	[align=center]		{1|(132)\\3|id}	(B);
  \end{tikzpicture}

  We find then easily that for this transducer $d = k_0 = 1$ holds.
\end{example}

\section{Reduction of \cref{th:moebius} and \cref{th:prime}}\label{sec:reduction}
We reduce in this section the two main Theorems~\ref{th:moebius} and \ref{th:prime} to statements concerning naturally induced transducers.

\subsection{M\"obius Randomness Principle}
First we present a lemma that reduces the Sarnak conjecture for the symbolic dynamical system associated with an automatic sequence 
to a M\"obius randomness law for (possibly different) automatic sequences.

\begin{lemma}\label{le:dynamical_system_uniform}
  Suppose that for every automatic sequence $(a_n)_{n\in\N}$ with values in $\C$
  \begin{align}
    \sum_{n\leq N} \mu(n) a_{n+r} = o(N),
  \end{align}
  uniformly for $r \in \N$. Then \cref{th:moebius} holds.
\end{lemma}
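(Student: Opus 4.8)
The plan is to reduce an arbitrary continuous observable to one depending on only finitely many coordinates, to realise such an observable along any point of the orbit closure as a finite truncation of a \emph{shifted} automatic sequence, and then to invoke the hypothesis, whose uniformity in the shift is exactly what makes this transfer legitimate.

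First I would reduce to cylinder functions. Since $X\subseteq\mathcal A^{\N}$ carries the product topology, the functions depending on only finitely many coordinates form a point-separating, conjugation-closed subalgebra of $C(X,\C)$ containing the constants, hence are uniformly dense by Stone--Weierstrass. So, given $\varepsilon>0$, fix $L\in\N$ and $G\colon\mathcal A^{L}\to\C$ such that $g(y):=G(y_{0},\dots,y_{L-1})$ satisfies $\norminf{f-g}<\varepsilon$. From $\sum_{n\le N}\abs{\mu(n)}\le N$ we get
\begin{align*}
  \Big|\sum_{n\le N}\mu(n)\,\xi(n)\Big|\le\Big|\sum_{n\le N}\mu(n)\,g(S^{n}x)\Big|+\varepsilon N ,
\end{align*}
so it suffices to prove $\sum_{n\le N}\mu(n)\,g(S^{n}x)=o(N)$ for each fixed cylinder function $g$, and then let $\varepsilon\to0$.

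Next I would bring in an automatic sequence. By the standard closure properties of automatic sequences --- closure under fixed-length sliding windows and under letter-to-letter codings, see \cite{AllouchShallit} --- the sequence $c_{n}:=G(a_{n},a_{n+1},\dots,a_{n+L-1})$ is again $k$-automatic, and $c_{n+r}=g\big(S^{n}(S^{r}\mathbf{a})\big)$ for all $n,r$. Applying the hypothesis to $(c_{n})_{n\in\N}$ gives, uniformly in $r$, the estimate $\sum_{n\le N}\mu(n)\,c_{n+r}=o(N)$. (If the hypothesis is stated only for $r\ge 1$, the case $r=0$ follows by applying it to a one-step shift of $(c_{n})$, which is still automatic.) Now fix $N$: since the orbit $\{S^{j}\mathbf{a}:j\in\N_0\}$ is dense in $X$, the nonempty open set of points of $X$ agreeing with $x$ on the first $N+L$ coordinates contains some $S^{r}\mathbf{a}$; for this $r=r(N)$ one has $g(S^{n}x)=g\big(S^{n}(S^{r}\mathbf{a})\big)=c_{n+r}$ for every $n\le N$. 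Hence
\begin{align*}
  \frac1N\sum_{n\le N}\mu(n)\,g(S^{n}x)=\frac1N\sum_{n\le N}\mu(n)\,c_{n+r(N)}=o(1)
\end{align*}
by the uniformity in $r$; combining with the first step and letting $\varepsilon\to0$ proves \cref{th:moebius}.

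The only point I expect to need genuine care is this use of uniformity in $r$: the shift $r(N)$ depends on $N$ and in general grows with it, and a point of the orbit closure need not itself be an automatic sequence, so a non-uniform form of the hypothesis would not close the argument. The remaining ingredients --- Stone--Weierstrass, the closure properties of automatic sequences, and the trivial bound on $\sum\abs{\mu}$ --- are routine.
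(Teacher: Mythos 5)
Your proof is correct and follows essentially the same route as the paper: the paper invokes a cited lemma (Lemma~2.1 of~\cite{synchronizing}) for precisely the Stone--Weierstrass reduction to cylinder functions and the orbit-density step that lets $r=r(N)$ grow with $N$, and then appeals to the same closure properties of automatic sequences from~\cite{AllouchShallit}. You have simply unpacked that cited lemma in full, and your remark that the orbit point's dependence on $N$ is what makes uniformity in $r$ indispensable is exactly the crux of the argument.
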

\begin{proof}
  We now fix one automatic sequence $\mathbf{b} = (b_n)_{n\in\N}$ that takes values in $\C$.
  Let $(X,S)$ be the symbolic dynamical system associated with $\mathbf{b}$.
  \cite[Lemma 2.1]{synchronizing} shows that it is sufficient to show that for every $j\geq 1$ and every $g:\C^j \to \C$ it holds that
  \begin{align*}
    \sum_{n\leq N} \mu(n) g(b_{n+r},b_{n+r+1},\ldots,b_{n+r+j-1}) = o(N)
  \end{align*}
  uniformly for $r\in\N$.
  However, \cite[Theorem 5.4.4, Lemma 5.2.6 and Lemma 4.3.9]{AllouchShallit} shows that $(b_{n+\ell})_{n\in \N}$ is again an automatic sequence for all $\ell \in \N$, 
  the product of two automatic sequences is again an automatic sequence, and therefore the product of $j$ automatic sequences is again an automatic sequence, 
  and also that $(g(b_n))_{n\in \N}$ is again an automatic sequence.
	Therefore, $a_n = g(b_{n+r},b_{n+r+1},\ldots,b_{n+r+j-1})$ defines again an automatic sequence and hence the statement follows.
\end{proof}

To further reduce this result we need some ideas of representation theory.
We cover the most important definitions and notations briefly.
A $d$ dimensional representation $D$ is a continuous homomorphism $D: G \rightarrow U_d$, where we let $U_d$ denote the group of unitary $d\times d$ matrices
over $\C$. A representation $D$ is called irreducible if there exists no non-trivial subspace $V \subset U_d$ such that $D(g)V \subseteq V$ holds for all $g\in G$.
Furthermore, we will use a scalar-product for functions $f_1,f_2: G \to \C$
\begin{align*}
	<f_1,f_2> := \int f_1 \overline{f_2} d\boldsymbol{\mu} = \frac{1}{\abs{G}} \sum_{g \in G} f_1(g) \overline{f_2(g)},
\end{align*}
where $\boldsymbol{\mu}$ denotes the Haar measure.
In \cref{sec:RS1}, we show the following result.

\begin{proposition}\label{pr:rep_estimate}
  Let $\mathcal{T}_{A}$ be a naturally induced transducer by $A$ and suppose that $d(A) = 1, k_0(A) = 1$ holds.
  Let, furthermore, $D$ be a unitary irreducible representation of $G$, 
  $r,\lambda_1, \lambda_2 \in \N, 0<b <k^{\lambda_1}$ and $m<k^{\lambda_2}$.
  It holds
  \begin{align}\label{eq:goal}
    \norm{\sum_{\substack{n<N\\n\equiv m \bmod k^{\lambda_2}}} \chi_{1/k^{\lambda_1}}\rb{\frac{n+r-bk^{\nu-\lambda_1}}{k^{\nu}}}D(T(q_0,(n+r)_k)) \mu(n)}_F = o(N),
  \end{align}
  uniformly for $r$ where $\nu$ is the unique integer satisfying $k^{\nu-1}\leq N < k^{\nu}$.
\end{proposition}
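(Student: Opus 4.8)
The plan is to reduce \eqref{eq:goal} to a scalar M\"obius sum and then run the Mauduit--Rivat method of \cite{mauduit_rivat_rs} in the modified form established in \cref{sec:RS1}, whose hypotheses will be exactly the two structural facts proved later in the paper: a carry propagation lemma for $T$, and the uniform smallness of the Fourier terms of $n\mapsto D(T(q_0,(n)_k))$. The assumption $d(A)=k_0(A)=1$ is what removes every arithmetic obstruction and makes these inputs effective.

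First I would fix the unitary irreducible representation $D$, of dimension $e$ say. If $D$ is the trivial representation, then $D(T(q_0,(n+r)_k))\equiv 1$ and \eqref{eq:goal} reduces to the cancellation of $\mu$ along the arithmetic progression $n\equiv m\bmod k^{\lambda_2}$ restricted to the (essentially single) interval cut out by $\chi_{1/k^{\lambda_1}}$, which is classical since the modulus is fixed and the interval has length $\gg N$; so from now on I assume $D$ nontrivial. Since $\norm{D(g)}_F=\sqrt{e}$ for every $g\in G$ and the Frobenius norm is controlled by the sum of the moduli of the $e^2$ entries, it suffices to bound, for each pair $(i,j)$, the scalar sum $\sum_{n<N,\,n\equiv m\,(k^{\lambda_2})}\chi_{1/k^{\lambda_1}}\rb{\frac{n+r-bk^{\nu-\lambda_1}}{k^{\nu}}}\,D_{ij}(T(q_0,(n+r)_k))\,\mu(n)$. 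The congruence fixes the $\lambda_2$ least significant digits of $n$, while $\chi_{1/k^{\lambda_1}}$ forces the $\lambda_1$ most significant digits of $(n+r)\bmod k^{\nu}$; as $\lambda_1,\lambda_2$ are fixed and $\nu\to\infty$, these are two local digit constraints separated by a long block of free digits. Writing $n=m+k^{\lambda_2}n'$ removes the congruence, and --- up to a carry of bounded length, which by the carry lemma affects $T$ only on a set of $n'$ of density $o(1)$ --- the high-digit constraint becomes a constraint on the top digits of $n'$. What remains is a M\"obius sum $\sum_{n'}\mu(k^{\lambda_2}n'+m)\,\Phi(n')$ with $\abs{\Phi}\leq 1$, where $\Phi(n')=D_{ij}(T(q_0,(k^{\lambda_2}n'+m+r)_k))$ restricted to a window of digits of $n'$.

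To this sum I would apply a combinatorial identity of Vaughan type (as in \cite{mauduit_rivat_rs}), splitting it into type I sums $\sum_{d\leq D_0}c_d\sum_{m'}\Phi(dm')$ with divisor-bounded $c_d$, and type II sums $\sum_{D_0<d<D_1}\sum_{m'}a_d b_{m'}\Phi(dm')$ with $\abs{a_d},\abs{b_{m'}}\ll N^{\varepsilon}$, for suitable powers $D_0,D_1$ of $N$. In the type I sums the inner variable $m'$ runs over an interval with no arithmetic weight; along such a progression $T(q_0,\cdot)$, read over any sufficiently long window of digits, equidistributes in $G$ by \cref{th:full_G} and \cref{th:dk_0} --- this is precisely where $d(A)=k_0(A)=1$ is used, since then no periodic or congruential restriction confines $T$ to a coset of a proper subgroup. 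As $D$ is nontrivial irreducible, $\sum_{g\in G}D(g)=0$, so each inner sum is $o(M')$ where $M'$ is the length of the $m'$-interval, uniformly in $d$ (using the carry lemma to detect the digits of $dm'$ and the Fourier-term bound of \cref{sec:RS1}); summing trivially over $d\leq D_0$ keeps the contribution $o(N)$.

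The heart of the matter, and the step I expect to be the main obstacle, is the type II sum. Cauchy--Schwarz in $d$ removes $a_d$, and expanding the square produces $\sum_{m',m''}b_{m'}\overline{b_{m''}}\sum_{d}\Phi(dm')\overline{\Phi(dm'')}$. For fixed $m',m''$ the inner sum over $d$ must cancel unless $m'$ and $m''$ are close: the carry propagation lemma lets me replace $\Phi(dm')\overline{\Phi(dm'')}$, for all $d$ outside a set of density $o(1)$, by a function depending only on a bounded window of digits of $d$; over that window $T(q_0,\cdot)$ again equidistributes in $G$ by \cref{th:full_G}, so the uniform Fourier-term estimate of \cref{sec:RS1} forces cancellation off the diagonal $m'\approx m''$, while the diagonal contributes an acceptable $O(N^{2-\delta})$. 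Optimising $D_0$ and $D_1$ then yields a power saving, hence $o(N)$. Throughout, every estimate depends on the base-$k$ expansions only through bounded windows together with the behaviour after a synchronising word, so all bounds are uniform in the shift $r$, which gives the stated uniformity. The genuinely delicate points are making the carry lemma quantitative enough to license the windowing of $T$ with an exceptional set of density $o(1)$, and checking that the precise hypotheses of the modified Mauduit--Rivat proposition of \cref{sec:RS1} are satisfied --- both are carried out in the later sections.
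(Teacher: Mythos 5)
Your plan has the right shape in its dependence on the carry lemma and the Fourier estimates from \cref{sec:RS1}, and on the Vaughan/type I--type II machinery of~\cite{mauduit_rivat_rs}. But there is a genuine gap in the way you isolate the exceptional representations.

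You peel off only the trivial representation at the start, asserting that $d(A)=k_0(A)=1$ ``removes every arithmetic obstruction'' so that for all nontrivial irreducible $D$ the Fourier terms of $n\mapsto D(T(q_0,(n)_k))$ are uniformly small. That is not so. Even with $d=k_0=1$ the parameter $d'$ of \cref{th:dk_0} can exceed $1$, in which case $T(q_0,\bfw)$ is confined to the coset $G_0\,g_0'^{\,[\bfw]_k\bmod d'}$ of the proper subgroup $G_0$, and the one-dimensional representations $D_\ell(g)=\e(\ell\, s_0(g)/d')$ for $\ell=1,\dots,d'-1$ are \emph{nontrivial} and nevertheless fail the uniform Fourier-term estimate: as noted just before \cref{thm:Fourier}, $k^{-\lambda}\sum_{u<k^{\lambda}}D_\ell(T(q_0,(u)_k))\e(-ut)$ equals $1$ at $t=\ell/d'$. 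A concrete instance with $d=k_0=1$ but $d'=2$ is the Thue--Morse-type transducer in base $k=3$ with one state and $\lambda(q_0,a)=\sigma^a$ for an involution $\sigma$: then $T(q_0,\bfw)=\sigma^{[\bfw]_3\bmod 2}$ and $D_1=\sgn$ is nontrivial. So your type I/type II argument, which invokes ``the uniform Fourier-term estimate of \cref{sec:RS1}'', collapses exactly on these $D_\ell$, and $\sum_{g\in G}D(g)=0$ does not rescue you, because equidistribution over $G$ does not hold along the relevant digit windows; only equidistribution over a coset of $G_0$ holds.

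The correct split is therefore: (i) if $D=D_\ell$ for some $\ell<d'$, then $D_\ell(T(q_0,(n)_k))=\e(\ell n/d')$ is a pure additive character and \eqref{eq:goal} reduces to M\"obius cancellation in arithmetic progressions over an interval, which the paper handles in \cref{le:D_l_mue}; (ii) only for $D\neq D_\ell$ does \cref{thm:Fourier} deliver a bound in $\mathcal{F}_{\gamma,c}$, after which one does not re-derive the Vaughan decomposition but applies the packaged \cref{thm:mobius} directly, with a Vaaler approximation of $\chi_{k^{-\lambda_1}}$ and a Fourier expansion of the congruence $n\equiv m\bmod k^{\lambda_2}$ supplying the additive twists $\e(\theta n)$. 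Once you replace ``trivial $D$'' by ``$D\in\{D_0,\dots,D_{d'-1}\}$'' and re-route that case through the classical estimate, your sketch aligns with the paper's proof.
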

\begin{remark}
  $n \equiv m \bmod k^{\lambda_2}$ obviously fixes the last $\lambda_2$ digits of $n$.
  Furthermore, $\chi_{1/k^{\lambda_1}}(.)$ fixes the digits $\nu-\lambda_1,\ldots,\nu-1$ of $n$.
\end{remark}

Our goal of this subsection is to show that \cref{pr:rep_estimate} implies \cref{th:moebius}.
% 
% \begin{proposition}
%   Suppose \cref{pr:rep_estimate} holds.
%   Then \cref{th:moebius} holds.
% \end{proposition}
% \begin{proof}

  We fix any $\varepsilon>0$ and need to show that there exists $N_0$ such that for $N \geq N_0(\varepsilon)$ 
  \begin{align*}
    \abs{S(r,N)} \leq \varepsilon N 
  \end{align*}
  holds for all $r$, where
  \begin{align*}
    S(r,N) := \sum_{n<N} \mu(n) a_{n+r}.
  \end{align*}

Let us first present the most important ideas for this proof.
Obviously, we can only work with naturally induced transducers, when we restrict ourself to the final components of $A$.
Therefore we fix some of the first digits of $(n+r)$  which allows us to work with a strongly connected component of $Q'$.
Thereafter, we also fix the last digits of $(n+r)$ to fix $\delta(q,(n+r)_k)$.
It then remains to find suitable estimates for $T(q,.)$.

\begin{remark}
  It would be very convenient to apply \cite[Lemma 3.2]{synchronizing} to only work with strongly connected automata, but this Lemma does not provide uniformity in $r$.
\end{remark}

More precisely, \cref{pr:components_d_k_0} shows that there exists a DFAO $A = (Q',\Sigma, \delta', q_0', \tau)$ such that $\Sigma = \{0,\ldots,k-1\}, \delta'(q_0', 0) = q_0'$ 
and $a_n = \tau(\delta'(q_0',(n)_k))$.
Let $Q'_1,\ldots, Q'_{\ell}$ be the strongly connected components of $Q'$ that are closed under $\delta'$. Furthermore, let $A_i$ be the restriction of $A$ to $Q'_i$. 
\cref{pr:components_d_k_0} states that $d(A_i) = k_0(A_i) = 1$ holds.

We define the naturally induced transducers for these components by
\begin{align*}
  \mathcal{T}_{A_i} = (Q_i,\Sigma,\delta_i,\Delta_i,\lambda_i)
\end{align*}
and let $\bfw_i$ be a synchronizing word of $\mathcal{T}_{A_i}$ (we intentionally avoid to define the initial state here).

\begin{restatable}{lemma}{setDens}
\label{le:set_dens}
  The set 
	{\small
	\begin{align*}
		M:=\{n\in \N|\exists \bfv_1,\bfv_2 \in \Sigma^{*}: (n)_k = \bfv_1 \bfv_2 \wedge \forall q' \in Q': \delta'(q',\bfv_1) \in \cup Q'_i \wedge 
		    \forall i: \bfw_i\ \text{ is a subword of } \bfv_2\}
	\end{align*}
	}%
	has density $1$.
\end{restatable}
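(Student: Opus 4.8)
The plan is to reduce the statement to the classical fact that the integers whose base-$k$ expansion avoids a fixed nonempty word have density $0$, by exhibiting a single fixed word $v$ over $\Sigma$ whose appearance as a subword of $(n)_k$ already forces $n\in M$.

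First I would construct an \emph{absorbing word} $\mathbf{u}\in\Sigma^{*}$ with $\delta'(q',\mathbf{u})\in\bigcup_i Q'_i$ for \emph{every} $q'\in Q'$. Such a word exists: since $\delta'$ is a total function, iterating transitions from any state one must eventually enter a closed strongly connected component, i.e.\ one of the final components $Q'_i$, and each $Q'_i$ is closed under $\delta'$. So, fixing an enumeration $Q'=\{p_1,\dots,p_m\}$, one chooses words $\mathbf{u}_1,\dots,\mathbf{u}_m$ inductively so that $\mathbf{u}_j$ drives the state $\delta'(p_j,\mathbf{u}_1\cdots\mathbf{u}_{j-1})$ into some final component, and sets $\mathbf{u}:=\mathbf{u}_1\cdots\mathbf{u}_m$; once a copy of a state has entered a final component it never leaves it, so $\mathbf{u}$ works for all $p_j$ simultaneously. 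Put $W:=\bfw_1\bfw_2\cdots\bfw_\ell$ and $v:=\mathbf{u}W$; this is a fixed word depending only on $A$ and on the chosen synchronizing words, not on $n$.

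The key observation is that if $v$ is a subword of $(n)_k$, say $(n)_k=\alpha\,\mathbf{u}\,W\,\beta$ with $\alpha,\beta\in\Sigma^{*}$, then $n\in M$ via the split $\bfv_1:=\alpha\mathbf{u}$, $\bfv_2:=W\beta$: indeed $\delta'(q',\bfv_1)=\delta'(\delta'(q',\alpha),\mathbf{u})\in\bigcup_i Q'_i$ by the absorbing property, and $\bfv_2$ contains $W$, hence each $\bfw_i$, as a subword. Consequently $\{\,n<N:n\notin M\,\}\subseteq\{\,n<N:(n)_k\text{ avoids the subword }v\,\}$.

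It then remains to bound the right-hand set, which is routine. If $v$ is empty the set is empty; otherwise put $L:=|v|\ge1$ and note that a length-$\ell$ word avoiding $v$ must disagree with $v$ on each of its $\lfloor\ell/L\rfloor$ disjoint length-$L$ blocks, so there are at most $k^{\ell}(1-k^{-L})^{\lfloor\ell/L\rfloor}$ such words; summing over $\ell\le\nu$ with $k^{\nu-1}\le N<k^{\nu}$ gives a bound $\ll N^{\theta}$ with $\theta=\log_k\!\big(k(1-k^{-L})^{1/L}\big)<1$ (a bound $\ll\log N$ in the borderline case $k^L=2$), hence $o(N)$. Therefore $|\{n<N:n\in M\}|=N-o(N)$, i.e.\ $M$ has density $1$. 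The only mildly delicate point in all of this is the construction of the single absorbing word $\mathbf{u}$ that handles every starting state at once; everything afterwards is bookkeeping and a standard subword-avoidance count.
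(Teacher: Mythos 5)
Your proof is correct, and it takes a genuinely different (and somewhat cleaner) route than the paper's. The paper splits $n$ into a high‑digit block and a low‑digit block, showing that the set $M_1$ of integers whose reading from \emph{any} state lands in $\bigcup_i Q'_i$ has density $1$ (by a direct exponential‑escape count), and separately that the set $M_2$ of integers whose padded expansion contains $\bfw_1\cdots\bfw_\ell$ has density $1$ (citing the synchronizing‑word lemma from the earlier paper), and then pastes the two halves together via $n = a k^{\nu} + b$ on scales $[0,k^{2\nu})$. You instead fold both conditions into a single fixed word $v=\mathbf{u}W$: the absorbing prefix $\mathbf{u}$ drives \emph{every} state into a final component at once, and $W=\bfw_1\cdots\bfw_\ell$ then supplies the synchronizing subwords, so one occurrence of $v$ anywhere in $(n)_k$ already yields a valid split $\bfv_1=\alpha\mathbf{u}$, $\bfv_2=W\beta$. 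This reduces the whole lemma to the single classical subword‑avoidance count, which you then carry out explicitly. What this buys is a self‑contained, one‑shot argument with no high/low digit decomposition and no external citation; what the paper's route buys is that the two contributing facts ($M_1$ and $M_2$ dense) are isolated as reusable statements, and in particular $M_2$ comes for free from a lemma the paper already needs elsewhere. Your inductive construction of the absorbing word $\mathbf{u}$ (exploiting that final components are closed under $\delta'$, so earlier states stay trapped while later ones are still being driven in) is the one slightly nontrivial step and is correct.
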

The motivation for introducing the set $M$ is that whenever you read a word corresponding to some $n \in M$, you end up in one final component $Q'_i$ -- this is achieved by $\bfv_1$.
The purpose of $\bfv_2$ is not so obvious but will be more apparent shortly.
\begin{remark}
  Let $n \in M$. By the definition of $M$ it follows that the considered properties hold also for unreduced digital representations of $n$ (i.e.,~with leading zeros).
\end{remark}
As the proof of \cref{le:set_dens} is rather technical, we postpone it to \cref{sec:technical}.

  We assume for simplicity that $\abs{a_n} \leq 1$ holds for all $n \in \N$.
  We let $\nu$ denote the unique integer satisfying $k^{\nu-1}\leq N < k^{\nu}$. 
  
  The first idea is to remove the digits of $n+r$ at positions $\nu,\nu+1,\ldots$, i.e., $(\floor{(n+r)/k^{\nu}})_k$, and restrict ourself to $(n+r) \bmod k^{\nu}$ as these integers form (at most) two intervals
  with combined length at least $k^{\nu-1}$.
  Therefore, we want to detect the digits of $n+r$ at the positions $\nu-\lambda_1,\ldots,\nu-1$ 
  for some fixed value $\lambda_1$ -- only depending on $\varepsilon$ -- that we will choose shortly:
  \begin{align*}
    \abs{\sum_{n<N} \mu(n) a_{n+r}} &= \abs{\sum_{n<N} \sum_{b<k^{\lambda_1}} \chi_{k^{-\lambda_1}} \rb{\frac{n+r-bk^{\nu- \lambda_1}}{k^{\nu}}} \mu(n) a_{n+r}}\\
    &\leq \sum_{b<k^{\lambda_1}} \abs{\sum_{n<N} \chi_{k^{-\lambda_1}} \rb{\frac{n+r-bk^{\nu-\lambda_1}}{k^{\nu}}} \mu(n) a_{n+r}}\\
    &\leq \sum_{\substack{b<k^{\lambda_1}\\b\in M}} \abs{\sum_{n<N} \chi_{k^{-\lambda_1}} \rb{\frac{n+r-bk^{\nu-\lambda_1}}{k^{\nu}}} \mu(n) a_{n+r}}
	+ \sum_{\substack{b<k^{\lambda_1}\\b \notin M}} k^{\nu-\lambda_1}.
  \end{align*}
  Choosing $\lambda_1 (< \nu)$ such that $\abs{\{b<k^{\lambda_1}| b\notin M\}} \leq \frac{\varepsilon}{3 k}k^{\lambda_1}$ gives
  \begin{align*}
    \abs{S(r,N)} &\leq \sum_{\substack{b<k^{\lambda_1}\\b\in M}} \abs{S_1(b,r,N)}
	+ \frac{\varepsilon}{3} N
  \end{align*}
  where 
  \begin{align*}
    S_1 := S_1(b,r,N) := \sum_{n<N} \chi_{k^{-\lambda_1}}\rb{\frac{n+r-bk^{\nu-\lambda_1}}{k^{\nu}}}\mu(n) a_{n+r}.
  \end{align*}

  We rewrite $r = r_1 + k^{\nu} r_2$ with $r_1 < k^{\nu}$ and $r_1,r_2 \in \N$, and split the sum over $n$ into two parts.
  We let $(n)_k^t$ denote the unique word $\bfw$ of length $t$ such that  $[\bfw]_k \equiv n \bmod k^{t}$ and find
  \begin{align*}
    \abs{S_1} &\leq \abs{\sum_{\substack{n<N\\n+r_1<k^{\nu}}} \chi_{k^{-\lambda_1}} \rb{\frac{n+r-bk^{\nu-\lambda_1}} {k^{\nu}}} \mu(n) \tau(\delta'(q'_0,(n+r)_k))}\\
      &\qquad + \abs{\sum_{k^{\nu}-r_1 \leq n < N} \chi_{k^{-\lambda_1}} \rb{\frac{n+r-bk^{\nu-\lambda_1}} {k^{\nu}}} \mu(n) \tau(\delta'(q'_0,(n+r)_k))}\\
      & = \abs{\sum_{\substack{n<N\\n<k^{\nu}-r_1}} \chi_{k^{-\lambda_1}} \rb{\frac{n+r-bk^{\nu-\lambda_1}}{k^{\nu}}} \mu(n) 
	\tau(\delta'(q'_0,(r_2)_k (b)_k^{\lambda_1} (n+r)_k^{\nu-\lambda_1}))}\\
      &\qquad + \abs{\sum_{k^{\nu}-r_1 \leq n < N} \chi_{k^{-\lambda_1}} \rb{\frac{n+r-bk^{\nu-\lambda_1}}{k^{\nu}}} \mu(n) 
	\tau(\delta'(q'_0,(r_2+1)_k (b)_k^{\lambda_1} (n+r)_k^{\nu-\lambda_1}))}.
  \end{align*}

  Thus we find
  \begin{align}\label{eq:S_1_to_S_2}
  \begin{split}
    \abs{S_1} &\leq \abs{S_2(b,r_1,\min (N,k^{\nu}-r_1),\delta'(q'_0,(r_2)_k))} + \abs{S_2(b,r_1,N,\delta'(q'_0,(r_2+1)_k))}\\
    &\qquad + \abs{S_2(b,r_1,k^{\nu}-r_1,\delta'(q'_0,(r_2+1)_k))},
  \end{split}
  \end{align}
  where
  \begin{align*}
    S_2(b,r',N',q') := \sum_{n<N'} \chi_{k^{-\lambda_1}} \rb{\frac{n+r'-bk^{\nu-\lambda_1}}{k^{\nu}}} \mu(n) \tau(\delta'(q',(b)_k^{\lambda_1} (n+r')_k^{\nu-\lambda_1}))
  \end{align*}
  for $N'+r' <k^{\nu}$.
  
  We simplify \eqref{eq:S_1_to_S_2} to
  \begin{align}\label{eq:S_1_to_S_2_simplified}
    \abs{S_1(b,r,N)} \leq 3 \max_{\substack{r',N'\in \N\\r'+N'<k^{\nu}}} \max_{q' \in Q'} \abs{S_2(b,r',N',q')}
  \end{align}

  Now we work on an estimate for $S_2$.
  We can rewrite $(b)_k^{\lambda_1} = \bfv_1 \bfv_2$ where $\delta'(q',\bfv_1) \in Q'_i$ for some $i$.
  
  We denote by $q'_1 = \delta'(q',\bfv_1)$ and $A(q'_1) = (Q'_i,\Sigma,\delta'_i,q'_1,\tau_i)$ the restriction of $A$ to $Q'_i$, 
  i.e.,~$\delta'_i = \delta'_i |_{Q'_i\times \Sigma}, \tau_i = \tau |_{Q'_i}$).
  Let $\mathcal{T}_{A(q'_1)} = (Q,\Sigma,\delta,q_0,\Delta,\lambda)$ be a naturally induced transducer of $A(q'_1)$.
  We define $M_{\lambda_2}$ as the set of integers $m$ such that $(m)_k^{\lambda_2}$ is a synchronizing words of $\mathcal{T}_{A(q'_1)}$ and note that for $[\bfw_0]_k \in M_{\lambda_2}, \bfw \in \Sigma^{*}$ it holds that
  $\delta(q_0,\bfw \bfw_0) = \delta(q_0,\bfw_0)$.
  \cite[Lemma 2.2]{synchronizing} shows that there exist $c,\eta>0$ such that 
  $k^{\lambda_2} - |M_{\lambda_2}| \leq c k^{(1-\eta) \lambda_2}$ holds for all $\lambda_2 \in \N$.
  \cref{pr:equality} shows that
  \begin{align*}
    \abs{S_2(b,r',N',q')} &= \biggl| \sum_{n<N'} \chi_{k^{-\lambda_1}} \rb{\frac{n+r'-bk^{\nu-\lambda_1}}{k^{\nu}}} \mu(n) \\
      &\qquad \tau(\pi_1(T(q_0,\bfv_2 (n+r')_k^{\nu-\lambda_1}) \delta(q_0,\bfv_2 (n+r')_k^{\nu-\lambda_1})))\biggr | \\
    &= \biggl| \sum_{m<k^{\lambda_2}} \sum_{\substack{n<N'\\n+r'\equiv m \bmod k^{\lambda_2}}} \chi_{k^{-\lambda_1}} \rb{\frac{n+r' - bk^{\nu-\lambda_1}}{k^{\nu}}} \mu(n)\\
      &\qquad \tau(\pi_1(T(q_0,\bfv_2 (n+r')_k^{\nu-\lambda_1}) \delta(q_0,\bfv_2 (n+r')_k^{\nu-\lambda_1}))) \biggr| \\
    &\leq \sum_{m\in M_{\lambda_2}} \biggl| \sum_{\substack{n<N'\\n\equiv m -r' \bmod k^{\lambda_2}}} \chi_{k^{-\lambda_1}} \rb{\frac{n+r' - bk^{\nu-\lambda_1}}{k^{\nu}}} \mu(n)\\
      &\qquad \tau(\pi_1(T(q_0,\bfv_2 (n+r')_k^{\nu-\lambda_1}) \delta(q_0,(m)_k^{\lambda_2}))) \biggr| + c k^{(1-\eta) \lambda_2} k^{\nu-\lambda_1-\lambda_2}.
  \end{align*}
  We choose $\lambda_2$ such that $c k^{-\eta \lambda_2} \leq \frac{\varepsilon}{9k}$ (and we also assume that $\lambda_1 + \lambda_2 < \nu$) and find
  \begin{align*}
    \abs{S_2(b,r',N',q')} \leq \sum_{m \leq k^{\lambda_2}} \sum_{q'_1 \in \cup Q'_i} \abs{S_3(b,r',N',m,\mathcal{T}_{A(q'_1)})} + \frac{\varepsilon N}{9} k^{-\lambda_1},
  \end{align*}
  where
  \begin{align*}
    S_3(b,r',N',m,\mathcal{T}_{A(q'_1)}) &:= \sum_{\substack{n<N'\\n\equiv m \bmod k^{\lambda_2}}} \chi_{k^{-\lambda_1}}\rb{\frac{n+r'-bk^{\nu-\lambda_1}}{k^{\nu}}} \mu(n)\\
      &\qquad \qquad \tau(\pi_1(T(q_0,\bfv_2 (n+r')_k^{\nu-\lambda_1}) \delta(q_0,(m)_k^{\lambda_2}))).
  \end{align*}
  Note that $\lambda_1$ and $\lambda_2$ are independent of $N$ and $N'$.

  We define $q_1:= \delta(q_0,(b)_k)$ and find $T(q_0,(b)_k (n+r')_k^{\nu-\lambda_1}) = T(q_0,(b)_k)\circ T(q_1,(n+r')_k^{\nu-\lambda_1})$ as well as
  $T(q_0,\bfv_2 (n+r')_k^{\nu-\lambda_1}) = T(q_0,\bfv_2) \circ T(q_1,(n+r')_k^{\nu-\lambda_1})$, since $\bfv_2$ is synchronizing. 
  This gives in total
  \begin{align*}
    T(q_0,\bfv_2 (n+r')_k^{\nu-\lambda_1}) = T(q_0,\bfv_2) \circ T(q_0,(b)_k)^{-1} \circ T(q_1,(b)_k (n+r')_k^{\nu-\lambda_1}).
  \end{align*}
  We define for $\bfw \in \Sigma^{*}$ a function $f_{\bfw,b}: G \rightarrow \C$
	\begin{align*}
		f_{\bfw,b}(\sigma) := \tau(\pi_1(T(q_0,\bfv_2) \circ (T(q_0,(b)_k)^{-1} \circ \sigma) \cdot(\delta(q_0,\bfw)))).
	\end{align*}
  Thus we find $\tau(\pi_1(T(q_0,\bfv_2 (n+r')_k^{\nu-\lambda_1}) \cdot \delta(q_0,(m)_k^{\lambda_2}))) = f_{(m)_k^{\lambda_2},b}(T(q_0,(b)_k (n+r')_k^{\nu-\lambda_1}))$.
  We denote by $\mathcal{F}(\mathcal{T}_{A(q'_1)}):= \{f_{\bfw,b}: \bfw \in \Sigma^{*},b\in \N\}$. 
  Note that $\abs{\mathcal{F}(\mathcal{T}_{A(q'_1)})}\leq \abs{Q}\cdot \abs{G}$ holds, as $\delta(q_0,\bfw)$ can take at most $\abs{Q}$ and $T(q_0,\bfv_2) \circ T(q_0,(b)_k)^{-1}$
  can take at most $\abs{G}$ values.
  We find

  \begin{align*}
    S_3(b,r',N',m,\mathcal{T}_{A(q'_1)}) = \sum_{\substack{n<N'\\n\equiv m \bmod k^{\lambda_2}}} \chi_{k^{-\lambda_1}}\rb{\frac{n+r'-bk^{\nu-\lambda_1}}{k^{\nu}}} 
	\mu(n) f_{(m)_k^{\lambda_2},b}(T(q_0,(n+r')_k)).
  \end{align*}

  With $T(n) := T(q_0,(n)_k)$ we find

  \begin{align*}
    \abs{S_3(b,r',N',m,\mathcal{T}_{A(q'_1)})} \leq \max_{f\in\mathcal{F}(\mathcal{T}_{A(q'_1)})} \abs{\sum_{\substack{n<N'\\n\equiv m \bmod k^{\lambda_2}}} 
	\chi_{k^{-\lambda_1}}\rb{\frac{n+r'-bk^{\nu-\lambda_1}}{k^{\nu}}} \mu(n) f(T(n+r'))}.
  \end{align*}

%   Now some ideas of representation theory are needed. As $f$ is a continuous function from the group $G$ into $\C$ 
%   we can approximate it using representations of $G$.
%   For all $\varepsilon_1>0$ there exists $M_0 \in \N$ and irreducible, unitary representations $D^{(m')} = (d_{ij}^{(m')})_{ij}$ as well as $c_{m'} \in \C$ such that
%   \begin{align*}
%     \abs{f(g) - \sum_{m' < M_0} c_{m'} d_{i_{m'}j_{m'}}^{(m')}(g)} \leq \varepsilon_1
%   \end{align*}
%   holds for all $g \in G$ (see for example \cite{representations}).
% 
%   Thus we find - by choosing $\varepsilon_1 = \frac{\varepsilon}{12 k (\# \mathcal{F}(\mathcal{T}_{A_{q'}}))}$ - that

  Consider a finite group $G$. It is well-known that there only exist finitely many equivalence classes of unitary and irreducible representations of $G$ 
  (see for example \cite[Part I, Section 2.5]{rep_finite}). The Peter-Weyl Theorem (see for example \cite[Chapter 4, Theorem 1.2]{representations}) 
  states that the entry functions of irreducible representations (suitably renormalized) form a orthonormal basis of $L^2(G)$.
  Thus we can express any function $f:G\to \C$ by these $M_0$ entry functions:
  
  Let $D^{(m')} = (d_{ij}^{(m')})_{ij}$ be representations of the equivalence classes mentioned above and $f: G\to\C$.
  Then, there exist $c_{\ell}$ such that
  \begin{align*}
    f(g) = \sum_{\ell < M_0} c_{\ell} d_{i_{\ell}j_{\ell}}^{(m_\ell)}(g)
  \end{align*}
  holds for all $g\in G$. Thus we find
  \begin{align*}
    \abs{S_3} &\leq \max_{D \in \mathcal{D}_{q'_1}} c \norm{\sum_{\substack{n<N'\\n\equiv m \bmod k^{\lambda_2}}} \chi_{k^{-\lambda_1}} 
	\rb{\frac{n+r'-bk^{\nu-\lambda_1}}{k^{\nu}}} \mu(n) D(T(n+r'))}_F,
  \end{align*}
  where $\norm{.}_F$ denotes the Frobenius norm, $c$ is the maximum of all possible values of $c_{\ell} \cdot M_0$ and $\mathcal{D}_{q'_1}$ is 
  the finite set of all possible appearing representations for $\mathcal{F}(\mathcal{T}_{A(q'_1)})$.
  
  If we denote by $T_{q'_1}$ the function $T$ for $\mathcal{T}_{A(q'_1)}$ we find in total
	{\small
  \begin{align*}
    \abs{\sum_{n<N} \mu(n) a_{n+r}} &\leq \frac{2}{3} \varepsilon N + c \sum_{b<k^{\lambda_1}} \sup_{\substack{r',N'\in \N\\N'+r'<k^{\nu}}} \sum_{m<k^{\lambda_2}}\\
      &\qquad  \sum_{q'_1\in \cup Q'_i} \sum_{D \in \mathcal{D}_{q'_1}} \norm{\sum_{\substack{n<N'\\n\equiv m \bmod k^{\lambda_2}}} \chi_{k^{-\lambda_1}} 
	\rb{\frac{n+r'-bk^{\nu-\lambda_1}}{k^{\nu}}} \mu(n) D(T_{q'_1}(n+r'))}_F.
  \end{align*}
	}%
  Note that $\mathcal{D}_{q'_1}$ is finite and only depends on the automatic sequence $\mathbf{a}$.

  \begin{align*}
    \abs{S(r,N)} &\leq \frac{2}{3} \varepsilon N + c \sum_{b<k^{\lambda_1}} \sum_{m<k^{\lambda_2}} \sum_{q'_1 \in \cup Q'_i} \sum_{D \in \mathcal{D}_{q'_1}}\\
      &\qquad  \sup_{r'\in \N} \max_{N' < k^{\nu}} \norm{\sum_{\substack{n<N'\\n\equiv m \bmod k^{\lambda_2}}} \chi_{k^{-\lambda_1}} 
	\rb{\frac{n+r'-bk^{\nu-\lambda_1}}{k^{\nu}}} \mu(n) D(T_{q'_1}(n+r'))}_F,
  \end{align*}
  where $\lambda_1,\lambda_2$ and $\mathcal{D}_{q'_1}$ only depend on $\varepsilon$.
  This finally proves the following proposition, which recapitulates the results of this subsection.
  \begin{proposition}
    Assume \cref{pr:rep_estimate} holds. Then \cref{th:moebius} holds.
  \end{proposition}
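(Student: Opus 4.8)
The plan is essentially bookkeeping: to concatenate the estimates assembled in this subsection into one chain, so that the proof of the proposition amounts to tracking three error contributions and then feeding finitely many sums into \cref{pr:rep_estimate}. By \cref{le:dynamical_system_uniform} it suffices to prove that $S(r,N) = \sum_{n<N}\mu(n)a_{n+r} = o(N)$ uniformly in $r$ for every $\C$-valued automatic sequence $(a_n)_{n\in\N}$. Fixing $\varepsilon>0$, I would first invoke \cref{pr:components_d_k_0} to replace the generating DFAO by one, say $A=(Q',\Sigma,\delta',q_0',\tau)$, with $\Sigma=\{0,\dots,k-1\}$, $\delta'(q_0',0)=q_0'$, and all restrictions $A_i$ to final components satisfying $d(A_i)=k_0(A_i)=1$; this last property is exactly what is needed to later apply \cref{pr:rep_estimate}.

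The body of the argument is a sequence of digit manipulations, each introducing a controlled error. First, remove the digits of $n+r$ at positions $\ge \nu$ (with $k^{\nu-1}\le N<k^{\nu}$) and insert $\sum_{b<k^{\lambda_1}}\chi_{k^{-\lambda_1}}$ to record the digits at positions $\nu-\lambda_1,\dots,\nu-1$; choosing $\lambda_1$ via \cref{le:set_dens} so that the prefixes $b\notin M$ contribute at most $\tfrac{\varepsilon}{3}N$, one is left with the sums $S_1(b,r,N)$ for $b\in M$. Splitting the range of $n$ at the carry position $k^{\nu}-r_1$ bounds each $\abs{S_1}$ by $3\max_{q'\in Q',\,r'+N'<k^{\nu}}\abs{S_2(b,r',N',q')}$, and by construction $S_2$ already runs inside a single final component $Q'_i$. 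Next, cutting on the last $\lambda_2$ digits and using the density of synchronizing words for $\mathcal{T}_{A(q'_1)}$ (\cite[Lemma 2.2]{synchronizing}), with $\lambda_2$ chosen so that the non-synchronizing residues $m$ cost at most $\tfrac{\varepsilon}{9k}k^{\nu-\lambda_1}$ per $b$, reduces $S_2$ to a sum of the $S_3(b,r',N',m,\mathcal{T}_{A(q'_1)})$.

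For $S_3$ I would apply \cref{pr:equality} to write $\tau(\delta'(q',\,\cdot\,))$ through $T(q_0,\,\cdot\,)$ and $\delta(q_0,\,\cdot\,)$ of a naturally induced transducer, peel off the synchronizing prefix $\bfv_2$ and the fixed digits $(b)_k$ (using that $\bfv_2$ is synchronizing, so $T(q_0,\bfv_2\,\cdot\,)=T(q_0,\bfv_2)\circ T(q_0,(b)_k)^{-1}\circ T(q_1,(b)_k\,\cdot\,)$), and thereby express the summand as $f(T(q_0,(n+r')_k))$ for a function $f$ drawn from a finite family $\mathcal{F}(\mathcal{T}_{A(q'_1)})$ of size at most $\abs{Q}\cdot\abs{G}$. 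Expanding $f$ in the finitely many entry functions of the unitary irreducible representations of $G$ via Peter--Weyl, $\abs{S_3}$ is controlled by finitely many Frobenius-norm sums of exactly the shape appearing on the left of \eqref{eq:goal}. Since $d(A(q'_1))=k_0(A(q'_1))=1$, \cref{pr:rep_estimate} shows each of these is $o(N)$, uniformly in $r$; as the number of relevant quadruples $(b,m,q'_1,D)$ depends only on $\varepsilon$ (through $\lambda_1,\lambda_2$ and the finite sets $\mathcal{D}_{q'_1}$), the entire tail is $o(N)$ and hence at most $\tfrac{\varepsilon}{3}N$ for $N\ge N_0(\varepsilon)$. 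Collecting the three error terms gives $\abs{S(r,N)}\le\varepsilon N$ for all $r$ and all $N\ge N_0(\varepsilon)$, i.e.\ the required uniform bound, and \cref{le:dynamical_system_uniform} then yields \cref{th:moebius}.

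The main obstacle throughout is preserving uniformity in $r$ at every reduction: the carry split at $k^{\nu}-r_1$, the two density arguments (\cref{le:set_dens} and the synchronizing-word estimate), and — most importantly — keeping the family $\mathcal{F}(\mathcal{T}_{A(q'_1)})$ and the representation set $\mathcal{D}_{q'_1}$ finite and independent of both $r$ and $N$. This is precisely why one cannot simply quote \cite[Lemma 3.2]{synchronizing} to reduce to strongly connected automata, and why the detour through induced transducers and \cref{pr:components_d_k_0} is carried out before \cref{pr:rep_estimate} is brought to bear.
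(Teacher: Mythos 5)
Your proposal reproduces the paper's argument faithfully: the same reduction chain (\cref{le:dynamical_system_uniform}, then \cref{pr:components_d_k_0}, then the prefix cut via \cref{le:set_dens}, the carry split giving $\abs{S_1}\le 3\max\abs{S_2}$, the synchronizing-suffix cut, \cref{pr:equality} plus the Peter--Weyl expansion, and finally \cref{pr:rep_estimate}), with the same $\varepsilon/3 + \varepsilon/3 + \varepsilon/3$ error accounting and the same emphasis on uniformity in $r$. No differences of substance; the proof is correct.
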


\subsection{Prime Number Theorem}
We use the same notations as in the previous section and some similar ideas.
We have seen that representations play an important role.
For the prime number theorem we have to distinguish some special representations.\\
Recall that $G_{\ell} := \{g \in G: s_0(g) = \ell\}$ for $\ell = 0,\ldots,d'(A)-1$.

\begin{restatable}{lemma}{Dl}\label{le:Dl}
  There exist $d'$ representations $D_0,\ldots,D_{d'-1}$ defined by
  \begin{align*}
    D_{\ell}(g) := \e\rb{\frac{\ell \cdot s_0(g)}{d'}},
  \end{align*}
  for $\ell = 0,\ldots d'-1$, where $s_0$ is defined by $s_0(T(q,\bfw)) \equiv [\bfw]_k \bmod d'$.
\end{restatable}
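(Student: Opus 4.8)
The plan is to show that $s_0\colon G\to\Z/d'\Z$ is a group homomorphism; once this is known, each $D_\ell$ is simply the composite of $s_0$ with the character $j\mapsto\e(\ell j/d')$ of $\Z/d'\Z$, hence a one‑dimensional (thus automatically irreducible) unitary representation of the finite group $G$, and letting $\ell$ range over $0,\dots,d'-1$ produces the asserted family of $d'$ representations.

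First I would make sure $s_0$ is a well‑defined map on $G$. By \cref{th:full_G}, every $g\in G$ equals $T(q_0,\bfw)$ for some loop $\bfw$ at $q_0$ whose length is a large multiple of $dk_0$, and for any such $\bfw$ one has $s_{|\bfw|}^{q_0q_0}(g)=r_{|\bfw|}^{q_0q_0}(g)\bmod d'=[\bfw]_k\bmod d'$, using $d'=d'(q_0,q_0)\mid d(q_0,q_0)$ from \cref{le:d'}. The last Proposition of \cref{sec:str2} shows that $s_\ell^{q\overline q}(g)$ depends only on $g$ and on $\ell\bmod k_0$ and is independent of $q,\overline q$; together with the normalisation adopted after \cref{le:s_equal} (which gives $s_0(id)=0$) this means that $s_0(g):=[\bfw]_k\bmod d'$ does not depend on the choice of $\bfw$, and that in fact $s_0(T(q,\bfw))\equiv[\bfw]_k\bmod d'$ for every $q\in Q$ and every word $\bfw$ of length divisible by $dk_0$ — the characterisation in the statement.

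Next I would check multiplicativity directly from this characterisation. Given $g_1,g_2\in G$, pick loops $\bfw_1,\bfw_2$ at $q_0$, each of length $dk_0m$ with $m$ large, with $T(q_0,\bfw_i)=g_i$. Since $\bfw_1$ is a loop at $q_0$, the concatenation $\bfw_1\bfw_2$ is a loop at $q_0$ of length $2dk_0m$ with $T(q_0,\bfw_1\bfw_2)=g_1g_2$, and
\begin{align*}
  [\bfw_1\bfw_2]_k=[\bfw_1]_k\,k^{dk_0m}+[\bfw_2]_k\equiv[\bfw_1]_k+[\bfw_2]_k\pmod{d'},
\end{align*}
because $d\mid dk_0m$ and $d'\mid k^d-1$. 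Hence $s_0(g_1g_2)\equiv[\bfw_1\bfw_2]_k\equiv[\bfw_1]_k+[\bfw_2]_k\equiv s_0(g_1)+s_0(g_2)\pmod{d'}$ (this could equally be deduced from \cref{le:s_add}), so $s_0$ is a homomorphism. Consequently $D_\ell(g_1g_2)=D_\ell(g_1)D_\ell(g_2)$ and $\abs{D_\ell(g)}=1$, so $D_\ell\colon G\to\U=U_1$ is a continuous (trivially, as $G$ is finite) one‑dimensional unitary representation and therefore irreducible; and since $s_0$ takes values in $\{0,\dots,d'-1\}$ the formula for $D_\ell$ depends on $\ell$ only modulo $d'$, so $D_0,\dots,D_{d'-1}$ is exactly the intended list.

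The only step that really deserves care is the well‑definedness of $s_0$ as a single function $G\to\Z/d'\Z$: one must invoke the last Proposition of \cref{sec:str2} (independence of base state and of the admissible $\bfw$) and the normalisation of \cref{le:s_equal} ($s_0(id)=0$), after which everything is formal. A secondary point, needed only if one wants the $D_\ell$ pairwise distinct — equivalently $s_0$ surjective, equivalently $G_1\neq\emptyset$, as is already used when $g_0'\in G_1$ is chosen in the proof of \cref{th:dk_0} — is that the image of $s_0$ is all of $\Z/d'\Z$; this follows from $d'$ being the $k$-coprime part of the gcd $d(q_0,q_0)=\lim_\ell d_{id,\ell}^{q_0q_0}$ together with the additivity of \cref{le:s_add}. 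For the later applications, however, only the family $\{D_\ell\}_{\ell=0}^{d'-1}$ and the orthogonality identity $\frac1{d'}\sum_{\ell=0}^{d'-1}D_\ell=\mathbf{1}_{G_0}$ — immediate from $G_0=\ker s_0$ — are needed, so the distinctness may be kept brief. I do not anticipate any genuine obstacle: the lemma is essentially a corollary of the structure theory developed in \cref{sec:str2}.
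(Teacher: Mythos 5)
Your proposal is correct and follows the same route the paper gestures at: the paper's proof is just ``follows directly by \cref{th:dk_0} and \cref{th:full_G}'', and you supply exactly the details implicit in that citation — that $s_0$ is a well-defined (via the Proposition after \cref{le:s_equal} and the normalisation $s_0(id)=0$) homomorphism $G\to\Z/d'\Z$ (via $d'\mid k^{d}-1$, equivalently \cref{le:s_add}), so each $D_\ell$ is a one-dimensional unitary character. Your remark that the stated identity $s_0(T(q,\bfw))\equiv[\bfw]_k\bmod d'$ should be read for $\abs{\bfw}$ divisible by $dk_0$ is also a fair clarification (in the places the lemma is used, $d=k_0=1$ has already been arranged, so the restriction is vacuous there).
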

\begin{proof}
  The proof follows directly by \cref{th:dk_0} and \cref{th:full_G}.
\end{proof}
\begin{remark}
  These representations play an important role in this section.
\end{remark}

Note that $D_{\ell}(T(q_0,\bfw)) = \e\rb{(\ell \cdot [\bfw]_k) / d'}$ for $\ell = 0,\ldots, d'-1, \bfw \in \Sigma^{*}$.

%In \cref{sec:RS1}, we will prove the following Proposition.
In \cref{sec:RS1} we will prove the following Proposition~\ref{pr:prime_Fourier} and in \cref{subsec:dist_T_primes} deduce from it the more technical \cref{le:dist_in_G} below.
The rest of this subsection is devoted to prove that this lemma implies \cref{th:prime}.

\begin{proposition}\label{pr:prime_Fourier}
  Let $D$ be a unitary, irreducible representation of $G$ different from $D_{\ell}$. 
  Then there exists some $\eta >0$ such that
  \begin{align*}
    \norm{\frac{1}{k^{\nu}} \sum_{p<k^{\nu}} D(T(q_0,(p)_k))\e(-pt)}_2 \ll k^{-\nu \eta}
  \end{align*}
  holds uniformly in $t\in \R$.
\end{proposition}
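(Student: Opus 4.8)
The plan is to run the Mauduit--Rivat digit method for exponential sums over primes, in the matrix-valued form worked out for the Rudin--Shapiro sequence in \cite{mauduit_rivat_rs}, feeding into it the two structural facts about naturally induced transducers established in this section: a carry-propagation lemma for $T(q_0,\cdot)$ and a uniform ``small Fourier terms'' estimate for the truncated transducer output. The hypothesis $D\neq D_\ell$ is precisely what makes the second of these non-trivial: by \cref{th:full_G} and \cref{th:dk_0} the representations $D_\ell$ are exactly those that factor through $g\mapsto s_0(g)$, i.e.\ through $[\bfw]_k\bmod d'$, so only they survive as ``major arc'' contributions, while every other irreducible $D$ acts non-trivially on the genuinely digital part of the $\Delta$-action.

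First I would apply a Vaughan-type identity to the von Mangoldt function and write $\sum_{p<k^{\nu}}D(T(q_0,(p)_k))\e(-pt)$, up to a negligible contribution from prime powers, as a bounded number of type~I sums
\begin{align*}
  \sum_{m\le M}\alpha_m\sum_{n}D(T(q_0,(mn)_k))\e(-mnt),\qquad M\ll k^{2\nu/3},
\end{align*}
with smooth coefficients $\alpha_m$, together with type~II bilinear sums
\begin{align*}
  \sum_{M<m\le 2M}\ \sum_{N<n\le 2N}\alpha_m\beta_n\, D(T(q_0,(mn)_k))\e(-mnt),\qquad k^{\nu/3}\ll M\ll k^{2\nu/3}.
\end{align*}
Since $D$ is unitary we have $\norm{D(\cdot)}_2\ll 1$, so it suffices to gain a fixed power of $k^{\nu}$ in each of these pieces.

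For the type~I sums I would invoke the carry lemma: over an interval of $n$ of length about $k^{\nu-\mu}$ the word $(mn)_k$ agrees, outside an exceptional set of relative density $\ll k^{-c\mu}$, with a word whose digits above a fixed position are frozen, so that (using synchronization to truncate to a bounded window near the cut) $D(T(q_0,(mn)_k))$ becomes a fixed matrix times a factor depending only on a bounded low-order window of $mn$; on each residue class the inner sum then collapses to a geometric sum in $\e(-mnt)$, which is either small or forces $t$ near a rational with small denominator, and in the latter case the non-constancy of the window factor of $D(T(q_0,\cdot))$ — again because $D\neq D_\ell$ — restores cancellation. This is the easier half.

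The hard part will be the type~II estimate. After Cauchy--Schwarz in $m$ and one application of the van der Corput process of \cite{mauduit_rivat_rs}, the problem reduces to bounding, on average over shifts $h$ and over $m$, sums of the form
\begin{align*}
  \sum_{n}D\bigl(T(q_0,(m(n+h))_k)\bigr)\,\overline{D\bigl(T(q_0,(mn)_k)\bigr)}\,\e(-mht).
\end{align*}
Here the carry lemma is used to detach the top digits of $mn$ and $m(n+h)$, which coincide for almost all $n$, and to confine the remaining dependence to a digit window of controlled length $\lambda$; the residual sum over that window is exactly the ``Fourier term'' of the truncated transducer output, and its uniform smallness for $D\neq D_\ell$ — which is where \cref{th:full_G}, \cref{th:dk_0} and the fact that $G\vartriangleleft\Delta$ enter — yields a gain of a fixed power of $k^{\lambda}$. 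Balancing $\lambda$, $\mu$, $M$, $N$ against the number of van der Corput iterations then produces the claimed bound $\ll k^{-\nu\eta}$ for some explicit, small $\eta>0$, uniformly in $t$ since every step above is uniform in $t$. The main technical obstacle is transplanting the scalar Mauduit--Rivat machinery to the non-commutative, matrix-valued setting while keeping all constants uniform in $t$ and independent of the initial state, and in particular verifying that the no-cancellation (``major arc'') behaviour is confined exactly to the representations $D_\ell$.
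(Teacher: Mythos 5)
Your plan captures the right strategy — carry property plus uniformly small Fourier terms feeding into the Mauduit--Rivat type~I/type~II machinery, with $D\neq D_\ell$ exactly pinning down the representations for which the Fourier input fails — but the paper's actual proof of \emph{this} proposition is much shorter because it is factored modularly. The paper first applies a matrix-valued version of the classical partial summation bound
\begin{align*}
  \Bigl\lvert\sum_{p\le x}f(p)\Bigr\rvert \le \frac{2}{\log x}\max_{t\le x}\Bigl\lvert\sum_{n\le t}\Lambda(n)f(n)\Bigr\rvert + O(\sqrt x)
\end{align*}
to replace the sum over primes by a $\Lambda$-weighted sum, and then simply invokes \cref{thm:prime}, the matrix-valued, weakened-carry generalization of the Mauduit--Rivat theorem, applied to $f(n)=D(T(q_0,(n)_k))$. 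The Vaughan decomposition, type~I/II bounds and van der Corput iterations you outline are all inside the proof of \cref{thm:prime}, which lives in \cref{sec:technical}; the two hypotheses that theorem needs — the carry property of \cref{def:1} and the small Fourier terms of \cref{def:2} (the latter being exactly where $D\neq D_\ell$ enters, via \cref{thm:Fourier}) — are verified separately in \cref{sec:carry} and \cref{sec:Fourier}. So you are re-deriving the black box rather than invoking it; the paper's factoring avoids repeating that machinery, which is also reused verbatim for the Möbius estimate in \cref{pr:rep_estimate}.
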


\begin{restatable}{lemma}{distG}
\label{le:dist_in_G}
  Let $\lambda \in \N$, $\mathcal{T}$ be a naturally induced transducer with function $T$.
  For every $g\in G$ there exists $f_g$ such that
  \begin{align*}
    \frac{1}{\pi(x;a,k^{\lambda})} \sum_{\substack{p<x\\p\equiv a \bmod k^{\lambda}}} \ind_{[T(q_0,(p)_k) = g]} = f_g + o(1) \text{ for } x\to\infty
  \end{align*}
  holds (uniformly) for all $a<k^{\lambda}$ such that $(a,k) = 1$.
\end{restatable}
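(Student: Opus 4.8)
The plan is to decompose the indicator $\ind_{[T(q_0,(p)_k)=g]}$ along the irreducible unitary representations of the finite group $G$, isolating the $d'$ one–dimensional representations $D_0,\dots,D_{d'-1}$ from \cref{le:Dl} — which will carry the density $f_g$ — from all remaining irreducible representations, for which the associated exponential sums over primes are negligible by \cref{pr:prime_Fourier}. (By \cref{pr:components_d_k_0} we may assume $d(A)=k_0(A)=1$.) Since the character of the regular representation of $G$ equals $\abs{G}\cdot\ind_{[\cdot=e]}$, we have for all $g_0,g\in G$
\begin{align*}
  \ind_{[g_0=g]} = \frac{1}{\abs{G}}\sum_{D}\dim(D)\,\tr\rb{D(g)^{-1}D(g_0)},
\end{align*}
the sum running over a complete set of pairwise inequivalent irreducible representations. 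Substituting $g_0=T(q_0,(p)_k)$, summing over $p<x$ with $p\equiv a\bmod k^{\lambda}$, and pulling the constant matrix $D(g)^{-1}$ out of the trace reduces everything, via $\abs{\tr(AB)}\le\norm{A}_F\norm{B}_F$ and $\norm{D(g)^{-1}}_F=\sqrt{\dim D}$, to controlling $\sum_{p<x,\,p\equiv a\bmod k^{\lambda}} D(T(q_0,(p)_k))$ for each irreducible $D$.

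First I would treat $D$ not equivalent to any $D_\ell$. Expanding the congruence by finite Fourier analysis, $\ind_{[p\equiv a\bmod k^{\lambda}]}=k^{-\lambda}\sum_{h<k^{\lambda}}\e\rb{h(p-a)/k^{\lambda}}$, gives
\begin{align*}
  \sum_{\substack{p<x\\p\equiv a\bmod k^{\lambda}}} D(T(q_0,(p)_k)) = \frac{1}{k^{\lambda}}\sum_{h<k^{\lambda}}\e\rb{\frac{-ha}{k^{\lambda}}}\sum_{p<x}D(T(q_0,(p)_k))\,\e\rb{\frac{hp}{k^{\lambda}}}.
\end{align*}
For $\nu$ with $k^{\nu-1}\le x<k^{\nu}$, \cref{pr:prime_Fourier} (applied with $t=-h/k^{\lambda}$, using its uniformity in $t$, together with a routine partial summation to pass from length $k^{\nu}$ to length $x$) bounds each inner sum by $O(k^{\nu(1-\eta)})$ for some fixed $\eta>0$. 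Since $\lambda$ is fixed and $(a,k)=1$ forces $(a,k^{\lambda})=1$, so that $\pi(x;a,k^{\lambda})\gg x/(\varphi(k^{\lambda})\log x)$ uniformly in $a$ by the prime number theorem in arithmetic progressions, this whole contribution is $o(\pi(x;a,k^{\lambda}))$ uniformly in admissible $a$.

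Next the main term: for $D=D_\ell$, \cref{le:Dl} gives $D_\ell(T(q_0,(p)_k))=\e(\ell p/d')$, so the relevant sum is $\sum_{p<x,\,p\equiv a\bmod k^{\lambda}}\e(\ell p/d')$. Because $d'\mid k^{d}-1$ we have $\gcd(d',k)=1$, hence $\gcd(k^{\lambda},d')=1$, and by the Chinese remainder theorem each pair of conditions $p\equiv a\bmod k^{\lambda}$ and $p\equiv r\bmod d'$ with $\gcd(r,d')=1$ describes a single reduced residue class modulo $k^{\lambda}d'$. Splitting according to $p\bmod d'$ (the classes with $\gcd(r,d')>1$ capture only finitely many primes) and invoking the prime number theorem in arithmetic progressions for the fixed modulus $k^{\lambda}d'$ yields, uniformly in $a$,
\begin{align*}
  \frac{1}{\pi(x;a,k^{\lambda})}\sum_{\substack{p<x\\p\equiv a\bmod k^{\lambda}}}\e\rb{\frac{\ell p}{d'}} \to \frac{1}{\varphi(d')}\sum_{\substack{r\bmod d'\\\gcd(r,d')=1}}\e\rb{\frac{\ell r}{d'}},
\end{align*}
a Ramanujan-type sum independent of $a$ (equal to $1$ for $\ell=0$, as it must be since $D_0$ is trivial). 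Combining the two cases, the left-hand side of the lemma converges, uniformly over $a$ with $(a,k)=1$, to
\begin{align*}
  f_g := \frac{1}{\abs{G}\,\varphi(d')}\sum_{\ell<d'}\e\rb{\frac{-\ell\,s_0(g)}{d'}}\sum_{\substack{r\bmod d'\\\gcd(r,d')=1}}\e\rb{\frac{\ell r}{d'}},
\end{align*}
which is the claim.

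I expect the delicate points here to be bookkeeping rather than analysis, since all the arithmetic depth is already packaged into \cref{pr:prime_Fourier}: one must (i) correctly recognise that exactly the representations $D_0,\dots,D_{d'-1}$ survive, producing a main term that — crucially — does not depend on $a$, and (ii) transfer the $t$-uniform bound of \cref{pr:prime_Fourier} to the progression $p\equiv a\bmod k^{\lambda}$ via the finite Fourier expansion while keeping all error terms uniform in $a$, which works precisely because $\lambda$ is fixed so the factor $k^{\lambda}$ is harmless.
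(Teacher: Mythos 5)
Your proposal is correct and rests on exactly the same analytic ingredients as the paper's proof: the representations $D_0,\dots,D_{d'-1}$ contribute a main term coming from the prime number theorem in arithmetic progressions modulo $k^{\lambda}d'$ (producing the Ramanujan sum $S(\ell,0;d')/\varphi(d')$), and every other irreducible $D$ contributes $o(\pi(x;a,k^{\lambda}))$ via a finite Fourier expansion of the congruence condition combined with \cref{pr:prime_Fourier}. The only difference is one of packaging: the paper first computes the Weyl sums $\lim \frac{1}{\pi(x;a,k^\lambda)}\sum_p D(T(q_0,(p)_k))$ for every irreducible $D$, then constructs an explicit density $f$ on $G$, checks $f\geq 0$, and concludes $\boldsymbol{\nu}$-uniform distribution by invoking the abstract Weyl-type criterion for compact groups; you instead expand the indicator $\ind_{[g_0=g]}=\frac{1}{|G|}\sum_D \dim(D)\,\tr(D(g)^{-1}D(g_0))$ directly (the character of the regular representation) and read off $f_g$ immediately, which sidesteps the measure construction and the nonnegativity check (nonnegativity of $f_g$ is automatic as a limit of nonnegative quantities). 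Both routes yield the same $f_g = \frac{d'}{|G|\varphi(d')}$ if $\gcd(s_0(g),d')=1$ and $0$ otherwise. One small point of imprecision: the passage from $\sum_{p<k^\nu}$ to $\sum_{p<x}$ is done in the paper via the standard $\nu\cdot\sup_\theta$ completion-of-sums device rather than what one usually calls partial summation, but either is routine and costs only a logarithmic factor, which is harmless against the power saving $k^{-\nu\eta}$ of \cref{pr:prime_Fourier}.
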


As mentioned before, the rest of this subsection is devoted to prove
\begin{proposition}\label{pr:prime_reduce}
	Suppose \cref{pr:prime_Fourier} holds. Then \cref{th:prime} holds.
\end{proposition}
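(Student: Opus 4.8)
The plan is to deduce \cref{th:prime} from \cref{le:dist_in_G}, which by the argument of \cref{subsec:dist_T_primes} is implied by \cref{pr:prime_Fourier}; so I assume \cref{pr:prime_Fourier} (hence \cref{le:dist_in_G}) and fix a strongly connected DFAO $A$ with input alphabet $\{0,\dots,k-1\}$ and $\delta'(q_0',0)=q_0'$, generating $\mathbf{a}=(a_n)_{n\in\N}$. I must show that for every letter $c$ in the image of $\tau$ the limit $\lim_{x\to\infty}\pi(x)^{-1}\#\{p\le x: a_p=c\}$ exists. First I would apply \cref{cor:aut_to_trans_schleife} to replace $A$ by a strongly connected automaton over an alphabet $\{0,\dots,K-1\}$ (with $K=k^P$ for a suitable $P$) satisfying $d=k_0=1$ and still generating $\mathbf{a}$; since primes do not depend on the base, this does not affect the quantity to be computed. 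Fix a naturally induced transducer $\mathcal{T}_A=(Q,\{0,\dots,K-1\},\delta,q_0,\Delta,\lambda)$ of this automaton, normalised as in \cref{sec:str1} and \cref{sec:str2}. By \cref{pr:equality} we have $a_n=\tau(\pi_1(T(q_0,(n)_K)\cdot\delta(q_0,(n)_K)))$ for all $n$, and since $d=1$, \cref{th:full_G} gives $T(q_0,(n)_K)\in G$ for every $n$.

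Next I would disentangle $\delta(q_0,(p)_K)$ from $T(q_0,(p)_K)$ by splitting the primes into residue classes modulo $K^\lambda$. For $\lambda\in\N$ call a residue $a$ with $0\le a<K^\lambda$ \emph{good} if $(a)_K^\lambda$ is a synchronizing word of $\mathcal{T}_A$; then for every good $a$ and every prime $p\equiv a\bmod K^\lambda$ whose digit string $(p)_K$ has length at least $\lambda$, the synchronizing property forces $\delta(q_0,(p)_K)=\delta(q_1,(a)_K^\lambda)$ for any $q_1\in Q$, a state $q_a$ depending only on $a$. With $G_{a,c}:=\{g\in G:\tau(\pi_1(g\cdot q_a))=c\}$ this yields, for good $a$,
\begin{align*}
  \frac{1}{\pi(x;a,K^\lambda)}\sum_{\substack{p\le x\\ p\equiv a\bmod K^\lambda}}\ind_{[a_p=c]}
  &=\sum_{g\in G_{a,c}}\frac{1}{\pi(x;a,K^\lambda)}\sum_{\substack{p\le x\\ p\equiv a\bmod K^\lambda}}\ind_{[T(q_0,(p)_K)=g]}+o(1),
\end{align*}
the $o(1)$ absorbing the finitely many primes with $(p)_K$ too short. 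Applying \cref{le:dist_in_G} to $\mathcal{T}_A$ (with base $K$; note $(a,K)=1\iff(a,k)=1$), the right-hand side tends to $\sum_{g\in G_{a,c}}f_g$ as $x\to\infty$, uniformly over the finitely many residues $a$ in question.

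Finally I would assemble the pieces: writing $\#\{p\le x:a_p=c\}$ as a sum over residues $a<K^\lambda$ coprime to $K$ (the primes dividing $K$ contributing only $O(1)$), using $\pi(x;a,K^\lambda)/\pi(x)\to 1/\varphi(K^\lambda)$ from the prime number theorem in arithmetic progressions, and bounding the contribution of the non-good classes by $2\varphi(K^\lambda)^{-1}\bigl(K^\lambda-\#\{\text{good }a\}\bigr)\ll K^{-\eta\lambda}$ via \cite[Lemma 2.2]{synchronizing}, one obtains that both $\limsup_{x\to\infty}$ and $\liminf_{x\to\infty}$ of $\pi(x)^{-1}\#\{p\le x:a_p=c\}$ lie within $O(K^{-\eta\lambda})$ of the fixed number $L_\lambda:=\varphi(K^\lambda)^{-1}\sum_{a\text{ good}}\sum_{g\in G_{a,c}}f_g$. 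Letting $\lambda\to\infty$ then forces $\limsup=\liminf$, so the limit exists; as $c$ was arbitrary this is \cref{th:prime}.

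I expect the main obstacle to be bookkeeping rather than anything conceptual: ensuring the $o(1)$ of \cref{le:dist_in_G} is uniform over the fixed finite family of residue classes $a$ and over $g\in G$, applying the synchronization-density estimate to the transducer of the \emph{reduced} automaton (not the original one), and disposing cleanly of the exceptional primes (those dividing $K$ and those with short digit strings). Each of these is routine but is where a loose argument could fail.
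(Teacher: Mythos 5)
Your proof is correct and follows the same overall scheme as the paper's: reduce via \cref{cor:aut_to_trans_schleife} to the case $d=k_0=1$, split primes by residue class modulo $K^\lambda$ to synchronize the $\delta$-part, invoke \cref{le:dist_in_G} for the distribution of $T$-values along primes in arithmetic progressions, bound the non-synchronizing residues via the density estimate, and let $\lambda\to\infty$. The one small structural difference is the final step: the paper additionally establishes the existence of per-state digit densities $f_q = \lim_{\lambda\to\infty}\varphi(k^\lambda)^{-1}\#\{a<k^\lambda:(\varepsilon_0(a),k)=1,\delta(q_0,(a)_k)=q\}$ (invoking primitivity of the naturally induced transducer and a classical density result), which allows the limit to be written explicitly as $\sum_{q\in Q}f_q f_{q,b}$. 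You instead sidestep this by comparing $\limsup$ and $\liminf$ both to $L_\lambda$ with error $O(K^{-\eta\lambda})$ and squeezing as $\lambda\to\infty$; this proves existence of the limit (which is all \cref{th:prime} asks for) without ever having to show $L_\lambda$ itself converges. Your version is a bit leaner; the paper's buys an explicit formula for the frequency. Both are valid, and the bookkeeping you flag — uniformity of the $o(1)$ in \cref{le:dist_in_G} over the finitely many residues, passing to the reduced base $K=k^P$, and discarding primes dividing $K$ or with short digit strings — is indeed where care is needed and you handle it correctly.
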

\begin{proof}
We start as in the previous subsection and find by using \cref{cor:aut_to_trans_schleife} and \cref{pr:equality} that
\begin{align*}
  \frac{1}{\pi(x)}\sum_{p\leq x} \ind_{[a_p = b]} &= 
	  \frac{1}{\pi(x)} \rb{\sum_{\substack{a\leq k^{\lambda}\\(a,k^{\lambda}) = 1}}\quad \sum_{\substack{p\leq x\\ p\equiv a \bmod k^{\lambda}}} \ind_{[a_p = b]} + O(k)}\\
	  &= \frac{1}{\pi(x)} \rb{\sum_{\substack{a\leq k^{\lambda}\\(a,k^{\lambda}) = 1}} \quad \sum_{\substack{p\leq x\\ p\equiv a \bmod k^{\lambda}}} 
	      \ind_{[\tau(\pi_1(T(q_0,(p)_k)\cdot \delta(q_0,(p)_k))) = b]} + O(k)},
\end{align*}
where the error term arises from primes that are not co-prime to $k^{\lambda}$ and therefore not co-prime to $k$.

We note that $(a,k^{\lambda})=1$ holds if and only if $(\varepsilon_0(a),k) = 1$ where $\varepsilon_0(x)$ denotes the least significant digit of $x$ in base $k$.
We denote again by $M_{\lambda}:= \{n< k^{\lambda}: (n,k) = 1 \text{ and } (n)_k \text{ is synchronizing}\}$.
One finds easily that $|\{n\leq k^{\lambda}: (n,k) = 1, n\notin M_{\lambda}\}| = O(k^{\lambda(1-\eta)})$ for some $\eta >0$, as in the previous section.

We fix $\lambda$ for now to find the necessary estimates. Thereafter we let $\lambda$ grow. 
This then gives the desired result.
%TODO!!!!!!!!!!!!!!!!!!!!!!!!!!!!!!!!!!!!!!!!!!!!!!!!!!!!!!!!!!!!!!!!!!!!!!!!!!!!!!!!!!!!!!
\begin{align*}
  \frac{1}{\pi(x)}\sum_{p\leq x} \ind_{[a_p = b]} &=
      \frac{1}{\pi(x)} \sum_{a \in M_{\lambda}} \sum_{\substack{p\leq x\\ p\equiv a \bmod k^{\lambda}}} 
	      \ind_{[\tau(\pi_1(T(q_0,(p)_k)\cdot \delta(q_0,(p)_k))) = b]}\\
	      &\qquad \qquad + \frac{\pi(x;1,k^{\lambda})}{\pi(x)} O(k^{\lambda(1-\eta)}) + O\rb{\frac{k^{\lambda}}{\pi(x)}}\\
      &= \frac{1}{\pi(x)} \sum_{a \in M_{\lambda}} \sum_{\substack{p\leq x\\ p\equiv a \bmod k^{\lambda}}} 
	      \ind_{[\tau(\pi_1(T(q_0,(p)_k)\cdot \delta(q_0,(a)_k))) = b]} + O(k^{-\lambda\eta}) + O\rb{\frac{k^{\lambda}}{\pi(x)}}\\
      &= \frac{1}{\pi(x)} \sum_{q\in Q} \sum_{\substack{a \in M_{\lambda}\\ \delta(q_0,(a)_k) = q}} \sum_{\substack{p\leq x\\ p\equiv a \bmod k^{\lambda}}} 
	      \ind_{\tau([\pi_1(T(q_0,(p)_k)\cdot q)) = b]} + O(k^{-\lambda\eta}) + O\rb{\frac{k^{\lambda}}{\pi(x)}}\\
      &= \frac{1}{\pi(x)} \sum_{q\in Q} \sum_{\substack{a<k^{\lambda}\\(a,k) = 1}} \ind_{[\delta(q_0,(a)_k) = q]} \sum_{\substack{p\leq x\\ p\equiv a \bmod k^{\lambda}}} 
	    \ind_{[\tau(\pi_1(T(q_0,(p)_k)\cdot q)) = b]}\\
	    &\qquad \qquad + O(k^{-\lambda\eta}) + O\rb{\frac{k^{\lambda}}{\pi(x)}}.    
\end{align*}

Thus we are interested in the distribution of $T(q_0,(p)_k)$ along primes in arithmetic progressions, which is covered by \cref{le:dist_in_G}.

By writing
\begin{align*}
  \sum_{\substack{p\leq x\\ p\equiv a \bmod k^{\lambda}}} \ind_{[\tau(\pi_1(T(q_0,(p)_k)\cdot q)) = b]} = 
	\sum_{g\in G} \ind_{[\tau(\pi_1(g\cdot q)) = b]} \sum_{\substack{p\leq x\\ p\equiv a \bmod k^{\lambda}}} \ind_{[T(q_0,(p)_k) = g]}
\end{align*}

we find that
\begin{align*}
  \sum_{\substack{p\leq x\\ p\equiv a \bmod k^{\lambda}}} \ind_{[\tau(\pi_1(T(q_0,(p)_k)\cdot q)) = b]} = \pi(x;1,k^{\lambda}) f_{q,b} + o(\pi(x;1,k^{\lambda}))
\end{align*}
holds, where
\begin{align*}
  f_{q,b} = \sum_{g \in G} f_g \ind_{[\tau(\pi_1(g \cdot q)) = b]}.
\end{align*}

Therefore, we are interested in the quantity 
\begin{align*}
\frac{1}{\varphi(k^{\lambda})}\abs{\{a <k^{\lambda}: (\varepsilon_0(a),k)=1, \delta(q_0,(a)_k) = q\}}.
\end{align*}
We want to show that it has a limit (for $\lambda \to \infty$). 
We start by rewriting it as
\begin{align*}
  &\frac{1}{\varphi(k)}\sum_{\substack{a_0 <k\\ (a_0,k)=1}} \frac{1}{k^{\lambda-1}}\abs{\{a <k^{\lambda-1}: \delta(q_0,(a\cdot k + a_0)_k) = q\}}\\
  &\qquad = \frac{1}{\varphi(k)} \sum_{\substack{a_0 <k\\ (a_0,k)=1}} \sum_{q'\in Q} \ind_{[\delta(q',a_0) = q]} 
		      \frac{1}{k^{\lambda-1}} \abs{\{a<k^{\lambda-1}: \delta(q_0,(a)_k) = q'\}}.
\end{align*}

Thus it is sufficient to show that $\frac{1}{k^{\lambda-1}} |\{a<k^{\lambda-1}: \delta(q_0,(a)_k) = q'\}|$ has a limit.
However, $\mathcal{T}$ is synchronizing and strongly connected and therefore primitive. 
It is a well-known result that the densities exist in this setting (see for example \cite{AllouchShallit}).
This allows us to write
\begin{align*}
  |\{a <k^{\lambda}: (\varepsilon_0(a),k)=1, \delta(q_0,(a)_k) = q\}| = \varphi(k^{\lambda}) f_q + o(\varphi(k^{\lambda})),
\end{align*}
where $f_q$ does not depend on $\lambda$.
Thus we find in total
\begin{align*}
  \frac{1}{\pi(x)} \sum_{p\leq x} \ind_{[a_p = b]} &= \frac{1}{\pi(x)} \sum_{q\in Q} (\varphi(k^{\lambda}) f_q + o(\varphi(k^{\lambda}))) 
  (\pi(x;1,k^{\lambda}) f_{q,b} + o(\pi(x;1,k^{\lambda}))) + O(k^{-\lambda\eta})\\
      &= \sum_{q\in Q} f_q f_{q,b} + \frac{1}{\pi(x;1,k^{\lambda})} o(\pi(x;1,k^{\lambda})) + \frac{1}{\varphi(k^{\lambda})} o(\varphi(k^{\lambda})) + O(k^{-\lambda \eta}).
\end{align*}

To show that the sum of the three error terms is smaller than $\varepsilon$, we choose $\lambda$ such that the second and third error term are bounded by $\varepsilon/3$.
Then we find for $x$ large enough that the first error term (for this given $\lambda$) is also bounded by $\varepsilon/3$ 
(note that $k^{\lambda-1}\leq \varphi(k^{\lambda})\leq k^{\lambda}$).
Consequently we find
\begin{align*}
  \lim_{x\to\infty}\frac{1}{\pi(x)} \sum_{p\leq x} \ind_{[a_p = b]} = \sum_{q\in Q} f_q f_{q,b},
\end{align*}
which -- finally -- finishes the proof of \cref{pr:prime_reduce}.

\end{proof}
%TODO rewrite!!!!!!!!!!!!!!!!!!!!!!!!!!!!!!!!!!!!!!!!!!!!!!!!!!!!!!!!!!!!!!!!!!!!!!!!!!!!!!!!!!!!!!!!!!!!!!!!!!!!!!!!!!!!!!!!!!!!!!!!!!!!!!!!

\begin{remark}
  The error terms can be made explicit. We find for example that the error term in \cref{le:dist_in_G} is actually of the form $O(k^{-\lambda \eta'})$.
  The dominant error term seems to correspond to the distribution of primes in arithmetic progressions.
\end{remark}

\begin{remark}
	As mentioned earlier, \cref{th:prime} covers block-additive functions. 
	However, to find the corresponding result for block-additive functions the group structure degenerates to an additive structure and many arguments simplify
	(e.g.,~ one does not need to use group representations).
\end{remark}

\section{A general M\"obius Principle}\label{sec:RS1}
The goal of this section is to show \cref{pr:rep_estimate} and \cref{pr:prime_Fourier}.
To prove these results we use a generalization of the results in \cite{mauduit_rivat_rs}. 
We start this section by stating the altered definitions and results.
In \cref{sec:Fourier} we show that for all but very special representations \cref{def:2} is fulfilled by $f(n) = D(T(n+r))$, uniformly in $r$.
In \cref{sec:carry} we show that for every representation \cref{def:1} is fulfilled by $f(n) = D(T(n+r))$, uniformly in $r$.
We leave the full proofs of the generalized results of \cite{mauduit_rivat_rs} to \cref{sec:technical}, as they are technical and very similar to the original proofs.

We consider a function $f:\N \to U_d$ where $U_d$ denotes the set of unitary $d\times d$ matrices.
Let $k\in\N$.
We let $f_{\lambda}$ denote the $k^{\lambda}$-periodic function defined by
\begin{align*}
  \forall n \in \{0,\ldots,k^{\lambda}-1\},\quad \forall m\in \Z, f_{\lambda}(n+m k^{\lambda}) = f(n).
\end{align*}
Furthermore, we define
$f_{\mu,\lambda}(n) := f_{\lambda}(n) f_{\mu}(n)^{H}$ for $\mu\leq \lambda$.

It is necessary to use matrix valued functions instead of complex valued functions as in \cite{mauduit_rivat_rs}, as we are working with representations.
\begin{definition}\label{def:1}
  A function $f:\N\rightarrow U_d$ has the carry property if there exists $\eta >0$ such that uniformly for $(\lambda,\alpha, \rho)\in \N^3$
  with $\rho<\lambda$, the number of integers $0\leq \ell <k^{\lambda}$ such that there exists $(n_1,n_2) \in \{0,\ldots,k^{\alpha}-1\}^2$ with
  \begin{align}\label{eq:carry_violation}
    f(\ell k^{\alpha} + n_1 + n_2)^{H} f(\ell k^{\alpha} + n_1) \neq f_{\alpha + \rho}(\ell k^{\alpha} + n_1 + n_2)^{H} f_{\alpha + \rho}(\ell k^{\alpha} + n_1)
  \end{align}
  is at most $O(k^{\lambda-\eta \rho})$ where the implied constant may depend only on $k$ and $f$.
\end{definition}
\begin{remark}
	One can obviously exchange \eqref{eq:carry_violation} with
	\begin{align*}
		f(\ell k^{\alpha} + n_1)^{H} f(\ell k^{\alpha} + n_1 + n_2) \neq f_{\alpha + \rho}(\ell k^{\alpha} + n_1)^{H} f_{\alpha + \rho}(\ell k^{\alpha} + n_1 + n_2).
	\end{align*}
\end{remark}
We introduce a set of functions with uniformly small Discrete Fourier Transforms as in \cite{mauduit_rivat_rs}:
\begin{definition}\label{def:2}
  Given a non-decreasing function $\gamma: \R \rightarrow \R$ satisfying $\lim_{\lambda \rightarrow \infty} \gamma(\lambda) = +\infty$ and $c>0$ we denote by 
  $F_{\gamma, c}$ the set of functions $f:\N\rightarrow U_d$ such that for $(\alpha,\lambda)\in \N^2$ with $\alpha \leq c \lambda$ and $t\in \R$:
  \begin{align}
    \norm{k^{-\lambda} \sum_{u<k^{\lambda}} f(u k^{\alpha}) \e(-ut)}_F \leq k^{-\gamma(\lambda)}.
  \end{align}

\end{definition}
One obvious difference to \cite{mauduit_rivat_rs} is that we consider matrix valued functions.
Moreover, the original definition of the carry property requires $\eta = 1$.
Nevertheless, we find results similar to those obtained in \cite{mauduit_rivat_rs} in this more general setting.
The results of \cite{mauduit_rivat_rs} have already been generalized to matrix valued functions in \cite{drmota2014}
and to a weaker carry property (although still more restrictive than \cref{def:1}) in \cite{hanna}.
We provide a full proof for the following theorems in \cref{sec:technical}.
\begin{theorem}\label{thm:prime}
  Let $\gamma:\R\to\R$ be a non-decreasing function satisfying $\lim_{\lambda\to\infty}\gamma(\lambda) = + \infty$, and $f:\N\to U_d$ be a function satisfying
  \cref{def:1} for some $\eta \in (0,1]$ and $f\in \mathcal{F}_{\gamma,c}$ for some $c\geq 10$ in \cref{def:2}.
  Then for any $\theta \in \R$ we have
  \begin{align}
    \norm{\sum_{n\leq x} \Lambda(n)f(n)\e(\theta n)} \ll c_1(k) (\log x)^{c_2(k)} x k^{-\eta \gamma(2\floor{(\log x)/(80 \log k)}/20},
  \end{align}
  with the same constants as in \cite{mauduit_rivat_rs}.
\end{theorem}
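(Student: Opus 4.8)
The plan is to follow the proof of the corresponding statement in \cite{mauduit_rivat_rs} step by step, generalising it in two directions: from scalar-valued to $U_d$-valued functions $f$ (as was already done in \cite{drmota2014}), and from the carry property with $\eta=1$ to the weaker version of \cref{def:1} with $\eta\in(0,1]$ (in the spirit of \cite{hanna}). Concretely, I would first apply Vaughan's identity to decompose $\sum_{n\leq x}\Lambda(n)f(n)\e(\theta n)$ into $O(1)$ sums of two shapes: Type~I sums $\sum_{t\leq T}a_t\sum_{m\leq x/t}f(tm)\e(\theta tm)$ with $\norm{a_t}\ll\log t$, and Type~II sums $\sum_{m}\sum_{t}b_m c_t\, f(tm)\e(\theta tm)$ with $m,t$ running over dyadic blocks in an intermediate range and $\norm{b_m},\norm{c_t}\ll\tau(m)\tau(t)$, the cut-off parameters being chosen exactly as in \cite{mauduit_rivat_rs}. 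Throughout, all norms are Frobenius norms; the elementary inequalities (triangle inequality, Cauchy--Schwarz, van der Corput) remain valid since every value of $f$ is unitary, hence of operator norm $1$, so that $\norm{Af(n)}_F=\norm{f(n)A}_F=\norm{A}_F$.

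For the Type~I sums the point is an estimate, uniform in $\theta$, for $\sum_{t\sim T}\bigl\|\sum_{m\leq y}f(tm)\e(\theta tm)\bigr\|_F$. Here one first invokes the carry property of \cref{def:1} to replace $f(tm)$ by its truncation $f_{\alpha+\rho}(tm)$ for all but $O(k^{\lambda-\eta\rho})$ choices of the high-order digit block of $tm$, with $\lambda\sim(\log x)/(\log k)$, and then one extracts cancellation from the remaining sum over $m$ --- now depending on $tm$ only through a window of $\rho$ digits --- by expanding into arithmetic progressions and applying the Fourier bound $f\in\mathcal F_{\gamma,c}$ of \cref{def:2}. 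This reproduces the Type~I lemma of \cite{mauduit_rivat_rs}, the only change being that the saving $k^{-\gamma(\lambda)}$ weakens to $k^{-\eta\gamma(\lambda)}$ because the carry property is used with exponent $\eta$ rather than $1$.

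The Type~II sums are the heart of the argument and the step I expect to be hardest. Following \cite{mauduit_rivat_rs}, I would apply Cauchy--Schwarz in $m$ to pass to a double sum over $t_1,t_2$ of $\sum_m f(t_1 m)^H f(t_2 m)\e(\theta(t_2-t_1)m)$, then use \cref{def:1} to replace each factor by a truncation $f_{\mu_1,\mu_2}$ depending only on a window of $\mu_2-\mu_1$ digits of its argument (losing again a factor whose exponent carries the $\eta$), and finally run the van der Corput / $A_q$ differencing process to reduce the estimate to bounding, for each shift $h$, a sum of the form $\sum_u f_{\mu_1,\mu_2}\bigl((tm+h)\cdots\bigr)\e(\cdots)$, which \cref{def:2} controls at a suitably chosen scale. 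Balancing $\rho$, the $\mu_i$, and the Vaughan parameters against $\lambda$ precisely as in \cite{mauduit_rivat_rs} yields the stated bound $\ll c_1(k)(\log x)^{c_2(k)}x\,k^{-\eta\gamma(2\lfloor(\log x)/(80\log k)\rfloor/20)}$. The delicate point --- where care rather than a new idea is needed --- is tracking the non-commutativity of the matrix factors through the differencing step: one must difference on the correct side so that the carry-truncated factors telescope to $f_{\mu_1,\mu_2}$ of a shifted argument, and one must verify that the matrix form of Cauchy--Schwarz together with the $\norm{\cdot}_F$ estimates combine to give exactly the constants $c_1(k)$, $c_2(k)$ of \cite{mauduit_rivat_rs}. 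Since these manipulations are lengthy but essentially mechanical, I would carry them out in full in \cref{sec:technical}.
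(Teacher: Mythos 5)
Your plan is essentially the paper's proof: both go through the Vaughan-type decomposition of \cite{mauduit_rivat_rs} into Type~I and Type~II sums, passing to Frobenius norms and tracking how the weakened carry exponent $\eta$ propagates through the exceptional-set counts of \cite[Lemmas~8--9]{mauduit_rivat_rs} before rebalancing the parameters $\rho,\rho',\mu_i$. The device you describe somewhat vaguely as ``differencing on the correct side'' is made precise in the paper via the cyclic property of the trace, e.g.\ $\tr\bigl(f(mn+mr)f(mn)^{H}\bigr)=\tr\bigl(f(mn)^{H}f(mn+mr)\bigr)$, which is invoked twice in the Type~II argument to reorder matrix products into the form $f(\cdot)^{H}f(\cdot)$ required by the carry lemma.
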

\begin{theorem}\label{thm:mobius}
  Let $\gamma:\R\to\R$ be a non-decreasing function satisfying $\lim_{\lambda\to\infty}\gamma(\lambda) = + \infty$, and $f:\N\to U_d$ be a function satisfying
  \cref{def:1} for some $\eta \in (0,1]$ and $f\in \mathcal{F}_{\gamma,c}$ for some $c\geq 10$ in \cref{def:2}.
  Then for any $\theta \in \R$ we have
  \begin{align}
    \norm{\sum_{n\leq x} \mu(n)f(n)\e(\theta n)} \ll c_1(k) (\log x)^{\frac{9}{4} + \frac{1}{4}\max(\omega(k),2)} x k^{-\eta \gamma(2\floor{(\log x)/(80 \log k)}/20},
  \end{align}
  with the same constants as in \cite{mauduit_rivat_rs}.
\end{theorem}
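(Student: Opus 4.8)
The plan is to adapt, essentially step by step, the argument Mauduit and Rivat use in \cite{mauduit_rivat_rs} for their M\"obius estimate, while keeping track of three modifications: $f$ is matrix valued, so every bound is phrased with the Frobenius norm together with the isometry property $\norm{AB}_F = \norm{B}_F$ for unitary $A$ (and $\norm{AB}_F\le\norm{B}_F$ more generally); the carry property we are given is the weaker one of \cref{def:1} with a parameter $\eta\in(0,1]$ rather than $\eta=1$; and the additive twist $\e(\theta n)$ is carried along throughout. First I would invoke a Vaughan-type combinatorial identity for $\mu$ --- exactly the one used in \cite{mauduit_rivat_rs} --- to write $\sum_{n\le x}\mu(n)f(n)\e(\theta n)$ as a sum of a number of \emph{type I} sums $\sum_{m\le M}\sum_{n} c_m\, f(mn)\e(\theta mn)$, with $\abs{c_m}\le 1$ and $n$ ranging over an interval, and \emph{type II} sums $\sum_{m\sim M}\sum_{n\sim R} a_m b_n\, f(mn)\e(\theta mn)$, with $\abs{a_m},\abs{b_n}$ bounded by divisor functions, where $MR\asymp x$ and $M,R$ lie in controlled dyadic ranges. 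The number of such sums and the size of their coefficients is exactly what produces the power $(\log x)^{9/4+\frac14\max(\omega(k),2)}$; the $\omega(k)$ contribution comes from separating the part of $n$ coprime to $k$, as in \cite{mauduit_rivat_rs}.

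For the type I sums, I would fix $\lambda\approx\log_k x$ and use the carry property (\cref{def:1}) to replace $f(mn)$, for all but $O(k^{\lambda-\eta\rho})$ choices of the top digits, by the $k^{\alpha+\rho}$-periodic function $f_{\alpha+\rho}$, where $\alpha$ is chosen so that $mn\bmod k^{\alpha}$ records the low digits and the inner sum over $n$ only perturbs the digits in positions $\ge\alpha$. On the periodic part one expands $f_{\alpha+\rho}$ into its discrete Fourier series over $\Z/k^{\alpha+\rho}\Z$: the zero-frequency term yields the only genuinely large contribution, handled by classical exponential-sum estimates for $\sum_n\e(\theta mn)$ (geometric series / Euler--Maclaurin), while each nonzero frequency is bounded using $f\in\mathcal{F}_{\gamma,c}$, giving a saving $k^{-\gamma(\lambda')}$ for a suitable $\lambda'\asymp\lambda$. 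Balancing $\rho$ against $\gamma$ then produces the claimed factor $k^{-\eta\gamma(\cdots)}$, with the $\eta$ entering only through the error term $O(k^{\lambda-\eta\rho})$.

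The type II sums are the main obstacle, and the plan is to follow the Cauchy--Schwarz / van der Corput scheme of \cite{mauduit_rivat_rs}. One applies Cauchy--Schwarz in $m$ to remove $a_m$, being left with $\sum_m\norm{\sum_n b_n f(mn)\e(\theta mn)}_F^2 = \sum_{n_1,n_2}\overline{b_{n_1}}b_{n_2}\sum_m \tr\!\big(f(mn_1)^{H}f(mn_2)\big)\e(\theta m(n_2-n_1))$; here non-commutativity forces working with $\tr\!\big(f(mn_1)^{H}f(mn_2)\big)$ rather than a scalar product, but the subsequent digit manipulation is unchanged. A van der Corput shift in $m$ (comparing $m$ with $m+k^{j}$ for a suitable $j$) together with the carry property lets one factor $f(mn_i)$ into a part depending only on the high digits of $mn_i$ (periodic, hence split off) and a windowed factor of $f_{\mu,\lambda}$-type; the off-diagonal terms $n_1\ne n_2$ are then controlled by applying $f\in\mathcal{F}_{\gamma,c}$ to this windowed function, while the diagonal $n_1=n_2$ contributes the expected $\asymp MR$. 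The delicate point is checking that the weaker carry exponent $\eta$ propagates only linearly, so that the final saving is $k^{-\eta\gamma}$ and not $k^{-\gamma}$, and that all matrix norms behave as in the scalar case because $f$ takes values in $U_d$; none of these changes touches the polynomial-in-$\log$ bookkeeping, which is why the constants $c_1(k)$ and the $\log x$ exponent match those of \cite{mauduit_rivat_rs}.

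Finally, I would assemble the type I and type II estimates with the optimal choice of the Vaughan cutoffs and with $\lambda=\floor{(\log x)/(80\log k)}$, so that the van der Corput and carry parameters line up as in \cite{mauduit_rivat_rs}; summing the $O\big((\log x)^{9/4+\frac14\max(\omega(k),2)}\big)$ contributions, each of size $\ll x\,k^{-\eta\gamma(2\floor{(\log x)/(80\log k)}/20)}$ up to the factor $c_1(k)$, gives the stated bound. I expect the only genuinely new work relative to \cite{mauduit_rivat_rs} to be the $\tr$-based reformulation of the Cauchy--Schwarz step and the careful tracking of $\eta$ through the carry lemma; everything else is a transcription of their proof.
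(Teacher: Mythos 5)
Your proposal follows essentially the same route as the paper: both reduce to the Mauduit--Rivat type~I/type~II machinery, replace the scalar bounds by Frobenius-norm estimates for $U_d$-valued $f$, use the cyclic invariance of the trace to reorder matrix products before invoking the carry lemma (the single genuinely new step), and track the weaker carry exponent $\eta$ linearly through all error terms. The paper (\cref{sec:RS2}) likewise defers the bookkeeping to \cite{mauduit_rivat_rs} and only records the adapted statements of their auxiliary lemmas, \cref{pr:1} and \cref{pr:2}, so your plan matches it.
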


%!!!!!!!!!!!!!!!!!!!!!!!!!!!!!!!!!!!!!!!!!!!!!!!!!!!!!!!!!!!!!!!!!!!!!!!!!!!!!!!!!!!!!!!!!!!!!!!!!!!!
\subsection{Fourier Estimates}\label{sec:Fourier}
One of the most difficult parts of this approach is to find sharp enough bounds for the Fourier terms.
The goal of this subsection is to show that \cref{def:2} holds for all but very special unitary irreducible representations where $f(n) = D(T(n+r))$ and 
$\gamma(\lambda) = \eta' \lambda - c'$ for some $\eta' >0$ and $c' \in \R$.
The proof is rather technical. However, it justifies some results of \cref{sec:induced_trans}.

Let $\mathcal{T}_{A}$ be a naturally induced transducer of $A$ and suppose that $d(A) = 1, k_0(A) = 1$ holds.
We remind the reader of the following special representations.

\Dl*

Note that $D_{\ell}(T(q_0,\bfw)) = \e\rb{(\ell \cdot [\bfw]_k) / d'}$ for $\ell = 0,\ldots, d'-1, \bfw \in \Sigma^{*}$.

We try to find exponentially decreasing bounds for expressions similar to
\begin{align*}
  \norm{\frac{1}{k^{\lambda}}\sum_{u<k^{\lambda}}D(T(q_0,(u)_k))\e(-ut)}_2,
\end{align*}
uniformly in $t$, where $D$ is a unitary, irreducible representation of $G$.

We already see that $D = D_{\ell}$ is a special case for which we are not able to find exponentially decreasing bounds as
{\small
\begin{align*}
  \frac{1}{k^{\lambda}}\sum_{u<k^{\lambda}}D_{\ell}(T(q_0,(u)_k))\e(-ut) = \frac{1}{k^{\lambda}} \sum_{u<k^{\lambda}} \e\rb{u\rb{\frac{\ell}{d'}-t}}
	\end{align*}
	}%
gives $1$ for $t = \ell/d'$, which is the trivial bound.
Nevertheless, we are able to find exponentially decreasing bounds for all other unitary and irreducible representations of $G$:

\begin{theorem}\label{thm:Fourier}
  Let $D$ be a unitary and irreducible representation of $G$ different from $D_0,\ldots,D_{d'-1}$.
  Then there exists $\eta >0$ such that
  \begin{align}\label{eq:eta}
    \norm{\frac{1}{k^{\lambda}}\sum_{u<k^{\lambda}}D(T(q,(uk^{\alpha}+r)_k))\e(-ut)}_2 \ll k^{-\eta \lambda}
  \end{align}
  holds uniformly for $t \in \R$, $q\in Q, r\in \N$ and $\alpha \in \N$.
\end{theorem}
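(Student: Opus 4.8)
Here is how I would attack Theorem~\ref{thm:Fourier}.

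The plan is to strip away from $(uk^\alpha+r)_k$ all the digits that do not really move as $u$ varies, reducing to a clean exponential sum over full blocks, and then to control that sum by a spectral‑gap estimate for a family of transfer operators on $\C^d\otimes\ell^2(Q)$. For the reduction, write $r=r_0+k^\alpha\tilde r$ with $0\le r_0<k^\alpha$, so that $(uk^\alpha+r)_k=(u+\tilde r)_k\,(r_0)_k^\alpha$ and hence, since $D$ is a homomorphism and $T(q,\bfw_1\bfw_2)=T(q,\bfw_1)\circ T(\delta(q,\bfw_1),\bfw_2)$,
\begin{align*}
  D(T(q,(uk^\alpha+r)_k)) = D(T(q,(u+\tilde r)_k))\,D(T(\delta(q,(u+\tilde r)_k),(r_0)_k^\alpha)).
\end{align*}
The second factor takes at most $\abs{Q}$ unitary values, and the high‑order digits of $\tilde r$ (essentially constant as $u$ runs over $[0,k^\lambda)$, up to a single carry) contribute, after splitting into the finitely many carry cases, one further fixed unitary prefactor; fixed unitary matrices have operator norm $1$, so they are irrelevant to a bound of shape $k^{-\eta\lambda}$. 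Substituting $v=u+\tilde r$ costs only a phase $\e(\tilde r t)$ and replaces the summation range by an interval, which is a disjoint union of $O(\lambda)$ prefix‑classes of length‑$(\lambda+O_k(1))$ words; leading zeros are absorbed the same way via $D(T(q,0^j\bfw))=D(T(q,0^j))D(T(\delta(q,0^j),\bfw))$. After collecting these bounded and polynomial factors, it suffices to show that for some $\eta>0$,
\begin{align*}
  \norm{\frac{1}{k^{\ell}}\sum_{\bfw\in\Sigma^{\ell}}D(T(q,\bfw))\,\e(-[\bfw]_k s)}_2 \ll k^{-\eta\ell}
\end{align*}
uniformly in $q\in Q$, $s\in\R$, $\ell\in\N$; note the left side is $1$‑periodic in $s$ because $[\bfw]_k\in\Z$.

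Call the matrix above $\mathcal B_\ell(q,s)$. Using $[\bfw_1\bfw_2]_k=[\bfw_1]_kk^{\abs{\bfw_2}}+[\bfw_2]_k$ together with the cocycle identity for $T$, one writes $\mathcal B_\ell(q,s)$ as a matrix entry of a product $\mathcal T_1(k^{\ell-1}s)\cdots\mathcal T_1(s)$ of transfer operators on $H:=\C^d\otimes\ell^2(Q)$, where
\begin{align*}
  \mathcal T_1(s)\rb{\xi\otimes e_q} := \frac1k\sum_{a\in\Sigma}\e(-as)\,\rb{D(\lambda(q,a))\xi}\otimes e_{\delta(q,a)} .
\end{align*}
Grouping digits into blocks of a fixed length $M$, $\mathcal B_\ell(q,s)$ becomes a matrix entry of a bounded boundary operator times a product $\prod_{j}\mathcal T_M\rb{k^{Mj+b}s}$ of the length‑$M$ transfer operators. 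Thus the whole problem reduces to a single uniform bound $\norm{\mathcal T_M(s)}\le\rho<1$ for one large fixed $M$.

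The crux is this spectral gap. Equip $\ell^2(Q)$ with a weighted inner product $\langle\,\cdot\,,\,\cdot\,\rangle_\nu$ built from the strictly positive Perron (left) eigenvector of the letter‑transition matrix of $\mathcal T_A$ — this eigenvector exists and is positive because $\mathcal T_A$ is synchronizing and strongly connected, hence primitive. Using only unitarity of $D$ and a weighted Cauchy–Schwarz inequality one checks $\norm{\mathcal T_M(s)}_{\nu}\le 1$ for all $M$ and $s$. The key point is that this is \emph{strict} when $D\neq D_0,\dots,D_{d'-1}$: if $\norm{\mathcal T_M(s)v}_\nu=\norm{v}_\nu$ for a unit vector $v=\sum_q\xi_q\otimes e_q$, then tightness in the triangle and Cauchy–Schwarz inequalities forces all vectors $\e(-[\bfw]_ks)D(T(q,\bfw))\xi_q$ sharing a common end state to be co‑directional; taking $M\ge m_0$ and letting $\bfw$ range over loops at one state, \cref{th:full_G} (with $d(A)=1$) gives $\{T(q,\bfw)\}=G$, so some $\xi_q$ is a common eigenvector of $D(G)$, whence by irreducibility $d=1$, $D$ is a character $\chi$, and all $\xi_q$ are parallel. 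Feeding this back, $\e(-[\bfw]_ks)\chi(T(q,\bfw))$ must be constant over all length‑$M$ loops at a fixed state; the second part of \cref{le:m_0} (distinct loops of equal weight) together with \cref{th:dk_0} then force $\chi$ to be trivial on $G_0$, i.e.\ $\chi=D_j$ for some $j$ — a contradiction. Hence $\norm{\mathcal T_M(s)}_\nu<1$ for every $s$ once $M\ge m_0$, and since $s\mapsto\norm{\mathcal T_M(s)}_\nu$ is continuous and $1$‑periodic, compactness of $\R/\Z$ upgrades this to a uniform $\norm{\mathcal T_M(s)}_\nu\le\rho<1$. Combining with the block decomposition, $\norm{\mathcal B_\ell(q,s)}_2\le C_\nu\,\rho^{\floor{\ell/M}}$ — paying the norm‑equivalence constant $C_\nu$ only once — which is the core bound with $\eta=-M^{-1}\log_k\rho>0$; unwinding the first reduction (whose polynomial loss in $\lambda$ is harmless against this exponential saving) yields Theorem~\ref{thm:Fourier}. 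I expect the main obstacle to be exactly this Step: pinning down the correct weighted norm in which $\mathcal T_M(s)$ is a genuine (non‑strict) contraction for \emph{every} $s$ despite the automaton not being reversible, and then carrying out the equality analysis cleanly enough to extract the contradiction with $D\ne D_\ell$ from the structure theory of \cref{sec:str1} and \cref{sec:str2}.
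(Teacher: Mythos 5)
Your proposal is correct in substance and takes a genuinely different formal route. The paper proves \eqref{eq:eta} by first reducing $\phi^q_{\lambda,\alpha}$ (the sum over $u k^\alpha+r$) to the padded-block sums $\psi^q_{\lambda,\alpha}$ (the technical lemma deferred to \cref{sec:technical}), and then iterates the one-step recursion of \cref{le:phi_rec} in the $\ell^\infty$-norm $\Psi_\lambda := \max_{q\in Q}\norm{\psi^q_{\lambda,\alpha}}_2$: at each level the paper bounds the non-loop summands trivially and extracts cancellation only from the loop terms at the \emph{same} state $q$, splitting the argument into the two cases $\dim D\geq 2$ (\cref{le:psi_d>1}) and $\dim D=1$ (\cref{le:psi_d=1}). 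You instead package the same block recursion as a product of transfer operators $\mathcal T_M(s)$ on $\C^d\otimes \ell^2(Q)$ with a Parry-type weight $\nu_q = u_q^{-1}$, where $u$ is the strictly positive eigenvector with $u_{q'}\,k=\sum_{(q,a):\,\delta(q,a)=q'}u_q$. With this choice the weighted Cauchy--Schwarz inequality gives $\norm{\mathcal T_1(s)}_\nu\leq 1$ exactly (the key identity $\sum_{(q,a):\,\delta(q,a)=q'}u_q = k\,u_{q'}$ cancels the Perron weight), so the contraction becomes a genuine spectral-gap statement, and strictness follows from the equality analysis that is mathematically identical to the paper's: tightness in weighted Cauchy--Schwarz forces $\e(-[\bfw]_k s)\,D(T(q,\bfw))\,\xi_q/u_q$ to be constant over all $(q,\bfw)$ ending at a fixed state, then \cref{th:full_G} and irreducibility force $\dim D=1$, and \cref{le:m_0} with \cref{th:dk_0} force $D=D_j$. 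What each buys: your formulation unifies the two cases and avoids the ad hoc loop/non-loop split; the paper's recursion is more elementary and self-contained. Two details you should nail down if you flesh this out: you write that the norm is built from the Perron eigenvector, but the correct weight is its \emph{reciprocal} $\nu_q=1/u_q$ (as above), and ``co-directional'' in your equality analysis needs to be sharpened to ``equal after dividing by $u_q$'' (equality in weighted Cauchy--Schwarz pins down the scaling, not merely the direction), which is precisely what is needed when $\dim D=1$ since all nonzero vectors in $\C$ are trivially parallel. Finally, your initial reduction to the full-block sums is only sketched; the carry and variable-length bookkeeping is exactly what the paper's Section~\ref{sec:technical} lemma on $\phi$ versus $\psi$ handles, and would have to be carried out with similar care.
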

The proof is carried out throughout this subsection.
We define
\begin{align*}
  \phi_{\lambda,\alpha}^{q}(t,r) := \frac{1}{k^{\lambda}} \sum_{u<k^{\lambda}} D(T(q,(uk^{\alpha}+r)_k)) \e(-ut)
\end{align*}
and for $r<k^{\alpha}$
\begin{align*}
  \psi_{\lambda,\alpha}^{q}(t,r) := \frac{1}{k^{\lambda}} \sum_{u<k^{\lambda}} D(T(q,(u)_k^{\lambda} (r)_k^{\alpha})) \e(-ut).
\end{align*}

We see that these two definitions look very similar, but it turns out to be much easier to deal with $\psi$.

\begin{restatable}{lemma}{phiPsi}
  Let $D$ be a unitary representation of $G$. Assume that there exists $\eta >0$ such that
  \begin{align*}
    \norm{\psi_{\lambda,\alpha}^{q}(t,r)}_2 \ll k^{-\eta \lambda}
  \end{align*}
  holds uniformly for $q \in Q,\lambda,\alpha \in \N, t \in \R$ and $r <k^{\alpha}$.
  Then \cref{thm:Fourier} holds for $D$.
%   \begin{align}
%     \norm{\phi_{\lambda,\alpha}^{q}(t,r)}_2 \leq \sum_{1\leq j \leq \lambda} \max_{q' \in Q} \norm{\psi_{\lambda-j,\alpha}^{q'}(t,r \bmod k^{\alpha})}_2 \frac{1}{k^{j}} + \frac{1}{k^{\lambda}}
%   \end{align}
\end{restatable}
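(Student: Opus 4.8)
The plan is to write $\phi_{\lambda,\alpha}^{q}(t,r)$ in terms of the functions $\psi$ (up to a bounded number of partial sums that can be controlled separately) and then transfer the hypothesized exponential bound. First I would use the elementary observation that, writing $r = r_1 + r_2 k^{\alpha}$ with $0 \le r_1 < k^{\alpha}$, one has $\floor{(uk^{\alpha}+r)/k^{\alpha}} = u+r_2$ and $(uk^{\alpha}+r) \bmod k^{\alpha} = r_1$, hence $(uk^{\alpha}+r)_k = (u+r_2)_k (r_1)_k^{\alpha}$ and
\[
  \phi_{\lambda,\alpha}^{q}(t,r) = \frac{1}{k^{\lambda}} \sum_{u<k^{\lambda}} D\rb{T\rb{q,(u+r_2)_k (r_1)_k^{\alpha}}}\e(-ut).
\]
The key point is that the ``variable part'' $u+r_2$ runs over the interval $[r_2,r_2+k^{\lambda})$ of length \emph{exactly} $k^{\lambda}$, so its residue mod $k^{\lambda}$ runs over all of $\{0,\dots,k^{\lambda}-1\}$ while its high part $\floor{(u+r_2)/k^{\lambda}}$ takes only the two values $R:=\floor{r_2/k^{\lambda}}$ and $R+1$. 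Splitting the sum accordingly and factoring out the leading block $(R)_k$ resp.\ $(R+1)_k$ (using $T(q,\bfw_1\bfw_2) = T(q,\bfw_1)\circ T(\delta(q,\bfw_1),\bfw_2)$ together with unitarity, hence $\norm{\cdot}_2$-invariance, of $D(T(q,\bfw_1))$), this reduces $\norm{\phi_{\lambda,\alpha}^{q}(t,r)}_2$ to $k^{-\lambda}$ times a bounded number of sums of the shape $\sum_{w<N'} D\rb{T\rb{q',(w)_k^{\lambda}(r_1)_k^{\alpha}}}\e(-wt)$ with $q'\in Q$, $r_1<k^{\alpha}$ and $0\le N'\le k^{\lambda}$, i.e.\ initial segments of (at most two consecutive) full $\psi$-periods. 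A borderline case, $R=0$, where $(u+r_2)_k$ has variable length, is handled in the same spirit together with an additional decomposition according to the number of digits of $u+r_2$ (peeling off its leading nonzero digit $a\in\{1,\dots,k-1\}$ and reducing to $\psi_{j-1,\alpha}^{\delta(q,a)}$); one cannot simply pad with leading zeros here, since leading zeros are not neutral for a transducer.

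Second, I would prove the required block estimate: uniformly for $q'\in Q$, $r_1<k^{\alpha}$, $t\in\R$ and $0\le N'\le k^{\lambda}$,
\[
  \norm{\sum_{w<N'} D\rb{T\rb{q',(w)_k^{\lambda}(r_1)_k^{\alpha}}}\e(-wt)}_2 \ll k^{\lambda(1-\eta)}.
\]
For this, decompose $[0,N')$ into the canonical disjoint union of at most $(k-1)\lambda + 1$ aligned $k$-adic blocks $[mk^{i},(m+1)k^{i})$ with $0\le i\le \lambda$ and at most $k-1$ blocks of each scale $i$. On such a block, writing $w = mk^{i}+w'$ gives $(w)_k^{\lambda} = (m)_k^{\lambda-i}(w')_k^{i}$, so the block sum equals $\e(-mk^{i}t)\,D\bigl(T(q',(m)_k^{\lambda-i})\bigr)\,k^{i}\,\psi_{i,\alpha}^{\delta(q',(m)_k^{\lambda-i})}(t,r_1)$, whose $\norm{\cdot}_2$ is therefore $\le k^{i}\cdot C k^{-\eta i} = Ck^{i(1-\eta)}$ by the hypothesis. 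Summing over all blocks yields $\ll \sum_{i\le \lambda}(k-1)k^{i(1-\eta)}$, a geometric series dominated by its top term $k^{\lambda(1-\eta)}$, because we may assume $\eta\in(0,1)$ (if the hypothesis holds with some $\eta\ge 1$, it holds with $\eta=\tfrac12$). Combining with the first step gives $\norm{\phi_{\lambda,\alpha}^{q}(t,r)}_2 \ll k^{-\lambda}\cdot k^{\lambda(1-\eta)} = k^{-\eta\lambda}$, uniformly in $q,r,\alpha,t$, which is exactly \eqref{eq:eta}, so that \cref{thm:Fourier} holds for $D$.

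The main obstacle is the unboundedness of the shift $r$: a direct $k$-adic decomposition of the shifted range $[r_2,r_2+k^{\lambda})$ would need $O(\log r_2)$ blocks and the estimate would degrade with $r$. The remedy is to exploit that this range has length exactly $k^{\lambda}$, so working modulo $k^{\lambda}$ leaves only two possible high parts; everything then reduces to full $\psi$-periods (length $\le \lambda$) plus finitely many partial sums, which the block lemma handles with a clean geometric bound. The remaining care is purely bookkeeping around the identity $T(q,\bfw_1\bfw_2) = T(q,\bfw_1)\circ T(\delta(q,\bfw_1),\bfw_2)$, the unitary invariance of $\norm{\cdot}_2$, and the fact (responsible for $\phi$ and $\psi$ not being literally equal) that leading zeros must be peeled off as genuine digits rather than padded.
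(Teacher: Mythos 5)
Your proof is correct and is essentially the paper's argument, repackaged: the paper also splits $u+r_2$ by whether it crosses $k^{\lambda}$ (two high parts) and then recursively peels the top digit, which amounts exactly to your $k$-adic block decomposition of the remaining partial $\psi$-sums; the iterated recurrence in the paper produces precisely the geometric series $\sum_i k^{i(1-\eta)}$ that your block lemma packages as a standalone statement. The only visible difference is presentational (the paper records the intermediate inequality $\norm{\phi_{\lambda,\alpha}^q}_2 \le \sum_{1\le j\le\lambda}k^{-j}\max_{q'}\norm{\psi_{\lambda-j,\alpha}^{q'}}_2 + k^{-\lambda}$ and then plugs in the hypothesis, whereas you carry the hypothesis through the decomposition directly), and your remark about peeling a leading nonzero digit in the $R=0$ case matches the paper's treatment of the variable-length $(u')_k$ term.
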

\begin{proof}
	We postpone this proof to \cref{sec:technical}.
\end{proof}

Therefore, it is sufficient to prove the following proposition.
\begin{proposition}\label{pr:psi}
  Let $D$ be a unitary and irreducible representation of $G$ different from $D_0,\ldots,D_{d'-1}$.
  Then there exists $\eta >0$ such that 
  \begin{align*}
    \frac{1}{k^{\lambda}} \norm{\sum_{u<k^{\lambda}}D(T(q,(u)_k^{\lambda} (r)_k^{\alpha})) \e(-ut)}_2 \ll k^{-\eta \lambda}
  \end{align*}
  holds uniformly for $q \in Q, \lambda, \alpha \in \N, t\in \R$ and $r<k^{\alpha}$.
\end{proposition}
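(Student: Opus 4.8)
The plan is to reduce the estimate to the norm of a product of ``transfer matrices'' indexed by the digits, and then to run a major/minor arc dichotomy in which the major arcs are controlled by the group $G$ (via \cref{th:full_G} and \cref{th:dk_0}) and the minor arcs by classical digit exponential sums, following the scheme of \cite{mauduit_rivat_rs}.

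First I would strip off the low-order block $(r)_k^\alpha$. Since $T$ is multiplicative under concatenation and $D$ is a homomorphism, writing $q_1:=\delta(q,(u)_k^\lambda)$ gives $D(T(q,(u)_k^\lambda(r)_k^\alpha)) = D(T(q,(u)_k^\lambda))\,D(T(q_1,(r)_k^\alpha))$, and $D(T(q_1,(r)_k^\alpha))$ is unitary. Splitting the $u$-sum according to $\bar q:=\delta(q,(u)_k^\lambda)\in Q$ and using subadditivity of $\norm{\cdot}_2$ together with unitary invariance, it suffices to bound
\[
  \sum_{\bar q\in Q}\ \norm{ \frac{1}{k^\lambda}\sum_{\substack{u<k^\lambda\\ \delta(q,(u)_k^\lambda)=\bar q}} D(T(q,(u)_k^\lambda))\,\e(-ut)}_2 .
\]
Peeling the last digit shows that the $|Q|\times|Q|$ array $\Phi_\lambda(t)$ of $d\times d$ blocks whose $(q,\bar q)$-entry is the matrix above factors as $\Phi_\lambda(t) = \mathcal{N}(k^{\lambda-1}t)\cdots\mathcal{N}(kt)\,\mathcal{N}(t)$, where $\mathcal{N}(\theta)$ has $(q,\bar q)$-block $\tfrac1k\sum_{a:\,\delta(q,a)=\bar q}\e(-a\theta)D(\lambda(q,a))$. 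Thus everything reduces to showing that this product is small, uniformly in $t$; concretely, that a block of $L$ consecutive factors, which at scale $j$ contributes $\tfrac1{k^L}\sum_{\bfw\in\Sigma^L,\ \delta(q,\bfw)=\bar q}D(T(q,\bfw))\,\e(-[\bfw]_k k^{j}t)$, has operator norm bounded away from $1$ by an amount depending only on how $k^jt$ sits relative to the exceptional fractions.

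The dichotomy is run scale by scale according to the distance of $k^jt$ from $\tfrac1{k-1}\Z$ (the relevant periodic points of $\theta\mapsto k\theta\bmod1$; note $d'\mid k^d-1=k-1$ since $d(A)=1$). If $k^jt$ is not within $k^{-\rho}$ of such a fraction, then the plain digit sum $\sum_{\bfw}\e(-[\bfw]_k k^jt)=\prod_i\sum_{a<k}\e(-a k^{i+j}t)$ already cancels, and this cancellation is transported through the constraint $\delta(q,\bfw)=\bar q$ and the factor $D(T(q,\bfw))$ by the same van der Corput / Cauchy--Schwarz iteration used in \cite{mauduit_rivat_rs} (the matrix-valued version carried out in \cref{sec:technical}, where $\norm{\cdot}_F$ and submultiplicativity replace absolute values). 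If instead $k^jt$ is within $k^{-\rho}$ of a fraction $a/b$ with $b\mid k-1$, I would use group averaging over the next $\asymp\rho$ digits: by \cref{th:dk_0}, with $d(A)=k_0(A)=1$, for $\bfw$ long enough and with $\ell:=[\bfw]_k\bmod d'$ fixed, $T(q,\bfw)$ runs through the coset $G_0\cdot(g_0')^{\ell}$ with multiplicity independent of $\bfw$; since $G_0\lhd G$ with $G/G_0\cong\Z/d'$ and $D$ is irreducible and distinct from every $D_\ell$, the restriction $D|_{G_0}$ has no invariant vector, so $\sum_{g\in G_0}D(g)=0$ and the block sum is exponentially small in $\rho$. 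Because $\theta\mapsto k\theta\bmod1$ fixes $\tfrac1{k-1}\Z$, membership in a major arc is preserved under passing to the next scale up to a geometrically worsening approximation error, so a bookkeeping over the $\lambda$ scales — exactly as in \cite{mauduit_rivat_rs} — accumulates the bound $k^{-\eta\lambda}$ with $\eta>0$ depending only on $A$ and $D$. Combined with the preceding lemma reducing $\phi$ to $\psi$, this yields \cref{thm:Fourier}, so \cref{def:2} holds for $f(n)=D(T(n+r))$ with the affine choice $\gamma(\lambda)=\eta'\lambda-c'$.

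The main obstacle is precisely the uniformity in $t$ in the transition region, where $k^jt$ is moderately but not extremely close to $\tfrac1{k-1}\Z$: there neither the exponential-sum cancellation nor the group averaging is by itself decisive, and one must interleave the van der Corput iteration with the $G_0$-averaging gain while carefully tracking $\min_{b\mid k-1}\norm{b\,k^jt}_{\R/\Z}$ along the orbit. This bookkeeping, together with the matrix-valued reworking of the Fourier machinery of \cite{mauduit_rivat_rs} and the use of the covering statements of \cref{sec:induced_trans} in place of the explicit Rudin--Shapiro combinatorics, is the technical core of the subsection.
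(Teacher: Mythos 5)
Your proposal takes a genuinely different route from the paper, and it contains gaps that you partly acknowledge but that are real obstructions.

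The paper's proof of \cref{pr:psi} is a single \emph{uniform} contraction argument over blocks of fixed length: using the recursion of \cref{le:phi_rec} with $m=m_0$ (or $m=\ell_0+m_0'$), it exploits cancellation only among a \emph{subset} of the $k^{m_0}$ words --- namely $|G|$ return words $\bfw_g$ of length $m_0$ from $q$ to $q$ realizing every weight $g\in G$ (\cref{th:full_G} with $d=k_0=1$), treating the remaining words trivially. For $\dim D\geq 2$ one shows $\norm{\sum_{g\in G}D(g)\e(-[\bfw_g]_k t')}_2<|G|$ for every $t'$, because equality would produce a common eigenvector of all $D(g)$ (Cauchy--Schwarz), contradicting irreducibility; by continuity and periodicity in $t'$ this gives a gap $\eta'>0$ uniform in $t$, hence a contraction factor $1-\eta'/k^{m_0}$ per block, with no major/minor arc dichotomy and no need to track the orbit of $t$. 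For $\dim D=1$ the paper combines two estimates and checks only the two cases $d'tk^{\lambda-m_0'}\in\Z$ and $\notin\Z$, using $D|_{G_0}\neq 1$ and the factorization $d(q,q)=d'\cdot d''(q,q)$ with $d''(q,q)\mid k^{\ell_0}$; again the gain is uniform in $t$ by compactness.

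Your scheme instead tries a scale-by-scale major/minor dichotomy on $\norm{k^jt}$. Three issues. First, and you flag it yourself, the transition region (when $k^jt$ is moderately but not extremely close to $\tfrac{1}{k-1}\Z$) is left unresolved; the paper's approach never runs into it because the contraction per fixed-length block is already uniform in $t$. Second, on the minor arcs your plan to ``transport'' the cancellation of $\prod_i\sum_{a<k}\e(-ak^{i+j}t)$ through the weight $D(T(q,\bfw))$ ``by the same van der Corput / Cauchy--Schwarz iteration used in \cite{mauduit_rivat_rs}'' is not available here: in the Mauduit--Rivat framework that machinery is used for the Type I/II bilinear sums and \emph{takes the Fourier estimate as an input} (\cref{def:2}); using it to prove the Fourier estimate itself would be circular. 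The Fourier estimate there, as here, is established by a direct transfer-matrix/contraction argument. Third, on the major arcs you use that $T(q,\bfw)$ runs through the coset $G_0\cdot(g_0')^{\ell}$ ``with multiplicity independent of $\bfw$'' so that $\sum_{g\in G_0}D(g)=0$ applies cleanly; but \cref{th:dk_0} only gives equality of the \emph{sets}, not equidistribution with equal multiplicity, so this step needs an additional equidistribution lemma (and even then you would still have to handle the $\e(-[\bfw]_kk^jt)$ phases that vary within a fixed coset). All of this is exactly what the paper's weaker-but-uniform ``contract by a fixed $\eta'$ per block, trivially bounding the remaining paths'' strategy is designed to avoid.
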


We use the recursive structure of transducers to find recurrences for these Fourier terms.

\begin{lemma}\label{le:phi_rec}
  Let $q \in Q$, $\lambda,\alpha \in \N,t \in \R$ and $r<k^{\alpha}$.
  It holds for all $m < \lambda$
  \begin{align}\label{eq:phi_rec}
    \psi_{\lambda,\alpha}^{q}(t,r) = \frac{1}{k^m}\sum_{\varepsilon < k^m} D(T(q,(\varepsilon)_k^{m}))\e(-\varepsilon (k^{\lambda-m}t)) \psi_{\lambda-m,\alpha}^{\delta(q,\varepsilon)}(t,r).
  \end{align}
\end{lemma}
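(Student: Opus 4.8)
The plan is to unfold the definition of $\psi_{\lambda,\alpha}^{q}(t,r)$ and split the summation range according to the $m$ most significant digits. Concretely, since $m<\lambda$, every integer $u$ with $0\le u<k^{\lambda}$ can be written uniquely as $u=\varepsilon k^{\lambda-m}+v$ with $0\le\varepsilon<k^{m}$ and $0\le v<k^{\lambda-m}$. As we read digits from the most significant one, the length-$\lambda$ word representing $u$ factors as a concatenation: indeed $[(\varepsilon)_k^{m}(v)_k^{\lambda-m}]_k=\varepsilon k^{\lambda-m}+v=u$ and this word has length $\lambda$, so by uniqueness of $(\cdot)_k^{\lambda}$ we get $(u)_k^{\lambda}=(\varepsilon)_k^{m}(v)_k^{\lambda-m}$, hence $(u)_k^{\lambda}(r)_k^{\alpha}=(\varepsilon)_k^{m}\,(v)_k^{\lambda-m}(r)_k^{\alpha}$.

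Next I would invoke the multiplicativity of $T$ along concatenations of words, i.e. the identity $T(q,\bfw_1\bfw_2)=T(q,\bfw_1)\circ T(\delta(q,\bfw_1),\bfw_2)$, which is the same fact used in the proof of \cref{pr:equality} and follows immediately from the definition of $T$ by induction on $|\bfw_2|$. Applying it with $\bfw_1=(\varepsilon)_k^{m}$ and $\bfw_2=(v)_k^{\lambda-m}(r)_k^{\alpha}$, and using $\delta(q,(\varepsilon)_k^{m})=\delta(q,\varepsilon)$, yields
\[
  T\bigl(q,(u)_k^{\lambda}(r)_k^{\alpha}\bigr)=T\bigl(q,(\varepsilon)_k^{m}\bigr)\circ T\bigl(\delta(q,\varepsilon),(v)_k^{\lambda-m}(r)_k^{\alpha}\bigr).
\]
Since $D$ is a representation, hence a homomorphism into $U_d$, it turns this group product into the matrix product $D\bigl(T(q,(\varepsilon)_k^{m})\bigr)\,D\bigl(T(\delta(q,\varepsilon),(v)_k^{\lambda-m}(r)_k^{\alpha})\bigr)$.

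Finally I would combine this with the factorisation $\e(-ut)=\e\bigl(-\varepsilon(k^{\lambda-m}t)\bigr)\e(-vt)$ and reorganise the double sum over $(\varepsilon,v)$ into a product of two sums. The outer sum over $\varepsilon<k^{m}$ contributes the factor $\frac{1}{k^{m}}\sum_{\varepsilon<k^{m}}D\bigl(T(q,(\varepsilon)_k^{m})\bigr)\e\bigl(-\varepsilon(k^{\lambda-m}t)\bigr)$, while for each fixed $\varepsilon$ the inner sum $\frac{1}{k^{\lambda-m}}\sum_{v<k^{\lambda-m}}D\bigl(T(\delta(q,\varepsilon),(v)_k^{\lambda-m}(r)_k^{\alpha})\bigr)\e(-vt)$ is, by definition, precisely $\psi_{\lambda-m,\alpha}^{\delta(q,\varepsilon)}(t,r)$; distributing $\frac{1}{k^{\lambda}}=\frac{1}{k^{m}}\cdot\frac{1}{k^{\lambda-m}}$ accordingly gives \eqref{eq:phi_rec}. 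There is no genuine difficulty in this argument; the only point requiring care is keeping the order of composition straight, both in $T$ (the digits of $\varepsilon$ come first because they are the most significant) and in the non-commutative matrix product, where the $\varepsilon$-factor $D\bigl(T(q,(\varepsilon)_k^{m})\bigr)$ must be written on the left.
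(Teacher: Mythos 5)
Your proof is correct and follows essentially the same route as the paper's: split $u=\varepsilon k^{\lambda-m}+v$ by the $m$ most significant digits, use the cocycle identity $T(q,\bfw_1\bfw_2)=T(q,\bfw_1)\circ T(\delta(q,\bfw_1),\bfw_2)$ together with the homomorphism property of $D$, and factor the double sum. The extra care you take in justifying the word factorization and in flagging the left-placement of the $\varepsilon$-factor due to non-commutativity is sound but does not change the argument.
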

\begin{proof}
  We know that $T(q,(\varepsilon)_k^{m}(u')_k^{\lambda-m}(r)_k^{\alpha}) = T(q,(\varepsilon)_k^{m})\cdot T(\delta(q,(\varepsilon)_k^m),(u')_k^{\lambda-m} (r)_k^{\alpha})$ for 
  $u' <k^{\lambda-m}$.
  By distinguishing the $m$ most significant digits of $u$, we find 
	{\small
  \begin{align*}
    \psi_{\lambda, \alpha}^{q}(t) &= \frac{1}{k^{\lambda}} \sum_{\varepsilon < k^m}\sum_{u' < k^{\lambda-m}} D(T(q,(\varepsilon)_k^{m}(u')_k^{\lambda-m}(r)_k^{\alpha})) 
	  \e(-(\varepsilon k^{\lambda-m}+u')t)\\
    &= \frac{1}{k^{m}} \sum_{\varepsilon < k^m} D(T(q,(\varepsilon)_k^{m})) \e(-\varepsilon (k^{\lambda-m}t)) \\
    &\qquad \frac{1}{k^{\lambda-m}} \sum_{u'<k^{\lambda-m}} D(T(\delta(q,(\varepsilon)_k^{m}),(u')_k^{\lambda-m}(r)_k^{\alpha}))\e(-u't).
  \end{align*}
	}%
\end{proof}

The main idea is to find $m \in \N, \varepsilon_1,\varepsilon_2<k^m$ such that two or more terms on the right side of \eqref{eq:phi_rec} cancel -- at least partially.
This means that we want to find $\varepsilon_1,\varepsilon_2$ such that
$\delta(q,\varepsilon_1) = \delta(q,\varepsilon_2)$ and 
\begin{align*}
	\norm{D(T(q,\varepsilon_1))\e(-\varepsilon_1 k^{\lambda-m} t) + D(T(q,\varepsilon_2))\e(-\varepsilon_2 k^{\lambda-m} t)}_2 < 2.
\end{align*}
We split the proof into two parts -- depending on the dimension of $D$.
\begin{lemma}\label{le:psi_d>1}
  Let $D$ be a unitary and irreducible representation of $G$ of dimension at least $2$. Then \cref{pr:psi} holds for $D$.
\end{lemma}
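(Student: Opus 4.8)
The plan is to deduce \cref{pr:psi} for $D$ (which is exactly \cref{le:psi_d>1}) from a single one‑step contraction read off the recursion of \cref{le:phi_rec}. Since $d(A)=k_0(A)=1$, \cref{th:full_G} furnishes an $m\in\N$ such that for all $q,\overline{q}\in Q$ the set $\{T(q,(\varepsilon)_k^{m}):\varepsilon<k^{m},\ \delta(q,(\varepsilon)_k^{m})=\overline{q}\}$ equals $G$; fix such an $m$. For $q\in Q$ and $s\in\R$ put
\begin{align*}
  \rho_q(s):=\frac{1}{k^{m}}\sum_{\overline{q}\in Q}\norm{\sum_{\substack{\varepsilon<k^{m}\\ \delta(q,(\varepsilon)_k^{m})=\overline{q}}}D\big(T(q,(\varepsilon)_k^{m})\big)\,\e(-\varepsilon s)}_2 .
\end{align*}
Grouping the summands in \eqref{eq:phi_rec} according to the target state $\overline{q}=\delta(q,(\varepsilon)_k^{m})$ and using submultiplicativity of the operator norm together with the trivial bound $\norm{\psi_{\lambda-m,\alpha}^{\overline{q}}(t,r)}_2\le\max_{\overline{q}}\norm{\psi_{\lambda-m,\alpha}^{\overline{q}}(t,r)}_2$, one obtains for every $\lambda>m$
\begin{align*}
  \norm{\psi_{\lambda,\alpha}^{q}(t,r)}_2\le\rho_q\!\big(k^{\lambda-m}t\big)\,\max_{\overline{q}\in Q}\norm{\psi_{\lambda-m,\alpha}^{\overline{q}}(t,r)}_2 .
\end{align*}
Each $\rho_q$ is continuous and $1$-periodic in $s$, so $\rho:=\max_{q\in Q}\max_{s\in\R}\rho_q(s)$ is attained; once $\rho<1$ is established, iterating the displayed bound (and using $\norm{\psi_{\mu,\alpha}^{q}}_2\le1$ for $\mu\le m$) gives $\norm{\psi_{\lambda,\alpha}^{q}(t,r)}_2\le\rho^{\floor{\lambda/m}-1}\ll k^{-\eta\lambda}$ with $\eta:=-\tfrac1m\log_k\rho>0$, uniformly in $q$, $\alpha$, $t$ and $r$, which is \cref{pr:psi}.

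It therefore remains to show $\rho_q(s)<1$ for every $q\in Q$ and $s\in\R$, and this is where the hypotheses that $D$ is irreducible and $\dim D\ge2$ are used decisively. Fix $q,s$ and write $E_{\overline{q}}:=\card\{\varepsilon<k^{m}:\delta(q,(\varepsilon)_k^{m})=\overline{q}\}$, so $\sum_{\overline{q}}E_{\overline{q}}=k^{m}$; since $D$ is unitary, each inner norm in the definition of $\rho_q(s)$ is at most $E_{\overline{q}}$, hence $\rho_q(s)\le1$. Suppose $\rho_q(s)=1$. Then equality holds in each of these bounds, i.e.\ $\norm{\sum_{\varepsilon:\delta(q,(\varepsilon)_k^{m})=\overline{q}}D(T(q,(\varepsilon)_k^{m}))\e(-\varepsilon s)}_2=E_{\overline{q}}$ for every $\overline{q}$. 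Fix one $\overline{q}$ and a unit vector $v$ attaining this operator norm; then $\norm{\sum_{\varepsilon}\e(-\varepsilon s)D(T(q,(\varepsilon)_k^{m}))v}=E_{\overline{q}}=\sum_{\varepsilon}\norm{\e(-\varepsilon s)D(T(q,(\varepsilon)_k^{m}))v}$, so by the equality case of the triangle inequality in $\C^{d}$ all the unit vectors $\e(-\varepsilon s)D(T(q,(\varepsilon)_k^{m}))v$ (for $\varepsilon$ with $\delta(q,(\varepsilon)_k^{m})=\overline{q}$) coincide with one fixed unit vector $u$. Thus $D(T(q,(\varepsilon)_k^{m}))v\in\C u$ for all such $\varepsilon$; by the choice of $m$ the weights $T(q,(\varepsilon)_k^{m})$ over these $\varepsilon$ exhaust $G$, so $D(g)v\in\C u$ for every $g\in G$. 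Taking $g$ to be the identity gives $v\in\C u$, hence $\C v=\C u$ and $D(g)v\in\C v$ for all $g\in G$: the line $\C v$ is a one-dimensional $D(G)$-invariant subspace, which contradicts the irreducibility of $D$ since $\dim D\ge2$. Hence $\rho_q(s)<1$.

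The main obstacle is conceptual rather than computational: one must recognise that the only way the recursion of \cref{le:phi_rec} can fail to contract at scale $m$ is that the state-sum $\sum_{\varepsilon:\delta(q,(\varepsilon)_k^{m})=\overline q}D(T(q,(\varepsilon)_k^{m}))\e(-\varepsilon s)$ saturates its trivial bound for each $\overline q$, and — via \cref{th:full_G} — that this forces all the matrices $D(g)$, $g\in G$, to share a common eigenline; the assumption that $D$ is irreducible of dimension at least $2$ is exactly what excludes this. The remaining ingredients are routine: uniformity in $t$ comes from $1$-periodicity and compactness of $\R/\Z$, uniformity in $r$ and $\alpha$ is built into $\psi$, and passing from the one-step contraction to the exponential bound is a direct iteration. (For the one-dimensional representations $D_{0},\dots,D_{d'-1}$ a common eigenline is automatic, so this argument breaks down; those are one-dimensional and hence fall outside the present lemma, being handled separately.)
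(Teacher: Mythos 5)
Your proof is correct and takes essentially the same route as the paper: both proofs apply the recursion of \cref{le:phi_rec} at a fixed scale $m\ge m_0$, reduce matters to a single strict inequality involving a finite trigonometric sum of the $D(g)$, prove that strict inequality from the equality case of Cauchy--Schwarz/triangle inequality together with the hypotheses that $D$ is irreducible of dimension $\ge 2$ (a saturated bound would force a common eigenline for all $D(g)$), and then upgrade "strict for each $t$'' to "uniformly bounded away from $1$'' via continuity and $1$-periodicity in $t$. The only difference is cosmetic: the paper isolates, for the chosen state $q$, one representative path $\bfw_g$ from $q$ back to $q$ per group element $g\in G$ and bounds the remaining $k^{m_0}-|G|$ terms trivially, whereas you group \emph{all} $k^m$ paths by target state $\overline q$ and exploit \cref{th:full_G} for each $\overline q$ separately; both decompositions lead to the same contraction and the same use of irreducibility.
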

\begin{proof}
  We need to show that there exists $\eta>0$ such that \cref{pr:psi} holds.
  \cref{th:full_G} shows that there exists for every $g \in G$ a path $\bfw_g \in \Sigma^{m_0}$ such that $\delta(q,\bfw_g) = q, T(q,\bfw_g) = g$ holds.
  We find using \cref{le:phi_rec} that
  \begin{align*}
    \norm{\psi_{\lambda,\alpha}^{q}(t,r)}_2 &= \frac{1}{k^{m_0}}\norm{\sum_{\varepsilon < k^{m_0}} D(T(q,(\varepsilon)_k^{m_0}))\e(-\varepsilon (k^{\lambda-m_0}t)) 
      \psi_{\lambda-m_0,\alpha}^{\delta(q,(\varepsilon)_k^{m_0})}(t,r)}_2\\
    &\leq \frac{1}{k^{m_0}} \norm{\sum_{g \in G} D(T(q, \bfw_g)) \e(-[\bfw_g]_k k^{\lambda-m_0}t) 
      \psi_{\lambda-m_0,\alpha}^{q}(t,r)}_2\\
    &\qquad + \frac{1}{k^{m_0}} \sum_{\substack{\varepsilon<k^{m_0}\\ \varepsilon \neq [\bfw_g]_k \forall g \in G}} \norm{D(T(q,(\varepsilon)_k^{m_0})) 
	      \e(-\varepsilon (k^{\lambda - m_0} t))
      \psi_{\lambda-m_0,\alpha}^{\delta(q,(\varepsilon)_k^{m_0})}(t,r)}_2\\
    &\leq \frac{1}{k^{m_0}} \norm{\sum_{g \in G} D(T(q,\bfw_g)) \e(-[\bfw_g]_k k^{\lambda-m_0} t)}_2 \cdot \norm{\psi_{\lambda-m_0,\alpha}^{q}(t,r)}_2\\
    &\qquad + \frac{k^{m_0} - |G|}{k^{m_0}} \max_{q'\in Q} \norm{\psi_{\lambda-m_0,\alpha}^{q'}(t,r)}_2\\
    &\leq \frac{1}{k^{m_0}} \rb{k^{m_0} - |G| + \norm{\sum_{g\in G}D(g) \e(-[\bfw_g]_k k^{\lambda-m_0} t)}_2} \max_{q'\in Q} \norm{\psi_{\lambda-m_0,\alpha}^{q'}(t,r)}_2.
  \end{align*}

  Thus it is sufficient to show that for all $t'\in \R$ holds 
  \begin{align}\label{eq:ineq_d>2}
    \norm{\sum_{g\in G}D(g) \e(-[\bfw_g]_k t')}_2 < |G|.
  \end{align}
  Since the left side is a periodic and continuous function in $t'$, this implies that there exists $\eta'>0$ such that
  \begin{align*}
    \norm{\sum_{g\in G}D(g) \e(-[\bfw_g]_k t')}_2 \leq |G| - \eta'
  \end{align*}
  for all $t'\in \R$.
  This gives in total
  \begin{align*}
    \norm{\psi_{\lambda,\alpha}^{q}(t,r)}_2 \leq \rb{1-\frac{\eta'}{k^{m_0}}} \max_{q'\in Q} \norm{\psi_{\lambda-m_0,\alpha}^{q'}(t,r)}_2
  \end{align*}
  and the statement follows easily as $\eta'$ only depends on $G$ and $D$ (but not on $\lambda,\alpha, t'$ or $r$).\\
  What follows is a variation of the proof of \cite[Lemma 4]{invertible}:
  
  Let us assume -- on the contrary -- that there exists $t' \in \R$ such that 
  \begin{align*}
    \norm{\sum_{g\in G}D(g) \e(-[\bfw_g]_k t')}_2 = |G|.
  \end{align*}
  This holds if and only if there exists $\mathbf{0} \neq \mathbf{y} \in \C^{d}$ such that
  \begin{align*}
    \norm{\sum_{g\in G}D(g) \e(-[\bfw_g]_k t') \mathbf{y}}_2^2 &= \sum_{g_1,g_2\in G} \langle D(g_1)\e(-[\bfw_{g_1}]_k t') \mathbf{y}, D(g_2) \e(-[\bfw_{g_2}]_k t') \mathbf{y} \rangle = |G|^2 \norm{\mathbf{y}}_2^2
  \end{align*}
  However, the Cauchy-Schwarz inequality implies 
  \begin{align}\label{eq:cs}
  \begin{split}
    &\abs{\langle D(g_1)\e(-[\bfw_{g_1}]_k t') \mathbf{y}, D(g_2) \e(-[\bfw_{g_2}]_k t') \mathbf{y} \rangle} \\
	&\qquad \qquad \leq \norm{D(g_1)\e(-[\bfw_{g_1}]_k t') \mathbf{y}}_2^2 \norm{D(g_2) \e(-[\bfw_{g_2}]_k t') \mathbf{y}}_2^2 = \norm{\mathbf{y}}_2^2.
  \end{split}
  \end{align}
  For equality to hold in \eqref{eq:cs} it is necessary that the $D(g_i)\e(-[\bfw_{g_i}]_k t')$ are linearly dependent.
  Since we find for $g_1=g_2 = id$ the summand $\langle \e(-[\bfw_{id}]_k t') \mathbf{y},\e(-[\bfw_{id}]_k t') \mathbf{y} \rangle$ we obtain for all $g\in G$
  \begin{align*}
    D(g) \e(-[\bfw_{g}]_k t') \mathbf{y} = \e(-[\bfw_{id}]_k t') \mathbf{y},
  \end{align*}
  i.e.,~$\mathbf{y}$ is an eigenvector of all $D(g), g \in G$.
  We define $W = span(\mathbf{y})$ and find $D(g) W \subseteq W$ for all $g \in G$.
  This means that $D$ would be reducible which yields a contradiction. 
  
  Therefore \eqref{eq:ineq_d>2} holds, which concludes this proof.
  \end{proof}

\begin{lemma}\label{le:psi_d=1}
  Let $D$ be a one-dimensional, unitary and irreducible representation of $G$ different from $D_{0},\ldots,D_{d'-1}$. Then \cref{pr:psi} holds for $D$.
\end{lemma}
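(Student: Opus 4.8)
The plan is to imitate the proof of \cref{le:psi_d>1}: use the recursion \cref{le:phi_rec} to reduce to a uniform contraction estimate, and then — since the irreducibility argument (no vector is a common eigenvector of all $D(g)$) is vacuous when $\dim D = 1$ — replace it by the arithmetic structure supplied by \cref{th:dk_0}. Write $\chi := D$, a character of the finite group $G$. Since $s_0\colon G\to\Z/d'\Z$ is a surjective homomorphism with kernel $G_0$, the characters of $G$ trivial on $G_0$ are exactly $D_0,\dots,D_{d'-1}$; hence the hypothesis $D\notin\{D_0,\dots,D_{d'-1}\}$ says precisely that $\chi$ is non-trivial on $G_0$.

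First I would fix an integer $m$ (depending only on $A$) large enough that $m\ge\max(m_0,m_0')$, that $\{T(q,\bfw):\bfw\in\Sigma^m,\ \delta(q,\bfw)=\overline q\}=G$ for all $q,\overline q$, and that $d_{g,m}^{q\overline q}=d_{g,2m}^{q\overline q}=d(q,\overline q)$ for all $q,\overline q\in Q$ and $g\in G$ (using \cref{th:full_G}, \cref{le:d_0}, and \cref{le:transducer_regular} to guarantee the relevant paths exist). Applying \cref{le:phi_rec} and grouping the inner sum by $\overline q=\delta(q,\varepsilon)$ gives
\begin{align*}
  \norm{\psi_{\lambda,\alpha}^{q}(t,r)}_2\le\frac{1}{k^m}\sum_{\overline q\in Q}\abs{\sum_{\substack{\varepsilon<k^m\\\delta(q,\varepsilon)=\overline q}}\chi\bigl(T(q,(\varepsilon)_k^m)\bigr)\e(-\varepsilon k^{\lambda-m}t)}\cdot\max_{q'\in Q}\norm{\psi_{\lambda-m,\alpha}^{q'}(t,r)}_2.
\end{align*}
As $\sum_{\overline q}\abs{\{\varepsilon<k^m:\delta(q,\varepsilon)=\overline q\}}=k^m$, it suffices to show that
\begin{align*}
  C_q(t'):=\frac{1}{k^m}\sum_{\overline q\in Q}\abs{\sum_{\substack{\varepsilon<k^m\\\delta(q,\varepsilon)=\overline q}}\chi\bigl(T(q,(\varepsilon)_k^m)\bigr)\e(-\varepsilon t')}<1
\end{align*}
for every $q\in Q$ and $t'\in\R$: since each $C_q$ is continuous and $1$-periodic and $Q$ is finite, this yields $\sup_{q,t'}C_q(t')=1-c$ for some $c>0$, and iterating the displayed inequality $\lfloor\lambda/m\rfloor$ times (with the trivial bound $\norm{\psi_{\lambda,\alpha}^{q}}_2\le1$ when $\lambda<m$) proves \cref{pr:psi} with $\eta=-\log(1-c)/(m\log k)>0$.

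So suppose, for contradiction, that $C_q(t_0')=1$ for some $q\in Q$, $t_0'\in\R$. Then in every block the summands must align, i.e.\ for each $\overline q$ there is $c_{\overline q}\in\U$ with $\chi\bigl(T(q,(\varepsilon)_k^m)\bigr)=c_{\overline q}\,\e(\varepsilon t_0')$ for all $\varepsilon<k^m$ with $\delta(q,\varepsilon)=\overline q$. I would apply this with $\overline q=q$: choosing length-$m$ loops $\bfw_{id},\bfw_g$ at $q$ of weight $id$ and $g$ respectively gives $\chi(g)=\e\bigl(([\bfw_g]_k-[\bfw_{id}]_k)\,t_0'\bigr)$, and consistency of this relation over all such loops (whose $[\cdot]_k$-values differ by multiples of $d_{g,m}^{qq}=d(q,q)=d'\,d''(q,q)$) forces $d'\,d''(q,q)\,t_0'\in\Z$. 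Comparing the two length-$2m$ loops $\bfw_{id}\bfw_g$ and $\bfw_g\bfw_{id}$ (both of weight $g$) via \cref{le:d_0} shows $d(q,q)=d'\,d''(q,q)$ divides $([\bfw_{id}]_k-[\bfw_g]_k)(k^m-1)$; since $\gcd\bigl(d''(q,q),k^m-1\bigr)=1$ (as $d''(q,q)\mid k^{\ell_0}$ and $\gcd(k,k^m-1)=1$), this forces $d''(q,q)\mid[\bfw_g]_k-[\bfw_{id}]_k$, while at the same time $[\bfw_g]_k\equiv s_0(g)\bmod d'$ and $[\bfw_{id}]_k\equiv0\bmod d'$. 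Splitting the denominator of $t_0'$ into its part dividing $d'$ and its coprime part dividing $d''(q,q)$ and feeding in these two congruences (Chinese remaindering), one gets that $\e\bigl(([\bfw_g]_k-[\bfw_{id}]_k)t_0'\bigr)$ depends on $g$ only through $s_0(g)$; hence $\chi$ factors through $s_0$, i.e.\ $\chi$ is trivial on $G_0=\ker s_0$ — contradicting the hypothesis.

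I expect this last paragraph to be the main obstacle. For $\dim D\ge2$ the failure of the contraction directly produces a common eigenvector and thus reducibility; for $\dim D=1$ every vector is an eigenvector, so the contradiction has to be squeezed entirely out of the digit structure — the decomposition $d(q,\overline q)=d'\cdot d''(q,\overline q)$ of \cref{th:dk_0} into a part dividing $k^{d}-1$ (which governs the $D_\ell$) and a part dividing $k^{\ell_0}$. Carrying out this reconciliation cleanly — keeping track of the coset ambiguities $c_{\overline q}$ and of exactly how $t_0'$ is pinned down, uniformly in $q$ — is the delicate step; the loop trick $\bfw_{id}\bfw_g$ versus $\bfw_g\bfw_{id}$ together with the coprimality $\gcd(d''(q,q),k^m-1)=1$ is what makes it work.
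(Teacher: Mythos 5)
Your argument is correct, and it draws on the same raw ingredients as the paper's proof — the recursion of \cref{le:phi_rec}, the existence of length-$m$ loops at $q$ with any prescribed weight (\cref{th:full_G}), the fact that a character outside $\{D_0,\ldots,D_{d'-1}\}$ is nontrivial on $G_0$, and the gcd identity $d_{g,m}^{qq}=d(q,q)=d'\,d''(q,q)$ with $d''(q,q)\mid k^{\ell_0}$ from \cref{le:d_0} and \cref{th:dk_0} — but it organizes them quite differently. The paper builds two separate contraction estimates (\eqref{eq:estimate_1} from the pair $\{\bfw_{id},\bfw_g\}$ with $g\in G_0$, $D(g)\neq1$; \eqref{eq:estimate_2} from a family $\bfw_1,\ldots,\bfw_N$ of weight-$id$ loops whose $[\cdot]_k$-differences realize the gcd $d'd''(q,q)$), averages them, and then resolves by a two-case analysis on whether $d'\,k^{\lambda-m_0'}t\in\Z$: in one case the first term drops to $\abs{1+D(g)}<2$, in the other the second term drops below $N$. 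You instead pack everything into a single contraction factor $C_q(t')$ by grouping \cref{le:phi_rec} over the target state $\delta(q,\varepsilon)$, and extract the contradiction from the full-alignment condition $C_q(t_0')=1$: same-weight loops force $d(q,q)\,t_0'\in\Z$; the loop swap $\bfw_{id}\bfw_g$ versus $\bfw_g\bfw_{id}$ together with $\gcd(d''(q,q),k^m-1)=1$ gives $d''(q,q)\mid[\bfw_g]_k-[\bfw_{id}]_k$; and the CRT split $t_0'\equiv a_1/d'+a_2/d''\pmod\Z$, combined with $[\bfw_g]_k-[\bfw_{id}]_k\equiv s_0(g)\pmod{d'}$, shows $\chi(g)=\e(s_0(g)a_1/d')$, i.e.\ $\chi=D_{a_1}$. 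What the paper's route buys is avoiding the CRT reconciliation entirely — the case split on $d'\,k^{\lambda-m_0'}t$ does that job in a lower-tech way; what yours buys is a more unified single contraction and a crisp dichotomy (``either $\chi\vert_{G_0}\neq1$ or the digit phases misalign''). The steps you flag as delicate are in fact harmless: the coset constants $c_{\overline q}$ never enter because you only evaluate at $\overline q=q$ and normalize against the weight-$id$ loop, and $t_0'$ is pinned modulo $\Z$ by $d(q,q)\,t_0'\in\Z$ alone.
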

\begin{proof}
  
  Our goal is to show that there exists some $\eta'>0$ (only depending on $G$ and $D$) such that
  \begin{align}\label{eq:ineq_d=1}
    \norm{\psi_{\lambda,\alpha}^{q}(t,r)}_2 \leq (1-\eta') \max_{\widetilde{q}\in Q} \norm{\psi_{\lambda-m_0'- \ell_0,\alpha}^{\widetilde{q}}(t,r)}_2
  \end{align}
  holds for all $t\in\R$.
  This implies again \cref{pr:psi} for some $\eta > 0$ as in the proof of the previous case.

We need two different estimates for this step and start to work on the first estimate.\\
  We start by using the properties of $G$, i.e.,~$\{g \in G: s_0(g) = \ell\} = G_0 \cdot (g_0')^{\ell}$, to find restrictions for $D$.\\
  Assume that $D(g) = 1$ for all $g \in G_0$.
  It follows that $D(g_0') = \e(\ell/d')$ for some $\ell < d'$ since $(g_0')^{d'} \in G_0$
  and therefore we see directly that $D = D_{\ell}$.
  Thus we know that there exists $g \in G_0$ such that $D(g) \neq 1$.\\
  We use \cref{le:m_0} to see that there exist $\bfw_{id},\bfw_{g}\in \Sigma^{m_0'}$ such that $\delta(q,\bfw_{id}) = \delta(q,\bfw_{g}) = q$ and 
  $T(q,\bfw_{id}) = id, T(q,\bfw_g) = g$ holds.
  By using \cref{le:phi_rec} (with $m = m_0'$), we find (as in the proof for representations of dimension $\geq 2$)
  \begin{align*}
    \norm{\psi_{\lambda,\alpha}^{q}(t,r)}_2 \leq \frac{1}{k^{m_0'}} &\left \| D(T(q, \bfw_{id})) \e(-[\bfw_{id}]_k k^{\lambda - m_0'} t) 
		\psi_{\lambda - m_0',\alpha}^{\delta(q,\bfw_{id})}(t,r)\right.\\
		& \quad + \left. D(T(q,\bfw_{g})) \e(-[\bfw_{g}]_k k^{\lambda - m_0'} t) 
		\psi_{\lambda - m_0',\alpha}^{\delta(q,\bfw_{g})}(t,r)\right\|_2\\
		& + \frac{k^{m_0'}-2}{k^{m_0'}} \max_{q'\in Q} \norm{\psi_{\lambda-m_0',\alpha}^{q'}(t,r)}_2.
  \end{align*}
  We see that the first term of the right side equals
  \begin{align*}
    &\frac{1}{k^{m_0'}} \norm{1\cdot \e(-[\bfw_{id}]_k k^{\lambda-m_0'}t) + D(g) \cdot \e(-[\bfw_{g}]_k k^{\lambda-m_0'}t)}_2 \norm{\psi_{\lambda-m_0',\alpha}^{q}(t,r)}_2\\
    &\quad = \frac{1}{k^{m_0'}} \norm{1 + D(g) \e(-([\bfw_{g}]_k-[\bfw_{id}]_k) k^{\lambda-m_0'}t)}_2 \norm{\psi_{\lambda-m_0',\alpha}^{q}(t,r)}_2.
  \end{align*}
  
	Now we use \cref{le:phi_rec} again with $m= \ell_0$ to find in total
	{\small
  \begin{align}\label{eq:estimate_1}
    \norm{\psi_{\lambda, \alpha}^{q}(t,r)}_2 \leq \frac{k^{m_0'}-2 + \abs{1+D(g)\e(-([\bfw_{g}]_k-[\bfw_{id}]_k) k^{\lambda-m_0'}t)}}{k^{m_0'}} \max_{\widetilde{q}\in Q} 
	  \norm{\psi_{\lambda-\ell_0-m_0',\alpha}^{\widetilde{q}}(t,r)}_2.
  \end{align}
	}%

    To find the second estimate, we use \cref{le:phi_rec} with $m = \ell_0$ and find
  \begin{align*}
    \norm{\psi_{\lambda,\alpha}^{q}(t,r)}_2 \leq \max_{\widetilde{q}\in Q}\norm{\psi_{\lambda-\ell_0,\alpha}^{\widetilde{q}}(t,r)}_2.
  \end{align*}
  For convenience we assume that the maximal value of the right side is attained at $q$.
    \cref{th:dk_0} shows $d'\cdot d''(q,q) = gcd\{[\bfw_1]_k -[\bfw_2]_k: \bfw_i \in \Sigma^{m_0'} \text{ with } \delta(q,\bfw_i) = q, T(q,\bfw_i) = id\}$.
    Therefore, there exist $N \in \N, \mu_{i,j}\in \Z^{N}\times \Z^{N}$ such that $d'\cdot d''(q,q) = \sum_{i,j<N} \mu_{i,j} ([\bfw_i]_k - [\bfw_j]_k)$ 
    where $\bfw_i,\bfw_j \in \Sigma^{m_0'}$ such that $\delta(q,\bfw_i) = \delta(q,\bfw_j) = q$ and $T(q,\bfw_i) = T(q,\bfw_j) = id$.
    By \cref{le:phi_rec} and the same arguments we used above, we find
    \begin{align*}
      \norm{\psi_{\lambda-\ell_0,\alpha}^{q}(t,r)}_2 &\leq \frac{1}{k^{m_0'}} \norm{\sum_{i < N} D(T(q,\bfw_{i})) \e(-[\bfw_{i}]_k k^{\lambda-\ell_0-m_0'} t) \psi_{\lambda-\ell_0-m_0',\alpha}^{q}(t,r)}_2\\
	&\qquad + \frac{k^{m_0'}-N}{k^{m_0'}} \max_{\widetilde{q} \in Q} \norm{\psi_{\lambda-\ell_0-m_0',\alpha}^{\widetilde{q}}(t,r)}_2\\
	&= \frac{1}{k^{m_0'}} \abs{\sum_{i<N} \e(-[\bfw_{i}]_k k^{\lambda-\ell_0-m_0'}t)} \norm{\psi_{\lambda-\ell_0-m_0',\alpha}^{q}(t,r)}_2\\
	&\qquad + \frac{k^{m_0'}-N}{k^{m_0'}} \max_{\widetilde{q}\in Q} \norm{\psi_{\lambda-\ell_0-m_0',\alpha}^{\widetilde{q}}(t,r)}_2.
    \end{align*}
  Thus, we find in total
  \begin{align}\label{eq:estimate_2}
    \norm{\psi_{\lambda,\alpha}^{q}(t,r)}_2 \leq \frac{k^{m_0'}-N + \abs{\sum_{i<N} \e(-[\bfw_{i}]_k k^{\lambda-\ell_0-m_0'}t)}}{k^{m_0'}} \max_{\widetilde{q}\in Q} 
	\norm{\psi_{\lambda-\ell_0-m_0',\alpha}^{\widetilde{q}}(t,r)}_2.
  \end{align}

  We combine \eqref{eq:estimate_1} and \eqref{eq:estimate_2} and find
	{\small
  \begin{align*}
    \norm{\psi_{\lambda,\alpha}^{q}(t,r)}_2 &\leq \rb{1-\frac{2-\abs{1+D(g)\e(-([\bfw_{g}]_k-[\bfw_{id}]_k) k^{\lambda-m_0'}t)} + N - \abs{\sum_{i<N} \e(-[\bfw_{i}]_k k^{\lambda-\ell_0-m_0'}t)}}{2 k^{m_0'}}}\\
	    &\qquad \qquad \max_{\widetilde{q}\in Q}\norm{\psi_{\lambda-\ell_0-m_0',\alpha}^{\widetilde{q}}(t,r)}_2.
  \end{align*}
	}%

  It remains to show that
  \begin{align}\label{eq:not_equal}
    \abs{1+D(g)\e(-([\bfw_{g}]_k-[\bfw_{id}]_k)k^{\lambda-m_0'}t)} + \abs{\sum_{i<N}\e(-[\bfw_i]_k k^{\lambda-\ell_0-m_0'}t)} < N+2
  \end{align}
  holds for all $t\in \R$ as the left side is a periodic and continuous function in $t$.
  We distinguish the following two cases.\\
  At first let us assume $d't k^{\lambda - m_0'} \in \Z $:\\
  Since $g \in G_0$ we know that $[\bfw_{id}] \equiv [\bfw_{g}] \bmod d'$ and, therefore, the first term of the left side of equation \eqref{eq:not_equal} 
  simplifies to $\abs{1+D(g)}$.
  By the definition of $g$, we find 
  that $\abs{1+D(g)}<2$ and, therefore, equation \eqref{eq:not_equal} holds.\\
  The remaining case is $d't k^{\lambda - m_0'} \notin \Z $:\\
  Let us assume
  \begin{align*}
    \abs{\sum_{i<N} \e(-[\bfw_i]_k k^{\lambda-\ell_0-m_0'} t)} = N.
  \end{align*}
  This implies $([\bfw_{i}]_k - [\bfw_{j}]_k) k^{\lambda-\ell_0-m'_0}t \in \Z$ for all $i,j<N$.
  As $d' \cdot d''(q_0,q_0)$ is a linear combination of $([\bfw_i]_k-[\bfw_j]_k)$, we find $d' \cdot d''(q,q) k^{\lambda-\ell_0-m_0'} t \in \Z$.
  This yields a contradiction since $d''(q,q) | k^{\ell_0}$.
  Therefore, we have
  \begin{align*}
    \abs{\sum_{i<N} \e(-[\bfw_i]_k k^{\lambda-\ell_0 - m_0'} t)} < N
  \end{align*}
  and, consequently, equation \eqref{eq:not_equal} holds, which finishes the proof.
\end{proof}

\subsection{Carry Lemma}\label{sec:carry}
We show in this subsection that the function $f(n) = D(T(q_0,(n+r)_k))$ has the carry property -- uniformly in $r$.
\begin{lemma}
  \cref{def:1} holds -- uniformly in $r$ -- for $f (n) = D(T(q_0,(n+r)_k))$ where $D$ is a unitary and irreducible representation of $G$, $\eta$ is given by \cite[Lemma 2.2]{synchronizing}, i.e., $\eta = \frac{\log(k^{\ell_0}/(k^{\ell_0}-1))}{\log(k^{\ell_0})}$,
  and the implied constant does not depend on $r$.
\end{lemma}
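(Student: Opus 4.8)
The plan is to unfold \cref{def:1} and show that the set of $\ell<k^{\lambda}$ witnessing a ``carry violation'' is small, by splitting it according to two elementary bad events — a carry of $n_2$ propagating too far, and a designated block of digits failing to contain a synchronising word — each of density $O(k^{-\eta'\rho})$. I would write $T=T(q_0,\cdot)$ and rely throughout on the cocycle identity $T(q,\mathbf{u}\mathbf{v})=T(q,\mathbf{u})\cdot T(\delta(q,\mathbf{u}),\mathbf{v})$, which holds by definition of $T$ and which, for any word $\mathbf{u}$ containing a synchronising word of $\mathcal T_A$, gives $\delta(q_0,\mathbf{u})=\delta(q',\mathbf{u})$ for all $q'$. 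Since $\mathcal T_A$ is synchronising a synchronising word $\bfw_q$ exists, and as $b\bfw_q$ is again synchronising for every digit $b$ we may assume the most significant digit of $\bfw_q$ is non-zero. Fixing $r$ and a triple $(\lambda,\alpha,\rho)$ with $\rho<\lambda$, and writing $f(m)=D(T((m+r)_k))$ and (by definition of $f_{\alpha+\rho}$) $f_{\alpha+\rho}(m)=D(T(((m\bmod k^{\alpha+\rho})+r)_k))$, \cref{def:1} asks to bound the number of $\ell<k^{\lambda}$ such that some $(n_1,n_2)\in\{0,\dots,k^{\alpha}-1\}^2$ violates $f(\ell k^{\alpha}+n_1+n_2)^{H}f(\ell k^{\alpha}+n_1)=f_{\alpha+\rho}(\ell k^{\alpha}+n_1+n_2)^{H}f_{\alpha+\rho}(\ell k^{\alpha}+n_1)$. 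Put $M=\ell k^{\alpha}+n_1+r$, $M'=M+n_2$, $\tilde M=((\ell k^{\alpha}+n_1)\bmod k^{\alpha+\rho})+r$, $\tilde M'=((\ell k^{\alpha}+n_1+n_2)\bmod k^{\alpha+\rho})+r$; then $M\equiv\tilde M$ and $M'\equiv\tilde M'\pmod{k^{\alpha+\rho}}$, so $(M)_k$ and $(\tilde M)_k$ share their $\alpha+\rho$ least significant digits, and likewise $M'$ and $\tilde M'$.

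Next I would define the good set. Set $\rho'=\lfloor\rho/2\rfloor$ (if $\rho<2\lvert\bfw_q\rvert$ the claimed bound is trivial, since then $k^{\lambda-\eta\rho}\gg k^{\lambda}$). As $n_1$ runs through $\{0,\dots,k^{\alpha}-1\}$, the integer $L=\lfloor M/k^{\alpha}\rfloor$ takes at most two values, namely $\ell+\lfloor r/k^{\alpha}\rfloor$ and that plus one. Call $\ell$ \emph{good} if for each of these $L$: the length-$(\rho-\rho')$ block of digits of $L$ in positions $\rho',\dots,\rho-1$ is a synchronising word of $\mathcal T_A$, the block in positions $0,\dots,\rho'-1$ is not $(k-1)\cdots(k-1)$, and in addition $\ell\bmod k^{\rho}\ne k^{\rho}-1$. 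The bad $\ell$ then lie in the union of: the single residue class $\ell\equiv k^{\rho}-1\pmod{k^{\rho}}$, of size $k^{\lambda-\rho}$; at most four residue classes mod $k^{\rho'}$ coming from the all-$(k-1)$ condition for the two $L$'s, of total size $\le 4k^{\lambda-\rho'}$; and the $\ell$ for which one of the two $L$'s has a non-synchronising block of length $\rho-\rho'$ at positions $\rho',\dots,\rho-1$, of size at most $2\cdot(\#\{\text{non-synchronising words of length }\rho-\rho'\})\cdot(k^{\lambda-(\rho-\rho')}+k^{\rho-\rho'})$, which by \cite[Lemma 2.2]{synchronizing} is $O(k^{\lambda-\eta'\rho})$ for a suitable $\eta'>0$ (and, with a slightly more careful split, for the $\eta$ in the statement). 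So the bad set has size $O(k^{\lambda-\eta'\rho})$.

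The core step is to check that a good $\ell$ never causes a violation, for any $n_1,n_2$. Since the block of $M$ at positions $\alpha,\dots,\alpha+\rho'-1$ is not all $k-1$, the carry produced by adding $n_2$ to $M$ stops before position $\alpha+\rho'$, so $(M)_k$ and $(M')_k$ agree at all positions $\ge\alpha+\rho'$; using $\ell\bmod k^{\rho}\ne k^{\rho}-1$ to exclude a wrap-around in the reduction mod $k^{\alpha+\rho}$, the same holds for $\tilde M$ and $\tilde M'$. By goodness $(M)_k$ has an occurrence of a synchronising word $\bfw_q$ at positions $[a,a+h)\subseteq[\alpha+\rho',\alpha+\rho)$ (take the topmost such occurrence); as its leading digit is non-zero, $(\tilde M)_k$ — agreeing with $(M)_k$ below position $\alpha+\rho$ — reaches position $a+h-1$ and carries the same occurrence, and so do $(M')_k$ and $(\tilde M')_k$ by the carry control. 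Hence the four expansions read $\mathbf{u}\,\bfw_q\,\mathbf{v}$, $\mathbf{u}\,\bfw_q\,\mathbf{v}'$, $\tilde{\mathbf{u}}\,\bfw_q\,\mathbf{v}$, $\tilde{\mathbf{u}}\,\bfw_q\,\mathbf{v}'$, with $\bfw_q$ synchronising so that $\delta(q_0,\mathbf{u}\bfw_q)=\delta(q_0,\tilde{\mathbf{u}}\bfw_q)=q$ for a fixed $q$. The cocycle identity and unitarity of $D$ then give
\[
 f(\ell k^{\alpha}+n_1+n_2)^{H}f(\ell k^{\alpha}+n_1)=D(T(q,\mathbf{v}'))^{H}D(T(q,\mathbf{v}))=f_{\alpha+\rho}(\ell k^{\alpha}+n_1+n_2)^{H}f_{\alpha+\rho}(\ell k^{\alpha}+n_1),
\]
the prefix factors $D(T(q_0,\mathbf{u}\bfw_q))^{H}D(T(q_0,\mathbf{u}\bfw_q))=I=D(T(q_0,\tilde{\mathbf{u}}\bfw_q))^{H}D(T(q_0,\tilde{\mathbf{u}}\bfw_q))$ cancelling. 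Thus good $\ell$ are never counted and the lemma follows.

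The hard part is purely the carry bookkeeping: one must verify that in the good case the carry from $n_2$ reaches neither the synchronising window nor, after reduction mod $k^{\alpha+\rho}$, disturbs the occurrence of $\bfw_q$, so that the four expansions genuinely share the prefix up to and including $\bfw_q$ and share the tail below it. Conditions (a)–(b) defining the good set, together with the non-zero leading digit of $\bfw_q$, are tailored precisely so that this goes through; everything else — the counting of the bad $\ell$, the value of $\eta'$, and matching the stated $\eta$ via the estimate of \cite[Lemma 2.2]{synchronizing} — is routine.
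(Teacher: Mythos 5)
Structurally your proof is the same as the paper's: identify a synchronizing block sitting between the low digits that the carry from $n_2$ touches and the high digits that the reduction modulo $k^{\alpha+\rho}$ removes, and use the cocycle identity $T(q,\mathbf{u}\mathbf{v}) = T(q,\mathbf{u})\,T(\delta(q,\mathbf{u}),\mathbf{v})$ plus unitarity of $D$ so that the differing prefixes cancel. The substantive difference is the bookkeeping that balances the two bad events against $\rho$. You fix the split at $\rho'=\lfloor\rho/2\rfloor$, so the synchronizing block has length $\approx\rho/2$ and you get a bad set of size $O(k^{\lambda-\eta\rho/2})$; that proves the carry property, but with exponent roughly $\eta/2$, not the $\eta=\frac{\log(k^{\ell_0}/(k^{\ell_0}-1))}{\log(k^{\ell_0})}$ the lemma asserts. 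The paper instead introduces $t:=\max(\nu_k(\ell+r_1+1),\nu_k(\ell+r_1+2))$, the exact depth to which the carry can reach, requires the block at positions $t,\dots,\rho-1$ (length $\rho-t$) to be synchronizing, and sums a geometric series $\sum_{t\le\rho}k^{\lambda-\rho}\cdot k^{(\rho-t)(1-\eta)}\ll k^{\lambda-\eta\rho}$. That is not a ``slightly more careful split'' --- no fixed split recovers the stated $\eta$, and the summation over $t$ is precisely the extra idea separating your argument from the paper's. Since the downstream theorems only need some positive exponent this is not fatal, but it does mean you prove a weaker statement than the lemma claims.

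A smaller but real gap: your condition (a) only asks that the block of $L$ at positions $\rho',\dots,\rho-1$ \emph{be} a synchronizing word, and you then rely on ``take the topmost occurrence; its leading digit is non-zero''. But a synchronizing word $0^{j}\bfw'$ need not contain any synchronizing occurrence with non-zero leading digit (stripping leading zeros from a synchronizing word does not preserve synchronization), so that step is unjustified. It also matters: whenever the digit of $L$ at position $\rho-1$ is $0$ --- a positive, non-vanishing proportion of the $\ell$ --- the number $\tilde M$ really does have fewer than $\alpha+\rho$ digits and the block is truncated in $(\tilde M)_k$. The clean fix, consistent with the $\bfw_q$ you introduce at the start, is to change (a) to ``the block \emph{contains} the fixed synchronizing word $\bfw_q$ (chosen with non-zero most significant digit) as a subword''; the proportion of blocks failing this is of the same order as in Lemma 2.2, and the topmost occurrence of $\bfw_q$ then has a genuinely non-zero top digit, forcing $\tilde M\ge k^{a+h-1}$ so that $(\tilde M)_k$ contains it and the rest of your cancellation goes through.
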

\begin{proof}
  We fix $\lambda, \rho$ and $\alpha$. We rewrite $r = r_1 k^{\alpha} + r_2$ where $r_1,r_2 \in \N, r_2 <k^{\alpha}$.
  We want to distinguish the form of $\ell + r_1$, i.e.,~the maximal number $t$ such that the digits at position $\alpha,\ldots,\alpha+t$ 
  of $(\ell+r_1) k^{\alpha} + n_1 + n_2 +r_2$ can be affected by the carry of $n_1+n_2+r_2$.
  As $n_1+n_2+r_2\leq 3k^{\alpha}-3$ limits the carry to $0,1,$ or $2$, we define $t:= \max (\nu_k(\ell+r_1+1),\nu_k(\ell+r_1+2))$.
  Thus we know that the digits at position $\alpha + t +i$ of $(\ell k^{\alpha} + n_1 + n_2 + r)$ and $(\ell k^{\alpha} + n_1 + r)$ coincide for $i = 1,2,\ldots$ 
  and are equal to the digits at position $t + i$ of $\ell+r_1$.
  We fix $t$ such that $t<\rho$ and count the number of integers $\ell$ such that \eqref{eq:carry_violation} holds for some $n_1,n_2$.
  As the terms corresponding to the digits of index $\alpha + t + i$ cancel for $i\geq 1$, we find
  \begin{align*}
    &D(T(q_0,(\ell k^{\alpha} + n_1 + n_2 + r)_k))^{H} D(T(q_0,(\ell k^{\alpha} + n_1 + r)_k)) \\
    &\quad = D(T(\delta(q_0,(\floor{(\ell + r_1)/k^{t}})_k),(\ell k^{\alpha} + n_1 + n_2 + r)_k^{\alpha+t}))^{H} \\
    &\qquad \qquad D(T(\delta(q_0,(\floor{(\ell + r_1)/k^{t}})_k),(\ell k^{\alpha} + n_1 + r)_k^{\alpha + t}))
  \end{align*}
  and
  \begin{align*}
    &D(T_{\alpha + \rho}(q_0,(\ell k^{\alpha} + n_1 + n_2 + r)_k))^{H} D(T_{\alpha + \rho}(q_0,(\ell k^{\alpha} + n_1 + r)_k)) \\
    &\quad = D(T(\delta(q_0,(\floor{(\ell + r_1)/k^{t}})_k^{\rho-t},(\ell k^{\alpha} + n_1 + n_2 + r)_k^{\alpha+t}))^{H} \\
    &\qquad \qquad D(T(\delta(q_0,(\floor{(\ell + r_1)/k^{t}})_k^{\rho-t}),(\ell k^{\alpha} + n_1 + r)_k^{\alpha + t})).
  \end{align*}
  Thus, the number of integers $\ell$ (with fixed $t$ and $r$) for which $\eqref{eq:carry_violation}$ holds for some $n_1,n_2$ can be bounded by the number
  of integers $\ell$ such that $((\ell+r_1)/k^{t})_k^{\rho-t}$ is not synchronizing -- 
  otherwise $\delta(q_0,(\floor{(\ell+r_1)/k^{t}})_k) = \delta(q_0,(\floor{(\ell+r_1)/k^{t}})_k^{\rho-t})$ would imply that \eqref{eq:carry_violation} does not hold.
  By \cite[Lemma 2.2]{synchronizing}, we know that this number is bounded by $O(k^{(\rho-t)(1-\eta)} k^{\lambda-\rho})$ for some $\eta >0$ where the constant only depends on $k$ and $A$.
  Note that there are only two possibilities for the digits with index $0,\ldots,t-1$.
  By summing over $t$ we find that the number of such $\ell$ is bounded by
  \begin{align*}
    c \sum_{t\leq \rho} k^{\lambda-\rho} k^{(\rho-t)(1-\eta)} + k^{\lambda-\rho} &\leq c k^{\lambda-\eta \rho} \sum_{t\leq \rho} k^{-t(1-\eta)} + k^{\lambda-\rho}\\
    &\leq c k^{\lambda- \eta \rho} \frac{1}{1-k^{-(1-\eta)}} + k^{\lambda- \rho}
  \end{align*}
  and the result follows easily as $c$ does not depend on $\lambda,\rho, \alpha$ and $r$.
\end{proof}

\subsection{Proof of \cref{pr:rep_estimate} and \cref{pr:prime_Fourier}}
We are now ready to show \cref{pr:rep_estimate} and, thereafter, \cref{pr:prime_Fourier}.

The proof for \cref{pr:rep_estimate} differs when $D = D_{\ell}$ and we start by considering this specific case.

\begin{lemma}\label{le:D_l_mue}
  For all $\ell < d'$, $\lambda_1,\lambda_2,r \in \N$ and $b < k^{\lambda_1},m<k^{\lambda_2}$ we have 
  \begin{align*}
    \norm{\sum_{\substack{n<N\\n\equiv m \bmod k^{\lambda_2}}} \chi_{k^{-\lambda_1}}\rb{\frac{n+r-bk^{\nu-\lambda_1}}{k^{\nu}}} D_{\ell} (T(q_0,(n+r)_k)) \mu(n)}_F = o(N)
  \end{align*}
  uniformly in $r\in \N$.
\end{lemma}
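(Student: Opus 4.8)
The plan is to exploit the very explicit nature of the special representations $D_\ell$. By the identity recorded after \cref{le:Dl} (namely $D_\ell(T(q_0,\bfw)) = \e(\ell[\bfw]_k/d')$ for all $\bfw\in\Sigma^{*}$), we have $D_\ell(T(q_0,(n+r)_k)) = \e(\ell(n+r)/d')$. In particular $D_\ell$ is one-dimensional, so the Frobenius norm is just the modulus, and the factor $\e(\ell r/d')$ is a unimodular constant that pulls out of the sum. Hence it suffices to prove
\begin{align*}
  \sum_{\substack{n<N\\ n\equiv m \bmod k^{\lambda_2}}} \chi_{k^{-\lambda_1}}\rb{\frac{n+r-bk^{\nu-\lambda_1}}{k^{\nu}}}\,\e\rb{\frac{\ell n}{d'}}\,\mu(n) = o(N)
\end{align*}
uniformly in $r\in\N$.

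First I would unwind the cutoff. Since $b<k^{\lambda_1}$, the condition $\chi_{k^{-\lambda_1}}((n+r-bk^{\nu-\lambda_1})/k^{\nu})=1$ is equivalent to $(n+r)\bmod k^{\nu}\in[bk^{\nu-\lambda_1},(b+1)k^{\nu-\lambda_1})$. As $n$ runs over $[0,N)$ with $N<k^{\nu}$, the integer $n+r$ runs over an interval of length $<k^{\nu}$, which meets at most two consecutive blocks $[Qk^{\nu},(Q+1)k^{\nu})$; inside each block the above condition singles out one subinterval. Consequently the admissible values of $n$ form a union $J$ of at most two intervals contained in $[0,N)$ — their exact location depends on $r,b,\nu$ but nothing else does. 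Thus the sum in question equals $\sum_{n\in J,\ n\equiv m\bmod k^{\lambda_2}}\e(\ell n/d')\,\mu(n)$.

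Next I would reduce to a classical M\"obius cancellation estimate. Expanding the congruence condition by additive characters, $\mathbf{1}_{[n\equiv m\bmod k^{\lambda_2}]} = k^{-\lambda_2}\sum_{j<k^{\lambda_2}}\e(j(n-m)/k^{\lambda_2})$, the sum becomes
\begin{align*}
  \frac{1}{k^{\lambda_2}}\sum_{j<k^{\lambda_2}}\e\rb{\frac{-jm}{k^{\lambda_2}}}\sum_{n\in J}\mu(n)\,\e\rb{n\rb{\frac{\ell}{d'}+\frac{j}{k^{\lambda_2}}}},
\end{align*}
i.e.\ a fixed finite combination, with $k^{\lambda_2}$ unimodular coefficients, of sums $\sum_{n\in J}\mu(n)\e(n\theta)$. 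By Davenport's estimate $\sum_{n\le x}\mu(n)\e(n\theta)\ll_A x(\log x)^{-A}$ uniformly in $\theta\in\R$ (equivalently one may use the prime number theorem for $\mu$ along arithmetic progressions, since $\theta$ here is rational with denominator dividing the fixed integer $k^{\lambda_2}d'$, which is coprime to $k$ because $d'\mid k^{d}-1$); writing $J$ as a difference of two initial segments $[0,x_1),[0,x_2)$ with $x_1,x_2\le N$ and bounding trivially when $x_i\le\sqrt N$, one gets $\sum_{n\in J}\mu(n)\e(n\theta)=o(N)$ uniformly in $\theta$ and in the location of $J$. Since $\lambda_1,\lambda_2,d',\ell,b,m$ are all fixed, while $r$ enters only through the two subintervals $J\subseteq[0,N)$ and the outer unimodular factor $\e(\ell r/d')$, the whole expression is $o(N)$ uniformly in $r$, as claimed.

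The only delicate point is the uniformity in $r$: one must observe that although $n+r$ can be arbitrarily large, the summation variable $n$ still lies in $[0,N)$, so the cutoff merely restricts it to at most two genuine subintervals of $[0,N)$ and no loss of uniformity occurs. Everything else is the standard fact that $\mu$ exhibits cancellation along fixed arithmetic progressions (or against fixed rational additive characters), so there is no substantial obstacle here beyond this bookkeeping.
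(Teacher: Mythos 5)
Your proof is correct and follows essentially the same route as the paper: both use the identity $D_\ell(T(q_0,(n)_k))=\e(\ell n/d')$ to collapse the matrix sum to a twisted M\"obius sum, observe that the cutoff $\chi_{k^{-\lambda_1}}$ confines $n$ to at most two intervals in $[0,N)$, and invoke classical cancellation of $\mu$ over fixed arithmetic progressions; you dualize by expanding $\ind_{[n\equiv m\ (k^{\lambda_2})]}$ in additive characters, whereas the paper splits directly into residue classes mod $k^{\lambda_2}d'$, but these are Fourier-dual phrasings of the same reduction. One small inaccuracy: the parenthetical claim that $k^{\lambda_2}d'$ is coprime to $k$ is false (only $d'$ is, since $d'\mid k^d-1$), though this is harmless because $\sum_{n<N,\ n\equiv a\ (q)}\mu(n)=o(N)$ holds for every fixed modulus $q$ and your main invocation of Davenport's estimate is uniform in $\theta$ regardless.
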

\begin{proof}
  We find by the characterizing property of $D_{\ell}$ (i.e.,~$D_{\ell}(T(q_0,(n)_k)) = \e(\ell n/d')$)
  \begin{align*}
    &\norm{\sum_{\substack{n<N\\n\equiv m \bmod k^{\lambda_2}}} \chi_{k^{-\lambda_1}}\rb{\frac{n+r-bk^{\nu-\lambda_1}}{k^{\nu}}} D_{\ell}(T(q_0,(n+r)_k)) \mu(n)}_F\\
    &\qquad = \abs{\sum_{\substack{n<N\\n\equiv m \bmod k^{\lambda_2}}} \chi_{k^{-\lambda_1}}\rb{\frac{n+r-bk^{\nu-\lambda_1}}{k^{\nu}}} \e\rb{(n+r)\frac{\ell}{d'}} \mu(n)}\\
    &\qquad \leq \sum_{\substack{m'<k^{\lambda_2}d'\\m'\equiv m\bmod k^{\lambda_2}}} \abs{\e\rb{m'\frac{\ell}{d'}}}
	\abs{\sum_{n<N} \chi_{k^{-\lambda_1}}\rb{\frac{n+r-bk^{\nu-\lambda_1}}{k^{\nu}}} \ind_{[n+r\equiv m' \bmod k^{\lambda_2}d']} \mu(n)}.
  \end{align*}
  As
  \begin{align*}
    \sum_{n<N} \chi_{k^{-\lambda_1}}\rb{\frac{n+r-bk^{\nu-\lambda_1}}{k^{\nu}}}
  \end{align*}
  is indeed a sum over at most two intervals included in $[1,\ldots,N-1]$, the result follows from the well-known result
  \begin{align*}
    \sum_{\substack{n<N\\n\equiv r \bmod s}} \mu(n) = o(N).
  \end{align*}
\end{proof}

\begin{proof}[Proof of \cref{pr:rep_estimate}:]
  For $D = D_{\ell}$ \cref{le:D_l_mue} gives the desired result.
  Suppose from now on $D \neq D_{\ell}$ for all $\ell <d'$.
  We rewrite the left side of \eqref{eq:goal} using exponential sums and obtain using Vaaler-approximation (see for example \cite[Lemma 1]{mauduit_rivat_rs})
  {\small
  \begin{align*}
    &\norm{\sum_{n<N} \frac{1}{k^{\lambda_2}} \sum_{h<k^{\lambda_2}} \e\rb{h\frac{n-m}{k^{\lambda_2}}} \sum_{n<N} \chi_{k^{-\lambda_1}}\rb{\frac{n+r-bk^{\nu-\lambda_1}}{k^{\nu}}}
	D(T(n+r)) \mu(n)}_F\\
    &\quad \leq \frac{1}{k^{\lambda_2}} \sum_{h<k^{\lambda_2}} \norm{\sum_{n<N} \e\rb{n\frac{h}{k^{\lambda_2}}} A_{k^{-\lambda_1},H}\rb{\frac{n+r-bk^{\nu-\lambda_1}}{k^{\nu}}} D(T(n+r)) \mu(n)}_F\\
    &\qquad + \frac{1}{k^{\lambda_2}} \sum_{h<k^{\lambda_2}} \sum_{n<N} B_{k^{-\lambda_1},H}\rb{\frac{n+r-bk^{\nu-\lambda_1}}{k^{\nu}}} \norm{\e\rb{n\frac{h}{k^{\lambda_2}}} 
    D(T(n+r))\mu(n)}_F\\
    &\quad \ll \frac{1}{k^{\lambda_2}} \sum_{h<k^{\lambda_2}} \norm{\sum_{n<N} \e\rb{n\frac{h}{k^{\lambda_2}}} \sum_{\abs{h'}\leq H} a_{h'}(k^{-\lambda_1},H) 
	\e\rb{h'\frac{n+r-bk^{\nu-\lambda_1}}{k^{\nu}}} D(T(n+r)) \mu(n)}_F\\
    &\qquad + \sum_{n<N} \sum_{\abs{h'}\leq H} b_{h'}(k^{-\lambda_1},H) 
	\e\rb{h'\frac{n+r-bk^{\nu-\lambda_1}}{k^{\nu}}},
  \end{align*}
  }%
  where the implied constant only depends on the dimension of $D$ and $\abs{a_{h'}(k^{-\lambda_1},H)} \leq \min\rb{k^{-\lambda_1}, \frac{1}{\pi \abs{h'}}}, 
      \abs{b_{h'}(k^{-\lambda_1},H)} \leq \frac{1}{H+1}$ holds. We assume that $k^{\lambda_1} \leq H \leq k^{\nu}$.
      
  Thus, the first summand above is bounded by
  \begin{align*}
    &\frac{1}{k^{\lambda_2}} \sum_{h<k^{\lambda_2}} \sum_{\abs{h'}< H} \abs{a_{h'}(k^{-\lambda_1},H)} 
	\norm{\sum_{n<N} \e\rb{n \rb{\frac{h}{k^{\lambda_2}} + \frac{h'}{k^{\nu}}}} D(T(n+r)) \mu(n)}_F\\
    &\qquad \ll \abs{\sum_{h'<k^{\lambda_1}} k^{-\lambda_1} + \sum_{k^{\lambda_1}<h'<H} \frac{1}{h'}} \sup_{\theta \in \R} \norm{\sum_{n<N} \e(\theta n) D(T(n+r)) \mu(n)}_F\\
    &\qquad \ll (1+\log(H)) \sup_{\theta \in \R} \norm{\sum_{n<N} \e(\theta n) D(T(n+r)) \mu(n)}_F
  \end{align*}
  However, as $D(T(n+r))$ fulfills \cref{def:1} and \cref{def:2} uniformly in $r$, we can apply \cref{thm:mobius}.
  This gives for some $\eta'>0$ the asymptotic estimate $(1+\log(H)) N^{1-\eta'}$.

  The second summand is positive and can be estimated by classical estimates involving geometric series (see for example \cite{mauduit_rivat_rs}).
  \begin{align*}
    &\sum_{\abs{h'}\leq H} b_{h'}(k^{-\lambda_1},H) 
	\sum_{n<N} \e\rb{h'\frac{n+r-bk^{\nu-\lambda_1}}{k^{\nu}}} \\
    &\quad \leq  \sum_{\abs{h'}\leq H} \frac{1}{H} \abs{\sum_{n<N} \e\rb{h'\frac{n}{k^{\nu}}}}\\
    &\quad \leq  \frac{1}{H} \sum_{\abs{h'}\leq k^{\nu}} \min\rb{N, \abs{\sin(\pi h' k^{-\nu})}}\\
    &\quad \ll \frac{1}{H} (N + k^{\nu} \log(k^{\nu}))\\
    &\quad \ll \frac{N \log(N)}{H},
  \end{align*}
  where the constant is absolute.
  
  Thus choosing for example $H = \floor{N^{1/2}}$ finishes the proof.
\end{proof}

\begin{proof}[Proof of \cref{pr:prime_Fourier}:]
By classical partial summation one finds the following well-known result (see for example \cite[Lemma 11]{mauduit_rivat_2010}).

If $f:\N \to \C$ is such that $\abs{f(n)}\leq 1$ for all $n \in \N$ then
\begin{align}
  \abs{\sum_{p\leq x} f(p)} \leq \frac{2}{\log x} \max_{t\leq x} \abs{\sum_{n\leq t} \Lambda(n) f(n)} + O(\sqrt{x}).
\end{align}
This result can easily be adapted to matrix valued functions.
%TODO????
Thus we find
\begin{align*}
  \norm{\frac{1}{\pi(k^{\nu})} \sum_{p<k^{\nu}} D(T(q_0,(p)_k)) \e(pt)}_F &\ll \frac{1}{\log k^{\nu}} \frac{1}{\pi(k^{\nu})} \max_{t\leq k^{\nu}} \norm{\sum_{n\leq t} \Lambda(n) D(T(q_0,(n)_k)) \e(nt)}_{F}\\
	&\qquad + O(k^{-\nu/2})
\end{align*}

As $D(T(n))$ fulfills \cref{def:1} and \cref{def:2}, we can apply \cref{thm:prime} and find that the right side is asymptotically bounded by
\begin{align*}
  \max_{t\leq k^{\nu}} t^{-\eta'} + O(k^{-\nu/2})
\end{align*}
for some $\eta'>0$ -- if we only choose $\eta'$ strictly smaller than $\eta$ to remove the other terms.
This gives directly the desired result.
\end{proof}

\section{Technical Results}\label{sec:technical}
In this section, we will provide the proofs for some results mentioned earlier, where the proof is technical and seemed to disturb the flow of reading.
\subsection{Number of words with special properties}
The first result stated that some properties occur ``often'' in the digital representation of numbers.
We considered an automaton with final components $Q'_i$.
Furthermore, we let $\bfw_i$ denote the synchronizing word for the naturally induced transducer corresponding to the automaton that is the restriction of the original automaton to $Q'_i$.
\setDens*
\begin{proof}
  One sees easily that it suffices to show that
  \begin{align*}
    \lim_{\nu\to\infty} \frac{M \cap [0,k^{2\nu})}{k^{2\nu}} = 1
  \end{align*}
  holds.
	Without loss of generality we assume that $\bfw_1$ has no leading zeros.
  We define $M_1 := \{n \in \N| \forall q' \in Q': \delta'(q',(n)_k) \in \cup Q'_i\}$, $M_{2,\nu} := \{n<k^{\nu}| \bfw_1\bfw_2\ldots\bfw_{\ell} \text{ is a subword of } (n)_k^{\nu}\}$ and $M_2 := \lim_{\nu\to\infty} M_{2,\nu}$.
	The idea is to show that both sets have density one.
  
  Noting that $\abs{(M \cap [0,k^{2 \nu}))} \geq \abs{ (M_1 \cap [0,k^{\nu}))} \cdot \abs{(M_{2,\nu})}$ gives then immediately the desired result.
	
	Thus, it just remains to show that $M_1$ and $M_2$ have density $1$.\\
  Therefore we define $M_{1,q'}:= \{n \in \N| \delta'(q',(n)_k) \in \cup Q'_i\}$ and see that $M_1 = \cap M_{1,q'}$.
  Thus it suffices to show that $M_{1,q'}$ has density $1$.
  
  One finds easily that for each $q'_1 \in Q'$ there exists $\bfw_{q'_1}\in\{0,\ldots,k-1\}^{*}$ such that $\delta'(q'_1,\bfw_{q'_1})$ belongs to $Q'_i$ for some $i$.
  We take $m$ to be the maximum of these lengths. As each $Q'_i$ is closed under $\delta'$ we may assume that each of these paths $\bfw_{q'}$ is exactly of length $m$.
  We split $Q'$ into two different parts, i.e.,~$\overline{Q'} = \cup Q_i, \widetilde{Q'} = Q'\setminus \overline{Q'}$.
  We show that 
  \begin{align}\label{eq:number_paths_strongly_connected}
    \abs{ \{\bfw \in \{0,\ldots,k-1\}^{n}: \delta'(q',\bfw) \in \widetilde{Q'}\}} \leq k^{m} k^{n(1-\eta)}
  \end{align}
  where $\eta$ is defined by
  $\eta = 1- \log(k^{m}-1)/\log(k^{m})>0$ so that
  \begin{align*}
    k^{m}-1 = k^{m(1-\eta)}.
  \end{align*}
  We show \eqref{eq:number_paths_strongly_connected} by induction on $n$.
  The statement is trivial for $n\leq m$.
  We find for $n > m$ (as $\overline{Q'}$ is closed under $\delta'$)
  \begin{align*}
    &| \{\bfw \in \Sigma^{n}: \delta'(q',\bfw)\in \widetilde{Q'}\}| = \\
      &\qquad =\sum_{q'_1 \in \widetilde{Q'}} |\{\bfw_1 \in \Sigma^{n-m}: \delta'(q',\bfw) = q'_1\}|\cdot
	    |\{\bfw_2 \in \Sigma^{m}: \delta'(q'_1,\bfw_2)\in \widetilde{Q'}\}|\\
      &\qquad \leq \sum_{q'_1 \in \widetilde{Q'}} |\{\bfw_1 \in \Sigma^{n-m}: \delta'(q',\bfw) = q'_1\}|\cdot (k^{m}-1)\\
      &\qquad = |\{\bfw \in \Sigma^{n-m}: \delta'(q',\bfw) \in \widetilde{Q'}\}|\cdot k^{m(1-\eta)} \leq k^{m} k^{(n-m)(1-\eta)} k^{m(1-\eta)}.
  \end{align*}
	
	\cite[Lemma 2.2]{synchronizing} shows 
  \begin{align*}
    \lim_{\nu \to \infty} \frac{M_{2,\nu}}{k^{\nu}} = 1,
  \end{align*}
  which finishes the proof.
\end{proof}

\subsection{Limiting distribution of $T$ along primes}\label{subsec:dist_T_primes}
The following proof shows that\\ 
$(T(q_0,(p)_k))_{p\in \mathcal{P}(a,k^{\lambda})}$ has a limiting distribution.

Therefore, we need the following very important result from representation theory.
\begin{lemma}
  Let $G$ be a compact group and $\boldsymbol{\nu}$ a regular normed Borel measure in $G$. Then a sequence $(x_n)_{n \geq 0}$ is $\boldsymbol{\nu}$-uniformly distributed
  in $G$, i.e., $\frac{1}{N} \sum_{n < N}\delta_{x_n} \rightarrow \boldsymbol{\nu}$, if and only if 
  \begin{align}\label{eq:rep_eq}
    \lim_{N\rightarrow \infty}\frac{1}{N} \sum_{n<N} D(x_n) = \int_{G} D d\boldsymbol{\nu}
  \end{align}
  holds for all irreducible unitary representations $D$ of $G$.
\end{lemma}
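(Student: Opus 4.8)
The plan is to prove both implications by passing through the weak-$*$ description of $\boldsymbol{\nu}$-uniform distribution: the sequence $(x_n)$ is $\boldsymbol{\nu}$-uniformly distributed in $G$ if and only if $\frac{1}{N}\sum_{n<N}f(x_n)\to\int_G f\,d\boldsymbol{\nu}$ for every $f\in C(G)$. The forward direction is then essentially immediate. Assuming $(x_n)$ is $\boldsymbol{\nu}$-uniformly distributed and letting $D:G\to U_d$ be an irreducible unitary representation, each of the $d^2$ matrix entries $g\mapsto d_{ij}(g)$ is a continuous function on $G$ (since $D$ is continuous), so applying the hypothesis to each coordinate function gives $\frac{1}{N}\sum_{n<N}d_{ij}(x_n)\to\int_G d_{ij}\,d\boldsymbol{\nu}$, i.e. $\frac{1}{N}\sum_{n<N}D(x_n)\to\int_G D\,d\boldsymbol{\nu}$, where the integral of the matrix-valued function is understood entrywise. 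This establishes \eqref{eq:rep_eq}.

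For the converse I would assume \eqref{eq:rep_eq} for all irreducible unitary representations $D$ and deduce $\frac{1}{N}\sum_{n<N}f(x_n)\to\int_G f\,d\boldsymbol{\nu}$ for arbitrary $f\in C(G)$. The key input is the Peter-Weyl theorem, already used in \cref{sec:RS1}: the linear span $\mathcal{A}$ of all matrix coefficients $g\mapsto d_{ij}^{(m)}(g)$ of irreducible unitary representations of $G$ is a subalgebra of $C(G)$ which is closed under complex conjugation (the conjugate of a matrix coefficient of $D$ is one of $\overline{D}$), contains the constants (as the matrix coefficient of the trivial representation), and separates the points of $G$; hence by the Stone-Weierstrass theorem $\mathcal{A}$ is dense in $C(G)$ for the uniform norm. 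Given $f\in C(G)$ and $\varepsilon>0$, I would pick $g\in\mathcal{A}$ with $\|f-g\|_\infty<\varepsilon$. By linearity, \eqref{eq:rep_eq} applied to the representations occurring in $g$ gives $\frac{1}{N}\sum_{n<N}g(x_n)\to\int_G g\,d\boldsymbol{\nu}$, and a standard three-$\varepsilon$ estimate, bounding $\frac{1}{N}\sum_{n<N}|f(x_n)-g(x_n)|\le\varepsilon$ and $\bigl|\int_G(f-g)\,d\boldsymbol{\nu}\bigr|\le\varepsilon\,\boldsymbol{\nu}(G)=\varepsilon$, yields $\limsup_{N\to\infty}\bigl|\frac{1}{N}\sum_{n<N}f(x_n)-\int_G f\,d\boldsymbol{\nu}\bigr|\le 2\varepsilon$. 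Letting $\varepsilon\to0$ completes the proof.

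The only non-elementary ingredient is the Peter-Weyl / Stone-Weierstrass density of matrix coefficients in $C(G)$; once this is granted the argument is routine, so I do not expect a genuine obstacle. I would nevertheless phrase everything for a general compact group $G$ to match the lemma as stated, remarking that in the intended application $G$ is finite, $C(G)$ is finite-dimensional, and the density is trivial since the matrix coefficients already span all of $C(G)$ by the classical orthogonality relations.
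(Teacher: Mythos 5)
Your proof is correct, and it is the standard argument (Weyl's criterion for compact groups via Peter--Weyl and Stone--Weierstrass), but note that the paper does not actually prove this lemma: it simply cites a reference (\cite{representations}). So there is no in-paper proof to compare against; your write-up is exactly what such a reference supplies, and in the paper's actual use case $G$ is finite so the density step is, as you observe, trivial because the matrix coefficients already form an orthonormal basis of $L^2(G) = C(G)$.

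One small point worth making explicit if you were to include this in the text: the closure of $\mathcal{A}$ under products relies on the fact that the pointwise product of two matrix coefficients is a matrix coefficient of a tensor product of representations, which (by complete reducibility for compact groups) decomposes into irreducibles, so the product again lies in $\mathcal{A}$. You state that $\mathcal{A}$ is a subalgebra without giving this reason; it is standard, but it is the one non-obvious ingredient in checking the Stone--Weierstrass hypotheses.
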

A proof is given for example in \cite{representations}.

\distG*
\begin{proof}
The sequence we are concerned with is $(T(q_0,(p)_k))_{p \in \mathcal{P}(a,k^{\lambda})}$.

We start by computing the left side of \eqref{eq:rep_eq} for $D = D_{\ell}$ for our specific sequence. Recall that $d'|k-1$ holds.

\begin{align*}
  &\lim_{x\rightarrow \infty}\frac{1}{\pi(x;a,k^{\lambda})} \sum_{\substack{p<x\\p\equiv a \bmod k^{\lambda}}} D_{\ell}(T(q_0,(p)_k))\\
     &\quad = \lim_{x\rightarrow \infty}\frac{1}{\pi(x;a,k^{\lambda})} \sum_{\substack{p<x\\p\equiv a \bmod k^{\lambda}}} \e(\ell p/d') \\
     &\quad = \lim_{x\rightarrow \infty}\frac{1}{\pi(x;a,k^{\lambda})} \sum_{b<d'} \e(\ell b/d') \sum_{\substack{p<x\\p\equiv a \bmod k^{\lambda}\\ p \equiv b \bmod d'}} 1\\
     &\quad = \frac{1}{\varphi(d')} \sum_{\substack{b<d'\\(b,d')=1}}  \e(\ell b/d')\\
     &\quad = \frac{1}{\varphi(d')} S(\ell,0;d').
\end{align*}
Here $S(a,b;c)$ denotes the Kloosterman (or Ramanujan) sum, defined by
\begin{align*}
  S(a,b;c) := \sum_{\substack{x<c\\(x,c) = 1}} \e\rb{\frac{ax + b\bar{x}}{c}},
\end{align*}
where $\bar{x}$ denotes the multiplicative inverse of $x$ modulo $c$.

Furthermore, we want to show that 
{\small
\begin{align*}
  \lim_{x\rightarrow \infty}\frac{1}{\pi(x;a,k^{\lambda})} \sum_{\substack{p<x\\p\equiv a \bmod k^{\lambda}}} D(T(q_0,(p)_k)) = 0
\end{align*}
}%
holds for $D\notin \{D_{\ell}: \ell = 0,\ldots,d'-1\}$.
We use a standard technique of analytic number theory to restrict ourself to sums with $x = k^{\nu}$.
Let $\nu$ be the unique integer such that $k^{\nu-1}\leq x < k^{\nu}$. We find (see for example~\cite{GrahamKolesnik})
{\small
\begin{align*}
  \norm{\frac{1}{\pi(x;a,k^{\lambda})}\sum_{\substack{p\leq x\\ p\equiv a \bmod k^{\lambda}}} D(T(q_0,(p)_{k}))}_F \ll 
  \nu \sup_{\theta \in \R} \norm{\frac{1}{\pi(x;a,k^{\lambda})}\sum_{\substack{p\leq k^{\nu}\\ p\equiv a \bmod k^{\lambda}}}\e(\theta p) D(T(q_0,(p)_{k}))}_F .
\end{align*}
}%

We can rewrite the condition $p \equiv a \bmod k^{\lambda}$ using exponential sums and find that the right side is bounded by
\begin{align*}
  &\nu \frac{1}{\pi(k^{\nu-1};a,k^{\lambda})} \sup_{\theta \in \R} \norm{\frac{1}{k^{\lambda}} \sum_{h<k^{\lambda}} \sum_{p\leq k^{\nu}} \e\rb{h\frac{p-a}{k^{\lambda}}} \e(\theta p) D(T(q_0,(p)_{k}))}_F\\
  &\qquad \leq \nu \frac{1}{\pi(k^{\nu-1};a,k^{\lambda})} \sup_{\theta \in \R} \max_{h<k^{\lambda}} \norm{\sum_{p\leq k^{\nu}} \e\rb{p\rb{\frac{h}{k^{\lambda}}+\theta}} D(T(q_0,(p)_{k}))}_F\\
  &\qquad \leq \nu \frac{1}{\pi(k^{\nu-1};a,k^{\lambda})} \sup_{\theta' \in \R} \norm{\sum_{p \leq k^{\nu}} \e(p \theta') D(T(q_0,(p)_{k}))}_F.
\end{align*}

Thus we find -- by using \cref{pr:prime_Fourier} -- that
\begin{align*}
  \lim_{x\rightarrow \infty}\frac{1}{\pi(x;a,k^{\lambda})} \sum_{\substack{p<x\\p\equiv a \bmod k^{\lambda}}} D(T(q_0,(p)_k)) 
	=\left\{\begin{array}{cl} \frac{1}{\varphi(d')} S(\ell,0;d'), & \mbox{for }D = D_{\ell} \\ 0, & \mbox{otherwise,} \end{array}\right.
\end{align*}

We define the function $f$ by
\begin{align*}
  f(g) &:= 1 + \frac{1}{\varphi(d')} \overline{S(1,0;d')} D_{1}(g) + \ldots + \frac{1}{\varphi(d')} \overline{S(d'-1,0;d')} D_{d'-1}(g)\\
      &= \sum_{\ell = 0}^{d'-1} \frac{1}{\varphi(d')} \overline{S(\ell,0;d')} D_{\ell}(g).
\end{align*}
We recall that $D_{\ell}(g) := \e\rb{\frac{s_0(g) \ell}{d'}}$ and, therefore, we find for $v < d', D_{\ell}(T(nd'+v)) = \e\rb{\frac{v \ell}{d'}}$.
By using this fact, we can determine $f(T(nd',v))$.
\begin{align*}
  f(T(nd'+v)) &= \frac{1}{\varphi(d')} \sum_{\ell = 0}^{d'-1} \overline{S(\ell,0;d')} \e\rb{\frac{\ell v}{d'}}\\
      &= \frac{1}{\varphi(d')} \sum_{\ell = 0}^{d'-1} \sum_{\substack{x<d'\\(x,d')=1}} \e\rb{-\frac{\ell x}{d'}} \e\rb{\frac{\ell v}{d'}}\\
      &= \frac{1}{\varphi(d')} \sum_{\substack{x<d'\\(x,d')=1}} \sum_{\ell = 0}^{d'-1} \e\rb{\ell \frac{v-x}{d'}}\\
      &= \frac{1}{\varphi(d')} \sum_{\substack{x<d'\\(x,d')=1}} d' \ind_{[x=v]}.
\end{align*}

In total we find
\begin{align*}
  f(T(nd'+v)) =\left\{\begin{array}{cl} \frac{d'}{\varphi(d')}, & \mbox{for }(v,d') = 1\\ 0, & \mbox{otherwise,} \end{array}\right. 
\end{align*}
i.e.,~$f$ is a positive function.
We can actually rewrite this by using \cref{th:dk_0}:
\begin{align*}
	f(g) =\left\{\begin{array}{cl} \frac{d'}{\varphi(d')}, & \mbox{for }(s_0(g),d') = 1\\ 0, & \mbox{otherwise,} \end{array}\right. .
\end{align*}
Thus we can define 
\begin{align*}
  d\boldsymbol{\nu} = f d\boldsymbol{\mu},
\end{align*}
where $\boldsymbol{\mu}$ denotes the Haar-measure of $G$.

Our goal is to show that $(T(q_0,(p)_k))_{p\in \mathcal{P}(a,k^{\lambda})}$ is $\boldsymbol{\nu}$-uniformly distributed in $G$. 

Let $\{D^{\alpha} = (d_{ij}^{\alpha})_{i,j\leq n_{\alpha}}, \alpha \in \mathcal{A}\}$
be a complete set of pairwise inequivalent irreducible unitary representations and set
$e_{ij}^{\alpha}(g) = \sqrt{n_{\alpha}}d_{ij}^{\alpha}(g)$. 
Note that $A$ is finite as we are actually considering a finite group $G$.
Recall that the set $\{e_{ij}^{\alpha}\}$ forms a complete orthonormal system in the Hilbert space $L^{2}(G)$. 
We obtain for $D_{\ell}$ that
\begin{align*}
  \int_{G} D_{\ell} f d\boldsymbol{\mu} = \frac{1}{\varphi(d')} \sum_{m<d'}S(m,0;d') <D_{\ell}|D_{m}> = \frac{1}{\varphi(d')} S(\ell,0;d').
\end{align*}
For all other representations $D^{\alpha} = (d_{ij}^{\alpha})_{1\leq i,j\leq n_{\alpha}}$, we find
\begin{align*}
  \int_{G} d_{ij}^{\alpha} f d\boldsymbol{\mu} = \frac{1}{\varphi(d')} \sum_{m<d'}S(m,0;d') <d_{ij}^{\alpha}|D_{m}> = 0.
\end{align*}

This proves that $(T(q_0,(p)_k))_{p \in \mathcal{P}(a,k^{\lambda})}$ is $\boldsymbol{\nu}$-uniformly distributed, where $\boldsymbol{\nu}$ does not depend on $a$.
\end{proof}

\subsection{Relation of Fourier terms}
We prove the following relation between very similarly defined terms that appeared when we estimated the Fourier terms.
\phiPsi*
\begin{proof}
  We start by proving
  \begin{align}\label{eq:phi_psi}
    \norm{\phi_{\lambda,\alpha}^{q}(t,r)}_2 \leq \sum_{1\leq j \leq \lambda} \max_{q' \in Q} \norm{\psi_{\lambda-j,\alpha}^{q'}(t,r \bmod k^{\alpha})}_2 \frac{1}{k^{j}} 
	  + \frac{1}{k^{\lambda}}.
  \end{align}
  We rewrite $r = r_1 k^{\alpha + \lambda} + r_2 k^{\alpha} + r_3$ where $r_3 <k^{\alpha}$ and $r_2 < k^{\lambda}$, 
  such that $uk^{\alpha} + r = r_1 k^{\alpha+\lambda} + (u+r_2) k^{\alpha} + r_3$.
	We distinguish the cases $r_1>0$ and $r_1 = 0$ as this determines whether the number of digits of $(u k^{\alpha}+r)$ depends on $u$.
  Let us first consider the case $r_1>0$.
  We find by distinguishing $u+r_2 <k^{\lambda}$ and $u+r_2 \geq k^{\lambda}$
  \begin{align*}
    \phi_{\lambda,\alpha}^{q}(t,r) &= \frac{1}{k^{\lambda}} \sum_{\substack{u<k^{\lambda}\\u+r_2 <k^{\lambda}}} D(T(q,(r_1)_k (u+r_2)_k^{\lambda} (r_3)_k^{\alpha})) \e(-ut)\\
	&\quad + \frac{1}{k^{\lambda}} \sum_{\substack{u<k^{\lambda}\\u+r_2 \geq k^{\lambda}}} D(T(q,(r_1+1)_k (u+r_2 -k^{\lambda})_k^{\lambda} (r_3)_k^{\alpha})) \e(-ut)\\
    &= \frac{1}{k^{\lambda}} D(T(q,(r_1)_k)) \sum_{r_2 \leq u' < k^{\lambda}} D(T(\delta(q,(r_1)_k),(u')_k^{\lambda} (r_3)_k^{\alpha})) \e(-(u'-r_2)t)\\
	&\quad + \frac{1}{k^{\lambda}} D(T(q,(r_1+1)_k)) \sum_{u'<r_2} D(T(\delta(q,(r_1+1)_k),(u')_k^{\lambda} (r_3)_k^{\alpha})) \e(-(u'-r_2+k^{\lambda})t).
  \end{align*}

  This gives
  \begin{align*}
    \norm{\phi_{\lambda,\alpha}^{q}(t,r)}_2 &\leq \max_{q' \in Q} \frac{1}{k^{\lambda}}\norm{\sum_{r_2 \leq u' < k^{\lambda}} D(T(q',(u')_k^{\lambda} (r_3)_k^{\alpha})) \e(-u't)}_2\\
	&\quad + \max_{q'\in Q} \frac{1}{k^{\lambda}} \norm{\sum_{u'<r_2} D(T(q',(u')_k^{\lambda} (r_3)_k^{\alpha})) \e(-u't)}_2.
  \end{align*}
  
  The case $r_1 = 0$ can be treated similarly and we find in this case
  \begin{align}\label{eq:rec_phi_neu}
  \begin{split}
    \norm{\phi_{\lambda,\alpha}^{q}(t,r)}_2 &\leq \max_{q' \in Q} \frac{1}{k^{\lambda}}\norm{\sum_{r_2 \leq u' < k^{\lambda}} D(T(q',(u')_k (r_3)_k^{\alpha})) \e(-u't)}_2\\
	&\quad + \max_{q'\in Q} \frac{1}{k^{\lambda}} \norm{\sum_{u'<r_2} D(T(q',(u')_k^{\lambda} (r_3)_k^{\alpha})) \e(-u't)}_2.
  \end{split}
  \end{align}
  We work from now on just with the case $r_1 = 0$ as the case $r_1 >0$ works similarly.
  We rewrite $r_2 = r_2' k^{\lambda-1} + r_2''$ with $r_2' <k, r_2'' < k^{\lambda-1}$ and find by distinguishing the most significant digits of $u'$
  the following upper bound for the right side of \eqref{eq:rec_phi_neu}:
  \begin{align*}
    &\frac{1}{k} \sum_{r_2' + 1 \leq u_1' < k} \max_{q' \in Q} \frac{1}{k^{\lambda-1}}\norm{\sum_{u_2' < k^{\lambda-1}} 
	  D(T(q',(u_2')_k^{\lambda-1} (r_3)_k^{\alpha})) \e(-u_2't)}_2\\
	&\quad + \frac{1}{k} \max_{q' \in Q} \frac{1}{k^{\lambda-1}}\norm{\sum_{r_2'' \leq u_2' <k^{\lambda-1}} D(T(q',(u_2')_k (r_3)_k^{\alpha})) \e(-u_2't)}_2\\
	&\quad +  \frac{1}{k} \sum_{u_1'<r_2'} \max_{q'\in Q} \frac{1}{k^{\lambda-1}}\norm{\sum_{u_2'<k^{\lambda-1}} D(T(q',(u_2')_k^{\lambda-1} (r_3)_k^{\alpha})) \e(-u_2't)}_2\\
	&\quad + \frac{1}{k} \max_{q' \in Q} \frac{1}{k^{\lambda-1}}\norm{\sum_{u_2'<r_2''} D(T(q',(u_2')_k^{\lambda-1} (r_3)_k^{\alpha})) \e(-u_2't)}_2.
  \end{align*}
  The first and third lines give
  \begin{align*}
    \frac{k-1}{k} \psi_{\lambda-1,\alpha}^{q}(t,r_3)
  \end{align*}
	and we find in total that the right side of \cref{eq:rec_phi_neu} is bounded by
	\begin{align*}
		\frac{k-1}{k} \psi_{\lambda-1,\alpha}^{q}(t,r_3) &+ \frac{1}{k} \max_{q' \in Q} \frac{1}{k^{\lambda-1}}\norm{\sum_{r_2'' \leq u_2' <k^{\lambda-1}} D(T(q',(u_2')_k (r_3)_k^{\alpha})) \e(-u_2't)}_2\\
		&\quad + \frac{1}{k} \max_{q' \in Q} \frac{1}{k^{\lambda-1}}\norm{\sum_{u_2'<r_2''} D(T(q',(u_2')_k^{\lambda-1} (r_3)_k^{\alpha})) \e(-u_2't)}_2.
	\end{align*}
  By applying this step inductively we find for $1\leq \ell \leq \lambda$ the bound
	\begin{align*}
		\sum_{1\leq j \leq \ell} \frac{k-1}{k^j} \psi_{\lambda-j,\alpha}^{q}(t,r_3) &+ \frac{1}{k^{\ell}} \max_{q' \in Q} \frac{1}{k^{\lambda-\ell}}\norm{\sum_{r_2^{(\ell)} \leq u_2' <k^{\lambda-\ell}} D(T(q',(u_2')_k (r_3)_k^{\alpha})) \e(-u_2't)}_2\\
		&\quad + \frac{1}{k^{\ell}} \max_{q' \in Q} \frac{1}{k^{\lambda-\ell}}\norm{\sum_{u_2'<r_2^{(\ell)}} D(T(q',(u_2')_k^{\lambda-\ell} (r_3)_k^{\alpha})) \e(-u_2't)}_2.
	\end{align*}
	\eqref{eq:phi_psi} is obtained by taking $\ell = \lambda$.
  Thus it just remains to use the bound of $\psi_{\lambda,\alpha}^{q}(t,r)$
  \begin{align*}
    \norm{\phi_{\lambda,\alpha}^{q}(t,r)}_2 &\leq \sum_{1\leq j \leq \lambda} \max_{q' \in Q} \norm{\psi_{\lambda-j,\alpha}^{q'}(t,r \bmod k^{\alpha})}_2 \frac{1}{k^{j}} + \frac{1}{k^{\lambda}}\\
	&\leq c \sum_{j\leq\lambda}k^{-(\lambda-j)\eta} k^{-j} + k^{-\lambda}\\
	&\leq c k^{-\lambda \eta} \frac{k^{-(1-\eta)}}{1-k^{-(1-\eta)}} + k^{-\lambda}.
  \end{align*}

\end{proof}

\subsection{Proof of \cref{thm:mobius} and \cref{thm:prime}}\label{sec:RS2}
 The proof of \cref{thm:prime} and \cref{thm:mobius} is completely analogous to the corresponding proof in \cite{mauduit_rivat_rs} and relies on 
 bounds for sums of type I and sums of type II.
In this part of the paper we will comment on how to adapt the original proof to our situation.
We will only describe the important changes briefly (and assume that the reader is familiar with \cite{mauduit_rivat_rs}).
First we comment on how to adapt the auxiliary results of \cite{mauduit_rivat_rs}.

For a generalization of \cite[Lemma 3]{mauduit_rivat_rs} see \cite{invertible}.
\cite[Lemma 6]{mauduit_rivat_rs} as well as the Cauchy-Schwarz inequality can be easily adapted to matrix-valued functions, where we use the Frobenius norm instead of the absolute value.
These results change at most by a factor $\sqrt{d}$.\\
\cite[Lemma 8]{mauduit_rivat_rs} can be adapted to our case where the proof stays unchanged:
\begin{lemma}\label{le:cut_1}
  Let $f:\N\rightarrow U_d$ satisfy \cref{def:1} with $\eta >0$. For $(\mu,\nu,\rho)\in\N^3$ with $2\rho < \nu$ the set $\mathcal{E}$ of pairs 
  $(m,n)\in \{k^{\mu-1},\ldots,k^{\mu}-1\} \times \{k^{\nu-1},\ldots,k^{\nu}-1\}$ such that there exists $\ell <k^{\mu+\rho}$ with 
  $f(mn+k)^{H}f(mn) \neq f_{\mu+2\rho}(mn+k)^{H}f_{\mu+2\rho}(mn)$ satisfies
  \begin{align}
    \card \mathcal{E} \ll (\log k) k^{\mu + \nu - \eta \rho}.
  \end{align}
\end{lemma}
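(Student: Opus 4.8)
The statement is the analogue of \cite[Lemma 8]{mauduit_rivat_rs}, and the plan is to run that proof essentially verbatim, the only differences being cosmetic: matrix products and the Frobenius norm replace complex products and the absolute value, and a general exponent $\eta\in(0,1]$ is carried through in place of $\eta=1$. The heart of the argument is to recognise a failure of the conclusion for a pair $(m,n)$ as a failure of the carry property \cref{def:1} at suitable scales, and then to estimate how many products $mn$ can sit above a single ``bad'' integer by a divisor-type count.

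First I would fix the scales. Apply \cref{def:1} with $\lambda:=\nu-\rho$, $\alpha:=\mu+\rho$ and third parameter $\rho$; note that $\rho<\lambda$ holds precisely because $2\rho<\nu$, and that $f_{\alpha+\rho}=f_{\mu+2\rho}$. This produces a set $B\subseteq\{0,\dots,k^{\nu-\rho}-1\}$ with $\card B\ll k^{\nu-\rho-\eta\rho}$ such that, for every $L\notin B$ and all $n_1,n_2\in\{0,\dots,k^{\mu+\rho}-1\}$,
\begin{align*}
  f(Lk^{\mu+\rho}+n_1+n_2)^{H}f(Lk^{\mu+\rho}+n_1)=f_{\mu+2\rho}(Lk^{\mu+\rho}+n_1+n_2)^{H}f_{\mu+2\rho}(Lk^{\mu+\rho}+n_1).
\end{align*}
Since $m<k^{\mu}$ and $n<k^{\nu}$ force $mn<k^{\mu+\nu}$, the integer $L:=\floor{mn/k^{\mu+\rho}}$ lies in $\{0,\dots,k^{\nu-\rho}-1\}$. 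Writing $n_1:=mn\bmod k^{\mu+\rho}$ (so $n_1<k^{\mu+\rho}$) and taking $n_2:=\ell$ to be the witness $\ell<k^{\mu+\rho}$ from the definition of $\mathcal{E}$, one has $mn=Lk^{\mu+\rho}+n_1$ and $mn+\ell=Lk^{\mu+\rho}+n_1+n_2$, so a violation of the displayed identity for $(m,n)$ is exactly a violation for $L$ with this choice of $(n_1,n_2)$. Hence $(m,n)\in\mathcal{E}$ forces $L\in B$, and therefore $\card\mathcal{E}\le\sum_{L\in B}\#\{(m,n):\floor{mn/k^{\mu+\rho}}=L\}$.

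It then remains to bound, for a fixed $L\in B$, the number of pairs $(m,n)$ with $k^{\mu-1}\le m<k^{\mu}$ and $mn\in[Lk^{\mu+\rho},(L+1)k^{\mu+\rho})$. For each admissible $m$ the number of $n$ with $mn$ in that interval of length $k^{\mu+\rho}$ is at most $k^{\mu+\rho}/m+1$, so summing over $m$ gives at most $k^{\mu+\rho}\sum_{k^{\mu-1}\le m<k^{\mu}}m^{-1}+k^{\mu}\ll (\log k)\,k^{\mu+\rho}$; this harmonic-sum estimate is the only place the factor $\log k$ enters and is the one computation that must be done carefully. Combining with $\card B\ll k^{\nu-\rho-\eta\rho}$ gives $\card\mathcal{E}\ll(\log k)\,k^{\mu+\rho}\cdot k^{\nu-\rho-\eta\rho}=(\log k)\,k^{\mu+\nu-\eta\rho}$, with an implied constant depending only on $k$ and $f$. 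I do not expect a genuine obstacle here: all the substance is already contained in \cref{def:1}, established in \cref{sec:carry}, and the present lemma is pure bookkeeping of scales together with a standard divisor count.
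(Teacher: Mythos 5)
Your proof is correct and follows exactly the route the paper intends: the paper states only that the proof of Mauduit--Rivat's Lemma~8 carries over unchanged, and what you have written out is precisely that argument, with $\eta$ inserted in the obvious place. Two small remarks for the record: the statement's ``$mn+k$'' is a typo for ``$mn+\ell$'', which you silently and correctly fix; and your choice of scales ($\lambda=\nu-\rho$, $\alpha=\mu+\rho$) together with the reduction of a bad pair $(m,n)$ to a bad leading block $L=\lfloor mn/k^{\mu+\rho}\rfloor$, followed by the harmonic-sum count $\sum_{k^{\mu-1}\le m<k^\mu} (k^{\mu+\rho}/m+1)\ll(\log k)\,k^{\mu+\rho}$, is exactly the intended bookkeeping.
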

For $a \in \Z$ and $\kappa \in \N$ we denote by $\mathbf{r}_{\kappa}(a)$ the unique integer $r \in \{0,\ldots,k^{\kappa}-1\}$ such that $a \equiv r \bmod k^{\kappa}$.
More generally for integers $0 \leq \kappa_1 \leq \kappa_2$ we denote by $\mathbf{r}_{\kappa_1,\kappa_2}(a)$ the unique integer $r \in \{0,\ldots,k^{\kappa_2-\kappa_1}-1\}$ such that $a = s k^{\kappa_2} + r k^{\kappa_1} + t$ for some $t \in \{0,\ldots,k^{\kappa_1}-1\}$ and $s \in \Z$.
\cite[Lemma 9]{mauduit_rivat_rs} can be adapted as well:
\begin{lemma}
  Let $f:\N \rightarrow U_d$ satisfying \cref{def:1} and $(\mu, \nu, \mu_0,\mu_1,\mu_2)\in \N^5$ with $\mu_0\leq \mu_1\leq \mu \leq \mu_2$, 
  $\mu \leq \nu$ and $2(\mu_2-\mu)\leq \mu_0$. For $(a,b,c) \in \N^3$ the set $\mathcal{E}(a,b,c)$ of pairs $(m,n) \in \{k^{\mu-1},\ldots,k^{\mu}-1\}\times\{k^{\nu-1},\ldots,k^{\nu}-1\}$
  such that
	{\small
  \begin{align*}
    &f_{\mu_2}(mn+am+bn+c)^{H} f_{\mu_2}(k^{\mu_0} \mathbf{r}_{\mu_0,\mu_2}(mn+am+bn+c))\\
    &\qquad \neq f_{\mu_1}(mn+am+bn+c)^{H} f_{\mu_1}(k^{\mu_0} \mathbf{r}_{\mu_0,\mu_2}(mn+am+bn+c))
  \end{align*}
	}%
  satisfies 
  \begin{align}
    \card \mathcal{E}(a,b,c) \ll \max(\tau(k),\log k) \mu_2^{\omega(k)} k^{\mu + \nu + \eta(\mu_0-\mu_1)}.
  \end{align}

\end{lemma}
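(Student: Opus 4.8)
The plan is to follow the proof of \cite[Lemma~9]{mauduit_rivat_rs} essentially line by line, making only the two substitutions already indicated in this section: complex absolute values become Frobenius norms (using that $\norm{\cdot}_F$ is unitarily invariant and submultiplicative, that $\norm{A^H}_F = \norm{A}_F$, and that products of unitary matrices are unitary, so every matrix manipulation in \cite{mauduit_rivat_rs} carries over unchanged up to an irrelevant factor $\sqrt{d}$), and the carry property in the strong form $\eta = 1$ of \cite{mauduit_rivat_rs} becomes \cref{def:1} with an arbitrary $\eta \in (0,1]$. The key point is that \cref{def:1} enters the argument only as a black box producing a short list of ``bad'' leading blocks, so replacing $1$ by $\eta$ merely replaces $k^{-\rho}$ by $k^{-\eta\rho}$ in that list and hence in the final bound.

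Concretely I would proceed as follows. First, exactly as in \cite{mauduit_rivat_rs}, use the factorisation $mn + am + bn + c = (m+b)(n+a) + (c - ab)$ to put the expression $N := mn + am + bn + c$ into the shape handled there. Second, observe that $f_{\mu_2}(N)^H f_{\mu_2}(k^{\mu_0}\mathbf{r}_{\mu_0,\mu_2}(N))$ can differ from $f_{\mu_1}(N)^H f_{\mu_1}(k^{\mu_0}\mathbf{r}_{\mu_0,\mu_2}(N))$ only when the carry generated by the digits of $N$ below position $\mu_0$ reaches position $\mu_1$; writing $N \bmod k^{\mu_2} = k^{\mu_0}\mathbf{r}_{\mu_0,\mu_2}(N) + (N \bmod k^{\mu_0})$ and applying \cref{def:1} (in the equivalent form of the Remark following it) with $\alpha = \mu_0$, $\rho = \mu_1 - \mu_0$, $\lambda = \mu_2 - \mu_0$, $n_1 = 0$ and $n_2 = N \bmod k^{\mu_0}$, one gets that equality holds for every pair $(m,n)$ except those for which the block $\mathbf{r}_{\mu_0,\mu_2}(N)$ lies in an exceptional set of cardinality $O(k^{\mu_2 - \mu_0 - \eta(\mu_1-\mu_0)})$. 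Third, bound $\card\mathcal{E}(a,b,c)$ by the number of pairs $(m,n) \in \{k^{\mu-1},\dots,k^\mu-1\}\times\{k^{\nu-1},\dots,k^\nu-1\}$ for which $\mathbf{r}_{\mu_0,\mu_2}(N)$ falls in that exceptional set. Here the hypothesis $2(\mu_2-\mu) \le \mu_0$ is exactly what makes the range of $m$ compatible with the modulus $k^{\mu_2}$ appearing in the count, and the classical divisor estimates $\sum_{n < k^\nu} \gcd(n+a, k^{\mu_2}) \ll k^\nu \tau(k^{\mu_2}) \ll k^\nu \mu_2^{\omega(k)}$, together with the $O(\tau(k))$ (resp.\ $O(\log k)$) loss from summing over the relevant residue/divisor classes, produce precisely the factor $\max(\tau(k),\log k)\,\mu_2^{\omega(k)}$; this part of the computation is verbatim the one in \cite{mauduit_rivat_rs}.

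I do not expect a genuinely new difficulty: the matrix-valued version of the carry manipulations is immediate, and the passage from $\eta = 1$ to general $\eta$ is automatic because \cref{def:1} is used only to count bad leading blocks. The one step that needs care is the arithmetic bookkeeping in the third point, namely keeping the exponents aligned so that the final estimate comes out as $k^{\mu+\nu+\eta(\mu_0-\mu_1)}$ and not with $k^{\mu_2}$ in place of $k^\mu$ — which is exactly where the condition $2(\mu_2-\mu)\le\mu_0$ is used, just as in the original proof.
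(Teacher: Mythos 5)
Your proposal is correct and follows exactly the route the paper intends: the paper gives no written proof, only the claim that \cite[Lemma~9]{mauduit_rivat_rs} adapts once absolute values become Frobenius norms and the $\eta=1$ carry bound is replaced by the $\eta\in(0,1]$ bound of \cref{def:1}. Your application of \cref{def:1} with $\alpha=\mu_0$, $\rho=\mu_1-\mu_0$, $\lambda=\mu_2-\mu_0$ (so $\ell k^{\alpha}=k^{\mu_0}\mathbf{r}_{\mu_0,\mu_2}(N)$ with $n_1=0$, $n_2=N\bmod k^{\mu_0}$) is the right one, yielding a set of bad leading blocks of cardinality $O(k^{\mu_2-\mu_0-\eta(\mu_1-\mu_0)})$, from which the Mauduit--Rivat arithmetic carries over verbatim to give the exponent $\eta(\mu_0-\mu_1)$.
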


\subsubsection{Sums of Type I}
We take a non-decreasing function $\gamma: \R \rightarrow \R$ satisfying $\lim_{\lambda \to \infty} \gamma(\lambda) = + \infty$, $c\geq 2$ and 
$f: \N \to U_d$ be a function satisfying \cref{def:1} and $f \in \mathcal{F}_{\gamma, c}$ according to \cref{def:2}.
Let
\begin{align}\label{eq:MN_relation}
  1 \leq M \leq N \text{ such that } M\leq(MN)^{1/3}.
\end{align}
Let $\mu$ and $\nu$ be the unique integers such that
\begin{align*}
  k^{\mu-1} \leq M < k^{\mu} \text{ and } k^{\nu-1} \leq N < k^{\nu}.
\end{align*}

Let $\theta \in \R$, an interval $I(M,N) \subseteq[0,MN]$ and 
\begin{align}
  S_{I}(\theta) = \sum_{\frac{M}{q} < m \leq M} \norm{\sum_{n: mn \in I(M,N)} f(mn)\e(\theta mn)}_F.
\end{align}

\begin{proposition}\label{pr:1}
  Assuming \eqref{eq:MN_relation} and with $c\geq 2$, we have -- uniformly for $\theta \in \R$ -- that
  \begin{align}
    S_{I}(\theta) \ll (\log k)^{5/2} (\mu + \nu)^{2} k^{\mu + \nu - \frac{\eta}{2} \gamma\rb{\frac{\mu + \nu}{3}}}.
  \end{align}
\end{proposition}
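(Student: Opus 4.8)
The plan is to follow the strategy of the corresponding result in \cite{mauduit_rivat_rs}, adapted to matrix-valued $f$ and to the weaker carry property of \cref{def:1}. First I would fix $\rho \in \N$ with $2\rho < \nu$ to be chosen at the end (roughly $\rho \approx (\mu+\nu)/3$), and split each inner sum $\sum_{n : mn \in I(M,N)} f(mn)\e(\theta mn)$ according to the digits of $mn$ below level $\mu + 2\rho$. Using \cref{le:cut_1}, the pairs $(m,n)$ for which the truncation $f(mn) = f_{\mu+2\rho}(mn)$ (after absorbing the interval endpoints via a bounded number of shift arguments, as in \cite{mauduit_rivat_rs}) fails to hold contribute at most $O((\log k)\, k^{\mu+\nu-\eta\rho})$ in Frobenius norm, since $\|f\|_F \le \sqrt d$ is bounded; this error is already of the claimed shape. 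On the remaining pairs, $f(mn)$ depends only on $\mathbf{r}_{\mu+2\rho}(mn)$, which lets us replace $f(mn)$ by $f(k^{\lambda_0}\lfloor mn/k^{\lambda_0}\rfloor + \cdots)$ for an appropriate small truncation level and, after a Fourier expansion in the low-order digits (Vaaler / geometric-series estimates as in \cite[Lemma 1]{mauduit_rivat_rs}, with the trivial replacement of the absolute value by the Frobenius norm and a loss of a factor $\sqrt d$), reduce the whole sum to a main term controlled by $\|k^{-\lambda}\sum_{u<k^\lambda} f(uk^{\alpha})\e(-ut)\|_F$ for suitable $\alpha \le c\lambda$, which by $f \in \mathcal{F}_{\gamma,c}$ (\cref{def:2}) is $\le k^{-\gamma(\lambda)}$.

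Next I would carry out the combinatorial bookkeeping exactly as in \cite{mauduit_rivat_rs}: after the digit truncation, interchange the order of summation so that $m$ runs freely over a dyadic block and the relevant sum over $n$ is a complete sum over residues modulo a power of $k$, picking up the $\mathcal{F}_{\gamma,c}$ bound once for each such block. The counting of blocks and the summation over the Fourier parameter $h$ (with coefficients bounded by $\min(k^{-\lambda_1}, 1/(\pi|h|))$ and $1/(H+1)$) introduce the logarithmic and polynomial-in-$(\mu+\nu)$ factors; tracking these gives the factor $(\log k)^{5/2}(\mu+\nu)^2$, and the additional $\sqrt d$ from passing to the Frobenius norm is absorbed into the implied constant (which is allowed to depend on $f$, hence on $d$). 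The condition $M \le (MN)^{1/3}$, i.e. $\mu \lesssim (\mu+\nu)/3$, is what guarantees that the truncation level can be taken as large as $\approx (\mu+\nu)/3$ while still keeping $\alpha \le c\lambda$ with $c \ge 2$, so that \cref{def:2} applies with $\lambda \approx (\mu+\nu)/3$; this is where the exponent $\gamma((\mu+\nu)/3)$ in the statement comes from, and the factor $\eta/2$ reflects that half of the $\rho$ of "room" is spent on the carry error in \cref{le:cut_1} and the other half on the Fourier-analytic main term.

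The main obstacle I expect is purely bookkeeping: matching the exact powers of $\log k$, of $(\mu+\nu)$, and the precise constant $\eta/2$ in the exponent requires reproducing the chain of estimates of \cite{mauduit_rivat_rs} faithfully, being careful that (i) every application of Cauchy–Schwarz and of \cite[Lemma 6]{mauduit_rivat_rs} in the matrix setting costs only a bounded factor, (ii) the carry-error term from \cref{le:cut_1} with a general $\eta \in (0,1]$ — rather than $\eta = 1$ as in the original — still combines cleanly with the main term when optimizing $\rho$, and (iii) the interval $I(M,N)$, as opposed to a full box, is handled by the standard device of writing its indicator as a difference of two "initial segment" indicators and expanding each via a smooth Fourier approximation, which is harmless since it only doubles the number of terms. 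No genuinely new idea beyond what is already developed in the excerpt is needed; the content is in the careful transcription.
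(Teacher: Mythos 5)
Your proposal matches the paper's own (sketched) proof in every essential respect: both follow \cite{mauduit_rivat_rs} for sums of type I, replace the scalar carry lemma by \cref{le:cut_1} to control the digit-truncation error at cost $k^{-\eta\rho}$, invoke the Fourier bound $f\in\mathcal{F}_{\gamma,c}$ from \cref{def:2} for the main term, and balance $\rho\approx(\mu+\nu)/3$ to obtain the exponent $\tfrac{\eta}{2}\gamma\rb{\tfrac{\mu+\nu}{3}}$. The only point the paper stresses that you only touch on implicitly is that the matrix-valued setting requires preserving the \emph{order} of the factors (e.g.\ in the decomposition of $\widehat{f_{\mu+\nu}}$), not merely paying a bounded loss in Cauchy–Schwarz, but this is subsumed in your remark that "the content is in the careful transcription."
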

\begin{proof}[Proof (Sketch):]
	The proof proceeds as in \cite{mauduit_rivat_rs} up to \cite[equation (32)]{mauduit_rivat_rs}.
	We find
	\begin{align*}
		\widehat{f_{\mu+\nu}}(t) &= \frac{1}{k^{\mu + \nu - \alpha}} \sum_{v < k^{\mu + \nu - \alpha}} f(v k^{\alpha}) \e\rb{-\frac{vt}{k^{\mu + \nu-\alpha}}}\\
			&\qquad \frac{1}{k^{\alpha}}\sum_{u\leq k^{\alpha}} f(v k^{\alpha})^{H} f(u + v k^{\alpha}) \e\rb{-\frac{ut}{k^{\mu + \nu}}}.
	\end{align*}
	Here we needed to ensure the correct order of the terms.
	From now on the proof does not change and we just have to take the factor $\eta$ into account when using \cref{def:1}.
	We find for example
	\begin{align*}
		\card\widetilde{\mathcal{W}_{\alpha}} \ll k^{\mu + \nu - \eta \rho_1}.
	\end{align*}
	Thereafter one only needs to keep the order of terms as the values of $f$ are now matrices and thus the order of terms is important.
	However, this does not change any important properties and all arguments of \cite{mauduit_rivat_rs} still hold.
	By taking $\eta$ into account, we find
	{\small
	\begin{align*}
		\norm{S_{I,2}''(M,d)}_F\ll (\log k)^{1/2} k^{\mu + \nu - \eta \rho_1/2}\\
		S_{I,2}'(\theta') \ll (\log k)^{1/2} \sum_{1\leq d \leq M} \frac{k^{-\eta \rho_1/2}}{d} \ll \mu (\log k)^{3/2} k^{-\eta \rho_1/2}.
	\end{align*}
	}%
	Choosing
	\begin{align*}
		\rho_1 = \gamma\rb{\frac{\mu + \nu}{3}} \frac{2}{1+\eta}
	\end{align*}
	then gives the desired result.
\end{proof}

\subsubsection{Sums of Type II}
We take $\gamma: \R\to\R$ a non-decreasing function satisfying $\lim_{\lambda\to\infty}\gamma(\lambda) = +\infty, c \geq 10$ and $f:\N \to U_d$ be a function satisfying 
\cref{def:1} and $f\in\mathcal{F}_{\gamma,c}$ in \cref{def:2}. Let $M,N$ such that $1\leq M\leq N$. We let $\mu$ and $\nu$ denote the unique integers such that
\begin{align*}
  k^{\mu-1}\leq M < k^{\mu} \text{ and } k^{\nu-1} \leq N < k^{\nu}.
\end{align*}
Let us assume that
\begin{align}\label{eq:mu_nu_2}
  \frac{1}{4}(\mu + \nu) \leq \mu \leq \nu \leq \frac{3}{4}(\mu + \nu).
\end{align}
We assume -- as in \cite{mauduit_rivat_rs} -- that the multiplicative dependency of the variables in the type II sums has been removed by the classical method described (for example) in 
\cite[Section 5]{mauduit_rivat_2010}.
Let $\theta\in\R, a_m \in \C, b_n \in \C$ with $\abs{a_m}\leq 1, \abs{b_n}\leq 1$ and
\begin{align*}
  S_{II}(\theta) = \sum_{m}\sum_{n} a_m b_n f(mn) \e(\theta mn)
\end{align*}
where we sum over $m \in (M/k,M]$ and $n \in (N/k,N]$. We will prove
\begin{proposition}\label{pr:2}
  Assuming \eqref{eq:mu_nu_2} and $c\geq 10$, uniformly for $\abs{a_m}\leq 1,\abs{b_n}\leq 1$ and $\theta\in\R$, we have
  \begin{align}
    \norm{S_{II}(\theta)}_F \ll \max(\tau(k) \log k, \log ^3 k)^{1/4}(\mu + \nu)^{\frac{1}{4}(1+\max(\omega(k),2))} k^{\mu + \nu - \frac{\eta}{20}\gamma(2\floor{\mu/15})}.
  \end{align}
\end{proposition}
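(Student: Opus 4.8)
\textbf{Overall strategy.} The plan is to mimic the Type~II estimate of \cite{mauduit_rivat_rs} essentially line by line, keeping three differences in mind throughout: (i) $f$ is matrix-valued into $U_d$, so all absolute values become Frobenius norms $\norm{\cdot}_F$ and the order of matrix products must be preserved; (ii) the carry property \cref{def:1} now carries an exponent $\eta\in(0,1]$ rather than $\eta=1$, so every place where a carry-set cardinality is invoked loses a factor in the exponent; (iii) the small-Fourier hypothesis is \cref{def:2} with the same $\gamma$. None of these changes the structure of the argument — they only rescale the final saving by $\eta$ and introduce harmless factors like $\sqrt d$ (absorbed into implied constants, as announced in the preamble of \cref{sec:RS2}) — so the task is bookkeeping rather than new ideas.

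\textbf{Key steps, in order.} First I would apply Cauchy--Schwarz (the matrix-valued version, with $\norm{\cdot}_F$) to replace $S_{II}(\theta)$ by a double sum over $(n,n')$ of inner sums over $m$, exactly as in \cite{mauduit_rivat_rs}; the Cauchy--Schwarz step costs only a $\sqrt d$-type factor. Next comes the van~der~Corput / carry-truncation step: one splits the digital expansion of $mn$ at well-chosen levels $\mu_0\le\mu_1\le\mu\le\mu_2$ and uses \cref{le:cut_1} together with the second lemma of \cref{sec:RS2} (the generalization of \cite[Lemma~9]{mauduit_rivat_rs}) to discard the pairs $(m,n)$ on which the truncated function $f_{\mu_2}$ differs from $f_{\mu_1}$; here the cardinality bound picks up $k^{\mu+\nu+\eta(\mu_0-\mu_1)}$ instead of $k^{\mu+\nu+(\mu_0-\mu_1)}$, which is precisely where the $\eta$ enters. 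After truncation one is left with a sum in which the matrix factor depends only on boundedly many digits of $mn$, and one expands it via its discrete Fourier transform $\widehat{f_{\mu_2}}$; at this point the recursive identity for $\widehat{f_{\mu+\nu}}(t)$ (written in the displayed form in the Type~I proof, with the $f(vk^\alpha)^H$ on the correct side) and the small-Fourier bound \cref{def:2} give a factor $k^{-\gamma(\cdot)}$ for each of the surviving Fourier modes. Finally one sums the geometric series over the detached digit-blocks, collects the polylogarithmic and $(\mu+\nu)^{\text{power}}$ losses exactly as in \cite{mauduit_rivat_rs}, and optimizes the free parameters ($\rho$, and the cut-levels) — the optimal choice being the analogue of $\rho=\tfrac{2}{1+\eta}\gamma(2\floor{\mu/15})$ used in the Type~I sketch — which yields the stated exponent $\mu+\nu-\tfrac{\eta}{20}\gamma(2\floor{\mu/15})$ and the factor $\max(\tau(k)\log k,\log^3 k)^{1/4}(\mu+\nu)^{\frac14(1+\max(\omega(k),2))}$.

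\textbf{Where the work really is.} The only genuinely delicate point is the same one as in \cite{mauduit_rivat_rs}: controlling the ``carry propagation'' across the two cut-levels $\mu_1$ and $\mu_2$ simultaneously, i.e.\ verifying that after discarding the exceptional set from the second lemma of \cref{sec:RS2} one may legitimately replace $f(mn+am+bn+c)$ by a function of $\mathbf r_{\mu_0,\mu_2}(mn+am+bn+c)$ uniformly in the auxiliary shifts $a,b,c$ coming from the van~der~Corput step. In the matrix setting one must be careful that the replacement is done by left- (resp.\ right-) multiplication consistently, so that the surviving expression is still a single ordered product to which the Fourier expansion applies; once that is set up, the estimates are literally those of \cite{mauduit_rivat_rs} with $1$ replaced by $\eta$ in the relevant exponents. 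I would therefore present the proof as ``Proof (Sketch)'', stating which displayed formulas of \cite{mauduit_rivat_rs} change, exhibiting the two or three modified displays (the corrected $\widehat{f_{\mu+\nu}}$ identity, the modified cardinality bounds $\card\mathcal E\ll k^{\mu+\nu+\eta(\mu_0-\mu_1)}$, and the final optimization), and remarking that all remaining steps are verbatim.
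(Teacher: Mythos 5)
Your overall strategy is the paper's: follow \cite{mauduit_rivat_rs} line by line while (i) replacing absolute values by Frobenius norms for a $U_d$-valued $f$, (ii) letting the carry cardinalities pick up the weaker exponent $\eta$, and (iii) reusing the small-Fourier hypothesis, then optimizing $\rho,\rho'$ at the end. That is indeed what the paper does. However, you gloss over the one genuinely new mechanism the matrix setting forces, and which the paper explicitly flags: the trace-cyclicity reorderings. After the first Cauchy--Schwarz one has
\begin{align*}
\norm{S_{II}(\theta)}_F^2 \ll \frac{M^2N^2}{R}+\frac{MN}{R}\sum_{1\le r<R}\Bigl(1-\tfrac{r}{R}\Bigr)\tr\bigl(S_1(r)\bigr),\qquad
S_1(r)=\sum_m\sum_n b_{n+r}\overline{b_n}\,f(mn+mr)\,f(mn)^{H}\,\e(\theta mr),
\end{align*}
but the carry property (\cref{def:1} and its Remark) controls products of the shape $f(x+y)^{H}f(x)$ or $f(x)^{H}f(x+y)$ -- not $f(x+y)f(x)^{H}$, and these are different matrices when $d\ge 2$. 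You cannot truncate $S_1(r)$ directly. The paper uses $\tr(AB)=\tr(BA)$ to pass to $\widetilde S_1(r)=\sum_m\sum_n b_{n+r}\overline{b_n}\,f(mn)^{H}f(mn+mr)\,\e(\theta mr)$ with $\tr(\widetilde S_1(r))=\tr(S_1(r))$, and only then invokes \cref{le:cut_1}; it repeats the same trick at the $S_2'$ stage. Your phrase ``one must be careful that the replacement is done by left-/right-multiplication consistently'' points at the right place but does not name or supply the mechanism, and without it the carry truncation simply does not apply. The fact that the Van der Corput step lands on a trace (so that cyclicity is available) is the nontrivial structural observation here, and a complete proof needs to make it.

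A smaller discrepancy: in the final optimization the paper keeps $\rho=\lfloor\mu/15\rfloor$ exactly as in \cite{mauduit_rivat_rs}, and absorbs $\eta$ into the auxiliary cut $\rho'=\lfloor\eta\,\gamma(2\rho)/10\rfloor$. You instead import the Type~I choice $\rho_1=\frac{2}{1+\eta}\gamma(\cdot)$; that is the wrong analogue for Type~II and would not reproduce the exponent $-\frac{\eta}{20}\gamma(2\lfloor\mu/15\rfloor)$ in the statement.
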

\begin{proof}
The proof of the corresponding result in \cite{mauduit_rivat_rs} is the most difficult part -- quite long and complicated.
We will try to focus only on the necessary changes and keep it as short as possible, as no important new ideas are needed.\\
We find by using the corresponding results:
\begin{align*}
  \norm{S_{II}(\theta)}_{F}^2 \ll \frac{M^2N^2}{R} + \frac{MN}{R} \sum_{1\leq r<R}\rb{1-\frac{r}{R}} \tr(S_1(r))
\end{align*}
with
\begin{align*}
  S_1(r) = \sum_{m}\sum_{n\in I(N,r)} b_{n+r}\overline{b_n} f(mn + mr) f(mn)^{H} \e(\theta mr),
\end{align*}
where $I(N,r) = (N/k,N-r]$. Let
\begin{align}
  \mu_2 = \mu + 2 \rho.
\end{align}
As we are only interested in $\tr(S_1(r))$, we can -- by well-known properties of the trace -- exchange the order of the matrices
\begin{align}
  \tr(S_1(r)) &= \sum_{m}\sum_{n\in I(N,r)} b_{n+r} \overline{b_n} \tr(f(mn+mr) f(mn)^{H}) \e(\theta mr)\\
    &= \sum_{m}\sum_{n\in I(N,r)} b_{n+r} \overline{b_n} \tr(f(mn)^{H} f(mn+mr)) \e(\theta mr)\\
    &= \tr\rb{\sum_{m} \sum_{n\in I(N,r)} b_{n+r} \overline{b_n} f(mn)^{H} f(mn+mr) \e(\theta mr)}.
\end{align}
We denote from now on by $\widetilde{S}$ the corresponding sum where we exchange the order of the matrices, e.g.~:
\begin{align}
  \widetilde{S}_1(r) = \sum_{m}\sum_{n\in I(N,r)} b_{n+r}\overline{b_n} f(mn)^{H} f(mn + mr) \e(\theta mr).
\end{align}
If $f$ satisfies the carry property described in \cref{def:1}, then by \cref{le:cut_1} the number of pairs $(m,n)$ for which
$f(mn)^{H} f(mn+mr) \neq f_{\mu_2}(mn)^{H} f_{\mu_2}(mn+mr)$ is $O(k^{\mu + \nu - \eta \rho})$. Hence
\begin{align}
  \tr(S_1(r)) = \tr(\widetilde{S}_1(r)) = \tr(S'_1(r)) + O(k^{\mu + \nu - \eta\rho})
\end{align}
where
\begin{align*}
  S'_1(r) &= \sum_{m}\sum_{n\in I(N,r)} b_{n+r}\overline{b_n} f_{\mu_2}(mn)^{H} f_{\mu_2}(mn+mr) \e(\theta mr).
\end{align*}
Note that it was important to change the order of matrices first to apply \cref{le:cut_1}.
Using again the Cauchy-Schwarz inequality for the summation over $r$ and $\abs{\tr(A)} \leq \norm{A}_F \sqrt{d}$, leads to
\begin{align}
  \norm{S_{II}(\theta)}_F^{4} \ll \frac{M^4N^4}{R^2}+\frac{M^2N^2}{R^2}R\sum_{1\leq r < R}\norm{S_1'(r)}_F^2.
\end{align}
When applying the Van-der-Corput inequality for the summation over $m$ (see for example \cite{invertible})
%TODO!!!!!!!!!!!!!!!!!!!!!
we need again to keep the correct order of terms. We find
\begin{align*}
  \sum_{1\leq r < R} \norm{S_1'(r)}_F^2 \ll \frac{M^2N^2R}{S} + \frac{MN}{S} \tr(\widetilde{S_2})
\end{align*}
with
\begin{align*}
  \widetilde{S_2} = \sum_{1\leq r<R}\sum_{1\leq s < S} \rb{1-\frac{s}{S}} \e(\theta k^{\mu_1} rs) \widetilde{S'_2}(r,s)
\end{align*}
where
\begin{align*}
  \widetilde{S'_2}(r,s) &= \sum_{m}\sum_{n} f_{\mu_2}((m+sk^{\mu_1})n)^{H} f_{\mu_2}((m+sk^{\mu_1})(n+r))f_{\mu_2}(m(n+r))^{H} f_{\mu_2}(mn)\\
      &= \sum_{m}\sum_{n} f_{\mu_1}((m+sk^{\mu_1})n)^{H} f_{\mu_1,\mu_2}((m+sk^{\mu_1})n)^{H} f_{\mu_1,\mu_2}((m+sk^{\mu_1})(n+r))\\
      &\qquad f_{\mu_1}((m+sk^{\mu_1})(n+r)) f_{\mu_1}(m(n+r))^{H} f_{\mu_1, \mu_2}(m(n+r))^{H} f_{\mu_1,\mu_2}(mn)f_{\mu_1}(mn).
\end{align*}
However, we find that $\tr(\widetilde{S'_2}(r,s)) = \tr(S'_2(r,s))$ for
{\small
\begin{align*}
  S'_2(r,s) = \sum_{m}\sum_{n} f_{\mu_1,\mu_2}(mn) f_{\mu_1,\mu_2}((m+sk^{\mu_1})n)^{H} f_{\mu_1,\mu_2}((m+sk^{\mu_1})(n+r)) f_{\mu_1, \mu_2}(m(n+r))^{H} 
\end{align*}
}%
and, therefore,
\begin{align*}
  \sum_{1\leq r < R} \norm{S_1'(r)}_F^2 \ll \frac{M^2N^2R}{S} + \frac{MN}{S} \tr(S_2)
\end{align*}
where
\begin{align*}
  S_2 = \sum_{1\leq r<R}\sum_{1\leq s < S} \rb{1-\frac{s}{S}} \e(\theta k^{\mu_1} rs) S'_2(r,s).
\end{align*}

We choose the order of the terms so that we can use the Cauchy-Schwarz inequality efficiently.
Whenever we use \cref{def:1} (at least implicitly), we find a different error term, e.g.,
instead of \cite[(60)]{mauduit_rivat_rs} we can use
\begin{align*}
  \card \mathcal{E}_{\mu_0,\mu_1,\mu_2}(r,s) \ll \max(\tau(k),\log k) (\mu+\nu)^{\omega(k)} k^{\mu + \nu - 2 \eta \rho'}.
\end{align*}
Thus we find
\begin{align*}
  S_2'(r,s) = S_3(r,s) + O(\max(\tau(k),\log k)(\mu + \nu)^{\omega(k)} k^{\mu + \nu-2\eta\rho'}),
\end{align*}
where the order of the terms in $S_3$ has to be changed:
{\small
\begin{align*}
  S_3(r,s) &= \sum_{m}\sum_{n} \sum_{0\leq u_0 < k^{\mu_2-\mu_0}} \sum_{0\leq u_1<k^{\mu_2-\mu_0}} \chi_{k^{\mu_0-\mu_2}}\rb{\frac{mn}{k^{\mu_2}}-\frac{u_0}{k^{\mu_2-\mu_0}}}
		      \chi_{k^{\mu_0-\mu_2}}\rb{\frac{mn+mr}{k^{\mu_2}} - \frac{u_1}{k^{\mu_2-\mu_0}}}\\
	      &\qquad g(u_0)g(u_0 + k^{\mu_1-\mu_0}sn)^{H}g(u_1+k^{\mu_1-\mu_0}sn+k^{\mu_1-\mu_0}sr)g(u_1)^{H}.
\end{align*}
}%

This impact of $\eta$ carries through the rest of the work and we will only comment on the specific form of some intermediate results, e.g.,~ $S_4$:
{\small
\begin{align*}
  S_4(r,s) &= k^{2(\mu_2-\mu_0)} \sum_{\abs{h_0}\leq H} \sum_{\abs{h_1}\leq H} a_{h_0}(k^{\mu_0-\mu_2},H) a_{h_1}(k^{\mu_0-\mu_2},H) 
    \sum_{0\leq h_2 < k^{\mu_2-\mu_0}} \sum_{0 \leq h_3 < k^{\mu_2-\mu_0}}\e\rb{\frac{h_3sr}{k^{\mu_2-\mu_1}}}\\
    &\qquad \widehat{g}(h_0-h_2) \widehat{g}(-h_2)^{H} \widehat{g}(h_3) \widehat{g}(h_3-h_1)^{H}\\
    &\qquad \sum_{m}\sum_{n} \e\rb{\frac{(h_0+h_1) mn + h_1 mr + (h_2+h_3)k^{\mu_1}sn}{k^{\mu_2}}}.
\end{align*}
}%
The definitions of $S_6$ and $S_7$ have to be adapted as well, e.g.~:
\begin{align*}
  S_7(h_1) = \sum_{0\leq h'<k^{\mu_2-\mu_0}} \norm{\widehat{g}(h'-h_1) \widehat{g}(h')^{H}}_F^2.
\end{align*}
The following arguments of \cite{mauduit_rivat_rs} carry over to this generalization.

We find the following lemma, which is the analogue to \cite[Lemma 10]{mauduit_rivat_rs}.
\begin{lemma}
  If
  \begin{align*}
    \mu \leq \rb{2+\frac{4}{3}c}\rho
  \end{align*}
  then we have -- uniformly for $\lambda \in \N$ with $\frac{1}{3}(\mu_2-\mu_0) \leq \lambda \leq \frac{4}{5} (\mu_2-\mu_0)$ --
  \begin{align*}
    \sum_{0\leq h<k^{\mu_2-\mu_0}} \sum_{0\leq k <k^{\mu_2-\mu_0-\lambda}}\norm{\widehat{g}(h+k)\widehat{g}(h)^{H}}_F^2 \ll k^{-\gamma_1(\lambda,\mu_1-\mu_0)}(\log k^{\mu_2-\mu_1})^2
  \end{align*}
  where
  \begin{align}
    \gamma_1(\lambda,\mu_1-\mu_0) = \frac{\gamma(\lambda) - \mu_1+\mu_0}{2} \eta.
  \end{align}
\end{lemma}
\begin{proof}
  The proof works as in \cite{mauduit_rivat_rs}, where one has to be careful to keep the order of terms.
  However, this does not change the proof substantially and by using the new estimate given by \cref{def:1} one finds the desired result.
\end{proof}

The rest of the proof does not change and it just remains to balance the error terms differently. One finds in total, uniformly for $\theta \in \R$
\begin{align*}
  \norm{S_{II}(\theta)}_F^4 &\ll k^{4\mu + 4\nu +\mu_1-\mu_0}(k^{-\gamma_1(\mu_2-\mu_0-2\rho,\mu_1-\mu_0)}+k^{-\rho} \log k^{\rho})\\
  &\qquad\qquad (\tau\rb{k^{\mu_2-\mu_1}}+k^{\mu_2-\mu_1-\nu \log k^{\mu_2-\mu_1}})\\
  &\quad + (\log k)^3(\mu+\nu)^3 k^{4\mu+4\nu+3(\mu_2-\mu_0)+2\rho}(k^{-\mu_2}+k^{-\nu})\\
  &\quad + \max(\log k^{\mu_0},\tau(k^{\mu_0})) k^{4\mu+4\nu-2\rho}\\
  &\quad + \max(\tau(k),\log k) (\mu+\nu)^{\omega(k)} k^{4\mu+4\nu-2\eta \rho'}.
\end{align*}
Note that only the first and last error terms have changed compared to \cite{mauduit_rivat_rs}.
As in \cite{mauduit_rivat_rs}, we assume
\begin{align*}
  \mu_2 = \mu + 2\rho\\
  \mu_1 = \mu - 2\rho\\
  \mu_0 = \mu_1 - 2 \rho'\\
  \mu \leq \nu \leq 3\mu.
\end{align*}
In total, we find
\begin{align*}
  \norm{S_{II}(\theta)}_F^4 &\ll \tau(k)(\mu_2-\mu_1)^{\omega(k)} k^{4\mu+4\nu + \frac{2+\eta}{2}(\mu_1-\mu_0) -\frac{\eta}{2} \gamma(2\rho)} \log k^{\rho}\\
  &\quad + (\log k)^3(\mu + \nu)^3 k^{4\mu + 4\nu + 3(\mu_1-\mu_0) + 14\rho-\mu}\\
  &\quad + \max(\log k^{\mu_0},\tau(k^{\mu_0})) k^{4\mu+4\nu-2\rho}\\
  &\quad + \max(\tau(k),\log k) (\mu+\nu)^{\omega(k)} k^{4\mu+4\nu-2\eta \rho'}.
\end{align*}

Taking
\begin{align*}
  \rho' = \floor{\eta \gamma(2\rho)/10},
\end{align*}
we have $\mu_1-\mu_0 = 2\rho' \leq \eta \gamma(2\rho)/5 \leq \eta \rho/5$, and thus
\begin{align*}
  \frac{2+\eta}{2}(\mu_1-\mu_0) - \frac{\eta}{2}\gamma(2\rho) \leq \frac{3}{10} \eta \gamma(2\rho) - \frac{\eta}{2} \gamma(2\rho) = -\eta\frac{\gamma(2\rho)}{5}.
\end{align*}
Furthermore, we choose
\begin{align*}
  \rho = \floor{\mu/15},
\end{align*}
which yields by the same arguments as in \cite{mauduit_rivat_rs} that -- finally --
\begin{align*}
  \norm{S_{II}(\theta)}_F^4 \ll \max(\tau(k)\log k, \log ^3 k)(\mu + \nu)^{1+\max(\omega(k),2)} k^{4\mu + 4\nu -\eta \gamma(2\floor{\mu/15})/5},
\end{align*}
which completes the proof of \cref{pr:2}.
\end{proof}

\begin{proof}{Proof of \cref{th:moebius} and \cref{th:prime}:}
  The proof is completely analogous to the corresponding proof in \cite{mauduit_rivat_rs}.
\end{proof}

 % bibliography
 \bibliographystyle{abbrv}
 \bibliography{bibliography}
 
\end{document}